\let\origsection=\section \def\section{\@ifstar{\origsection*}{\mysection}}
\def\mysection{\@startsection{section}{1}\z@{.7\linespacing\@plus\linespacing}{.5\linespacing}{\normalfont\scshape\centering\S}}
\renewcommand{\PrintDOI}[1]{\doi{#1}}
\numberwithin{equation}{section}
\numberwithin{figure}{section}
\def\rmlabel{\upshape({\itshape \roman*\,})}
\def\alabel{\upshape({\itshape \alph*\,})}
\def\nlabel{\upshape({\itshape \arabic*\,})}
\let\polishlcross=\l
\def\l{\ifmmode\ell\else\polishlcross\fi}
\def\tand{\ \text{and}\ }
\def\qand{\quad\text{and}\quad}
\let\emptyset=\varnothing
\let\setminus=\smallsetminus
\def\moverlay{\mathpalette\mov@rlay}
\def\mov@rlay#1#2{\leavevmode\vtop{   \baselineskip\z@skip \lineskiplimit-\maxdimen
		\ialign{\hfil$\m@th#1##$\hfil\cr#2\crcr}}}
\newcommand{\charfusion}[3][\mathord]{
	#1{\ifx#1\mathop\vphantom{#2}\fi
		\mathpalette\mov@rlay{#2\cr#3}
	}
	\ifx#1\mathop\expandafter\displaylimits\fi}
\newcommand{\dcup}{\charfusion[\mathbin]{\cup}{\cdot}}
\DeclareFontFamily{U}  {MnSymbolC}{}
\DeclareSymbolFont{MnSyC}         {U}  {MnSymbolC}{m}{n}
\DeclareFontShape{U}{MnSymbolC}{m}{n}{
	<-6>  MnSymbolC5
	<6-7>  MnSymbolC6
	<7-8>  MnSymbolC7
	<8-9>  MnSymbolC8
	<9-10> MnSymbolC9
	<10-12> MnSymbolC10
	<12->   MnSymbolC12}{}
\DeclareMathSymbol{\powerset}{\mathord}{MnSyC}{180}
\newcommand{\pedge}[9]{
	
	\ifx\relax#6\relax
	\def\qoffs{0pt}
	\else
	\def\qoffs{#6}
	\fi
		
	\def\phedge{
		($#1+#5!\qoffs!-90:#2-#5$) -- 
		($#2+#1!\qoffs!-90:#3-#1$) -- 
		($#3+#2!\qoffs!-90:#4-#2$) -- 
		($#4+#3!\qoffs!-90:#5-#3$) -- 
		($#5+#4!\qoffs!-90:#1-#4$) -- cycle}

	\coordinate (12) at ($#1!\qoffs!90:#2$);
	\coordinate (15) at ($#1!\qoffs!-90:#5$);
	\coordinate (23) at ($#2!\qoffs!90:#3$);
	\coordinate (21) at ($#2!\qoffs!-90:#1$);
	\coordinate (34) at ($#3!\qoffs!90:#4$);
	\coordinate (32) at ($#3!\qoffs!-90:#2$);
	\coordinate (45) at ($#4!\qoffs!90:#5$);
	\coordinate (43) at ($#4!\qoffs!-90:#3$);
	\coordinate (51) at ($#5!\qoffs!90:#1$);
	\coordinate (54) at ($#5!\qoffs!-90:#4$);

	\def\nphedge{
		(15) let \p1=($(15)-#1$), \p2=($(12)-#1$) in 
		arc[start angle={atan2(\y1,\x1)}, delta angle={atan2(\y2,\x2)-atan2(\y1,\x1)-360*(atan2(\y2,\x2)-atan2(\y1,\x1)>0)}, x radius=\qoffs, y radius=\qoffs] --
		(21) let \p1=($(21)-#2$), \p2=($(23)-#2$) in 
		arc[start angle={atan2(\y1,\x1)}, delta angle={atan2(\y2,\x2)-atan2(\y1,\x1)-360*(atan2(\y2,\x2)-atan2(\y1,\x1)>0)}, x radius=\qoffs, y radius=\qoffs] --
		(32) let \p1=($(32)-#3$), \p2=($(34)-#3$) in 
		arc[start angle={atan2(\y1,\x1)}, delta angle={atan2(\y2,\x2)-atan2(\y1,\x1)-360*(atan2(\y2,\x2)-atan2(\y1,\x1)>0)}, x radius=\qoffs, y radius=\qoffs] --
		(43) let \p1=($(43)-#4$), \p2=($(45)-#4$) in 
		arc[start angle={atan2(\y1,\x1)}, delta angle={atan2(\y2,\x2)-atan2(\y1,\x1)-360*(atan2(\y2,\x2)-atan2(\y1,\x1)>0)}, x radius=\qoffs, y radius=\qoffs] --
		(54) let \p1=($(54)-#5$), \p2=($(51)-#5$) in 
		arc[start angle={atan2(\y1,\x1)}, delta angle={atan2(\y2,\x2)-atan2(\y1,\x1)-360*(atan2(\y2,\x2)-atan2(\y1,\x1)>0)}, x radius=\qoffs, y radius=\qoffs] --
		cycle}

	\ifx\relax#7\relax
	\def\plwidth{1pt}
	\else
	\def\plwidth{#7}
	\fi
	
	\ifx\relax#9\relax
	\fill \nphedge;
	\else
	\fill[#9]\nphedge;
	\fi
	
	\ifx\relax#8\relax
	\draw[line width=\plwidth,rounded corners=\qoffs]\nphedge;
	\else
	\draw[line width=\plwidth,#8]\nphedge;
	\fi
}
\newcommand{\qedge}[7]{
	
	\ifx\relax#4\relax
	\def\qoffs{0pt}
	\else
	\def\qoffs{#4}
	\fi
	
	\def\qhedge{
		($#1+#3!\qoffs!-90:#2-#3$) --
		($#2+#1!\qoffs!-90:#3-#1$) --
		($#3+#2!\qoffs!-90:#1-#2$) -- cycle}

	\coordinate (12) at ($#1!\qoffs!90:#2$);
	\coordinate (13) at ($#1!\qoffs!-90:#3$);
	\coordinate (23) at ($#2!\qoffs!90:#3$);
	\coordinate (21) at ($#2!\qoffs!-90:#1$);
	\coordinate (31) at ($#3!\qoffs!90:#1$);
	\coordinate (32) at ($#3!\qoffs!-90:#2$);
	
	\def\nqhedge{
		(13) let \p1=($(13)-#1$), \p2=($(12)-#1$) in
		arc[start angle={atan2(\y1,\x1)}, delta angle={atan2(\y2,\x2)-atan2(\y1,\x1)-360*(atan2(\y2,\x2)-atan2(\y1,\x1)>0)}, x radius=\qoffs, y radius=\qoffs] --
		(21) let \p1=($(21)-#2$), \p2=($(23)-#2$) in
		arc[start angle={atan2(\y1,\x1)}, delta angle={atan2(\y2,\x2)-atan2(\y1,\x1)-360*(atan2(\y2,\x2)-atan2(\y1,\x1)>0)}, x radius=\qoffs, y radius=\qoffs] --
		(32) let \p1=($(32)-#3$), \p2=($(31)-#3$) in
		arc[start angle={atan2(\y1,\x1)}, delta angle={atan2(\y2,\x2)-atan2(\y1,\x1)-360*(atan2(\y2,\x2)-atan2(\y1,\x1)>0)}, x radius=\qoffs, y radius=\qoffs] --
		cycle}
	
	\ifx\relax#5\relax
	\def\qlwidth{1pt}
	\else
	\def\qlwidth{#5}
	\fi
	
	\ifx\relax#7\relax
	\fill \nqhedge;
	\else
	\fill[#7]\nqhedge;
	\fi
	
	\ifx\relax#6\relax
	\draw[line width=\qlwidth,rounded corners=\qoffs]\nqhedge;
	\else
	\draw[line width=\qlwidth,#6]\nqhedge;
	\fi
}
\newcommand{\redge}[8]{
	
	\ifx\relax#5\relax
	\def\qoffs{0pt}
	\else
	\def\qoffs{#5}
	\fi
	
	\def\rhedge{
		($#1+#4!\qoffs!-90:#2-#4$) -- 
		($#2+#1!\qoffs!-90:#3-#1$) -- 
		($#3+#2!\qoffs!-90:#4-#2$) -- 
		($#4+#3!\qoffs!-90:#1-#3$) -- cycle}

	\coordinate (12) at ($#1!\qoffs!90:#2$);
	\coordinate (14) at ($#1!\qoffs!-90:#4$);
	\coordinate (23) at ($#2!\qoffs!90:#3$);
	\coordinate (21) at ($#2!\qoffs!-90:#1$);
	\coordinate (34) at ($#3!\qoffs!90:#4$);
	\coordinate (32) at ($#3!\qoffs!-90:#2$);
	\coordinate (41) at ($#4!\qoffs!90:#1$);
	\coordinate (43) at ($#4!\qoffs!-90:#3$);
	
	\def\nrhedge{
		(14) let \p1=($(14)-#1$), \p2=($(12)-#1$) in 
		arc[start angle={atan2(\y1,\x1)}, delta angle={atan2(\y2,\x2)-atan2(\y1,\x1)-360*(atan2(\y2,\x2)-atan2(\y1,\x1)>0)}, x radius=\qoffs, y radius=\qoffs] --
		(21) let \p1=($(21)-#2$), \p2=($(23)-#2$) in 
		arc[start angle={atan2(\y1,\x1)}, delta angle={atan2(\y2,\x2)-atan2(\y1,\x1)-360*(atan2(\y2,\x2)-atan2(\y1,\x1)>0)}, x radius=\qoffs, y radius=\qoffs] --
		(32) let \p1=($(32)-#3$), \p2=($(34)-#3$) in 
		arc[start angle={atan2(\y1,\x1)}, delta angle={atan2(\y2,\x2)-atan2(\y1,\x1)-360*(atan2(\y2,\x2)-atan2(\y1,\x1)>0)}, x radius=\qoffs, y radius=\qoffs] --
		(43) let \p1=($(43)-#4$), \p2=($(41)-#4$) in 
		arc[start angle={atan2(\y1,\x1)}, delta angle={atan2(\y2,\x2)-atan2(\y1,\x1)-360*(atan2(\y2,\x2)-atan2(\y1,\x1)>0)}, x radius=\qoffs, y radius=\qoffs] --
		cycle}
	
	\ifx\relax#6\relax
	\def\rlwidth{1pt}
	\else
	\def\rlwidth{#6}
	\fi
	
	\ifx\relax#8\relax
	\fill \nrhedge;
	\else
	\fill[#8]\nrhedge;
	\fi
	
	\ifx\relax#7\relax
	\draw[line width=\rlwidth,rounded corners=\qoffs]\nrhedge;
	\else
	\draw[line width=\rlwidth,#7]\nrhedge;
	\fi
}
\let\epsilon=\varepsilon
\let\eps=\epsilon
\let\rho=\varrho
\let\theta=\vartheta
\def\NN{{\mathds N}}
\def\ZZ{{\mathds Z}}
\def\PP{{\mathds P}}
\def\RR{{\mathds R}}
\newcommand{\cR}{\mathcal{R}}
\newcommand{\ccA}{\mathscr{A}}
\newcommand{\ccB}{\mathscr{B}}
\newcommand{\ccS}{\mathscr{S}}
\newcommand{\ccC}{\mathscr{C}}
\newcommand{\ccP}{\mathscr{P}}
\newcommand{\ccW}{\mathscr{W}}
\newcommand{\ccZ}{\mathscr{Z}}
\newcommand{\gB}{\mathfrak{B}}
\newcommand{\gU}{\mathfrak{U}}
\newcommand{\gA}{\mathfrak{A}}
\newcommand{\gE}{\mathfrak{E}}
\newtheoremstyle{note}  {4pt}  {4pt}  {\sl}  {}  {\bfseries}  {.}  {.5em}          {}
\newtheoremstyle{introthms}  {3pt}  {3pt}  {\itshape}  {}  {\bfseries}  {.}  {.5em}          {\thmnote{#3}}
\newtheoremstyle{remark}  {2pt}  {2pt}  {\rm}  {}  {\bfseries}  {.}  {.3em}          {}
\theoremstyle{plain}
\newtheorem{theorem}{Theorem}[section]
\newtheorem{lemma}[theorem]{Lemma}
\newtheorem{prop}[theorem]{Proposition}
\newtheorem{cor}[theorem]{Corollary}
\newtheorem{fact}[theorem]{Fact}
\newtheorem{claim}[theorem]{Claim}
\theoremstyle{note}
\newtheorem{dfn}[theorem]{Definition}
\theoremstyle{remark}
\newtheorem{remark}[theorem]{Remark}
\newtheorem{exmpl}[theorem]{Example}
\newcommand*\patchAmsMathEnvironmentForLineno[1]{
	\expandafter\let\csname old#1\expandafter\endcsname\csname #1\endcsname
	\expandafter\let\csname oldend#1\expandafter\endcsname\csname end#1\endcsname
	\renewenvironment{#1}
	{\linenomath\csname old#1\endcsname}
	{\csname oldend#1\endcsname\endlinenomath}}
\newcommand*\patchBothAmsMathEnvironmentsForLineno[1]{
	\patchAmsMathEnvironmentForLineno{#1}
	\patchAmsMathEnvironmentForLineno{#1*}}
\def\Ubad{U_\text{\rm bad}}
\newcommand{\overrighharpoonup}[1]{\ThisStyle{%
		\vbox {\m@th\ialign{##\crcr
				\rightharpoonupfill \crcr
				\noalign{\kern-\p@\nointerlineskip}
				$\hfil\SavedStyle#1\hfil$\crcr}}}}
\def\rightharpoonupfill{%
	$\SavedStyle\m@th\mkern+0.8mu\cleaders\hbox{$\shortbar\mkern-4mu$}\hfill\rightharpoonuptip\mkern+0.8mu$}
\def\rightharpoonuptip{%
	\raisebox{\z@}[2pt][1pt]{\scalebox{0.55}{$\SavedStyle\rightharpoonup$}}}
\def\shortbar{%
	\smash{\scalebox{0.55}{$\SavedStyle\relbar$}}}
\let\seq=\overrighharpoonup
\newcommand{\overlefharpoonup}[1]{\ThisStyle{%
		\vbox {\m@th\ialign{##\crcr
				\leftharpoonupfill \crcr
				\noalign{\kern-\p@\nointerlineskip}
				$\hfil\SavedStyle#1\hfil$\crcr}}}}
\def\leftharpoonupfill{%
	$\SavedStyle\m@th\mkern+0.8mu\cleaders\hbox{$\shortbar\mkern-4mu$}\hfill\leftharpoonuptip\mkern+0.8mu$}
\def\leftharpoonuptip{%
	\raisebox{\z@}[2pt][1pt]{\scalebox{0.55}{$\SavedStyle\leftharpoonup$}}}
\let\seqq=\overlefharpoonup
\def\sa{\seq a}
\def\sb{\seq b}
\def\su{\seq u}
\def\sw{\seq w}
\def\sx{\seq x}
\def\zetas{\zeta_{\star}}
\def\thetas{\theta_{\star}}
\def\zetass{\zeta_{\star\star}}
\def\thetass{\theta_{\star\star}}
\def\al{\alpha}
\def\P{\Psi}
\newsavebox\myboxA
\newsavebox\myboxB
\newlength\mylenA
\newcommand*\xoverline[2][0.75]{%
	\sbox{\myboxA}{$\m@th#2$}%
	\setbox\myboxB\null
	\ht\myboxB=\ht\myboxA%
	\dp\myboxB=\dp\myboxA%
	\wd\myboxB=#1\wd\myboxA
	\sbox\myboxB{$\m@th\overline{\copy\myboxB}$}
	\setlength\mylenA{\the\wd\myboxA}
	\addtolength\mylenA{-\the\wd\myboxB}%
	\ifdim\wd\myboxB<\wd\myboxA%
	\rlap{\hskip 0.5\mylenA\usebox\myboxB}{\usebox\myboxA}%
	\else
	\hskip -0.5\mylenA\rlap{\usebox\myboxA}{\hskip 0.5\mylenA\usebox\myboxB}%
	\fi}
\def\copr{\;\middle|\;}
\def\coprn{\mid}
\let\herz=\heartsuit
\let\pik=\spadesuit
\begin{document}
\dedicatory{Dedicated to Endre Szemer\'edi on the occasion of his $80^{\text{th}}$ birthday}

\title[On Hamiltonian cycles in hypergraphs with dense link graphs]
{On Hamiltonian cycles in hypergraphs with dense link graphs}
\author[J.~Polcyn]{Joanna Polcyn}
\address{Adam Mickiewicz University, Faculty of Mathematics and Computer Science, Pozna\'n, Poland}
\email{joaska@amu.edu.pl}

\author[Chr.~Reiher]{Christian Reiher}
\address{Fachbereich Mathematik, Universit\"at Hamburg, Hamburg, Germany}
\email{Christian.Reiher@uni-hamburg.de}
\email{bjarne.schuelke@uni-hamburg.de}

\author[V.~R\"{o}dl]{Vojt\v{e}ch R\"{o}dl}
\address{Department of Mathematics, Emory University, Atlanta, USA}
\email{vrodl@emory.edu}
\thanks{The third author is supported by NSF grant DMS 1764385.}

\author[B. Sch\"ulke]{Bjarne Sch\"ulke}

\subjclass[2010]{Primary: 05C65. Secondary: 05C45}
\keywords{Hamiltonian cycles, Dirac's theorem, hypergraphs}

\begin{abstract}
We show that every $k$-uniform hypergraph on $n$ vertices whose minimum $(k-2)$-degree 
is at least $(5/9+o(1))n^2/2$ contains a Hamiltonian cycle. A construction due to Han 
and Zhao shows that this minimum degree condition is optimal. The same result was proved
independently by Lang and Sahueza-Matamala.
\end{abstract}

\maketitle
	
\section{Introduction}

Hamiltonian cycles are a central theme in graph theory and extremal combinatorics. 
Dirac's classic result~\cite{Dirac} states that every graph on~$n\geq 3$ 
vertices whose minimum degree is at least~$\frac n2$ contains a Hamiltonian cycle.
The present work continues the investigation of hypergraph generalisations of 
Dirac's theorem -- an area of research owing many deep 
insights to Endre Szemer\'edi.   

\subsection{Hypergraphs and Hamiltonian cycles}
For~$k\ge 2$ a~\emph{$k$-uniform hypergraph} is defined to be a pair~$H=(V,E)$ consisting 
of a (finite) \emph{set of vertices}~$V$ and a 
set
\[
	E\subseteq V^{(k)}=\{U\subseteq V\colon \vert U\vert=k\}
\]
of edges. A $k$-uniform hypergraph $H=(V, E)$ with $n$ vertices is said to contain 
a {\it Hamiltonian cycle} if its vertex set admits a cyclic 
enumeration $V=\{x_i\colon i\in\ZZ/n\ZZ\}$ such that $\{x_i, x_{i+1}, \ldots, x_{i+k-1}\}\in E$ 
holds for all $i\in \ZZ/n\ZZ$. Observe that this naturally generalises the familiar notion of 
Hamiltonian cycles in graphs. 

In contrast to the graph case, there are several interesting minimum degree notions  
for hypergraphs. For a $k$-uniform hypergraph $H=(V, E)$ and a set~$S\subseteq V$ the {\it degree 
of~$S$ in~$H$} is defined by 
\[
	d_H(S)=\vert\{e\in E\colon S\subseteq e\}\vert\,.
\]
\vbox{
Moreover, for an integer~$i$ with~$1\leq i < k$ the number
\[
	\delta _{i}(H)=\min\bigl\{d_H(S)\colon S\in V^{(i)}\bigr\} 
\]
is called the {\it minimum~$i$-degree of $H$}. 
}

The research on minimum $i$-degree conditions guaranteeing the existence of Hamiltonian cycles 
in hypergraphs was initiated by Katona and Kierstead~\cite{KaKi}. 
The main problem is to determine, for any two given integers~$k\ge 2$ 
and~$i\in [k-1]$, the optimal minimum $i$-degree condition 
which for $k$-uniform hypergraphs ensures the existence of a Hamiltonian cycle. 
Notice that Dirac's aforementioned theorem solves the case $(k, i)=(2, 1)$. 

In general, if $i<j$, then a minimum $j$-degree condition seems to reveal more structural information 
about a hypergraph than a minimum $i$-degree condition. For this reason, it is reasonable to 
suspect that the difficulty of the problem we are interested in increases with~$k-i$. 
The first case, $i=k-1$, was solved more than a decade ago by R\"odl, Ruci\'nski, 
and Szemer\'edi~\cite{RRS}. 

\begin{theorem}
	For every integer~$k\ge 2$ and every $\alpha>0$ there exists an integer~$n_0$ such that 
	every~$k$-uniform hypergraph~$H$ on~$n\geq n_0$ vertices 
	with~$\delta_{k-1}(H)\geq\left(\frac{1}{2}+\alpha\right)n$ contains a Hamiltonian cycle.
\end{theorem}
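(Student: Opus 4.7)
The plan is to apply the \emph{absorbing method}, which was introduced by R\"odl, Ruci\'nski, and Szemer\'edi precisely in order to tackle this problem. The overall strategy breaks into four steps: (i) find a short \emph{absorbing path} $P_A$ that can swallow any small leftover set of vertices; (ii) set aside a small \emph{reservoir} $R$ of vertices that can be used to bridge short gaps; (iii) find an \emph{almost-Hamiltonian} tight path covering almost all remaining vertices; (iv) \emph{close up} everything into a Hamiltonian cycle and use $P_A$ to absorb the few vertices not yet covered.

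The heart of the argument is the Absorbing Lemma. I would define, for each vertex $v\in V(H)$, an \emph{absorber} to be a short tight path $A_v$ of bounded length (in terms of $k$) having the property that there also exists a tight path on $V(A_v)\cup\{v\}$ sharing the same ordered $(k-1)$-tuples of endpoints as $A_v$. A direct counting argument using $\delta_{k-1}(H)\ge\bigl(\tfrac12+\alpha\bigr)n$ shows that every vertex $v$ has at least $\Omega_\alpha(n^{c_k})$ absorbers: the intersection of two $(k-1)$-neighbourhoods has size at least $2\alpha n$, which lets one build the little path around $v$ greedily. Picking each short potential absorber independently at random with probability $p=n^{-c_k+\varepsilon}$ and deleting overlaps, I would then concatenate the surviving absorbers through the reservoir into a single tight path~$P_A$ of length $o(n)$ that works for \emph{every} set $U$ with $|U|\leq n^{1-\varepsilon'}$.

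For steps (ii) and (iv) I would prove a Connecting Lemma: given any two ordered $(k-1)$-tuples $\sx,\sy$ of vertices whose $(k-1)$-degrees are typical, there is a tight path of length at most some constant $\l=\l(\alpha,k)$ connecting $\sx$ to $\sy$. This again uses the $1/2+\alpha$ threshold, which is exactly the density at which the common $(k-1)$-neighbourhoods of two typical tuples are forced to be nonempty. A standard random-subset argument then gives a reservoir $R$ of size $|R|=\mu n$ (with $\mu\ll\alpha$) inside which the Connecting Lemma can be executed without interfering with other constructions.

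For step (iii) I would apply the weak hypergraph regularity lemma to $H\setminus (V(P_A)\cup R)$ and pass to the reduced graph, where the $(k-1)$-degree condition transfers approximately. In the reduced hypergraph, the minimum degree condition $\tfrac12+\alpha/2$ suffices (by an averaging argument over $(k-1)$-subsets of clusters) to find a spanning structure that can be lifted back to a tight path covering all but $o(n)$ vertices of $V(H)\setminus(V(P_A)\cup R)$. The endpoints of this long path, together with the endpoints of $P_A$, are then joined through $R$ by two applications of the Connecting Lemma, yielding a tight cycle $C^\star$ covering all but a tiny leftover set $U$; finally $P_A$ absorbs $U$ and produces a Hamiltonian cycle.

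The main obstacle is the Absorbing Lemma, and specifically the existence of many absorbers for each vertex~$v$ under only the assumption $\delta_{k-1}(H)\ge\bigl(\tfrac12+\alpha\bigr)n$. One must find a gadget-style tight path on boundedly many vertices that remains a tight path after $v$ is inserted, and one must do so robustly enough that the probabilistic selection can be performed simultaneously for all $v$. Every other step is a fairly standard consequence once the absorbers, the reservoir, and the Connecting Lemma are in place; the extremal constant $1/2$ enters precisely through the Connecting Lemma and the cover step.
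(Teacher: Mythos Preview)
The paper does not actually prove this theorem: it is stated as a known result of R\"odl, Ruci\'nski, and Szemer\'edi~\cite{RRS} and quoted without argument. So there is no ``paper's own proof'' to compare against; your outline is to be measured against~\cite{RRS} itself, and at the level of strategy it matches that paper's absorbing-method scheme (absorbing path, reservoir, connecting lemma, almost spanning path via weak regularity, close and absorb).

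One quantitative inconsistency in your sketch is worth flagging. With the sampling probability $p=n^{-c_k+\varepsilon}$ you would retain only about $n^{\varepsilon}$ absorbers in total, so your absorbing path $P_A$ could swallow at most $n^{\varepsilon}$ leftover vertices --- hence the bound $|U|\le n^{1-\varepsilon'}$ you wrote. But your own steps (ii) and (iii) leave behind a set $U$ of size $\Theta(\mu n)$: the reservoir has $\mu n$ vertices, only $O(1)$ of which are consumed by the two final connections, and the regularity-based cover misses a further $\varepsilon' n$ vertices. Thus $|U|$ is a small \emph{linear} fraction of $n$, not $n^{1-\varepsilon'}$, and your $P_A$ as described cannot absorb it. The fix is standard and exactly what~\cite{RRS} does: take $p=\gamma n^{1-c_k}$ for a small constant $\gamma$, so that $P_A$ has $\Theta(\gamma n)$ vertices and, for every $v$, still contains $\Omega(\gamma n)$ absorbers for $v$; then $P_A$ can absorb any set of size up to a fixed small constant times $n$, which suffices. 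With that correction your plan is the~\cite{RRS} proof.
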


Similarly as for Dirac's theorem, slightly unbalanced bipartite hypergraphs show that this result is asymptotically best possible. Our main result addresses the next case, $i=k-2$.

\begin{theorem}\label{t:main}
	For every integer~$k\geq 3$ and every~$\alpha >0$, there exists an integer~$n_0$ such that 
	every $k$-uniform hypergraph~$H$ on~$n\geq n_0$ vertices 
	with~$\delta_{k-2}(H)\geq\left(\frac{5}{9}+\alpha \right)\frac{n^2}{2}$ contains a 
	Hamiltonian cycle.
\end{theorem}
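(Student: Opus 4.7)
The plan is to adopt the \emph{absorption method} of R\"odl, Ruci\'nski, and Szemer\'edi, which has become the standard machinery for Dirac-type results on Hamilton cycles in hypergraphs. The overall strategy has three main components: a short \emph{absorbing path}~$P_A$ that can swallow any small set of leftover vertices, a small \emph{reservoir}~$R$ of vertices usable for connecting partial paths, and an \emph{almost-covering} step producing a long tight path through the remaining vertices. These are then glued into a single Hamiltonian cycle.

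The central technical ingredient is a \emph{connecting lemma}: for any two sufficiently ``typical'' ordered $(k-1)$-tuples $\sx,\sw$ of vertices, there exist at most $t_0$ additional vertices $z_1,\ldots,z_t$ so that $\sx z_1\ldots z_t \sw$ is a tight path in~$H$. Since the minimum $(k-2)$-degree hypothesis only directly controls the link graph $L_H(S)$ of a $(k-2)$-set~$S$ (which is a $2$-uniform graph of edge density at least $5/9+\alpha$), extending a tight path by one vertex amounts to finding an edge in such a link graph, and iterating the extension requires that the new $(k-1)$-suffix still have a rich link. The main point is that for most $(k-2)$-suffixes the link graph is genuinely dense, leaving enough flexibility to realise the connection through a short chain of extension and matching arguments inside these graphs, rather than inside~$H$ itself.

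Building on this, a probabilistic deletion argument yields a reservoir~$R$ of size $o(n)$ such that any two generic $(k-1)$-tuples lying outside~$R$ can be joined through a short tight path using only vertices of~$R$. An \emph{absorbing lemma} then produces, disjoint from~$R$, a tight path~$P_A$ of length $o(n)$ with the property that for every sufficiently small set $W\subseteq V(H)\setminus V(P_A)$ there is a tight path on $V(P_A)\cup W$ with the same endpoints as~$P_A$. Following the R\"odl--Ruci\'nski--Szemer\'edi template, one first shows that every vertex lies in many ``absorbing gadgets'' -- short tight paths that remain tight after the vertex is inserted in a distinguished location -- and then selects~$P_A$ at random. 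Finally, an almost-covering step (via a weak hypergraph regularity lemma together with a fractional matching technique, or via iterated absorption) covers all but $o(n)$ vertices of $V(H)\setminus V(P_A)\setminus R$ by a single tight path. Stitching $P_A$, this long path, and the connecting paths through~$R$ together yields a cycle missing only a small set~$W$, which is then absorbed into~$P_A$ to complete the Hamilton cycle.

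The main obstacle is the near-extremal regime. The Han--Zhao construction shows the constant~$5/9$ is tight, so when $\delta_{k-2}(H)$ is barely above the threshold, $H$ may be structurally very close to the extremal example; in that case the typical link graph is nearly tripartite-like, and the counting estimates that usually drive the extension and absorbing arguments become delicate. I therefore expect the hardest part of the proof to be an \emph{extremal versus non-extremal dichotomy}: hypergraphs far from the Han--Zhao construction should be treatable by the absorption machinery sketched above, while hypergraphs close to it demand a tailor-made structural argument exploiting the approximate partition structure to construct the Hamilton cycle more directly.
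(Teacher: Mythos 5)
Your high-level framework (Connecting Lemma, reservoir, absorbing path, almost-covering, then gluing) matches the paper, but the key technical ideas you propose for making it work diverge substantially from what the authors actually do, and the paper's route sidesteps the main difficulty you flag.

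The paper does \emph{not} use an extremal versus non-extremal dichotomy. Instead, the decisive observation (Proposition~\ref{prop:robust}, inherited from the $k=3,4$ papers~\cites{R,Y}) is that every $2$-uniform graph of edge density above $5/9$ contains a \emph{robust} induced subgraph~$R$ on more than $2/3$ of the vertices that is nearly edge-disjoint from its complement: between any two vertices of~$R$ there are $\Omega(n^{\ell-1})$ paths of some fixed odd length $\ell$. Applying this to every $(k-2)$-link graph of~$H$ and recording the choices yields what the authors call a \emph{constellation} (Definition~\ref{d:1723}). Connectability of a $(k-1)$-tuple is then defined recursively in~$k$ (Definition~\ref{d:210}) in terms of membership in these robust subgraphs, and the Connecting Lemma (Proposition~\ref{prop:clk}) is proved by induction on~$k$ within this framework. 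In other words, the robust-subgraph structure is strong enough to run the same argument uniformly, whether or not~$H$ is close to the Han--Zhao extremal construction; there is no case split. Your instinct that "hypergraphs close to the extremal example demand a tailor-made structural argument" is exactly the difficulty the constellation machinery is designed to avoid.

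The covering step is also different. You suggest weak hypergraph regularity plus fractional matchings, or iterated absorption. The paper instead proves the almost-covering statement by induction on uniformity: it formulates two covering principles $\heartsuit_k$ and $\spadesuit_k$ and shows $\heartsuit_{k-1}\Rightarrow\spadesuit_{k-1}\Rightarrow\heartsuit_k$ (Lemmata~\ref{lem:herzzupik} and~\ref{lem:indstep}), using random subconstellations (Lemma~\ref{l:cr}) and Janson's inequality rather than a regularity lemma. Finally, note one detail you did not anticipate: to ensure the leftover set $Z$ has size divisible by~$k$, the paper proves a Connecting Lemma with prescribed path length modulo~$k$ (the function $f(k,i,\ell)$ in Definition~\ref{d:f}), which is used in the final closing step (Stage~E of Section~\ref{sec:main-pf}). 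Your outline, taken at face value, would not control $|Z|\bmod k$, and this is needed for the absorption to close the cycle.
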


In previous articles written in collaboration with Ruci\'nski, Schacht, and Szemer\'edi~\cites{R, Y}
we solved the cases $k=3$ and $k=4$.
The general case was also obtained by Lang and Sanhueza-Matamala~\cite{Lang} 
in independent research. A construction due to Han and Zhao~\cite{HZ16} reproduced in the 
introduction of~\cite{Y} shows that the number $\frac59$ appearing in Theorem~\ref{t:main} is optimal.

We would like to conclude this subsection by pointing to some problems for future investigations.
First and foremost, it remains an intriguing question whether for $k\ge 4$ the minimum $(k-3)$-degree
condition $\delta_{k-3}(H)\ge \left(\frac{5}{8}+o(1) \right)\frac{n^3}{6}$ enforces the existence
of a Hamiltonian cycle. Here the number $\frac 58$ would again match the construction of 
Han and Zhao~\cite{HZ16}.   

Another possible area of research would be to extend the work of P\'osa~\cite{Posa} 
and Chv\'atal~\cite{Chv}, who in the graph case studied which conditions on the degree
sequence (rather than just on the minimum degree) guarantee the existence of Hamiltonian 
cycles. Such degree sequence versions have recently been obtained for the Hajnal-Szemer\'edi 
theorem~\cite{HS}  
by Treglown~\cite{Tr} and for P\'osa's conjecture (see~\cite{Fiedler}*{Problem 9})	
by Staden and Treglown~\cite{ST}. 
It would be very interesting to find similar theorems for Hamiltonian cycles in hypergraphs. 
For first results in this direction we refer to~\cite{Schuelke}. 

\subsection{Organisation and Overview}
We use the {\it absorption method} developed by R\"odl, Ruci\'nski, and Szemer\'edi
and surveyed by Szemer\'edi himself in~\cite{Sz}. Therefore, the proof decomposes in the usual 
way into a Connecting Lemma, an Absorbing Path Lemma, and a Covering Lemma. 

Very roughly speaking, the Absorbing Path Lemma reduces the task of proving Theorem~\ref{t:main}
to the much easier problem of finding `almost spanning' cycles in $k$-uniform hypergraphs $H$ 
satisfying~$\delta_{k-2}(H)\geq\left(\frac{5}{9}+\alpha \right)\frac{|V(H)|^2}{2}$. 
Such an almost spanning cycle is build in two main steps: First, the covering lemma asserts
that we can cover almost all vertices by means of long paths. Second, the Connecting Lemma
allows us to connect these `pieces' into one long cycle.

In our earlier articles we stored all information about $H$ that became relevant in the course 
of the proof in various `setups' and the complexity of these setups got somewhat out of control. 
To avoid this in the present work, we abandon the setups and replace them by the much more
flexible notion of a {\it constellation} (see Definition~\ref{d:1723} below).

Section~\ref{sec:prelim} lays out a systematic treatment of these constellations and contains 
several auxiliary results that will assist us later.   
The subsequent \hbox{Sections~\ref{sec:conn}\,--\,\ref{sec:cov}} deal with the  
main lemmata enumerated above: connecting, absorbing, and covering.
Lastly, in Section~\ref{sec:main-pf} we derive Theorem~\ref{t:main} from these results. 

\section{Preliminaries}\label{sec:prelim}

\subsection{Graphs}\label{subsec:graphs}
In our earlier articles~\cites{R,Y} dealing with the $3$- and $4$-uniform case of 
Theorem~\ref{t:main} we inductively reduced connectability properties of the hypergraphs under
discussion to connectability properties of their $2$-uniform link graphs.
Here we pursue the same strategy and the present subsection contains the graph theoretic
preliminaries that we require for this purpose. 
The central notion we work with in this context is taken from~\cite{R}*{Definition~2.2}
and reappeared as~\cite{Y}*{Definition~2.1}. 

\begin{dfn} \label{d:robust}
	Given $\beta>0$ and $\ell\in\NN$ a graph~$R$ is said to 
	be {\it $(\beta, \l)$-robust} 
	if for any two distinct vertices~$x$ and~$y$ of~$R$ the number of $x$-$y$-paths of 
	length $\ell$ is at least $\beta |V(R)|^{\ell-1}$.
\end{dfn}

It turns out that every graph whose edge density is larger than $5/9$ possesses 
a robust subgraph containing more than two thirds of its vertices that is 
quite disconnected from the rest of the graph. The following statement to this 
effect was proved in \cite{Y}*{Proposition~2.2} (marginally 
strengthening~\cite{R}*{Proposition~2.3}).

\begin{prop} \label{prop:robust}
	Given $\alpha$, $\mu >0$, there exist $\beta>0$ and an odd integer $\ell\ge 3$
	such that for sufficiently large~$n$, every $n$-vertex graph $G=(V, E)$ 
	with $|E|\ge \big(\frac{5}{9}+\alpha\big)\frac{n^2}{2}$
	contains a $(\beta,\l)$-robust induced subgraph $R\subseteq G$ satisfying
	\begin{enumerate}[label=\rmlabel]
		\item\label{it:rc1} $|V(R)|\ge \bigl(\frac23+\frac\alpha 2\bigr)n$,
		\item\label{it:rc2} and $e_G\big(V(R), V\setminus V(R)\big)\leq \mu n^2$. 			
	\end{enumerate}
\end{prop}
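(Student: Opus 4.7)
My plan is to construct $R$ in two phases: a ``Han--Zhao-type'' sparse-cut phase followed by a low-degree clean-up. I would fix parameters in the hierarchy $\beta \ll 1/\ell \ll \nu \ll \alpha,\mu$, with $\ell$ odd and $\nu$ chosen at the end so that $\nu \ll \min(\alpha^2,\mu^2)$.

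\textbf{Phase~I: extracting the big cut.} The essential tool I rely on is a \emph{key lemma}: any induced subgraph $H \subseteq G$ that fails to be $(\beta,\ell)$-robust admits a partition $V(H) = A \sqcup B$ with $|A| \leq |B|$ and $e_H(A,B) \leq \nu |V(H)|^2$, where (ideally) both $|A|,|B|$ can be taken bounded below by some $\delta|V(H)|$. A typical proof applies walk-counting or spectral expansion to a pair $x,y$ witnessing non-robustness; the odd parity of $\ell$ enters to rule out bipartite obstructions. Starting from $G$ and iterating this key lemma when necessary, I extract a partition $V(G) = A \sqcup B$ and combine it with the density hypothesis via $2|E(G)| \leq |A|^2 + |B|^2 + 2 e_G(A,B)$:
\[
	\bigl(\tfrac{5}{9} + \alpha\bigr) n^2 \leq |A|^2 + |B|^2 + 2\nu n^2.
\]
Solving the resulting quadratic and using $|A| \leq |B|$ forces $|A| \leq \bigl(\tfrac{1}{3} - \tfrac{3\alpha}{2} + O(\nu)\bigr) n$, whence $|B| \geq \bigl(\tfrac{2}{3} + \tfrac{3\alpha}{2} - O(\nu)\bigr) n$. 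Plugging the extremal value back into the bound on $|E(G[B])|$ yields the matching estimate $\binom{|B|}{2} - |E(G[B])| \leq O(\nu) n^2$, so $R' := G[B]$ is almost complete.

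\textbf{Phase~II: clean-up and robustness.} Near-completeness of $R'$ implies that at most $O(\sqrt{\nu}) n$ vertices of $R'$ have degree (inside $R'$) below $(1 - \sqrt{\nu})|V(R')|$; discarding these yields the final $R$, whose minimum degree is at least $(1 - O(\sqrt{\nu})) |V(R)|$. A standard greedy path-construction---extending from $x$ by choosing at each step a fresh neighbour of the current endpoint, and arranging the last intermediate vertex to be a neighbour of $y$---then produces at least $(1 - O(\ell\sqrt{\nu}))^{\ell-1} |V(R)|^{\ell-1} \geq \beta |V(R)|^{\ell-1}$ length-$\ell$ paths between any two $x,y \in V(R)$, establishing $(\beta,\ell)$-robustness. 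Condition \ref{it:rc1} follows since the total vertex loss is at most the Phase~I peel plus $O(\sqrt{\nu}) n$, bounded by $\bigl(\tfrac{1}{3} - \tfrac{\alpha}{2}\bigr) n$ once $\nu \ll \alpha^2$; condition \ref{it:rc2} follows because $e_G(V(R), V \setminus V(R))$ is bounded by the Phase~I cut $\nu n^2$ plus the $O(\sqrt{\nu}) n^2$ edges incident to cleaned vertices, which is $\leq \mu n^2$ once $\sqrt{\nu} \ll \mu$.

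\textbf{Main obstacle.} Everything outside the key lemma is a density calculation paired with a near-complete greedy argument; the real work is concentrated in the key lemma and---relatedly---in ensuring that ``when necessary'' iteration of Phase~I only happens a bounded number of times so the accounting closes up. I expect the hardest step to be the quantitative translation of ``few length-$\ell$ paths between some pair $x,y$'' into a balanced sparse cut, where the parity of $\ell$ plays a genuine role. I would either reprove this via a Cheeger-type estimate on the $\ell$-step walk operator of $H$, or invoke the analogous statement from~\cite{R}*{Proposition~2.3}.
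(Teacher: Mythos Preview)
Your approach has a genuine gap at the point where you claim that ``plugging the extremal value back'' shows $R'=G[B]$ is almost complete. The inequality
\[
\bigl(\tfrac{5}{9}+\alpha\bigr)n^2 \le |A|^2+|B|^2+2\nu n^2
\]
only yields the \emph{upper} bound $|A|\le (\tfrac13-\tfrac{3\alpha}2+O(\nu))n$; it does not force $|A|$ to be near this extreme. If, say, $|A|=\tfrac n4$ (which your balance condition $|A|\ge\delta n$ certainly allows), then $|B|=\tfrac{3n}4$ and the density bookkeeping only gives
\[
e(G[B])\ge \Bigl(\tfrac59+\alpha\Bigr)\tfrac{n^2}{2}-\tfrac{n^2}{32}-\nu n^2\,,
\]
so the edge density of $G[B]$ can be as low as roughly $\tfrac{71}{81}\approx 0.88$, which is nowhere near $1-O(\nu)$. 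In that regime $G[B]$ need not be robust --- it could, for instance, itself be a disjoint union of two cliques --- so your Phase~II greedy path count, which needs minimum degree $(1-O(\sqrt{\nu}))|V(R)|$, does not apply.

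The scheme can be salvaged by genuinely iterating your key lemma, recursing on $G[B]$ whenever it is still non-robust. But then the ``almost complete'' conclusion never materialises and Phase~II becomes irrelevant: the iteration halts precisely when the current piece is already $(\beta,\ell)$-robust. What you must prove instead is that the cumulative loss $\sum_i|A_i|$ stays below $(\tfrac13-\tfrac\alpha2)n$ throughout. Writing $a_i=|A_i|/n$ and $b=|B_k|/n$, this amounts to showing that
\[
\sum_i a_i^2 + b^2 \ge \tfrac59+\alpha - O(k\nu)
\qquad\text{together with}\qquad
a_i\le \tfrac12\Bigl(1-\sum_{j<i}a_j\Bigr)
\]
forces $b\ge\tfrac23+\tfrac\alpha2$. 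This is a real (if elementary) optimisation argument --- the point being that splitting the removed mass into several pieces can only decrease $\sum_i a_i^2$ --- which your write-up does not carry out; and one also needs the lower bound $|A_i|\ge\delta|B_{i-1}|$ to bound the number $k$ of iterations so that the accumulated cut stays below $\mu n^2$. Incidentally, the present paper does not prove this proposition at all but quotes it from~\cite{Y}*{Proposition~2.2}.
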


\begin{remark}\label{r:224}
	We shall usually apply Proposition~\ref{prop:robust} with $\mu\le \frac\alpha4$. In this 
	case, clause~\ref{it:rc2} yields 
	\[
		e(R)
		\ge 
		\Big(\frac{5}{9}+\frac{\alpha}{2}\Big)\frac{n^2}{2}-\frac{(n-|V(R)|)^2}{2}
		\overset{\ref{it:rc1}}{\ge}
		\Big(\frac49+\frac23\alpha\Big)\frac{n^2}{2}\,.
	\]
	Originally, this estimate was included as a third clause into~\cite{Y}*{Proposition~2.2},
	but it seems preferable to omit this part.  
\end{remark}
	
In Section~\ref{sec:Abpa} below we need to render our absorbers connectable. 
To this end we shall utilise a consequence of the following graph theoretic lemma. 

\begin{lemma}\label{l:21}
	Let $\alpha>0$ and let $G$ be a graph with $n$ vertices and at 
	least $\big(\frac 59+\alpha\big)\frac{n^2}{2}$ edges. If
	\[
		A = \bigl\{x\in V(G)\colon d(x) < n/3\bigr\}
	\]
	and 
	\[
		B = \bigl\{x\in V(G)\colon |N(x)\setminus A|\le \al n/3\bigr\}\,,
	\]
	then 
	\[
		e(A\cup B) \le \frac{n^2}{18}\,.
	\]
\end{lemma}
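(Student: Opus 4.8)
The only structural information available is that every $x\in A$ has $d(x)<n/3$ and that every $x\in B\setminus A$ has $|N(x)\setminus A|\le\alpha n/3$; since neither of these controls edges lying \emph{inside} $A\cup B$, the plan is to route everything through the complementary set $C:=V(G)\setminus(A\cup B)$ and to feed in the edge‑count hypothesis. Write $n_A=|A|$, $n_B=|B\setminus A|$, $n_C=|C|$, so that $n_A+n_B+n_C=n$, and recall $e(A\cup B)=e(A)+e(A,B\setminus A)+e(B\setminus A)$. If $n_A+n_B\le n/3$ there is nothing to prove, since then $e(A\cup B)\le\binom{n_A+n_B}{2}<n^2/18$; so assume $n_A+n_B>n/3$ from now on.

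\emph{Step 1: an upper bound for $n_A+n_B$.} Each $x\in A$ has $d(x)<n/3$, each $x\in B\setminus A$ has $d(x)=|N(x)\cap A|+|N(x)\setminus A|\le n_A+\alpha n/3$, and each $x\in C$ has $d(x)<n$. Summing degrees, comparing with $2e(G)\ge(\tfrac59+\alpha)n^2$, substituting $n_C=n-n_A-n_B$, and using $b\le1$ to discard the ($\alpha$‑dependent) terms, one arrives at
\[
	\tfrac23 a+b-ab<\tfrac49,\qquad\text{with }a:=n_A/n,\ b:=n_B/n.
\]
For a fixed value of $s:=a+b$ the left–hand side is a convex quadratic in $a$ whose minimum over $[0,s]$ equals $s-\tfrac14(s+\tfrac13)^2$; since this quantity exceeds $4/9$ as soon as $s\ge\tfrac79$, the displayed inequality is only possible when $n_A+n_B<\tfrac79 n$.

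\emph{Step 2: an upper bound for $e(A\cup B)$.} Starting from $e(G)=e(A\cup B)+e(A,C)+e(B\setminus A,C)+e(C)$, the degree condition on $A$ gives $e(A,C)\le n_An/3-2e(A)-e(A,B\setminus A)$, the condition on $B$ gives $e(B\setminus A,C)\le n_B\alpha n/3-2e(B\setminus A)$, and trivially $e(C)\le n_C^2/2$. Substituting these, the $e(A,B\setminus A)$ terms cancel, leaving $e(G)\le n_An/3+n_B\alpha n/3+n_C^2/2-e(A)-e(B\setminus A)$. Using $e(G)\ge(\tfrac59+\alpha)\tfrac{n^2}{2}$ to isolate $e(A)+e(B\setminus A)$, then adding $e(A,B\setminus A)\le n_An_B$ and dividing by $n^2$, one obtains
\[
	\frac{e(A\cup B)}{n^2}\le\frac a3+ab+\frac{(1-a-b)^2}{2}-\frac{5}{18}+\alpha\Bigl(\frac b3-\frac12\Bigr)<\frac a3+ab+\frac{(1-a-b)^2}{2}-\frac{5}{18},
\]
the last step using $b\le1$ to see that the $\alpha$‑term is negative.

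It then suffices to verify the purely analytic inequality $\tfrac a3+ab+\tfrac12(1-a-b)^2\le\tfrac13$ for all $a,b\ge0$ with $\tfrac13\le a+b\le\tfrac79$. For fixed $s=a+b$ the left–hand side is concave in $a$, is maximised at $a=(s+\tfrac13)/2$, and there equals $\tfrac34 s^2-\tfrac56 s+\tfrac{19}{36}$; this quadratic in $s$ has roots exactly $\tfrac13$ and $\tfrac79$, hence is $\le\tfrac13$ throughout $[\tfrac13,\tfrac79]$. Combining this with Steps 1 and 2 gives $e(A\cup B)<n^2/18$. I expect Step 1 to be the genuine obstacle: it is what confines $a+b$ to the interval on which the closing quadratic behaves, and it forces one to exploit the density hypothesis \emph{a second time} (beyond its use in Step 2), whereas the individual estimates are all elementary. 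Every inequality above is essentially tight for $G=K_{2n/3}\dcup K_{n/3}$, where $A$, $B$ and $A\cup B$ all coincide with the smaller clique and $C$ is the larger one.
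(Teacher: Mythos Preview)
Your proof is correct. Both your argument and the paper's reduce the problem to verifying a two--variable inequality, but the route to the constraints differs.

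The paper first splits on the size of $A$ alone (if $|A|<(\tfrac13-\tfrac{\alpha}{3})n$ then $B\subseteq A$ and one is done), and in the main case derives a \emph{lower} bound $|X|\ge\tfrac23n-\tfrac12|A|$ on the complement $X=V\setminus(A\cup B)$ by first estimating $e(G-A)$ and then the edges from $B\setminus A$ into $V\setminus A$. The resulting constraint is then fed into the same expression $\tfrac13|A|n+|B\setminus A||A|+\tfrac12|X|^2$ that appears (after normalising) as your $a/3+ab+(1-a-b)^2/2$, and the paper handles the optimisation by distinguishing which of two bounds on $|X|-|A|$ is active.

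Your approach is cleaner in two respects. First, your case split on $s=a+b$ rather than on $a$ means the trivial case is literally $e(A\cup B)\le\binom{|A\cup B|}{2}$, with no need to argue that $B\subseteq A$. Second, you obtain the constraint $s<\tfrac79$ from a single degree sum over all of $V(G)$, rather than via the two--step estimate through $e(G-A)$; and this constraint matches up exactly with the roots $\{1/3,7/9\}$ of the final quadratic $\tfrac34 s^2-\tfrac56 s+\tfrac{19}{36}-\tfrac13$, so the optimisation closes without any $\alpha$--dependent slack. The paper, by contrast, proves the analogous inequality only up to an additive $\tfrac{\alpha}{6}$ (its equation (2.7)), which is harmless for the application but less tidy. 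In effect you have found the natural coordinates for the problem.
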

	
\begin{proof}
	In the special case that $|A|< (\frac 13 - \frac{\al}{3})n$, every vertex $x\in B$ satisfies 
	\[
		d(x)\le |N(x)\setminus A|+|A| < \frac n3\,, 
	\]
	which yields $B\subseteq  A$ and the desired inequality
	\[
		e(A\cup B) = e(A) \le \frac 12|A|^2\le \frac{1}{18}n^2\,.
	\]

	So henceforth we may suppose that 
	\begin{equation}\label{eq:20}
		|A|\ge \Big(\frac 13 - \frac{\al}{3}\Big)n\,.
	\end{equation}
	Now the definition of $A$ implies
	\[
		\frac 59 n^2 
		\le 
		2e(G)
		=
		\sum_{x\in V(G)}d(x)\le \frac 13|A|n + (n-|A|)n 
		= 
		n^2 - \frac 23|A|n\,,
	\]
	i.e.,
	\begin{equation}\label{eq:23}
		|A| \le \frac 23 n\,,
	\end{equation}
	and
	\begin{equation}\label{eq:24}
		e(G-A)\ge \Big(\frac 59 + \al \Big)\frac{n^2}{2}-\frac 13|A|n\,.
	\end{equation}	

	Setting $X=V(G)\setminus (A\cup B)$ we conclude from the definition of $B$ that 
	\begin{equation}\label{eq:25}
		2e(B\setminus A) + e(B\setminus A, X) 
		= 
		\sum_{x\in B\setminus A}|N(x)\setminus A|
		\le 
		|B\setminus A|\cdot \frac{\al}{3}n\le \frac{\al}{3}n^2\,,
	\end{equation}
	which together with \eqref{eq:24} yields
	\begin{align*}
		|X|^2 
		&\ge 
		2e(X) 
		= 
		2e(G-A) - 2e(B\setminus A)- 2e(B\setminus A,X) \\
		&\ge 
		\Big(\frac 59 + \al\Big)n^2 - \frac 23|A|n-\frac 23\al n^2 
		\ge 
		\frac 59 n^2 - \frac 23 |A|n\,.
	\end{align*}
	In view of \eqref{eq:23} this entails
	\[
		|X|^2 
		\ge 
		\frac 49 n^2 - \frac 23|A|n + \frac 14 |A|^2 
		= 
		\Big(\frac 23 n-\frac 12|A|\Big)^2\,,
	\]
	wherefore
	\begin{equation}\label{eq:26}
		|X| \ge \frac 23n - \frac 12|A|\,.
	\end{equation}

	Next, we claim that
	\begin{equation}\label{eq:27}
		\frac 13|A|n+|B\setminus A||A| + \frac 12|X|^2 \le \Big(\frac 13 + \frac{\al}{6}\Big)n^2\,.
	\end{equation}
	In view of $|A|+|B\setminus A|+|X|=n$ the left side of this estimate rewrites as
	\[
		\frac 13|A|n+(n-|A|-|X|)|A|+\frac 12|X|^2
		=
		\frac 43 |A|n - \frac 32 |A|^2 + \frac 12(|A|-|X|)^2\,.
	\]
	By \eqref{eq:26} and $X\subseteq V(G)\setminus A$ we have 
	\[
		\frac 23 n - \frac 32 |A|
		\le 
		|X| - |A| 
		\le 
		n-2|A|
	\]
	and, hence, 
	\[
		(|A|-|X|)^2
		\le 
		\max\Big\{(n-2|A|)^2, \Big(\frac 23n -\frac 32|A|\Big)^2\Big\}\,.
	\]
	So to conclude the proof of~\eqref{eq:27} it suffices to observe that				
	\[
		\frac 43 |A|n - \frac 32|A|^2 + \frac 12(n-2|A|)^2 = \frac{n^2}3+{\frac 16 (n-|A|)(n-3|A|)}
		\overset{\eqref{eq:20}}{\le} 
		\Big(\frac 13 + \frac{\al}{6}\Big)n^2
	\]
	and, similarly, 
	\[
		\frac 43|A|n - \frac 32 |A|^2 + \frac 12 \Big(\frac 23 n - \frac 32|A|\Big)^2
		=
		\frac{n^2}{3} - \Big(\frac 13n-\frac 12|A|\Big)^2 - \frac 18|A|^2 
		\le 
		\frac{n^2}{3}\,.
	\]

	Having thus established~\eqref{eq:27} we appeal to the definition of $A$ again and observe
	\[
		e(A)+e(G)=\sum_{x\in A}d(x)+e(G-A)
		\le 
		\frac 13 |A|n+e(G-A)\,.
	\]
	Consequently, 
	\[
		e(A\cup B) + e(G)
		\le 
		\frac 13|A|n+e(B\setminus A, A) + e(B\setminus A)+e(G-A)
	\]
	and \eqref{eq:25} leads to 	
	\[
		e(A\cup B)+e(G)
		\le 
		\frac 13 |A|n+|B\setminus A||A|+e(X)+\frac{\al}{3}n^2\,.
	\]
	Owing to \eqref{eq:27} we deduce 
	\[
		e(A\cup B)+ e(G) 
		\le 
		\Big(\frac 13 +\frac \al 6\Big)n^2+\frac \al 3 n^2 
		= 
		\Big(\frac 23 + \al \Big)\frac{n^2}2
		\le 
		\frac{1}{18}n^2+e(G)\,,
	\]
	which implies the desired estimate $e(A\cup B)\le \frac 1{18}n^2$.
\end{proof}
	
\begin{remark}\label{r:22}
	The set $A$ already had an appearance in~\cite{Y} and Lemma~2.3 there is roughly equivalent
	to the weaker estimate $e(A)\le \frac{n^2}{18}$. 
	Concerning the set $B$ one can prove $|B|\le \frac n3$, but this 
	fact is not going to be exploited in the sequel. 
\end{remark}

The following consequence of Lemma~\ref{l:21} will later be generalised to $k$-uniform hypergraphs 
(see Lemma~\ref{l:24}) and constitutes the base case of an induction on $k$. 

\begin{cor}\label{c:23}
	Let $\al > 0$, and let $V$ be a set of $n$ vertices.
	If $G$, $G'$ are two graphs with $V(G), \, V(G') \subseteq V$ and  
	\[
		e(G),\, e(G')\ge \Big(\frac 59 + \al\Big)\frac{n^2}{2}\,,
	\]
	then there are at least $\frac{\al^2}{3}n^3$ triples $(x,y,z)\in V^3$ such that 
	\begin{itemize}
		\item $xyz$ is a walk in $G$,
		\item $xy\in E(G')$,
		\item and $d_G(y),\,d_G(z)\ge \frac n3$.
	\end{itemize}
\end{cor}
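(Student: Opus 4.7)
The plan is to apply Lemma~\ref{l:21} to $G$, obtaining sets $A=A_G$ and $B=B_G$ with $e_G(A\cup B)\le n^2/18$. Writing $V_0=V\setminus(A\cup B)$, every vertex $y\in V_0$ satisfies $d_G(y)\ge n/3$ (since $y\notin A$) and $|N_G(y)\setminus A|> \alpha n/3$ (since $y\notin B$). So each such $y$ is a natural candidate to play the role of the middle vertex of a walk of the desired type, and our task reduces to finding many ``first edges'' $xy\in E(G)\cap E(G')$ that end at some $y\in V_0$.

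To count such first edges, I would use $E(G)\cup E(G')\subseteq V^{(2)}$ together with inclusion--exclusion to see that
\[
	|E(G)\cap E(G')|\ge e(G)+e(G')-\tbinom{n}{2}\ge \Bigl(\tfrac{1}{18}+\alpha\Bigr)n^2\,.
\]
At most $e_G(A\cup B)\le n^2/18$ of these common edges lie inside $A\cup B$, so at least $\alpha n^2$ of them meet $V_0$. Orienting each such edge so that its $V_0$-endpoint is designated as~$y$ produces at least $\alpha n^2$ ordered pairs $(x,y)$ with $xy\in E(G)\cap E(G')$ and $y\in V_0$. For any such pair, every $z\in N_G(y)\setminus A$ yields a triple $(x,y,z)$ meeting all four requirements of the corollary, and there are strictly more than $\alpha n/3$ such choices of~$z$; multiplying gives at least $\alpha n^2\cdot \alpha n/3=\alpha^2 n^3/3$ good triples.

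The argument is essentially pure bookkeeping once Lemma~\ref{l:21} is in hand, so no step should pose a serious obstacle. The only subtlety lies in coordinating the two $\tfrac{1}{18}$ terms -- the one coming from Lemma~\ref{l:21} and the one arising from the bound $\binom{n}{2}\le n^2/2$ used in the inclusion--exclusion step -- so that their difference leaves precisely the surplus of $\alpha n^2$ usable edges needed to seed the walks.
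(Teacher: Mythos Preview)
Your proof is correct and follows essentially the same approach as the paper's: apply Lemma~\ref{l:21} to~$G$, use inclusion--exclusion to get at least $(\tfrac{1}{18}+\alpha)n^2$ common edges, subtract the at most $\tfrac{n^2}{18}$ that lie inside $A\cup B$, orient the remaining $\alpha n^2$ edges so that~$y\notin A\cup B$, and then use $y\notin B$ to find at least $\tfrac{\alpha n}{3}$ choices of~$z\notin A$. The only cosmetic difference is that you write $\binom{n}{2}$ where the paper uses $\tfrac{n^2}{2}$.
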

\begin{proof}
	By adding some isolated vertices to $G$ and $G'$ if necessary, we may assume that $V(G)=V(G')=V$.
	The sieve formula yields	
	\[
		|E(G)\cap E(G')|
		\ge 
		2\Big(\frac 59 + \al\Big) \frac{n^2}{2} - \frac{n^2}{2}
		=
		\Big(\frac 1{18}+\al\Big)n^2\,.
	\]
	Define the sets $A$ and $B$ with respect to $G$ as in Lemma \ref{l:21}. 
	In view of that lemma itself, there are at least $\al n^2$ unordered 
	pairs $xy\in E(G)\cap E(G')$ for which $x, y\in A\cup B$ fails. 
	Consequently, there are at least $\al n^2$ ordered pairs $(x,y)\in V^2$ such 
	that $xy\in E(G)\cap E(G')$ and $y\notin A\cup B$. For each of them there are, 
	by the definition of $B$, at least $\frac{\al}{3}n$ vertices $z$ with $yz\in E(G)$ 
	and $z\notin A$. Altogether, this yields at least $\frac{\al^2}{3}n^3$ 
	triples $(x,y,z)$ with the desired properties.
\end{proof}

\subsection{Hypergraphs}

In this subsection we introduce our terminology and some preliminary results on hypergraphs. 
When there is no danger of confusion we shall omit several parentheses, braces, 
and commas. For instance, we write $x_1\dots x_k$ for the edge $\{x_1,\dots, x_k\}$ of 
a $k$-uniform hypergraph.

\subsubsection*{Walks, paths, and cycles}
A $k$-uniform \emph{walk~$W$} of length $\l\geq 0$ is a hypergraph 
whose vertices can, possibly with repetitions, be enumerated 
as $(x_1,\dots,x_{\l+k-1})$ in such a way that $e\in E(W)$ if and only if $e=x_i\dots x_{i+k-1}$ for 
some $i\in[\l]$. The ordered $(k-1)$-tuples $(x_1,\dots, x_{k-1})$ and $(x_{\l+1},\dots,x_{\l+k-1})$
are called the \emph{end-tuples} of~$W$ and we say that $W$ is 
a $(x_1\dots x_{k-1})$-$(x_{\l+1}\dots x_{\l+k-1})$-walk.  
This notion of end-tuples is not symmetric and implicitly fixes a direction 
of~$W$. Sometimes we refer to $(x_1,\dots, x_{k-1})$ and $(x_{\l+1},\dots, x_{\l+k-1})$ as the 
\emph{starting $(k-1)$-tuple} and \emph{ending $(k-1)$-tuple} of $W$, respectively. 
We call $x_k,\dots,x_\l$ the {\it inner vertices} of $W$. Counting them 
with their multiplicities we say for $\ell\ge k-1$ that a walk of length~$\ell$
has~$\l-k+1$ inner vertices. We often identify a walk with the sequence of its 
vertices $x_1x_2\dots x_{\l+k-1}$. If the vertices $x_1,\dots,x_{\l+k-1}$ are distinct 
we call the walk $W$ a {\it path}.
For $\ell> k$ a {\it cycle of length $\ell$} is a hypergraph $C$ whose vertices and 
edges can be represented as $V(C)=\{x_i\colon i\in \ZZ/\ell\ZZ\}$ and 
$E(C)=\{x_i\ldots x_{i+k-1}\colon i\in \ZZ/\ell\ZZ\}$.

\subsubsection*{Link hypergraphs}
Given a $k$-uniform hypergraph $H=(V,E)$ and a set $S\subseteq V$ with $|S|\le k-2$ 
we define the $(k-|S|)$-uniform \emph{link hypergraph} $H_S$ by $V(H_S) = V(H)$ and  
\[
	E(H_S)=\{e\setminus S\colon S\subseteq e \in E\}\,.
\]
Clearly the vertices in $S$ are isolated in $H_S$ and sometimes it is convenient to remove them. 
In such cases, we write $\xoverline H_S=H_S- S$. 
For instance, we have 
$H_\varnothing=\xoverline H_\varnothing=H$ for every hypergraph~$H$. If $S=\{v\}$ consists 
of a single vertex, we abbreviate $H_{\{v\}}$ to $H_v$.
		
\subsubsection*{A lemma with two hypergraphs}	
Our next step is to generalise Corollary~\ref{c:23} to hypergraphs. 

\enlargethispage{1em}	
\begin{lemma}\label{l:24}
	Suppose that $k\ge 2$, $\al > 0$, and that $V$ is a set of $n$ vertices. 
	If $H$, $H'$ are two $k$-uniform hypergraphs satisfying	
	\[
		V(H), V(H') \subseteq V
	\]
	and 
	\[
		\delta_{k-2}(H), \,\delta_{k-2}(H')\ge \Big(\frac 59 + \al\Big)\frac{n^2}{2}\,,
	\]
	then the number of $(2k-1)$-tuples $(x_1, \dots, x_{2k-1})\in V^{2k-1}$ such that 
		\begin{itemize}
			\item $x_1 \dots x_{2k-1}$ is a walk in $H$,
			\item $\{x_1, \dots, x_k\}\in E(H')$,
			\item and $d_H(x_2, \dots, x_k),\, d_H(x_{k+1}, \dots, x_{2k-1})\ge \frac n3$
		\end{itemize}
	is at least $\big(\frac{\al}{2}\big)^{2^{k-1}}n^{2k-1}$.
\end{lemma}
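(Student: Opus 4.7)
My plan is to proceed by induction on $k$, with Corollary~\ref{c:23} serving as the base case $k=2$. For the inductive step I fix a vertex $v \in V$ and pass to the $(k-1)$-uniform link hypergraphs $H_v$ and $H'_v$. Since $d_{H_v}(T) = d_H(T \cup \{v\})$ for every $(k-3)$-set $T \subseteq V \setminus \{v\}$, both $H_v$ and $H'_v$ inherit $\delta_{k-3} \geq (5/9+\alpha)n^2/2$, so applying the inductive hypothesis to the pair $(H_v, H'_v)$ yields, for each $v$, at least $(\alpha/2)^{2^{k-2}} n^{2k-3}$ good $(2k-3)$-tuples $(z_1, \dots, z_{2k-3})$.

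To each such IH tuple I would associate the candidate
\[
	(x_1, \dots, x_{2k-2}) = (z_1, \dots, z_{k-2}, v, z_{k-1}, \dots, z_{2k-3})
\]
obtained by inserting $v$ at position $k-1$. Unwinding the link definitions shows that $(x_1, \dots, x_{2k-2})$ is a walk of length $k-1$ in $H$, that $\{x_1, \dots, x_k\} \in E(H')$, and that the first required degree bound $d_H(x_2, \dots, x_k) \geq n/3$ holds; moreover the second IH degree condition $d_{H_v}(z_k, \dots, z_{2k-3}) \geq n/3$ translates to $d_H(x_{k-1}, x_{k+1}, \dots, x_{2k-2}) \geq n/3$.

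To complete the walk it remains to append $x_{2k-1}$ with $\{x_k, \dots, x_{2k-1}\} \in E(H)$ and $d_H(x_{k+1}, \dots, x_{2k-1}) \geq n/3$. Writing $S = \{x_{k+1}, \dots, x_{2k-2}\}$, these become the conditions $\{x_k, x_{2k-1}\} \in E(H_S)$ and $x_{2k-1} \notin A_S$ in the $2$-uniform link graph $H_S$, with $A_S$ as in Lemma~\ref{l:21}. Since $|E(H_S)| \geq (5/9+\alpha)n^2/2$, applying Lemma~\ref{l:21} to $H_S$ provides the sets $A_S, B_S$ with $e_{H_S}(A_S \cup B_S) \leq n^2/18$, and whenever $x_k \notin A_S \cup B_S$ the definition of $B_S$ yields at least $\alpha n/3$ admissible values of $x_{2k-1}$.

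The main obstacle will be showing that $x_k \notin A_S \cup B_S$ holds for a proportion $\Omega(\alpha)$ of all IH tuples. For this I would exploit that the translated second IH degree condition forces $x_{k-1} \notin A_S$, that the $(k-1)$th walk-edge minus $v$ forces $\{x_{k-1}, x_k\} \in E(H_S)$, and then double count the bad pairs $(x_{k-1}, x_k)$ using the bound $e_{H_S}(A_S \cup B_S) \leq n^2/18$ summed over all suffixes $S$ occurring in IH tuples. Multiplying the IH count $(\alpha/2)^{2^{k-2}} n^{2k-2}$ (summed over $v$) by the $\Omega(\alpha)$ success rate and the $\alpha n/3$ extensions per good tuple would then yield at least $(\alpha/2)^{2^{k-1}} n^{2k-1}$ good tuples overall, the constants working out since $\alpha^2/3 \geq (\alpha/2)^{2^{k-2}}$ in the regime of interest.
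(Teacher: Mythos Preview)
Your inductive setup is sound, and inserting the link vertex $v$ at position $k-1$ does yield a walk $(x_1,\dots,x_{2k-2})$ in $H$ together with the first degree condition and with $x_{k-1}\notin A_S$ (where $S=\{x_{k+1},\dots,x_{2k-2}\}$). The gap is precisely in the step you flag as ``the main obstacle'': your proposed double counting via $e_{H_S}(A_S\cup B_S)\le n^2/18$ only controls pairs $(x_{k-1},x_k)$ with \emph{both} endpoints in $A_S\cup B_S$, whereas all the IH tells you about $x_{k-1}$ is that $x_{k-1}\notin A_S$. In particular $x_{k-1}$ may lie outside $A_S\cup B_S$ altogether, in which case $x_{k-1}x_k$ with $x_k\in A_S\cup B_S$ is a crossing edge, and Lemma~\ref{l:21} says nothing about those. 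Since $|A_S\cup B_S|$ can be a constant fraction of $n$, the number of bad partial tuples with $x_k\in A_S\cup B_S$ can be $\Theta(n^{2k-2})$, which swamps the IH lower bound $(\alpha/2)^{2^{k-2}}n^{2k-2}$; so you cannot deduce that an $\Omega(\alpha)$ fraction of the IH tuples is good. Nothing else in the IH relates $x_k$ to $B_S$, so there is no evident rescue along these lines.

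The paper avoids this difficulty by a different mechanism. Instead of placing the link vertex only at position $k-1$ and then trying to extend by $x_{2k-1}$, it records, for each $v\in V$, the IH tuples in $H_v$ as edges of a bipartite graph between $V$ and $V^{2k-3}$, and applies Cauchy--Schwarz to the degrees on the $V^{2k-3}$ side. This produces pairs $(x_{k-1},x_{2k-1})$ in the common neighbourhood of the same $(2k-3)$-tuple; with $v=x_{2k-1}$ the second IH degree condition $d_{H_v}(x_{k+1},\dots,x_{2k-2})\ge n/3$ becomes exactly $d_H(x_{k+1},\dots,x_{2k-1})\ge n/3$, and the final walk edge $\{x_k,\dots,x_{2k-1}\}$ comes for free as well. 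Thus no control over $B_S$ is ever needed, and the squaring in Cauchy--Schwarz is precisely what converts $(\alpha/2)^{2^{k-2}}$ into $(\alpha/2)^{2^{k-1}}$.
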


\begin{proof}
	For $k=2$ this follows from Corollary~\ref{c:23}. Proceeding by induction on $k$, 
	we assume $k\ge 3$ and that the assertion holds for $k-1$ in place of $k$. 
	Construct an auxiliary bipartite graph $\Gamma$ with vertex classes $V$ and $V^{2k-3}$ 
	by drawing an edge between $x\in V$ and 
	\[
		(x_1, \dots, x_{k-2}, x_k, \dots, x_{2k-2})\in V^{2k-3}
	\]
	if and only if
	\begin{enumerate}[label=\alabel]
		\item $x_1 \dots x_{k-2} x_k \dots x_{2k-2}$ is a walk in $\xoverline H_x$,
		\item $\{x_1, \dots, x_{k-2}, x_k\}\in E(\xoverline H'_x)$,
		\item $d_{\overline H_x}(x_2, \dots, x_{k-2}, x_k)\ge \frac n3$ 
			and $d_{\overline H_x}(x_{k+1}, \dots, x_{2k-2})\ge \frac n3$.
	\end{enumerate}
	The induction hypothesis, applied to the hypergraphs $\xoverline H_x$ and $\xoverline H'_x$, 
	reveals that every vertex $x\in V$ has at least 
	degree $\bigl(\frac \al 2\bigr)^{2^{k-2}}n^{2k-3}$ in $\Gamma$. Thus
	\[
		e(\Gamma)\ge \Big(\frac{\al}{2}\Big)^{2^{k-2}}n^{2k-2}
	\]
	and the Cauchy-Schwarz inequality implies 
	\[
		\sum_{\seq {x}\in V^{2k-3}}|N_\Gamma(\seq x)|^2
		\ge 
		\frac{e(\Gamma)^2}{n^{2k-3}}\ge \Big(\frac{\al}{2}\Big)^{2^{k-1}}n^{2k-1}\,,
	\]
	where $N_\Gamma(\seq x)$ denotes the neighbourhood of the vertex $\seq{x}$ in $\Gamma$.  
	Now if 
	\[
		\seq x = (x_1, \dots, x_{k-2}, x_k, \dots, x_{2k-2})\in V^{2k-3}
		\quad \text{ and } \quad 
		x_{k-1}, x_{2k-1}\in N_\Gamma(\seq x)
	\]
	are arbitrary, then $(x_1, \dots, x_{2k-1})$ has the desired properties. 
\end{proof}

\subsubsection*{Walks in dense hypergraphs}
For later use we now quote a lower bound on the number of walks of given length 
in a given dense hypergraph, that is somewhat related to Sidorenko's conjecture~\cites{Sid, Sim}.
It is well known that this conjecture holds for paths in graphs, i.e., that for $d\in [0,1]$
and $\ell\in\NN$ every graph $G=(V, E)$ satisfying $|E|\ge d|V|^2/2$ contains at least 
$d^\ell |V|^{\ell+1}$ walks of length $\ell$ (see~\cite{BR} for a proof based on linear 
algebra and~\cite{AR}*{Lemma 3.8} for a different approach using vertex deletions and the tensor 
power trick). The latter argument generalises in a straightforward manner to partite hypergraphs
(see Lemma~\ref{l:217} below). An alternative proof based on the entropy method was worked out 
by Fitch~\cite{Fitch}*{Lemma 7} and by Lee~\cite{Lee}*{Theorems 2.6 and 2.7}.

\begin{lemma}\label{l:217}
	Suppose $k\ge 2$, $d\in [0, 1]$, and that $H$ is a $k$-partite $k$-uniform hypergraph with vertex 
	partition $\{V_i\colon i\in \ZZ/k\ZZ\}$. If $H$ has $d\prod_{i\in \ZZ/k\ZZ}|V_i|$
	edges, then for every $r\ge k$ there are at least 
	\[
	d^{r-k+1}\prod_{i\in [r]}|V_i|
	\]
	walks $(x_1, \dots, x_r)$ in $H$ with $x_1\in V_1, \dots, x_k\in V_k$. \qed
\end{lemma}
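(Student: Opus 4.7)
The plan is to establish the lemma by induction on $r \ge k$, combining the vertex-deletion strategy of \cite{AR}*{Lemma~3.8} with the tensor-power trick, as the paper indicates. The base case $r = k$ is immediate: a walk of $k$ vertices is nothing but an ordered edge $(x_1, \dots, x_k) \in V_1 \times \cdots \times V_k$ with underlying set in $E(H)$, so $W_k = e(H) = d\prod|V_i|$ matches the claimed bound exactly.

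For the inductive step, the natural target is the one-step inequality
\[
  W_{r+1}(H) \;\ge\; d\cdot|V_{(r\bmod k)+1}|\cdot W_r(H),
\]
with subscripts read cyclically modulo $k$; iterating from the base case then produces the desired bound. Decomposing $W_{r+1} = \sum_{\vec y} P(\vec y)\,d(\vec y)$, where $\vec y$ ranges over $(k-1)$-tuples in the partite slots appropriate to the end of a walk on $r$ vertices, $P(\vec y)$ is the number of such walks ending at $\vec y$, and $d(\vec y)$ is the forward codegree of $\vec y$, the inequality says that the walk-weighted average of $d(\vec y)$ is at least the unweighted average $d|V_?|$, a positive-correlation assertion. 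To force it, I plan to iteratively prune vertices that drag the density or the codegree structure below the threshold: if $v \in V_j$ satisfies $d_H(v) < d\prod_{i\ne j}|V_i|$, delete $v$; a direct computation shows the resulting hypergraph has density strictly greater than $d$. A refined iterative deletion, possibly carried out inductively on $k$ by applying the procedure to the link hypergraphs $H_v$, eventually produces a sub-hypergraph $\widetilde H$ of density $\ge d - \varepsilon$ in which every relevant $(k-1)$-tuple codegree is $\ge (d - \varepsilon)|V_?|$. In $\widetilde H$ the walks are then produced greedily: from any of the $\ge (d-\varepsilon)\prod|V_i|$ edges one may extend one vertex at a time with $\ge (d-\varepsilon)|V_?|$ choices per step, yielding at least $(d-\varepsilon)^{r-k+1}\prod|V_i|$ walks of $r$ vertices.

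The residual $\varepsilon$-slack is removed by the tensor-power trick. Applying the slack bound to the $k$-partite hypergraph $H^{\otimes t}$, whose parts are $V_i^t$, whose edges are the $t$-fold coordinatewise products of edges of $H$, and whose density is exactly $d^t$, and using the multiplicativity $W_r(H^{\otimes t}) = W_r(H)^t$, I take $t$-th roots and send first $\varepsilon \to 0$ and then $t \to \infty$ to recover the exact inequality $W_r(H) \ge d^{r-k+1}\prod|V_i|$ on $H$.

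The main obstacle is the vertex-deletion step itself. Unlike in the graph case, where controlling vertex degrees immediately controls codegrees and hence walk extensions, here a single vertex deletion affects many $(k-1)$-tuple codegrees at once, and vertex-degree bounds do not suffice by themselves to guarantee large codegrees. Organising the iterated pruning so that the density is preserved (to within $\varepsilon$) while every relevant codegree is driven above the threshold requires careful bookkeeping, and the cyclic indexing of parts modulo $k$ compounds it. Once this is in place, the tensor-power trick serves as a clean black box that removes the $\varepsilon$-loss.
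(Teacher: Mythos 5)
The paper itself gives no proof of Lemma~\ref{l:217}; it asserts that the vertex-deletion/tensor-power argument of Alon--Ruzsa for graph walks ``generalises in a straightforward manner,'' and also points to the entropy proofs of Fitch and Lee. Your proposal follows the first of these pointers, and you have the base case $r=k$ and the correct one-step target $W_{r+1}\ge d\,|V_{(r\bmod k)+1}|\,W_r$ right. But the core of your argument --- the pruning that is supposed to force every relevant $(k-1)$-tuple codegree above $(d-\varepsilon)|V_?|$ while keeping the edge count at $(d-\varepsilon)\prod_i|V_i|$ with respect to the \emph{original} class sizes --- is not carried out, and as you yourself concede it is precisely this step that ``requires careful bookkeeping.'' Deleting a vertex of low degree raises the density, but it says nothing about $(k-1)$-tuple codegrees, and your suggestion to run the deletion ``inductively on $k$\dots to the link hypergraphs $H_v$'' is a gesture rather than an argument: the link of a vertex you keep can still have arbitrarily small codegrees, and after you have cleaned one link you may destroy the cleanliness of another. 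A proposal that openly flags its key step as an unresolved obstacle is not yet a proof.

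There is also a quantitative issue your write-up glosses over. The greedy count happens inside the pruned hypergraph $\widetilde H$, whose parts $\widetilde V_i$ may be much smaller than the $V_i$; you only know $\prod_i|\widetilde V_i|\ge d\prod_i|V_i|$ from density considerations. For a walk of $r$ vertices the product $\prod_{i\in[r]}|\widetilde V_i|$ can then fall short of $\prod_{i\in[r]}|V_i|$ by a factor as bad as $d^{\Theta(r/k)}$. This is a $d$-dependent loss, and the tensor-power trick only removes \emph{absolute} constant factors (independent of the instance, since under tensoring $d\mapsto d^t$ as well). So either you must arrange the pruning so that each $|\widetilde V_i|$ stays comparable to $|V_i|$ --- which is a real additional requirement you have not addressed --- or you must count in a way that does not pay the $(\widetilde V_i/V_i)$ tax. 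Until these two points are settled, the argument does not close. For comparison, the entropy route of Fitch and Lee avoids this shrinkage problem entirely by choosing a suitably reweighted random walk and invoking the chain rule, which is why it is the cleaner way to write down a full proof of this lemma.
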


By identifying the vertex classes one obtains the following, more standard, non-partite version
of this lemma. 

\begin{cor}\label{c:218}
	For $k\ge 2$ and $d\in [0, 1]$ let $H=(V, E)$ be a $k$-uniform hypergraph.  
	If $|E|\ge d|V|^k/k!$, then for every integer $r\ge k$ there are at least $d^{r-k+1}|V|^r$ 
	walks $(x_1, \dots, x_r)$ in $H$. \qed
\end{cor}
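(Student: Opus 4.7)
The plan is to deduce this non-partite version from Lemma~\ref{l:217} via a standard partite blow-up, which is exactly what ``identifying the vertex classes'' refers to. First I would take $k$ disjoint copies $V_1, \dots, V_k$ of $V$, each equipped with a canonical bijection $\pi_j\colon V_j\to V$, and build a $k$-partite $k$-uniform hypergraph $H^\star$ on $V_1 \dcup \dots \dcup V_k$ whose edges are those transversals $\{z_1, \dots, z_k\}$ with $z_j \in V_j$ for which $\{\pi_1(z_1), \dots, \pi_k(z_k)\} \in E(H)$.

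Each edge of $H$ contributes exactly $k!$ edges to $H^\star$ (one per bijection assigning its vertices to the parts), so
\[
|E(H^\star)| = k!\,|E(H)| \ge d\,|V|^k = d\prod_{j\in \ZZ/k\ZZ}|V_j|\,.
\]
Thus Lemma~\ref{l:217} applies with density at least $d$ and yields at least $d^{r-k+1}\prod_{i\in[r]}|V_i| = d^{r-k+1}|V|^r$ walks $(z_1, \dots, z_r)$ in~$H^\star$ with $z_i \in V_i$ for $i\in \ZZ/k\ZZ$, where the subscripts on the parts are read cyclically.

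To finish, I would project back via $\pi_j$ by setting $x_i = \pi_j(z_i)$ whenever $j\equiv i \pmod{k}$. In any window of $k$ consecutive positions, the indices $i, i+1, \dots, i+k-1$ cover all residues modulo~$k$ exactly once, so $\{z_i, \dots, z_{i+k-1}\}$ is a genuine transversal of the parts -- hence an edge of~$H^\star$ -- and it projects onto an edge $\{x_i, \dots, x_{i+k-1}\}$ of~$H$. Therefore $(x_1, \dots, x_r)$ is a walk in~$H$. Conversely, every walk $(x_1, \dots, x_r)$ in~$H$ lifts uniquely by placing $x_i$ into the copy $V_j$ with $j\equiv i\pmod k$, so the projection is a bijection between the walks counted in~$H^\star$ and the walks $(x_1,\dots,x_r)\in V^r$ counted in~$H$. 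The desired lower bound $d^{r-k+1}|V|^r$ transfers directly.

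There is no real obstacle here: once the blow-up is set up, the edge count of $H^\star$ is immediate, and the only delicate point worth checking is that consecutive length-$k$ windows in a walk of $H^\star$ really are transversals of the parts, which is automatic because $i, i+1, \dots, i+k-1$ form a complete residue system modulo~$k$.
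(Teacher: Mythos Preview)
Your proof is correct and follows precisely the approach the paper indicates with the phrase ``by identifying the vertex classes'': you build the $k$-partite blow-up, verify the edge count $|E(H^\star)|=k!\,|E(H)|\ge d|V|^k$, apply Lemma~\ref{l:217}, and project back via the part-wise bijections. The paper gives no detailed argument here (the corollary carries only a \qed), so your write-up supplies exactly the intended routine deduction.
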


\subsection{Abstract connectability}\label{subsec:ac}
Our intended way of using Proposition~\ref{prop:robust} is that given a $k$-uniform hypergraph $H$
satisfying $\delta_{k-2}(H)\ge \bigl(\frac 59 + \al \bigr)|V(H)|^2/2$ we can choose robust 
subgraphs of all the $\binom{|V(H)|}{k-2}$ link graphs. It will be convenient to collect
the data thus arising into a single structure.

\begin{dfn}\label{d:1723}
	For $k\ge 2$ a $k$-uniform {\it constellation} is a pair
		\[
			\P=\bigl(H, \bigl\{R_x\colon x\in V(H)^{(k-2)}\bigr\}\bigr)
		\]	
	consisting of a $k$-uniform hypergraph $H$ and a family of induced subgraphs $R_x\subseteq H_x$ 
	of the 2-uniform link hypergraphs that can be formed in $H$. We write $H(\Psi)=H$ for the 
	{\it underlying hypergraph} of a constellation $\Psi$ and use the abbreviations $V(\Psi)=V(H)$, 
	$E(\Psi)=E(H)$ for its vertex set and edge set, respectively. For a constellation $\Psi$ 
	and $x\in V(\Psi)^{(k-2)}$ we denote the subgraph associated with $x$ by $R_x^{\Psi}=R_x$.
\end{dfn}

\begin{exmpl}
	A $2$-uniform constellation is determined by its underlying graph $H$ and a distinguished 
	induced subgraph $R_{\emptyset}\subseteq H_{\emptyset}=H$.
\end{exmpl}	

Notice that so far the induced subgraphs $R_x\subseteq H_x$ are completely arbitrary and at
this moment there are no restrictions on their orders, sizes, and connectivity properties. 
This allows us to study constellations ``axiomatically'', adding further useful conditions
in each of the following subsections. The central connectability notions are definable without 
any such assumptions and they will be introduced in the present subsection (see 
Definition~\ref{d:210} below). Of course one cannot prove a meaningful Connecting Lemma at this 
level of generality, so our way of organising the material may appear somewhat peculiar on
first sight. When establishing the covering lemma in Section~\ref{sec:cov} however, we need 
to analyse connectability in random subconstellations and for such situations the abstract
approach developed here turns out to be advantageous. 
Subconstellations themselves are defined in the expected way.
	
\begin{dfn}\label{d:29}
	Let
	\[
		\P=\bigl(H, \bigl\{R_x \colon x\in V(H)^{(k-2)}\bigr\}\bigr)
	\]
	be a $k$-uniform constellation, where $k\ge 2$. For $X\subseteq V(\Psi)$ we call
	\[
		\P[X]=\bigl(H[X], \bigl\{R_x[X] \colon x\in X^{(k-2)}\bigr\}\bigr)
	\]
	the {\it subconstellation of $\Psi$ induced by $X$}. 
	Moreover, $\Psi-X=\Psi[V(\Psi)\setminus X]$ denotes the constellation obtained from $\Psi$
	by {\it removing} $X$.
\end{dfn}

We can also form link constellations in the obvious way.

\begin{dfn}\label{d:28}
	Let $k\ge 2$ and let 
	\[
		\P = \bigl(H, \bigl\{R_x\colon x\in V(H)^{(k-2)}\bigr\}\bigr)
	\]
	be a $k$-uniform constellation. If $S\subseteq V(\Psi)$ and $|S|\le k-2$, 
	then the $(k-|S|)$-uniform {\it link constellation} $\Psi_S$ is defined to be 
	\[
		\P_S=\bigl(\xoverline H_S, 
		\bigl\{R_{x\cup S}- S\colon x\in (V(H)\setminus S)^{(k-2-|S|)}\bigr\}\bigr)\,.
	\]
\end{dfn}

Next we tell which $(k-1)$-tuples of vertices of a $k$-uniform constellation are regarded as 
being $\zeta$-leftconnectable for a given real number $\zeta>0$. The definition progresses by recursion on $k$. 

\begin{dfn}\label{d:210}
	Let $k\ge 2$, $\zeta > 0$, let 
	\[
		\P=\bigl(H, \bigl\{R_x\colon x\in V(H)^{(k-2)}\bigr\}\bigr)
	\]
	be a $k$-uniform constellation, and let $\seq x = (x_1, \dots, x_{k-1})\in V(\Psi)^{k-1}$ 
	be a $(k-1)$-tuple of distinct vertices. 
	\begin{enumerate}[label=\alabel]
		\item\label{it:1259a} If $k=2$ we say that $\seq x = (x_1)$ is {\it $\zeta$-leftconnectable} 
			in $\Psi$ if $x_1\in V(R_\emptyset)$.
		\item\label{it:1259b} If $k\ge 3$ we say that $\seq x$ is {\it $\zeta$-leftconnectable} 
			in $\Psi$ if 
			\[
				|U^{\P}_{\seq x}|\ge \zeta |V(\Psi)|\,,
			\]
			where 
			\begin{multline*}
				\hskip5em U^{\P}_{\seq x}=\bigl\{z\in V(\Psi)\colon x_1\dots x_{k-1}z \in E(\Psi)
					 \textrm{ and }\\
				(x_2, \dots, x_{k-1}) \textrm{ is $\zeta$-leftconnectable in } \Psi_z \bigr\}\,.
			\end{multline*}
	\end{enumerate}	
\end{dfn} 

We remark that this is a ``new'' concept in the sense that in the earlier articles~\cites{Y, R} 
we managed to work with symmetric notions of connectability. For this reason, we need
to be careful when quoting the Connecting Lemma from~\cite{R} later. 

\begin{exmpl}
	Let $(x_1, x_2)$ be a pair of distinct vertices from a $3$-uniform constellation~$\Psi$
	and let $\zeta>0$. According to part~\ref{it:1259b} of Definition~\ref{d:210} 
	the pair $(x_1, x_2)$ is $\zeta$-leftconnectable in~$\Psi$ if and only if 
	$|U^{\P}_{(x_1, x_2)}|\ge \zeta |V(\Psi)|$. Due to part~\ref{it:1259a} the definition 
	of this set unravels to 
	\[
		U^{\P}_{(x_1, x_2)}
		=
		\bigl\{z\in V(\Psi)\colon x_1x_2z \in E(\Psi)
					 \textrm{ and } 
				 x_2\in V(R^\Psi_z) \bigr\}\,.
	\]
\end{exmpl}

There is a dual notion of rightconnectability obtained by reversing the ordering of the vertices. 

\begin{dfn}\label{d:212}
	Let $k\ge 2$, $\zeta >0$, $\Psi$, and $\seq x\in V(\Psi)^{k-1}$ be as in Definition~\ref{d:210}. 
	\begin{enumerate}[label=\alabel]
		\item\label{it:d212a} If the reverse tuple $(x_{k-1}, \dots, x_1)$ 
			is $\zeta$-leftconnectable, then 
			$\seq{x}$ itself is said to be~{\it $\zeta$-rightconnectable}.
		\item\label{it:d212b} Further, $\seq x$ is called {\it $\zeta$-connectable} if it is 
			$\zeta$-leftconnectable and $\zeta$-rightconnectable. 
	\end{enumerate}
\end{dfn}

Some readers may react negatively to our choice of the specifiers `left' and `right' in these 
notions, arguing that the definition of leftconnectability of $\seq{x}$ pivots on the right 
end-segment of $\seq{x}$. The reason for our terminological choice is that the 
Connecting Lemma (Proposition~\ref{prop:cl3} below) will assert that under reasonable assumptions 
every leftconnectable tuple can be connected to every rightconnectable tuple in such a way that 
the leftconnectable tuple is `on the left side' in the resulting path, while the rightconnectable 
tuple is `on the right side'. 
  
The following observation follows by a straightforward induction from Definition~\ref{d:210}. 
In later sections we will often use it either tacitly or by referring to `monotonicity'. 
  
\begin{fact}\label{r:monoton}
	For a $k$-uniform constellation $\Psi$ and $\zeta>\zeta'>0$ every $\zeta$-leftconnectable 
	$(k-1)$-tuple is also $\zeta'$-leftconnectable. Similarly statements hold 
	for rightconnectability and connectability. 
\end{fact}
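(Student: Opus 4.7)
The plan is to prove the fact by induction on $k$, mirroring the recursive structure of Definition~\ref{d:210}.

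The base case $k = 2$ is immediate: a $1$-tuple $(x_1)$ is $\zeta$-leftconnectable in $\Psi$ precisely when $x_1 \in V(R^\Psi_\emptyset)$, and this condition does not involve $\zeta$ at all, so every leftconnectable tuple is trivially $\zeta'$-leftconnectable for any $\zeta' > 0$.

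For the inductive step with $k \ge 3$, I assume the fact for $(k-1)$-uniform constellations. Let $\seq{x} = (x_1, \dots, x_{k-1})$ be $\zeta$-leftconnectable in the $k$-uniform constellation $\Psi$, and temporarily write $U^{\Psi,\zeta}_{\seq{x}}$ for the set from Definition~\ref{d:210}\ref{it:1259b}, so that the dependence on $\zeta$ is explicit. The key observation is the containment $U^{\Psi,\zeta}_{\seq{x}} \subseteq U^{\Psi,\zeta'}_{\seq{x}}$: given any $z$ in the left-hand side, the edge $x_1 \dots x_{k-1} z$ lies in $E(\Psi)$ and $(x_2, \dots, x_{k-1})$ is $\zeta$-leftconnectable in the $(k-1)$-uniform link constellation $\Psi_z$; hence by the induction hypothesis $(x_2, \dots, x_{k-1})$ is also $\zeta'$-leftconnectable in $\Psi_z$, placing $z$ in the right-hand side. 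Combining this inclusion with the hypothesis yields
\[
	|U^{\Psi,\zeta'}_{\seq{x}}| \ge |U^{\Psi,\zeta}_{\seq{x}}| \ge \zeta |V(\Psi)| > \zeta' |V(\Psi)|,
\]
so $\seq{x}$ is $\zeta'$-leftconnectable, completing the induction.

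The analogous statements for rightconnectability and connectability then require no separate argument: rightconnectability is defined in Definition~\ref{d:212}\ref{it:d212a} as leftconnectability of the reverse tuple, and connectability is the conjunction of leftconnectability and rightconnectability, so both properties inherit the monotonicity automatically. I do not anticipate any genuine obstacle here; the whole argument reduces to the observation that the set $U^{\Psi}_{\seq{x}}$ grows monotonically as $\zeta$ decreases.
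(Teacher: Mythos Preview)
Your proof is correct and follows essentially the same approach as the paper: induction on $k$, trivial base case since the $k=2$ definition is independent of $\zeta$, and in the inductive step the containment $U^{\Psi,\zeta}_{\seq{x}} \subseteq U^{\Psi,\zeta'}_{\seq{x}}$ via the induction hypothesis, followed by the chain $|U^{\Psi,\zeta'}_{\seq{x}}| \ge |U^{\Psi,\zeta}_{\seq{x}}| \ge \zeta |V(\Psi)| \ge \zeta' |V(\Psi)|$. Your reduction of rightconnectability and connectability to the leftconnectable case also matches the paper.
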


\begin{proof}
	It suffices to display the argument for leftconnectability. We argue by induction on~$k$. 
	In the base case $k=2$ the definition of $\zeta$-leftconnectability does not depend on $\zeta$
	and there is nothing to prove. Now let $k\ge 3$ and suppose that the assertion is true 
	for~$k-1$ playing the r\^{o}le of $k$.
	
	Let $\zeta > \zeta' > 0$, let $\P=\bigl(H, \bigl\{R_x\colon x\in V(H)^{(k-2)}\bigr\}\bigr)$
	be a $k$-uniform constellation, and let $\seq x = (x_1, \dots, x_{k-1})\in V(\Psi)^{k-1}$ 
	be a $\zeta$-leftconnectable $(k-1)$-tuple. We are to prove that~$\seq x$ is 
	$\zeta'$-leftconnectable as well. To this end we consider the sets
	\begin{align*}
		U
		&=
		\{z\in V(\Psi)\colon x_1 \dots x_{k-1}z\in E(\Psi) \textrm{ and } 
			(x_2, \dots, x_{k-1}) \textrm{ is $\zeta$-leftconnectable in } \Psi_z\} 
		\intertext{and} 
		W
		&=
		\{z\in V(\Psi)\colon\,x_1 \dots x_{k-1}z\in E(\Psi) \textrm{ and } 
			(x_2, \dots, x_{k-1}) \textrm{ is $\zeta'$-leftconnectable in } \Psi_z\}\,.
	\end{align*}
	The induction hypothesis discloses $U\subseteq W$
	and the assumption that $\seq{x}$ is $\zeta$-leftconnectable means that
	$|U|\ge \zeta |V(\Psi)|$. So altogether we have 
	\[
		|W| 
		\ge 
		|U|
		\ge 
		\zeta |V(\Psi)| 
		\ge 
		\zeta' |V(\Psi)|\,,
	\]
	for which reason $\seq{x}$ is indeed $\zeta'$-leftconnectable. 
\end{proof}

Next, we study connectability in subconstellations. 

\begin{fact}\label{f:41}
	Suppose that $\Psi$ is a $k$-uniform constellation, that $\Psi'=\Psi[X]$ 
	is a subconstellation induced by some $X\subseteq V(\Psi)$ 
	with $|X|\ge \frac12\bigl(|V(\Psi)|+k-2\bigr)$. If 
	$\seq{x}\in V(\Psi')^{k-1}$ is $(2\zeta)$-leftconnectable in $\Psi'$,
	then it is $\zeta$-leftconnectable in $\Psi$ as well. Similar statements hold for 
	`rightconnectability' and `connectability'. 
\end{fact}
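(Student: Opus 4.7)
I would prove the statement by induction on~$k$, mirroring the recursive structure of Definition~\ref{d:210}. The rightconnectability and connectability versions then follow by applying the leftconnectability version to the reversed tuple and combining. So the only real work is in the leftconnectability claim.

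\textbf{Base case.} When $k=2$, the parameter $\zeta$ plays no role in the definition: $(x_1)$ is $\zeta$-leftconnectable in a constellation if and only if $x_1\in V(R_\emptyset)$. Since $R^{\Psi'}_\emptyset$ is by definition the induced subgraph $R^{\Psi}_\emptyset[X]$, the conclusion is immediate.

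\textbf{Inductive step.} Assume $k\ge 3$ and the statement holds in dimension $k-1$. Let $\seq{x}=(x_1,\dots,x_{k-1})\in V(\Psi')^{k-1}$ be $(2\zeta)$-leftconnectable in $\Psi'$, so that
\[
    |U^{\Psi'}_{\seq x}|\ge 2\zeta |X|\,.
\]
I would like to show $U^{\Psi'}_{\seq x}\subseteq U^{\Psi}_{\seq x}$. Pick any $z\in U^{\Psi'}_{\seq x}$. By definition, $x_1\cdots x_{k-1}z\in E(\Psi')\subseteq E(\Psi)$, and $(x_2,\dots,x_{k-1})$ is $(2\zeta)$-leftconnectable in $\Psi'_z$. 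The key observation is that unpacking Definitions~\ref{d:29} and~\ref{d:28} gives $\Psi'_z=(\Psi_z)[X\setminus\{z\}]$, so $\Psi'_z$ is a subconstellation of $\Psi_z$. Moreover, the size hypothesis transfers cleanly to the link: from $|X|\ge\frac12(|V(\Psi)|+k-2)$ one gets
\[
    |X\setminus\{z\}|
    =|X|-1
    \ge\tfrac12\bigl(|V(\Psi)|-1+(k-1)-2\bigr)
    =\tfrac12\bigl(|V(\Psi_z)|+(k-1)-2\bigr)\,,
\]
which is exactly the hypothesis needed to apply the induction hypothesis in dimension $k-1$ to the pair $(\Psi_z,\Psi'_z)$. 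That application yields that $(x_2,\dots,x_{k-1})$ is $\zeta$-leftconnectable in $\Psi_z$, whence $z\in U^\Psi_{\seq x}$.

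\textbf{Conclusion.} Combining the inclusion $U^{\Psi'}_{\seq x}\subseteq U^{\Psi}_{\seq x}$ with the size hypothesis,
\[
    |U^{\Psi}_{\seq x}|
    \ge |U^{\Psi'}_{\seq x}|
    \ge 2\zeta |X|
    \ge 2\zeta\cdot\tfrac12|V(\Psi)|
    =\zeta|V(\Psi)|\,,
\]
so $\seq{x}$ is $\zeta$-leftconnectable in $\Psi$. I do not anticipate any real obstacle: the only point that requires a brief verification is that the $|X|$-bound degrades to exactly the right $|X\setminus\{z\}|$-bound when passing to a link, which is why the ``$+k-2$'' shift was chosen in the statement. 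The factor of~$2$ between the parameters is used only at the final step to absorb the loss from $|X|\ge|V(\Psi)|/2$; it propagates unchanged through the recursion.
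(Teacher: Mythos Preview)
Your proof is correct and follows essentially the same approach as the paper's own argument: induction on $k$ with trivial base case, and in the step showing $U^{\Psi'}_{\seq x}\subseteq U^{\Psi}_{\seq x}$ by applying the induction hypothesis to the link constellations, using that the ``$+k-2$'' shift in the size hypothesis degrades exactly right under passing to a link. Your write-up is in fact slightly more explicit than the paper's, in that you spell out $\Psi'_z=(\Psi_z)[X\setminus\{z\}]$.
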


\begin{proof}
	Again we only display the argument for leftconnectability and proceed by induction 
	on $k$. The base case $k=2$ is trivial. For the induction step from $k-1$ to $k$ we 
	recall that the assumption entails $|U|\ge 2\zeta |V(\Psi')|\ge \zeta |V(\Psi)|$, where
	\[
		U=\bigl\{z\in V(\Psi')\colon x_1\dots x_{k-1}z \in E(\Psi')
					 \textrm{ and }
				(x_2, \dots, x_{k-1}) \textrm{ is $(2\zeta)$-leftconnectable in } \Psi'_z \bigr\}\,.
	\]
	Now consider an arbitrary vertex $z\in U$. Since 
	\[
		|V(\Psi'_z)|=|V(\Psi')|-1
		\ge 
		\frac12\bigl(|V(\Psi)|+k-4\bigr)
		=
		\frac12\bigl(|V(\Psi_z)|+k-3\bigr)\,, 
	\]
	the induction hypothesis is applicable to the constellation $\Psi_z$, 
	its subconstellation $\Psi'_z$, and to the $(2\zeta)$-leftconnectable $(k-2)$-tuple 
	$(x_2, \dots, x_{k-1})$. It follows that 
	\[
		U\subseteq \bigl\{z\in V(\Psi)\colon x_1\dots x_{k-1}z \in E(\Psi)
					 \textrm{ and }
				(x_2, \dots, x_{k-1}) \textrm{ is $\zeta$-leftconnectable in } \Psi_z \bigr\}
	\]
	and together with $|U|\ge \zeta |V(\Psi)|$ this shows that $\seq{x}$ is 
	indeed $\zeta$-leftconnectable in $\Psi$.
\end{proof}

We shall frequently have the situation that for some edge $x_1\dots x_k$ of a $k$-uniform 
constellation $\Psi$ we know $x_k \in V(R^\P_{x_1\dots x_{k-2}})$ and we would like to 
conclude from this state of affairs that $(x_2, \dots, x_k)$ is $\zeta$-leftconnectable in $\Psi$.
While such deductions are invalid in general, it turns out that for small values of $\zeta$ there are
only few exceptions to this rule of inference. More precisely, we have the following 
result (cf.~\cite{R}*{Fact~4.1} and \cite{Y}*{Lemma~3.7} for similar statements).  

\begin{lemma}\label{l:214}
	Let $k\ge 2$ and $\zeta >0$ be given. If $\P$ is a $k$-uniform constellation, then there 
	exist at most ${(k-2)\zeta|V(\Psi)|^k}$ $k$-tuples $(x_1, \dots, x_k)\in V(\Psi)^k$ such that 
	\begin{enumerate}[label=\alabel]
		\item\label{it:214a} $\{x_1, \dots, x_k \}\in E(\Psi)$, 
		\item\label{it:214b} $x_k \in V(R^\P_{x_1\dots x_{k-2}} )$, 
		\item\label{it:214c} and $(x_2, \dots, x_k)$ fails to be $\zeta$-leftconnectable in $\P$. 
	\end{enumerate}
\end{lemma}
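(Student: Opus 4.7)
My plan is to proceed by induction on $k$. The base case $k = 2$ is immediate: any candidate bad pair $(x_1, x_2)$ would have to satisfy both $x_2 \in V(R^\Psi_\varnothing)$ (by~\ref{it:214b}) and $x_2 \notin V(R^\Psi_\varnothing)$ (by~\ref{it:214c} combined with Definition~\ref{d:210}\ref{it:1259a}), which is a contradiction. Hence there are no bad pairs, matching the desired bound $0 = (k-2)\zeta |V(\Psi)|^2$.

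For the inductive step, fix $k \geq 3$, write $n = |V(\Psi)|$, and parametrise each bad tuple by its suffix $\seq{x} = (x_2, \dots, x_k)$ together with the leading vertex $x_1$. Since~\ref{it:214c} depends only on $\seq{x}$, the relevant suffixes form a set $T \subseteq V(\Psi)^{k-1}$ with $|T| \leq n^{k-1}$. For each $\seq{x} \in T$ and each admissible $x_1$, one of two cases applies, according to whether $(x_3, \dots, x_k)$ is $\zeta$-leftconnectable in the link constellation $\Psi_{x_1}$ or not.

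In the first case, condition~\ref{it:214a} together with the assumed leftconnectability of $(x_3, \dots, x_k)$ in $\Psi_{x_1}$ shows that $x_1$ lies in the set $U^\Psi_{\seq{x}}$ from Definition~\ref{d:210}\ref{it:1259b}; by~\ref{it:214c} this set has fewer than $\zeta n$ elements, so summing over $T$ yields at most $|T| \cdot \zeta n \leq \zeta n^k$ bad tuples of this kind. In the second case, $(x_2, \dots, x_k)$ is itself a bad $(k-1)$-tuple for $\Psi_{x_1}$ in the sense of the lemma: conditions~\ref{it:214a} and~\ref{it:214b} transfer routinely, since $\{x_2, \dots, x_k\}$ is an edge of $\xoverline H_{x_1}$ iff $\{x_1, \dots, x_k\} \in E(H)$, and $R^{\Psi_{x_1}}_{x_2 \dots x_{k-2}}$ coincides with $R^\Psi_{x_1 x_2 \dots x_{k-2}}$ on all vertices other than $x_1$ (which is distinct from $x_k$). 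Meanwhile the failure of leftconnectability of $(x_3, \dots, x_k)$ in $\Psi_{x_1}$ is exactly condition~\ref{it:214c} transposed down one uniformity level. The induction hypothesis applied to each $\Psi_{x_1}$ therefore gives at most $(k-3)\zeta(n-1)^{k-1} \leq (k-3)\zeta n^{k-1}$ bad tuples per $x_1$, hence at most $(k-3)\zeta n^k$ overall. Adding the two contributions yields the desired bound $\zeta n^k + (k-3)\zeta n^k = (k-2)\zeta n^k$.

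The main point that needs care is the interplay of the two variants of~\ref{it:214c}: the one governing $\seq{x}$ in $\Psi$, which controls the first case, and the one governing $(x_3, \dots, x_k)$ in $\Psi_{x_1}$, which controls the second. Once I verify that the dichotomy covers every admissible $x_1$ for a given $\seq{x} \in T$ without overlap, the counting goes through essentially without further calculation.
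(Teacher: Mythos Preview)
Your proof is correct and follows essentially the same approach as the paper: both argue by induction on $k$, split the bad $k$-tuples according to whether $x_1\in U^\Psi_{(x_2,\dots,x_k)}$ (your Case~1, the paper's set $A'$) or not (your Case~2, the paper's sets $A''_{x_1}$), and bound these contributions by $\zeta n^k$ and $(k-3)\zeta n^k$ respectively using~\ref{it:214c} and the induction hypothesis applied to $\Psi_{x_1}$. The only difference is cosmetic organisation.
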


\begin{proof}
	We argue by induction on $k$. In the base case $k=2$ the demands~\ref{it:214b} 
	and~\ref{it:214c} contradict each other and, hence, there are indeed no such pairs. 
	Now let $k\ge 3$ and suppose that the lemma is true for $k-1$ in place of $k$. 
	Define $A\subseteq V(\Psi)^k$ to be the set of all $k$-tuples 
	satisfying \ref{it:214a}\,--\,\ref{it:214c}, set
	\[
		A'=\bigl\{(x_1, \dots, x_k)\in A\colon  x_1\in U^{\P}_{(x_2, \dots, x_k)}\bigr\}
	\]
	and define 
	\[
		A''_x = \bigl\{(x_2, \dots, x_k)\in V(\Psi)^{k-1}\colon
			(x, x_2, \dots, x_k)\in A\setminus A'\bigr\}
	\]
	for every $x\in V(\Psi)$. Since
	\[
		|A|=|A'|+\sum_{x\in V(\Psi)}|A''_x|\,,
	\]
	it suffices to show
	\begin{enumerate}[label=\nlabel]
		\item\label{it:11} $|A'| \le \zeta|V(\Psi)|^k$
		\item\label{it:12} and $|A''_x| \le (k-3)\zeta|V(\Psi_x)|^{k-1}$ for every $x\in V(\Psi)$.
	\end{enumerate}

	Now~\ref{it:11} follows from the fact that for $(x_1, \dots, x_k)\in A'\subseteq A$ we have
	\[
		\big|U^{\P}_{(x_2, \dots, x_k)}\big| < \zeta |V(\Psi)|
	\]
	by~\ref{it:214c} and the definition of $\zeta$-leftconnectability. 
	For the proof of~\ref{it:12} we apply the induction hypothesis to the link constellation~$\Psi_x$. 
	Notice that if $(x_2, \dots, x_k)\in A''_x$, then
	\begin{itemize}
		\item $\{x_2, \dots, x_k\}\in E(\Psi_x)$
		\item and $x_k\in V(R^{\Psi_x}_{x_2\dots x_{k-2}})$
	\end{itemize}
	follow from~\ref{it:214a},~\ref{it:214b}, and the definition of $\Psi_x$. 
	Moreover $(x, x_2, \dots, x_k)\in A\setminus A'$ yields $x\notin U^{\P}_{(x_2,\dots, x_k)}$, 
	which together with $\{x,x_2,\dots, x_k\}\in E(\Psi)$ reveals that
	\[
		  (x_3, \dots, x_k) \textrm{ fails to be $\zeta$-leftconnectable in }\Psi_x\,.
	\]
	So altogether the induction hypothesis leads to~\ref{it:12} and the induction step is complete. 
\end{proof}

We proceed with a similar statement that will ultimately assist us in the construction 
of the absorbing path.

\begin{lemma}\label{l:215}
	For $k\ge 2$, $\zeta > 0$, and a $k$-uniform constellation $\P$, there are at 
	most ${(k-2)\zeta|V(\Psi)|^{2k-3}}$ walks $x_1\dots x_{2k-3}$ in $H(\Psi)$ such that 
	\begin{enumerate}[label=\alabel]
		\item\label{it:215a} $x_{k-1}\in V(R^{\P}_{x_k\dots x_{2k-3}})$
		\item\label{it:215b} but $(x_1, \dots, x_{k-1})$ fails to be $\zeta$-leftconnectable.
	\end{enumerate}
\end{lemma}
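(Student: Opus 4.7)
The plan is to argue by induction on~$k$, following the inductive pattern of Lemma~\ref{l:214}. The base case $k=2$ is vacuous: the walk $x_1\dots x_{2k-3}$ collapses to a single vertex $x_1$, and conditions~\ref{it:215a} and~\ref{it:215b} become directly contradictory via Definition~\ref{d:210}\ref{it:1259a}. For $k\ge 3$, assume the statement for $k-1$, let $A$ denote the set of walks satisfying~\ref{it:215a} and~\ref{it:215b}, and split $A=A'\sqcup A''$ according to whether $x_k\in U^{\P}_{(x_1,\dots,x_{k-1})}$ or not. The goal is to establish
\[
	|A'|\le \zeta|V(\Psi)|^{2k-3}
	\qand
	|A''|\le (k-3)\zeta|V(\Psi)|^{2k-3},
\]
whose sum is the bound asserted.

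The first estimate is immediate: condition~\ref{it:215b} together with Definition~\ref{d:210}\ref{it:1259b} gives $|U^{\P}_{(x_1,\dots,x_{k-1})}|<\zeta|V(\Psi)|$, so each of the $|V(\Psi)|^{k-1}$ choices of $(x_1,\dots,x_{k-1})$ admits fewer than $\zeta|V(\Psi)|$ valid $x_k$ and at most $|V(\Psi)|^{k-3}$ extensions by $(x_{k+1},\dots,x_{2k-3})$.

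For the bound on $|A''|$, I would invoke the induction hypothesis inside the link constellation~$\Psi_{x_k}$. Given $(x_1,\dots,x_{2k-3})\in A''$, the combination of $x_k\notin U^{\P}_{(x_1,\dots,x_{k-1})}$ with the fact that $\{x_1,\dots,x_k\}\in E(\Psi)$ is one of the walk's edges forces $(x_2,\dots,x_{k-1})$ to fail to be $\zeta$-leftconnectable in $\Psi_{x_k}$. Since every one of the $k-2$ edges of the original walk contains $x_k$, removing $x_k$ from each of them yields the edges of a walk $(x_2,\dots,x_{k-1},x_{k+1},\dots,x_{2k-3})$ on $2(k-1)-3$ vertices in the $(k-1)$-uniform constellation~$\Psi_{x_k}$. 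Using Definition~\ref{d:28}, condition~\ref{it:215a} for the original walk is equivalent to the analogous link-condition for this shorter walk (the vertex $x_k$ being erased as an isolated vertex). The induction hypothesis thus caps the number of such shorter walks in~$\Psi_{x_k}$ by $(k-3)\zeta|V(\Psi_{x_k})|^{2k-5}$, and since each walk in~$A''$ is uniquely recovered from the pair $(x_1,x_k)\in V(\Psi)^2$ together with its image in~$\Psi_{x_k}$, we obtain $|A''|\le (k-3)\zeta|V(\Psi)|^{2k-3}$.

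No genuine obstacle is expected here; the only delicate point is tracking the indexing when passing to $\Psi_{x_k}$, to ensure that the position of $x_{k-1}$ inside the shorter walk matches the location prescribed by the recursion of Definition~\ref{d:210}, so that both~\ref{it:215a} and the failure of leftconnectability transfer verbatim into the induction hypothesis.
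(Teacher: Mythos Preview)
Your argument is correct and follows essentially the same route as the paper: induction on $k$, with $A$ split according to whether $x_k\in U^{\P}_{(x_1,\dots,x_{k-1})}$; the paper writes the second piece as $\sum_{(x,y)}|A''_{x,y}|$ with $x=x_1$ and $y=x_k$, which is exactly your ``pair $(x_1,x_k)$ together with the image in $\Psi_{x_k}$'' bookkeeping. One small slip: removing $x_k$ from all $k-2$ edges of the original walk produces the edges of the $(2k-4)$-vertex walk $x_1\dots x_{k-1}x_{k+1}\dots x_{2k-3}$ in $\Psi_{x_k}$, not of the $(2k-5)$-vertex walk you name; the first edge $\{x_1,\dots,x_{k-1}\}$ is then discarded along with $x_1$, leaving the $k-3$ edges of the shorter walk to which the induction hypothesis applies.
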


\begin{proof}
	Again we argue by induction on $k$. In the base case $k=2$ condition~\ref{it:215a} 
	reads $x_1\in V(R^{\P}_\emptyset)$, whereas~\ref{it:215b} demands that $(x_1)$ fails 
	to be $\zeta$-leftconnectable in $\Psi$. As these requirements contradict each other, 
	there are indeed no $1$-vertex walks with the required properties. 
	
	Now let $k\ge 3$ and assume that the lemma is true for $k-1$ instead of $k$. 
	Let $A\subseteq V(\Psi)^{2k-3}$ be the set of all walks $x_1\dots x_{2k-3}$ 
	satisfying~\ref{it:215a} and~\ref{it:215b}, set 
	\[
		A'=\bigl\{(x_1, \dots, x_{2k-3})\in A\colon x_k\in U^{\P}_{(x_1,\dots, x_{k-1})}\bigr\}
	\]
	and put
	\begin{multline*}
		A''_{x,y}=\bigl\{(x_2, \dots, x_{k-1}, x_{k+1}, \dots, x_{2k-3})\in V(\Psi)^{2k-5}\colon \\
			(x,x_2, \dots, x_{k-1}, y, x_{k+1}, \dots, x_{2k-3})\in A\setminus A'\bigr\}
	\end{multline*}
	for all $x, y\in V(\Psi)$. In view of
	\[
		|A|=|A'|+\sum_{(x,y)\in V(\Psi)^2}|A''_{x,y}|
	\]
	it suffices to prove
	\begin{enumerate}[label=\nlabel]
		\item\label{it:21} $|A'|\le \zeta |V(\Psi)|^{2k-3}$
		\item\label{it:22} and $|A''_{x,y}|\le (k-3)\zeta|V(\Psi_y)|^{2k-5}$ for all $x,y \in V(\Psi)$.
	\end{enumerate}

	The estimate~\ref{it:21} follows from the fact that due to~\ref{it:215b} 
	every $(x_1, \dots, x_{2k-3})\in A'\subseteq A$ has the property 
	$\big|U^{\P}_{(x_1, \dots, x_{k-1})}|<\zeta|V(\Psi)\big|$. For the proof of~\ref{it:22} we 
	intend to apply the induction hypothesis to $\Psi_y$. Consider any $(2k-5)$-tuple
	\[
		\seq x=(x_2, \dots, x_{k-1}, x_{k+1}, \dots, x_{2k-3})\in  A''_{x,y}\,.
	\]
	Since $(x_2, \dots, x_{k-1}, y, x_{k+1}, \dots, x_{2k-3})$ is a walk in $H(\Psi)$, we know 
	that $\seq x$ itself is a walk in $H(\Psi_y)$. Moreover,~\ref{it:215a} rewrites as
	\[
		x_{k-1}\in V(R^{\Psi_y}_{x_{k+1} \dots x_{2k-3}})\,.
	\]
	Finally, $y \notin U^{\P}_{(x,x_2,\dots, x_{k-1})}$
	and $\{x, x_2, \dots, x_{k-1}, y\}\in E(\Psi)$ imply that
	\[
		 (x_2,\dots, x_{k-1}) \textrm{ fails to be $\zeta$-leftconnectable in }\P_y\,.
	\]

	Altogether, the $(2k-5)$-tuples in $A''_{x, y}$ have the required properties for 
	applying the induction hypothesis to $\Psi_y$. This proves~\ref{it:22} and completes 
	the induction step.
\end{proof}

We conclude this subsection by introducing one further notion.

\begin{dfn}\label{d:bridge}
	Given $k\ge 2$, $\zeta > 0$, and a $k$-uniform constellation 
	\[
		\P=\bigl(H, \bigl\{R_x\colon x\in V(H)^{(k-2)}\bigr\}\bigr)\,,
	\]
	a $k$-tuple $(x_1, \dots, x_k)\in V(\Psi)^k$ is said to be a {\it $\zeta$-bridge in $\Psi$} if 
	\begin{enumerate}[label=\alabel]
		\item $\{x_1, \dots, x_k\}\in E(\Psi)$,
		\item $(x_1, \dots, x_{k-1})$ is $\zeta$-rightconnectable, 
		\item and $(x_2, \dots, x_k)$ is $\zeta$-leftconnectable.
	\end{enumerate}
\end{dfn}

Such bridges will help us later to construct connecting paths between given $(k-1)$-tuples 
of vertices. The fundamental existence result for such bridges (see Corollary~\ref{c:219}
below) asserts, roughly speaking, that under natural assumptions $k$-uniform constellations
contain many $\zeta$-bridges for sufficiently small values of $\zeta$. 

\subsection{On \texorpdfstring{$(\al, \mu)$}{(alpha, mu)}-constellations}  

In this subsection we study some properties of constellations that can be deduced
from the order and size restrictions~\ref{it:rc1} and~\ref{it:rc2} in Proposition~\ref{prop:robust}
alone without taking the $(\beta, \ell)$-robustness into account. We are thus led to the 
following concept.

\begin{dfn}\label{d:217}
	Let $k\ge 2$ and $\al, \mu >0$. A $k$-uniform constellation $\Psi$ is said to be 
	an {\it $(\al, \mu)$-constellation} if 
	\[
		\delta_{k-2}\bigl(H(\Psi)\bigr)\ge \Big(\frac 59 + \al\Big)\frac{|V(\Psi)|^2}{2}
	\]
	and every $x\in V(\Psi)^{(k-2)}$ satisfies 
	\begin{enumerate}[label=\alabel]
		\item\label{it:217a} $|V(R_x^\Psi)|\ge \bigl(\frac 23 + \frac{\al }{2}\bigr)|V(\Psi)|$
		\item\label{it:217b} as well as 
			$e_{H(\Psi_x)}\bigl(V(R^\Psi_x), V(\Psi)\setminus V(R^\P_x)\bigr)\le \mu |V(\Psi)|^2$.
	\end{enumerate}
\end{dfn}

It turns out that the level of generality provided by this concept is fully appropriate 
for discussing the key parts of our absorbing mechanism and for constructing an important 
building block entering the proof of the Connecting Lemma. Before reaching those results
we record a couple of easier observations. 

\begin{fact}\label{r:218}
	If $\Psi$ denotes a $k$-uniform $(\alpha, \frac\alpha 9)$-constellation for some $\alpha>0$,
	then
	\[
		e\bigl(H(\Psi_x)\bigr) - e(R_x^\P)\le \frac{|V(\P)|^2}{18}
	\]
	holds for every $x\in V(\Psi)^{(k-2)}$. 
\end{fact}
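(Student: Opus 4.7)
Fix $x\in V(\Psi)^{(k-2)}$ and set $n=|V(\Psi)|$, $R=R_x^\Psi$. Since edges of $H_x$ live entirely on $V(\Psi)\setminus x$, one has $e(H(\Psi_x))=e(\xoverline H_x)=e(H_x)$; and because $R$ is an \emph{induced} subgraph of $H_x$ with $V(R)\subseteq V(\Psi)$, every edge counted by $e(H_x)-e(R)$ either crosses between $V(R)$ and $V(\Psi)\setminus V(R)$ or lies entirely inside $V(\Psi)\setminus V(R)$. This yields the decomposition
\[
    e(H(\Psi_x))-e(R) \le e_{H(\Psi_x)}\bigl(V(R),V(\Psi)\setminus V(R)\bigr)+\binom{n-|V(R)|}{2}.
\]

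The two summands are then controlled by the defining properties of an $(\alpha,\alpha/9)$-constellation. Clause~\ref{it:217b} of Definition~\ref{d:217}, applied with $\mu=\alpha/9$, bounds the first summand by $\alpha n^2/9$. For the second, clause~\ref{it:217a} gives $|V(R)|\ge(\tfrac23+\tfrac\alpha2)n$, so the binomial coefficient is at most $\tfrac12(\tfrac13-\tfrac\alpha2)^2 n^2$.

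It then remains to verify the scalar inequality
\[
    \frac{\alpha}{9}+\frac12\Big(\frac13-\frac\alpha2\Big)^2 \le \frac{1}{18},
\]
which, after expanding the square, simplifies to $\alpha^2/8\le\alpha/18$, i.e.\ $\alpha\le 4/9$. This constraint is automatically satisfied, since the hypothesis $\delta_{k-2}(H(\Psi))\ge(5/9+\alpha)n^2/2$ built into Definition~\ref{d:217} would otherwise be unsatisfiable (the $(k-2)$-degree cannot exceed $\binom{n-k+2}{2}\le n^2/2$). I do not expect any real obstacle: the proof is a single edge decomposition followed by a routine quadratic estimate. The only point worth noting is that the constants $\mu=\alpha/9$ and $\tfrac23+\tfrac\alpha2$ in Definition~\ref{d:217} are calibrated precisely so that the bound is tight at the extreme value $\alpha=0$ (and again at $\alpha=4/9$), with strict slack for all intermediate~$\alpha$.
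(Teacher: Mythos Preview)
Your proof is correct and follows essentially the same approach as the paper: decompose the missing edges into those crossing between $V(R)$ and its complement and those lying entirely in the complement, bound the two pieces via clauses~\ref{it:217b} and~\ref{it:217a} of Definition~\ref{d:217} respectively, and then observe that the resulting scalar inequality reduces to $\alpha\le 4/9$, which is forced by the minimum degree condition. The paper writes the decomposition as an equality (since $R$ is induced) rather than an inequality, but this makes no difference.
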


\begin{proof}
	Using both parts of Definition~\ref{d:217} we obtain
	\begin{align*}
		e\bigl(H(\Psi_x)\bigr) - e(R_x^\P)
		&=
		e_{H(\Psi_x)}\bigl(V(\Psi)\setminus V(R^\P_x)\bigr)
			+e_{H(\Psi_x)}\bigl(V(R^\Psi_x), V(\Psi)\setminus V(R^\P_x)\bigr)\\
		&\le
		\Big(\frac 13 - \frac{\al}{2}\Big)^2\frac {|V(\P)|^2}2 + \frac{\al}9 |V(\P)|^2 
		=
		\Big(\frac 1{18}+\frac{\al^2}{8}-\frac{\al}{18}\Big)|V(\P)|^2 
	\end{align*}	
	and it remains to observe that the minimum $(k-2)$-degree condition imposed on $H(\Psi)$
	is only satisfiable for $\alpha\le\frac49$.
\end{proof}

\begin{fact}\label{fact:neu} 
	Suppose that $\Psi$ is a $k$-uniform $(\alpha, \mu)$-constellation. 
	If $x\in V(\Psi)^{(k-2)}$ is arbitrary, then there are at 
	most $\frac{2\mu}{\alpha}|V(\Psi)|$ 
	vertices $z\in V(\Psi)\setminus V(R^\Psi_x)$ with $d_{H(\Psi_x)}(z)>\frac13 (|V(\Psi)|-2)$.
\end{fact}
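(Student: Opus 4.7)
The plan is a direct double-counting argument combining both clauses of Definition~\ref{d:217}. Write $n=|V(\Psi)|$, $W=V(R^\Psi_x)$, and $U=V(\Psi)\setminus W$. Clause~\ref{it:217a} gives $|U|\le\bigl(\frac{1}{3}-\frac{\alpha}{2}\bigr)n$, while clause~\ref{it:217b} supplies the bipartite edge bound $e_{H(\Psi_x)}(W,U)\le\mu n^2$. The strategy is to show that every high-degree $z\in U$ contributes many edges to $e_{H(\Psi_x)}(W,U)$, so that the second bound forces a cap on the number of such~$z$.

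The key observation is that any $z\in U$ with $d_{H(\Psi_x)}(z)>\frac{n-2}{3}$ must send many edges into~$W$. Indeed, at most $|U|-1$ of its neighbours in $H(\Psi_x)$ can lie in~$U$, so its number of neighbours in~$W$ exceeds
\[
\frac{n-2}{3}-|U|+1\ge\frac{n-2}{3}-\Big(\frac{1}{3}-\frac{\alpha}{2}\Big)n+1=\frac{\alpha n}{2}+\frac{1}{3}\,.
\]
Writing $t$ for the number of such vertices and summing this lower bound gives
\[
t\Big(\frac{\alpha n}{2}+\frac{1}{3}\Big)<e_{H(\Psi_x)}(W,U)\le\mu n^2\,.
\]
The elementary inequality $\alpha n\le 2\bigl(\frac{\alpha n}{2}+\frac{1}{3}\bigr)$ then rearranges to $t<\frac{2\mu}{\alpha}n$, as required.

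I do not anticipate any real obstacle: the fact is essentially an immediate corollary of the two defining clauses, and the constants work out cleanly precisely because the $\frac{\alpha}{2}$ slack in $|W|\ge\bigl(\frac{2}{3}+\frac{\alpha}{2}\bigr)n$ exactly matches the $\alpha$ appearing in the denominator of the target bound $\frac{2\mu}{\alpha}|V(\Psi)|$.
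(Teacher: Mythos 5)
Your proof is correct and takes essentially the same approach as the paper: use Definition~\ref{d:217}\ref{it:217a} to bound $|V(\Psi)\setminus V(R^\Psi_x)|$, deduce that each high-degree vertex outside $R^\Psi_x$ sends more than $\frac{\alpha}{2}n$ edges into $V(R^\Psi_x)$, and compare against the bipartite bound of Definition~\ref{d:217}\ref{it:217b}.
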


\begin{proof}
	Definition~\ref{d:217}~\ref{it:217a} tells us 
	that $|V(\Psi)\setminus V(R^\Psi_x)|\le (\frac 13-\frac\alpha2)|V(\Psi)|$. 
	Consequently, the number of edges that every vertex $z$ from the set 
	\[
		Z=\bigl\{z\in V(\Psi)\setminus V(R^\Psi_x)\colon d_{H(\Psi_x)}(z)>\tfrac13 (|V(\Psi)|-2)\bigr\}
	\]
	sends to $V(R^\Psi_x)$ is at least 
	\begin{align*}
		d_{H(\Psi_x)}(z)-\big|V(\Psi)\setminus\bigl(V(R^\Psi_x)\cup\{z\}\bigr)\big|
		&\ge 
		\frac13 \bigl(|V(\Psi)|-2\bigr)-\Bigl(\frac 13-\frac\alpha2\Bigr)|V(\Psi)|+1 \\
		&>
		\frac\alpha2|V(\Psi)|\,.
	\end{align*}
	In combination with Definition~\ref{d:217}~\ref{it:217b} this yields 
	\[
		\frac\alpha2|V(\Psi)||Z|
		\le 
		e_{H(\Psi_x)}\bigl(V(R^\Psi_x), V(\Psi)\setminus V(R^\P_x)\bigr)
		\le 
		\mu |V(\Psi)|^2
	\]
	and the upper bound $|Z|\le \frac{2\mu}{\alpha}|V(\Psi)|$ we are aiming for follows.
\end{proof}

Next, there is an obvious monotonicity statement.

\begin{fact}\label{f:monmu}
	For $k\ge 2$, $\al\ge  \al' > 0$, and $\mu'\ge \mu> 0$, every $k$-uniform 
	$(\al, \mu)$-constellation is an $(\al', \mu')$-constellation as well. \qed
\end{fact}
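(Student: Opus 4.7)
The statement is a direct monotonicity observation, so the plan is simply to verify that each of the three defining clauses of an $(\alpha,\mu)$-constellation (see Definition~\ref{d:217}) weakens under the parameter changes $\alpha \leadsto \alpha'$ and $\mu \leadsto \mu'$. Concretely, I would let $\Psi$ be an arbitrary $k$-uniform $(\alpha,\mu)$-constellation and check the three requirements of being an $(\alpha',\mu')$-constellation one by one.

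For the minimum $(k-2)$-degree condition, the inequality $\frac59+\alpha\ge \frac59+\alpha'$ shows that
\[
	\delta_{k-2}\bigl(H(\Psi)\bigr)
	\ge
	\Big(\frac59+\alpha\Big)\frac{|V(\Psi)|^2}{2}
	\ge
	\Big(\frac59+\alpha'\Big)\frac{|V(\Psi)|^2}{2}\,.
\]
For Definition~\ref{d:217}\ref{it:217a}, the estimate $\frac23+\frac\alpha2\ge\frac23+\frac{\alpha'}{2}$ transfers the lower bound on $|V(R_x^\Psi)|$ to the weaker one required for $\alpha'$, uniformly over every $x\in V(\Psi)^{(k-2)}$. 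Finally, for Definition~\ref{d:217}\ref{it:217b} one just uses $\mu\le\mu'$ to see that
\[
	e_{H(\Psi_x)}\bigl(V(R_x^\Psi),V(\Psi)\setminus V(R_x^\Psi)\bigr)\le \mu|V(\Psi)|^2\le \mu'|V(\Psi)|^2\,.
\]

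There is no real obstacle here; the statement is recorded purely to license implicit parameter decreases in later sections, and the verification is a one-line check for each of the three clauses. The only thing worth remarking is that monotonicity holds in the ``natural'' directions for each parameter: decreasing $\alpha$ relaxes both the degree condition and the order condition on the robust subgraphs, while increasing $\mu$ relaxes the edge-cut condition between $V(R_x^\Psi)$ and its complement.
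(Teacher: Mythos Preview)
Your verification is correct and matches the paper's treatment: the fact is stated with an immediate \qed, so the authors regard it as an obvious monotonicity check, exactly the clause-by-clause verification you spelled out.
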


Link constellations `almost' inherit being $(\alpha, \mu)$-constellations, but since 
we are slightly shrinking the vertex set we need to be careful with clause~\ref{it:217b}
of Definition~\ref{d:217}.
 
\begin{fact}\label{f:linkconst}
	Given $k\ge 2$, $\al>0$, and $\mu'>\mu>0$ there exists a natural number~$n_0$ with
	the following property. If $\P$ denotes a $k$-uniform $(\al, \mu)$-constellation having 
	at least~$n_0$ vertices and $S\subseteq V(\P)$ with $|S|\le k-2$ is arbitrary, 
	then $\P_S$ is a $(k-|S|)$-uniform $(\al, \mu')$-constellation. \qed
\end{fact}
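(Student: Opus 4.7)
My plan is to unravel Definition~\ref{d:217} as applied to $\P_S$ and verify, one by one, the three embedded conditions, with $n_0$ chosen large enough to absorb the tiny losses caused by the removal of~$S$. Throughout I write $n=|V(\P)|$, $s=|S|\le k-2$, and $n'=n-s=|V(\P_S)|$.

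First I would check the minimum $(k-s-2)$-degree condition. The map $e\mapsto e\setminus S$ is a bijection between the edges of $H(\P)$ containing $y\cup S$ and the edges of $H(\P_S)=\overline{H}_S$ containing $y$, for any $y\in V(\P_S)^{(k-s-2)}$. Hence $d_{H(\P_S)}(y)=d_{H(\P)}(y\cup S)\ge(5/9+\alpha)n^2/2\ge(5/9+\alpha)(n')^2/2$, as desired.

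Next I would verify clause~\ref{it:217a} for an arbitrary $y\in V(\P_S)^{(k-s-2)}$, where $R^{\P_S}_y=R^\P_{y\cup S}-S$ by Definition~\ref{d:28}. Since the elements of $y\cup S$ are isolated in the link hypergraph $H(\P)_{y\cup S}$, I would adopt the natural convention that they are not vertices of the induced subgraph $R^\P_{y\cup S}$; this gives $V(R^{\P_S}_y)=V(R^\P_{y\cup S})$ and hence $|V(R^{\P_S}_y)|\ge(2/3+\alpha/2)n\ge(2/3+\alpha/2)n'$.

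For clause~\ref{it:217b} I would use the identification $H(\P_{S\cup y})=\overline{H}_{S\cup y}=H(\P_{y\cup S})$ (since $S\cup y=y\cup S$ as sets). Every edge of $H(\P_{S\cup y})$ between $V(R^{\P_S}_y)$ and $V(\P_S)\setminus V(R^{\P_S}_y)$ is simultaneously an edge of $H(\P_{y\cup S})$ between $V(R^\P_{y\cup S})$ and $V(\P)\setminus V(R^\P_{y\cup S})$, of which there are at most $\mu n^2$ by hypothesis on~$\P$. Choosing $n_0$ large enough that $n/(n-s)\le\sqrt{\mu'/\mu}$ for all $n\ge n_0$ (any $n_0\ge(k-2)/(1-\sqrt{\mu/\mu'})$ works, using $\mu'>\mu$ and $s\le k-2$) yields $\mu n^2\le\mu'(n')^2$, as required. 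I do not anticipate any genuine obstacle: the whole argument is pure bookkeeping, and the only slightly subtle point is the convention mentioned in clause~\ref{it:217a}, needed to compensate for the fact that removing~$S$ shrinks the ambient vertex set.
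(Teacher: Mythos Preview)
Your proof is correct and is exactly the routine verification the paper intends; the paper gives no proof at all (just a \qed), having already remarked that the only point requiring care is clause~\ref{it:217b}. Your handling of the convention $V(R^\P_{y\cup S})\cap(y\cup S)=\varnothing$ is appropriate: the paper tacitly relies on it as well, and makes it explicit only later (just before Fact~\ref{f:linkfullc}), where the $(\beta,\ell)$-robustness of $R^\P_x$ forces it to hold in every constellation that actually occurs in the argument.
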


Now we estimate the number of walks of any short length in $\P$, whose starting $(k-1)$-tuple 
is rightconnectable and whose ending $(k-1)$-tuple is leftconnectable. Later we will use these 
walks in the proof of the Connecting Lemma thus gaining control over the length of the connections 
modulo $k$.

\begin{lemma}\label{l:219}
	For $k\ge 2$ and $\al > 0$ let $\P$ be a $k$-uniform $(\al,\frac \al 9)$-constellation.
	Provided that $|V(\P)| \ge \frac{k^2}{\al}$, there are for every positive integer $r$ at least 
   $\frac 1{3^{r+1}}|V(\P)|^{r+k-1}$ walks $x_1x_2\dots x_{r+k-1}$ of length $r$ in $H(\P)$ 
   starting with a {$\frac{1}{k3^{r+1}}$-rightconnectable} $(k-1)$-tuple $(x_1,\dots, x_{k-1})$
   and ending with a $\frac{1}{k3^{r+1}}$-leftconnectable $(k-1)$-tuple $(x_{r+1}, \dots, x_{r+k-1})$. 
\end{lemma}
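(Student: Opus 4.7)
The plan is to count walks of length $r$ in $H(\Psi)$ via Corollary~\ref{c:218} and then bound walks with non-connectable endpoints using Lemmas~\ref{l:214} and~\ref{l:215}. Set $n = |V(\Psi)|$ and $\zeta = 1/(k3^{r+1})$. From $\delta_{k-2}(H(\Psi)) \geq (5/9+\alpha)n^2/2$ together with $n \geq k^2/\alpha$ one computes (using $k!/((k-2)!\binom{k}{2}) = 2$ and $\binom{n}{k-2}/n^{k-2} \geq 1-k^2/n$) that the edge density of $H(\Psi)$ is at least $5/9$, so Corollary~\ref{c:218} yields at least $(5/9)^r n^{r+k-1}$ walks of length $r$ in total. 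This exceeds the target $n^{r+k-1}/3^{r+1}$ by the factor $3(5/3)^r$, so the remaining task is to show that most such walks have $\zeta$-connectable endpoints.

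To bound walks whose end tuple $(x_{r+1}, \ldots, x_{r+k-1})$ fails to be $\zeta$-leftconnectable, I plan to extend each length-$r$ walk by $k-2$ auxiliary vertices $x_{r+k}, \ldots, x_{r+2k-3}$ and apply Lemma~\ref{l:215} to the sub-walk $x_{r+1} \dots x_{r+2k-3}$. In this sub-walk the middle vertex is $x_{r+k-1}$, the trailing $(k-2)$-tuple is $(x_{r+k}, \ldots, x_{r+2k-3})$, and the start tuple is precisely the original end tuple. Lemma~\ref{l:215} caps the number of sub-walks in which the middle lies in $V(R^\Psi_{\{x_{r+k}, \ldots, x_{r+2k-3}\}})$ but the start tuple is not $\zeta$-leftconnectable by $(k-2)\zeta n^{2k-3}$, and multiplying by the at most $n^r$ pre-extensions of the sub-walk yields $(k-2)\zeta n^{r+2k-3}$ such length-$(r+k-2)$ walks. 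A symmetric argument, extending each walk by $k-2$ vertices at its start and invoking the reversed form of Lemma~\ref{l:215}, bounds the walks whose start tuple fails to be $\zeta$-rightconnectable.

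The transfer from these bounds on extended walks to bounds on the original walks of length $r$ uses Definition~\ref{d:217}\ref{it:217a}: since $|V(R^\Psi_\sigma)| \geq (2/3+\alpha/2)n$ for every $(k-2)$-tuple $\sigma$, a typical $(k-2)$-vertex extension of any fixed original walk places $x_{r+k-1}$ in the middle-robust vertex set with density at least $2/3$. A double-counting argument then gives that the number of length-$r$ walks with non-$\zeta$-leftconnectable end tuple is at most $\tfrac{3}{2}(k-2)\zeta n^{r+k-1} < n^{r+k-1}/(2\cdot 3^r)$, and analogously for the start tuple. Summing the two contributions, the number of good walks is at least
\[
\bigl((5/9)^r - 1/3^r\bigr) n^{r+k-1} \geq \frac{n^{r+k-1}}{3^{r+1}},
\]
where the last inequality is equivalent to $3((5/3)^r - 1) \geq 1$ and holds for every $r \geq 1$.

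The hard part will be the averaging step: one needs a uniform lower bound on the number of valid extensions of an arbitrary length-$r$ walk, not merely on average. I expect this to follow from the density bound $|V(R^\Psi_\sigma)| \geq (2/3+\alpha/2)n$ of Definition~\ref{d:217}\ref{it:217a} combined with Fact~\ref{fact:neu}, which controls the atypical vertices outside the robust link subgraphs.
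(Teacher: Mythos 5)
Your proposal diverges from the paper's proof in a way that opens a genuine gap, and the gap sits exactly where you flag it as ``the hard part.''

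The paper does not count walks in $H(\Psi)$ and then filter: it builds an auxiliary $k$-partite $k$-uniform hypergraph $\ccA$ on $k$ copies of $V(\Psi)$ whose edges are exactly the $k$-tuples $(x_1,\dots,x_k)$ satisfying $\{x_1,\dots,x_k\}\in E(\Psi)$, $x_1x_2\in E(R^\Psi_{x_3\dots x_k})$, and a cyclically shifted analogue of the last condition. Fact~\ref{r:218} shows each robustness condition eliminates at most $\frac19 n^k$ tuples, so $e(\ccA)\ge\frac13 n^k$, and Lemma~\ref{l:217} gives $\frac{1}{3^r}n^{r+k-1}$ walks in $\ccA$. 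The crucial point is that every such walk \emph{automatically} satisfies the membership hypothesis of Lemma~\ref{l:214} at both ends, so that lemma applies directly and discards at most $\frac{k-2}{k3^{r+1}}n^{r+k-1}$ walks per end. No extension of a given walk is ever needed.

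Your route tries to manufacture the missing membership hypothesis by extending each length-$r$ walk by $k-2$ new vertices and invoking Lemma~\ref{l:215} on the suffix. For that double-count to give an upper bound on bad walks, you need a uniform lower bound of order $n^{k-2}$ on the number of \emph{valid} extensions of an arbitrary bad walk, and neither ingredient you cite delivers this. First, a valid extension must itself be a walk, so already its first new vertex $z_1$ must satisfy $\{x_{r+1},\dots,x_{r+k-1},z_1\}\in E(\Psi)$; this is a $(k-1)$-degree count, and the hypothesis $\delta_{k-2}(H(\Psi))\ge(\tfrac59+\alpha)\tfrac{n^2}{2}$ gives no control over $(k-1)$-degrees — a particular $(k-1)$-set can have tiny or zero degree. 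Second, even granting many walk-extensions, you still need $x_{r+k-1}\in V(R^\Psi_{\{z_1,\dots,z_{k-2}\}})$ for many choices of $\{z_1,\dots,z_{k-2}\}$. Definition~\ref{d:217}\ref{it:217a} bounds $\vert V(R^\Psi_\sigma)\vert$ from below for each fixed $\sigma$, but says nothing about how often a \emph{fixed} vertex lies in $V(R^\Psi_\sigma)$ as $\sigma$ varies; nothing in the axioms prevents a vertex $v$ from being absent from every $V(R^\Psi_\sigma)$, in which case a bad walk ending at $v$ has zero valid extensions and escapes your bound entirely. Fact~\ref{fact:neu} does not repair this: it controls, for a \emph{fixed} $\sigma$, the high-degree vertices outside $V(R^\Psi_\sigma)$, which is the wrong quantifier order. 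Since your upper bound on bad walks may miss an $\Omega(n^{r+k-2})$-sized family, the subtraction that follows is not justified.

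If you want to keep a filtering scheme, the fix is essentially to adopt the paper's device: count walks in a $k$-partite auxiliary hypergraph whose edges already encode the robustness conditions, so that the hypotheses of Lemma~\ref{l:214} hold for the first and last $k$-tuple of every walk by construction, and no extension or averaging over $\sigma$ is required.
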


\let\O=\ccA
\begin{proof}
	Consider the auxiliary $k$-partite $k$-uniform hypergraph $\O$ 
	whose vertex classes $V_1, \dots, V_k$ are copies of $V(\P)$ and whose 
	edges $\{x_1,\dots, x_k\}\in E(\O)$ with 
	\[
		x_1\in V_1, \dots, x_k\in V_k
	\]
	signify that
	\begin{enumerate}[label=\nlabel]
		\item\label{it:4a} $\{x_1, \dots, x_k\}\in E(\P)$,
		\item\label{it:4b} $x_1x_2\in E(R^\P_{x_3\dots x_k})$,
		\item\label{it:4c} and $x_{r+k-2}x_{r+k-1}\in E(R^\P_{x_{r}\dots x_{r+k-3}})$,
	\end{enumerate}
	where the indices in~\ref{it:4c} are to be read modulo $k$. 
	
	In view of $|V(\P)|\ge \frac{k^2}{\al}$ 
	and $\delta_{k-2}(H(\P))\ge \big(\frac 59 + \al\big)\frac{|V(\P)|^2}{2}$ there are at least 
	\[
		(|V(\P)|-k)^{k-2}\cdot\Big(\frac 59+\al\Big)|V(\P)|^2
		\ge 
		\frac 59|V(\P)|^k
	\]
	possibilities $(x_1, \ldots, x_k)\in V_1\times \dots \times V_k$ satisfying~\ref{it:4a}. 
	Among them, there are by Fact~\ref{r:218} at most $\frac{1}{9}|V(\P)|^k$ violating~\ref{it:4b} 
	and at most the same number violating~\ref{it:4c}. Consequently, $e(\O) \ge \frac 13|V(\P)|^k$ 
	and Lemma~\ref{l:217} applied to $\O$ and $d=\frac 13$ shows that there are at 
	least $\frac{1}{3^r}|V(\P)|^{r+k-1}$ walks
	\[
		x_1x_2\dots x_{r+k-1}
	\]
	of length $r$ in $\O$ with $x_1\in V_1, \dots, x_k\in V_k$.
	Among them, there are by~\ref{it:4b} and Lemma~\ref{l:214} applied 
	to $\zeta=\frac{1}{k3^{r+1}}$ at most 
	\[
		\frac{k-2}{k3^{r+1}}|V(\P)|^{r+k-1}
		<
		\frac{1}{3^{r+1}}|V(\P)|^{r+k-1}
	\]
	walks for which $(x_1, \dots, x_{k-1})$ fails to be $\frac{1}{k3^{r+1}}$-rightconnectable. 	
	Similarly~\ref{it:4c} and Lemma~\ref{l:214} ensure that at 
	most $\frac{1}{3^{r+1}}|V(\P)|^{r+k-1}$ of our walks have the defect 
	that $(x_{r+1}, \dots, x_{r+k-1})$ fails to be $\frac{1}{k3^{r+1}}$-leftconnectable. 
	This leaves us with at least
	\[
		\Big(\frac{1}{3^r}-\frac{2}{3^{r+1}}\Big)|V(\P)|^{r+k-1}
		=
		\frac{|V(\P)|^{r+k-1}}{3^{r+1}}
	\]
	walks of the desired form. 
\end{proof}
	
\begin{cor}\label{c:219}
	Given $k\ge 2$ and $\al >0$ let $\Psi$ be a $k$-uniform $(\al,\frac \al 9)$-constellation.
	If $\Psi$ has at least $\frac{k^2}{\al}$ vertices, then the number of its $\frac 1{9k}$-bridges 
	is at least $\frac 19 |V(\Psi)|^k$. 
\end{cor}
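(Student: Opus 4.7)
The plan is to recognize the corollary as essentially the $r=1$ case of Lemma~\ref{l:219}. Unpacking Definition~\ref{d:bridge}, a $\frac1{9k}$-bridge is precisely an ordered $k$-tuple $(x_1,\dots,x_k)$ such that $\{x_1,\dots,x_k\}\in E(\Psi)$, the prefix $(x_1,\dots,x_{k-1})$ is $\frac1{9k}$-rightconnectable, and the suffix $(x_2,\dots,x_k)$ is $\frac1{9k}$-leftconnectable. But a walk of length $r=1$ in $H(\Psi)$ is an ordered sequence $x_1x_2\dots x_k$ with $\{x_1,\dots,x_k\}\in E(\Psi)$, its starting $(k-1)$-tuple is $(x_1,\dots,x_{k-1})$, and its ending $(k-1)$-tuple is $(x_{r+1},\dots,x_{r+k-1})=(x_2,\dots,x_k)$. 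Hence the objects enumerated by Lemma~\ref{l:219} with $r=1$ are exactly the $\frac1{9k}$-bridges of~$\Psi$.

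Since the hypotheses of Lemma~\ref{l:219} (namely, that $\Psi$ is an $(\alpha,\frac\alpha9)$-constellation with at least $\frac{k^2}{\alpha}$ vertices) are inherited from our assumption, the lemma applies. With $r=1$ the two relevant quantities become $\frac{1}{3^{r+1}}=\frac19$ and $\frac{1}{k3^{r+1}}=\frac{1}{9k}$, so we obtain at least $\frac{1}{9}|V(\Psi)|^{k}$ walks of the required form, which is precisely the claimed lower bound on the number of $\frac1{9k}$-bridges.

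There is essentially no obstacle: the work is hidden in Lemma~\ref{l:219}, and the only thing to verify is that the parameters match. In particular, one should observe that our definition of a $\zeta$-bridge does not demand that the vertices $x_1,\dots,x_k$ be distinct (it only requires that they form an edge), so every walk produced by Lemma~\ref{l:219} qualifies, and no further pruning is needed.
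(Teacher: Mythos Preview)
Your proposal is correct and takes exactly the same approach as the paper, which proves the corollary in a single line: ``Plug $r=1$ into Lemma~\ref{l:219}.'' You have simply spelled out the parameter matching that the paper leaves implicit.
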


\begin{proof}
	Plug $r=1$ into Lemma~\ref{l:219}. 
\end{proof}

The following lemma builds a device that will assist us in the inductive proof of the 
Connecting Lemma in the next section. 

\begin{lemma}\label{l:220}
	Given $k\ge 4$, $\al > 0$, and $\zeta\in \bigl(0,\frac{1}{3^{k+2}}\bigr]$, there exists 
	an integer $n_0$ such that the following holds for every $k$-uniform 
	$(\al, \frac{\al}{10})$-constellation $\P$ on $n\ge n_0$ vertices. 
		
	If two subsets $U, W \subseteq V(\P)$ satisfy $|U|, |W|\ge \zeta n$, then there are 
	at least $\zeta^3n^{2k-2}$ $(2k-2)$-tuples $(u, q_1, \dots, q_{2k-4}, w)\in V(\P)^{2k-2}$ 
	such that 
	\begin{enumerate}[label=\rmlabel]
		\item\label{it:2201} $u\in U$ and $w\in W$ are distinct, 
		\item\label{it:2202} $q_1\dots q_{2k-4}$ is a walk in $H(\P_{uw})$,
		\item\label{it:2203} $(q_1, \dots, q_{k-2})$ is $\zeta^3$-rightconnectable in $\P_u$,
		\item\label{it:2204} and $(q_{k-1}, \dots, q_{2k-4})$ is $\zeta^3$-leftconnectable in $\P_w$.
	\end{enumerate} 
\end{lemma}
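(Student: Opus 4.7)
The plan is to work pair-by-pair: for each $(u,w)\in U\times W$ with $u\neq w$, I would build an auxiliary partite hypergraph $\mathcal{A}_{u,w}$ whose walks of length $k-1$ supply the interior vertices $q_1,\dots,q_{2k-4}$, lower-bound the number of those walks by Lemma~\ref{l:217}, and exclude the walks whose start fails $\zeta^{3}$-rightconnectability in $\Psi_u$ or whose end fails $\zeta^{3}$-leftconnectability in $\Psi_w$ by means of Lemma~\ref{l:214}. By Fact~\ref{f:linkconst} applied with $S=\{u,w\}$ and $\mu'=\alpha/9$, one may assume $n_0$ is large enough that $\Psi_{uw}$ is a $(k-2)$-uniform $(\alpha,\alpha/9)$-constellation on $n-2$ vertices. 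The hypergraph $\mathcal{A}_{u,w}$ would be $(k-2)$-partite with classes $V_1,\dots,V_{k-2}$, each a copy of $V(\Psi)\setminus\{u,w\}$, and an ordered tuple $(y_1,\dots,y_{k-2})\in V_1\times\dots\times V_{k-2}$ would be declared an edge iff $\{u,w,y_1,\dots,y_{k-2}\}\in E(\Psi)$, $y_1\in V(R^{\Psi}_{\{u,w,y_3,\dots,y_{k-2}\}})$, and $y_{k-2}\in V(R^{\Psi}_{\{u,w,y_1,\dots,y_{k-4}\}})$. The asymmetry of the last two conditions is arranged so that they are precisely the hypotheses of Lemma~\ref{l:214} applied to the $(k-1)$-tuple $(w,y_{k-2},y_{k-3},\dots,y_1)\in V(\Psi_u)^{k-1}$ (whose conclusion is $\zeta^{3}$-rightconnectability of $(y_1,\dots,y_{k-2})$ in $\Psi_u$) and to the $(k-1)$-tuple $(u,y_1,y_2,\dots,y_{k-2})\in V(\Psi_w)^{k-1}$ (whose conclusion is $\zeta^{3}$-leftconnectability of $(y_1,\dots,y_{k-2})$ in $\Psi_w$).

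To estimate $e(\mathcal{A}_{u,w})$, the $(k-2)$-degree hypothesis on $\Psi$ supplies at least $(5/9+\alpha-o(1))(n-2)^{k-2}$ ordered tuples satisfying the edge condition, while Fact~\ref{r:218} applied to the appropriate link graphs shows that each of the two robustness conditions is violated by at most $(1/9+o(1))(n-2)^{k-2}$ further tuples; hence $e(\mathcal{A}_{u,w})\ge(n-2)^{k-2}/3$ for $n$ large. Lemma~\ref{l:217} applied to $\mathcal{A}_{u,w}$ with density $d=1/3$ and walks of $r=2k-4$ vertices then furnishes at least $(n-2)^{2k-4}/3^{k-1}$ walks $(q_1,\dots,q_{2k-4})$ in $\mathcal{A}_{u,w}$, each automatically fulfilling (i) and (ii). Applying Lemma~\ref{l:214} at threshold $\zeta^{3}$ in $\Psi_u$ to the $(k-1)$-tuple $(w,q_{k-2},q_{k-3},\dots,q_1)$ (whose hypotheses are guaranteed by the first two conditions defining the first edge $\{q_1,\dots,q_{k-2}\}$ of the walk) then provides $\zeta^{3}$-rightconnectability of $(q_1,\dots,q_{k-2})$ in $\Psi_u$ save for at most $(k-3)\zeta^{3}(n-1)^{k-1}$ bad initial configurations per $u$; since each bad configuration extends to at most $n^{k-2}$ walks in $\mathcal{A}_{u,w}$, summing over $u\in U$ bounds the number of walks violating (iii) by $(k-3)\zeta^{3}n^{2k-2}$, and the symmetric argument with $\Psi_w$ yields the same bound for (iv).

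Summing the walk counts over the at least $|U||W|-n\ge\zeta^{2}n^{2}/2$ admissible pairs $(u,w)$ and subtracting the two bad-walk bounds, the number of $(2k-2)$-tuples satisfying all of (i)--(iv) is at least
\[
\frac{\zeta^{2}n^{2k-2}}{2\cdot 3^{k-1}}-2(k-3)\zeta^{3}n^{2k-2}-o(n^{2k-2}).
\]
The main obstacle is the final arithmetic: one must verify that for $\zeta\le 1/3^{k+2}$ this expression is at least $\zeta^{3}n^{2k-2}$. To close the gap I would exploit the extra $\alpha$-slack in the density bound, which strengthens $e(\mathcal{A}_{u,w})$ to $\ge(1/3+\alpha)(n-2)^{k-2}$ and thereby the walk count to $(1/3+\alpha)^{k-1}(n-2)^{2k-4}$; together with $\zeta\le 1/3^{k+2}$ this settles the comparison for $n\ge n_0$.
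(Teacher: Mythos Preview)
Your strategy is structurally sound and closely parallels the paper's, but the final arithmetic does not close for all admissible parameters. Applying Lemma~\ref{l:214} directly in the $(k-1)$-uniform constellation $\Psi_u$ produces a factor $k-3$ in the exception count, so you end up subtracting $2(k-3)\zeta^{3}n^{2k-2}$ from a main term of order $\tfrac{\zeta^{2}}{2\cdot 3^{k-1}}n^{2k-2}$. The required inequality $\tfrac{1}{2\cdot 3^{k-1}}\ge (2k-5)\zeta$ forces $\zeta\le\tfrac{1}{2(2k-5)3^{k-1}}$, whereas the hypothesis only guarantees $\zeta\le\tfrac{1}{3^{k+2}}=\tfrac{1}{27\cdot 3^{k-1}}$; for $k\ge 10$ the latter is the weaker bound and the subtraction is negative at $\zeta=3^{-(k+2)}$. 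Your $\alpha$-slack multiplies the main term by $(1+3\alpha)^{k-1}$, but the lemma is stated for every fixed $\alpha>0$: with $k=10$ and $\alpha=10^{-3}$ one has $(1+3\alpha)^{9}\approx 1.027$, which is not enough to lift $\tfrac{27}{2}$ above $2k-5=15$.

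The paper avoids the $(k-3)$ factor by a two-stage lift. First it applies Lemma~\ref{l:219} with $r=k-1$ to the $(k-2)$-uniform constellation $\Psi_{uw}$, obtaining at least $6\zeta n^{2k-4}$ walks $q_1\cdots q_{2k-4}$ whose end-$(k-3)$-tuples $(q_1,\dots,q_{k-3})$ and $(q_k,\dots,q_{2k-4})$ are $\zeta^{3}$-connectable \emph{in $\Psi_{uw}$}; the Lemma~\ref{l:214} losses are absorbed there into the constant $3^{-k}$. Second, connectability is lifted from $\Psi_{uw}$ to $\Psi_w$ by unwinding a \emph{single} layer of Definition~\ref{d:210}: if $(q_k,\dots,q_{2k-4})$ is $\zeta^{3}$-leftconnectable in $(\Psi_w)_u$ and $\{u,q_{k-1},\dots,q_{2k-4}\}\in E(\Psi_w)$, then by definition $u\in U^{\Psi_w}_{(q_{k-1},\dots,q_{2k-4})}$; whenever~\ref{it:2204} fails this set has fewer than $\zeta^{3}n$ elements, so at most $\zeta^{3}n$ vertices $u$ are bad for each fixed $(w,q_1,\dots,q_{2k-4})$. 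This yields the $k$-independent exception bound $\zeta^{3}n^{2k-2}$ per side, and $3\zeta^{3}-2\zeta^{3}=\zeta^{3}$ suffices.
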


\begin{proof}
	Assuming that $n_0$ has been chosen sufficiently large for the subsequent arguments, 
	we commence by considering the $(2k-2)$-tuples $(u, q_1, \dots, q_{2k-4}, w)\in V(\P)^{2k-2}$ 
	satisfying~\ref{it:2201},~\ref{it:2202} as well as the conditions
	\begin{enumerate}[label=\rmlabel]
			\setcounter{enumi}{4}
			\item\label{it:2205} $(q_1,\dots, q_{k-3})$ is $\zeta^3$-rightconnectable in $\P_{uw} $,
			\item\label{it:2206} $(q_k,\dots, q_{2k-4})$ is $\zeta^3$-leftconnectable in $\P_{uw}$.
	\end{enumerate}

	First of all, by $|U|, |W| \ge \zeta n$ and $n\ge n_0\ge 2/\zeta$ there are at 
	least $\frac 12 \zeta^2n^2$ pairs $(u,w)$ in $U\times W$ with $u\neq w$. 
	For each of these pairs Fact~\ref{f:linkconst} tells us that $\P_{uw}$ is 
	a $(k-2)$-uniform $(\al, \frac{\al}{9})$-constellation. Applying the 
	case $r=k-1$ of Lemma~\ref{l:219} to this constellation we learn that the number 
	of $(2k-4)$-tuples $(q_1, \dots, q_{2k-4})\in V(\Psi_{uw})^{2k-4}$
	obeying~\ref{it:2202},~\ref{it:2205}, and~\ref{it:2206} is at  
	least $\frac{1}{3^k}(n-2)^{2k-4}\ge\frac{6}{3^{k+2}}n^{2k-4}\ge 6\zeta n^{2k-4}$.  		
		
	Summarising, the number of $(2k-2)$-tuples $(u, q_1, \dots, q_{2k-4}, w)$ 
	satisfying \ref{it:2201},~\ref{it:2202},~\ref{it:2205}, and~\ref{it:2206}
	is at least $\frac 12 \zeta^2n^2\cdot 6\zeta n^{2k-4}=3\zeta^3n^{2k-2}$. 
	So it suffices to prove that among 
	all $(2k-2)$-tuples $(u, q_1, \dots, q_{2k-4}, w)\in V(\P)^{2k-2}$ there are 
	\begin{enumerate}[label=\nlabel]
		\item\label{it:1} at most $\zeta^3n^{2k-2}$ 
				with~\ref{it:2202},~\ref{it:2205},~$\neg$\ref{it:2203}
		\item\label{it:2} and at most $\zeta^3n^{2k-2}$ 
				with~\ref{it:2202},~\ref{it:2206},~$\neg$\ref{it:2204}.
	\end{enumerate}

	For reasons of symmetry we only need to establish~\ref{it:2}.  
	To this end it is enough to check that for fixed vertices $w, q_1, \dots, q_{2k-4}\in V(\Psi)$ 
	the number of vertices $u$ such that 
	\begin{itemize}
		\item $\{u, q_{k-1}, \dots, q_{2k-4}\}\in E(\P_w)$\,,
		\item \ref{it:2206}, but $\neg$\ref{it:2204}.
	\end{itemize}
	is at most $\zeta^3n$. Now by Definition~\ref{d:210}, the first bullet, and~\ref{it:2206} 
	these vertices satisfy $u\in U^{\P_w}_{(q_{k-1}, \dots, q_{2k-4})}$ 
	and by~$\neg$\ref{it:2204} the latter set has size at most $\zeta^3|V(\Psi_w)|$.
\end{proof}

The last lemma of this subsection will help us to exchange arbitrary vertices by `absorbable'
ones in Section~\ref{sec:Abpa}. Roughly speaking it asserts that 
for $\mu\ll \alpha, k^{-1}$, with few exceptions, the links of two vertices in 
a $k$-uniform $(\alpha, \mu)$-constellation intersect in a substantial number of connectable
$(k-1)$-tuples. 

\begin{lemma}\label{l:221}
	Given $k\ge 3$ and $\al > 0$ set $\mu = \frac 1{10k} \big(\frac \al 2\big)^{2^{k-3}+1}$.
	If $\Psi$ denotes a $k$-uniform $(\al, \mu)$-constellation on $n$ vertices 
	and $\zeta > 0$
	is arbitrary, then there is a set $X\subseteq V(\P)$ of size $|X|\le \frac{\zeta}{\mu}n$ 
	such that for every $a\in V(\P)$ and every $x\in V(\P)\setminus (X\cup \{a\})$ 
	the number of $\zeta$-connectable $(k-1)$-tuples $(x_1, \dots, x_{k-1})$ 
	with $\{x_1, \dots, x_{k-1}\}\in E(\Psi_a)\cap E(\P_x)$ is at least $\mu |V(\P)|^{k-1}$ . 
\end{lemma}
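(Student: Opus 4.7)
My plan is to count $(k+1)$-tuples $(a,\seq y,x)\in V(\Psi)^{k+1}$ with $\{a\}\cup\seq y, \{x\}\cup\seq y \in E(\Psi)$ such that $\seq y = (y_1,\dots,y_{k-1})$ is $\zeta$-connectable in $\Psi$, and then extract $X$ by an averaging argument in which the ``failure rate'' of $\zeta$-connectability is distributed across $x\in V(\Psi)$.

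To enforce $\zeta$-connectability of $\seq y$ I would attach to each such $(k+1)$-tuple the two \emph{robustness predicates} $y_{k-1}\in V(R^{\Psi}_{a y_1\dots y_{k-3}})$ and $y_1\in V(R^{\Psi}_{x y_3\dots y_{k-1}})$. Lemma~\ref{l:214} applied to the $k$-tuple $(a,y_1,\dots,y_{k-1})$ (for leftconnectability) and to the reversed $k$-tuple $(x,y_{k-1},\dots,y_1)$ (for rightconnectability) then shows that at most $2(k-2)\zeta n^{k+1}$ tuples satisfying both edge conditions and both predicates have $\seq y$ fail to be $\zeta$-connectable. So it suffices to produce at least $C n^{k+1}$ such \emph{robust} $(k+1)$-tuples, with $C$ suitably larger than $\zeta$.

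For the counting, the number of $(k+1)$-tuples satisfying only the two edge conditions equals (up to orderings) $(k-1)!\sum_{S\in V(\Psi)^{(k-1)}}d_H(S)(d_H(S)-1)$, which is at least $c_1 n^{k+1}$ by Cauchy--Schwarz from $\delta_{k-2}(H(\Psi))\ge(\tfrac59+\alpha)\tfrac{n^2}{2}$. The two robustness predicates survive for a positive proportion of these walks, because Definition~\ref{d:217}\ref{it:217a} gives $|V(R^{\Psi}_S)|\ge(\tfrac23+\tfrac\alpha2)n$ for every $(k-2)$-set $S$, and Fact~\ref{fact:neu} confines the high-degree exceptions outside $V(R^{\Psi}_S)$ to a set of size $\le \tfrac{2\mu}{\alpha}n$. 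Iterating this argument through the $(k-2)$-uniform link $\Psi_{\{a,x\}}$ --- which by Fact~\ref{f:linkconst} is itself a constellation satisfying the hypotheses of Lemma~\ref{l:24} for $k\ge 4$; for $k=3$ one uses Corollary~\ref{c:23} directly --- accounts for the exponent $2^{k-3}+1$ in the formula for~$\mu$.

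The main obstacle is the extraction of $X$ with the sharp linear size $\zeta n/\mu$, stronger than what raw averaging over pairs $(a,x)$ delivers. I would restrict to \emph{fat} tuples $\seq y$ --- those with codegree $|N_H(\{y_1,\dots,y_{k-1}\})|\ge(\tfrac23+\tfrac\alpha 4)n$ --- and set $F(x):=\{\seq y \colon \seq y\text{ is fat, $\zeta$-connectable, and }\{x\}\cup\seq y\in E(\Psi)\}$. The key Markov step is that the total count, summed over $x\in V(\Psi)$, of fat $(k-1)$-tuples in $E(\Psi_x)$ that fail to be $\zeta$-connectable is at most $O(\zeta n^k)$ --- since each such $\seq y$ is counted at most $n$ times --- and hence the set $X$ of those $x$ for which the per-$x$ count exceeds $\mu n^{k-1}$ has $|X|\le\zeta n/\mu$. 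For $x\notin X$, every $\seq y\in F(x)$ lies in $E(\Psi_a)$ except when $a$ falls in the small codegree complement $V\setminus N_H(\{y_1,\dots,y_{k-1}\})$ of size $\le(\tfrac13-\tfrac\alpha 4)n$, and combining this uniform codegree control with the individual vertex bound $d_H(a)\ge c_3 n^{k-1}$ (a consequence of $\delta_{k-2}$) yields $|W_{a,x}|\ge \mu n^{k-1}$ for every $a$. The delicate balance between the fatness threshold and the individual degree bound is what ultimately forces the particular constants in $\mu=\tfrac{1}{10k}(\alpha/2)^{2^{k-3}+1}$.
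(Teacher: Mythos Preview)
Your proposal has a genuine gap at the ``key Markov step''. You assert that the total count, summed over $x$, of fat $(k-1)$-tuples in $E(\Psi_x)$ that fail to be $\zeta$-connectable is $O(\zeta n^k)$ because ``each such $\seq y$ is counted at most $n$ times''. This presupposes that only $O(\zeta n^{k-1})$ fat $(k-1)$-tuples fail to be $\zeta$-connectable, but fatness (a codegree condition) has no direct bearing on connectability (a recursive constellation property). Lemma~\ref{l:214} only bounds non-leftconnectable tuples that \emph{also} carry a robustness predicate $x_k\in V(R^\Psi_{x_1\dots x_{k-2}})$, and this predicate must involve the summation variable if you want to extract $X$ by Markov. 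Your proposed predicate $y_{k-1}\in V(R^{\Psi}_{ay_1\dots y_{k-3}})$ for leftconnectability is tied to $a$, not $x$, so summing over $x$ gains nothing; and the final sentence (``combining this uniform codegree control with the individual vertex bound $d_H(a)\ge c_3 n^{k-1}$'') only yields an average over $a$, not the pointwise ``for every $a$'' the lemma demands.

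The paper resolves exactly this asymmetry. It defines two exceptional sets $A_x$ and $B_x$, where $B_x$ is built from \emph{longer} walks $x_1\dots x_{k-1}xx_{k+1}\dots x_{2k-3}$ so that the robustness predicate $x_{k-1}\in V(R^\Psi_{xx_{k+1}\dots x_{2k-3}})$ involves $x$ rather than $a$; this is precisely what Lemma~\ref{l:215} is for, and it is what lets Markov produce an $X$ independent of $a$. For the pointwise lower bound with $a$ fixed, the paper does not rely on inclusion--exclusion in $E(\Psi_a)\cap E(\Psi_x)$ combined with fatness; instead it applies Lemma~\ref{l:24} to the $(k-2)$-uniform pair $H(\Psi_{xx_{k-1}})$, $H(\Psi_{ax_{k-1}})$ for each $x_{k-1}$, which simultaneously manufactures the edge in $E(\Psi_a)$, the edge in $E(\Psi_x)$, and the degree condition needed (via Fact~\ref{fact:neu}) to force the $x$-based robustness predicate. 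This is where the constant $(\alpha/2)^{2^{k-3}}$ actually enters; in your sketch it is merely asserted.
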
		

\begin{proof}
	Set 
	\begin{equation}\label{eq:130}
		\eta = \frac{1}{10}\Big(\frac \al 2\Big)^{2^{k-3}}
	\end{equation}
	and $V=V(\Psi)$. Since $\mu=\frac{\alpha\eta}{2k}$, we have 		
	\begin{equation}\label{eq:1721}
		\max\Big\{\frac{2\mu}{\al},\, 2k\mu \Big\} \le \eta\,.
	\end{equation}

	\smallskip
	 
	\noindent {\bf The choice of $X$.} With every $x\in V$ we shall associate two exceptional sets,
	the idea being that on average these sets can be proved to be small. So there will only be
	few vertices for which one of the exceptional sets is very large and these `unpleasant vertices'
	will form the set $X$. 
	For every vertex not belonging to $X$, we will then be able to show that its link constellation
	intersect the link constellations of all other vertices in the desired way. 

	For an arbitrary $x\in V$ the first of the exceptional sets $A_x$ consists of 
	all $(k-1)$-tuples $(x_1, \dots, x_{k-1})\in V^{k-1}$ satisfying 
	\begin{itemize}
		\item $\{x_1,\dots, x_{k-1}, x\}\in E(\P)$
		\item and $x_1\in V(R^{\P}_{x_3 \dots x_{k-1} x})$
		\item that fail to be $\zeta$-rightconnectable in $\P$. 
	\end{itemize}
	We would like to point out that the second bullet does not involve the vertex $x_2$. Moreover,
	in the special case $k=3$ the condition just means that $x_1\in V(R^\Psi_x)$.
	
	The second exceptional set $B_x$ comprises all $(2k-4)$-tuples 
	$(x_1, \dots, x_{k-1}, x_{k+1}, \dots, x_{2k-3})$ in $V^{2k-4}$ such that 
	\begin{itemize}
		\item $x_1\dots x_{k-1} xx_{k+1}\dots x_{2k-3}$ is a walk in $H(\P)$
		\item and $x_{k-1}\in V(R^{\P}_{xx_{k+1} \dots x_{2k-3}})$,
		\item for which $(x_1, \dots, x_{k-1})$ fails to be $\zeta$-leftconnectable in $\P$.
	\end{itemize}
	Now we define 
	\begin{align*}
		X' &= \bigl\{x\in V\colon |A_x| > 2k\mu |V|^{k-1}\bigr\}\,, \\
		X'' &= \bigl\{x\in V\colon |B_x| > 2k\mu |V|^{2k-4}\bigr\},
	\end{align*}
	and set $X=X'\cup X''$. By Lemma~\ref{l:214} and double counting we have
	\[
		2k\mu |X'||V|^{k-1}\le \sum_{x\in X'}|A_x|\le (k-2)\zeta|V|^k\,,
	\]
	whence $|X'|\le \frac{\zeta}{2\mu}|V|$. Similarly, Lemma~\ref{l:215} yields 
	\[
		2k\mu |X''||V|^{2k-4}\le \sum_{x\in X''}|B_x|\le (k-2)\zeta|V|^{2k-3}\,,
	\]
	which shows that $|X''|\le \frac{\zeta}{2\mu}|V|$ holds as well. Altogether we arrive 
	at the desired estimate 
	\[
		|X|\le |X'|+|X''|\le \frac{\zeta}{\mu}|V|\,.
	\]

	For the rest of the proof we fix two distinct vertices $a, x\in V$ with $x\not\in X$. 
	We are to show that the number of $\zeta$-connectable $(k-1)$-tuples 
	$(x_1, \dots, x_{k-1})$ such that 
	\[
		\{x_1, \dots, x_{k-1}\}\in E(\P_a)\cap E(\P_x)
	\]
	is at least $\mu|V|^{k-1}$. The smallest case $k=3$ receives a separate treatment. 
	
	\smallskip
	
	\noindent{\bf The special case $k=3$.}
	We know that both of the graphs $H(\Psi_a)$ and $H(\Psi_x)$ have at 
	least $\bigl(\frac 59 + \al\bigr)\frac{n^2}{2}$ edges and thus
	they have at least $\bigl(\frac 19 + 2\al\bigr)\frac{n^2}{2}$ edges in common.
	Owing to Fact~\ref{r:218} this shows that $H(\Psi_a)$ and $R_x^\Psi$ have at least $\al n^2$ 
	common edges or, in other words, that there are at least $2\al n^2$ ordered pairs $(x_1, x_2)$ 
	such that $x_1x_2\in E(\P_a) \cap E(R^\Psi_x)$. 
	Due to $|A_x|\le 6\mu n^2$ at most $6\mu n^2$ of these pairs fail to be $\zeta$-rightconnectable. 
	By symmetry, at most the same number of pairs under consideration fails to 
	be $\zeta$-leftconnectable. Altogether, this demonstrates that among the ordered 
	pairs $(x_1, x_2)$ with $x_1x_2\in E(\P_a)\cap E(\P_x)$ there are at least $(2\al - 12\mu)n^2$ 
	which are $\zeta$-connectable. Because of $\mu=\frac{\alpha^2}{120}<\frac\alpha7$ this is more 
	than what we need. 
	
	\smallskip
	
	\noindent{\bf The general case $k\ge 4$.} Our first goal is to count $\zeta$-leftconnectable 
	$(k-1)$-tuples in the intersection of $H(\Psi_a)$ and $H(\Psi_x)$ that satisfy a certain 
	minimum degree condition. 
	
	\begin{claim}\label{clm:neu}
		The number of $\zeta$-leftconnectable $(k-1)$-tuples $(x_1, \dots, x_{k-1})$ such that
		 \begin{enumerate}[label=\nlabel]
			\item\label{it:61} $\{x_1, \dots, x_{k-1}\}\in E(\P_a)\cap E(\P_x)$
			\item\label{it:62} and $d(x_2, \dots, x_{k-1}, x)\ge \frac{n-2}3$
		\end{enumerate}
		is at least $3\eta n^{k-1}$. 
	\end{claim} 
	
	\begin{proof}
		For every vertex $x_{k-1}\in V\setminus \{a,x\}$ we apply Lemma~\ref{l:24} to 
		the $(k-2)$-uniform hypergraphs $H(\P_{xx_{k-1}})$ and $H(\P_{ax_{k-1}})$. 
		This yields a lower bound on the number of $(2k-4)$-tuples 
		\[
			(x_1, \dots, x_{k-1}, x_{k+1}, \dots, x_{2k-3})\in V^{2k-4}
		\]
		such that
		\begin{enumerate}[label=\alabel]
			\item\label{it:1a} $x_1\dots x_{k-1}x x_{k+1}\dots x_{2k-3}$ is a walk in $H(\P)$
			\item\label{it:1b} $\{x_1, \dots, x_{k-1}\}\in E(\P_a)$
			\item\label{it:1c} $d(x_2, \dots, x_{k-1}, x)\ge \frac{n-2}3$
			\item\label{it:1d} and $d(x_{k-1}, x, x_{k+1}, \dots, x_{2k-3})\ge \frac{n-2}3$.
		\end{enumerate}
		Notably, there are $n-2$ possibilities for $x_{k-1}$ and for each of 
		them Lemma~\ref{l:24} yields 
		\[
			  \Big(\frac \al 2\Big)^{2^{(k-2)-1}}n^{2(k-2)-1}
			  \overset{\eqref{eq:130}}{=}
			  10\eta n^{2k-5}
		\]
		possibilities for remaining $2k-5$ vertices. 
		\enlargethispage{1.5em}
		Therefore the number of $(2k-4)$-tuples 
		\[
			(x_1, \dots, x_{k-1}, x_{k+1}, \dots, x_{2k-3})\in V^{2k-4}
		\]
		satisfying~\ref{it:1a}\,--\,\ref{it:1d} is at least $10\eta (n-2)n^{2k-5}$. 		
		
		Because of the minimum $(k-2)$-degree condition $\Psi$ needs to have at least one 
		edge, whence $n\ge k\ge 4$. As this implies $n-2\ge \frac12 n$, the total number 
		of $(2k-4)$-tuples satisfying~\ref{it:1a}\,--\,\ref{it:1d} is at least $5\eta n^{2k-4}$.
 		
		In view of~\ref{it:1d} and Fact~\ref{fact:neu} applied 
		to $\{x, x_{k+1}, \ldots, x_{2k-3}\}$ here in place of $x$ there 
		we know that all but at most $\frac{2\mu}{\al}n^{2k-4}$ of these $(2k-4)$-tuples 
		satisfy
		\begin{enumerate}[label=\alabel, resume]
			\item\label{it:1e} $x_{k-1}\in V(R^\P_{xx_{k+1}\dots x_{2k-3}})$.
		\end{enumerate}

		Now $x\notin X''$ yields $|B_x|\le 2k\mu n^{2k-4}$. So at most $2k\mu n^{2k-4}$ of 
		the $(2k-4)$-tuples satisfying~\ref{it:1a} and~\ref{it:1e} violate
		\begin{enumerate}[label=\alabel, resume]
			\item\label{it:1f} $(x_1,\dots, x_{k-1})$ is $\zeta$-leftconnecctable.
		\end{enumerate}

		Summarising, the number of $(2k-4)$-tuples satisfying~\ref{it:1a}\,--\,\ref{it:1f} 
		is at least 
		\[
			(5\eta-\tfrac{2\mu}\alpha-2k\mu)n^{2k-4}
			\overset{\eqref{eq:1721}}{\ge} 
			3\eta n^{2k-4}\,.
		\]
		Ignoring the vertices $x_{k+1}, \ldots, x_{2k-3}$ as well as the 
		conditions~\ref{it:1d},~\ref{it:1e} we arrive at the desired conclusion. 
	\end{proof}
	
	Now we keep working with the $\zeta$-leftconnectable $(k-1)$-tuples 
	satisfying~\ref{it:61} and~\ref{it:62} obtained in Claim~\ref{clm:neu}. 
	According to~\ref{it:62} and  Fact~\ref{fact:neu} applied $\{x_3,\dots, x_{k-1}, x\}$ 
	here in place of~$x$ there all but at most $\frac{2\mu}\alpha n^{k-1}$ of them have the property  
	\begin{enumerate}[label=\nlabel]
		\setcounter{enumi}{2}
		\item\label{it:63} $x_2\in V(R^\Psi_{x_3\dots x_{k-1}x})$.
	\end{enumerate}
	Moreover, by Definition~\ref{d:217}~\ref{it:217b} applied to 
	the $(k-2)$-set $\{x_3,\dots, x_{k-1}, x\}$ at most $\mu n^{k-1}$ tuples of length $k-1$
	satisfy~\ref{it:61} and~\ref{it:63} but not 
	\begin{enumerate}[label=\nlabel, resume]
		\item\label{it:64} $x_1\in V(R^\Psi_{x_3\dots x_{k-1}x})$.
	\end{enumerate}
	Finally, $x\notin X'$ implies $|A_x|\le 2k\mu n^{k-1}$, so among 
	the $\zeta$-leftconnectable $(k-1)$-tuples 
	satisfying~\ref{it:61}\,--\,\ref{it:64} there are at most $2k\mu n^{k-1}$ 
	for which
	\begin{enumerate}[label=\nlabel, resume]
		\item\label{it:65} $(x_1, \dots, x_{k-1})$ is $\zeta$-rightconnectable
	\end{enumerate}
	fails. In particular, the number 
	of $\zeta$-leftconnectable $(k-1)$-tuples $(x_1, \dots, x_{k-1})$ 
	with~\ref{it:61} and~\ref{it:65} is at 
	least 
	\[
		\left(3\eta-\frac{2\mu}\alpha-\mu-2k\mu\right)n^{k-1} 
		\overset{\eqref{eq:1721}}{\ge} 
		\mu n^{k-1}\,. 
	\]
	Altogether this shows that the number of $(k-1)$-tuples $(x_1, \dots, x_{k-1})$
	that are $\zeta$-leftconnect\-able, $\zeta$-rightconnectable, and satisfy 
	$\{x_1, \ldots, x_{k-1}\}\in E(\P_a)\cap E(\P_x)$ is at least $\mu n^{k-1}$. 
	In view of Definition~\ref{d:212}\ref{it:d212b} this concludes the proof of Lemma~\ref{l:221}. 
\end{proof}

The `connectable' edges in $E(\Psi_a)\cap E(\Psi_x)$ considered in the previous lemma can be used to build paths. 

\begin{cor}\label{c:smallabs}
	For given $k\ge 3$ and $\al > 0$ there exists a natural number $n_0$ such that 
	if $\mu = \frac 1{10k} \big(\frac \al 2\big)^{2^{k-3}+1}$, $\Psi$ is 
	a $k$-uniform $(\al, \mu)$-constellation on $n\ge n_0$ vertices, and $\zeta > 0$ 
	then there exists a set $X\subseteq V(\P)$ with $|X|\le \frac{\zeta}{\mu}n$ such that 
	the following holds. For every $a\in V(\P)$ and every $x\in V(\P)\setminus (X\cup \{a\})$ 
	the number of $(k-1)$-uniform paths $b_1b_2\dots b_{2k-2}$ in $H(\P_a)\cap H(\P_x)$
	such that $(b_1, \dots, b_{k-1})$ and $(b_k, \dots, b_{2k-2})$ are $\zeta$-connectable in $\P$
	is at least $\frac12\mu^kn^{2k-2}$. 
\end{cor}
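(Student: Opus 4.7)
The plan is to extract, via Lemma~\ref{l:221}, an abundance of $\zeta$-connectable $(k-1)$-tuples lying in $E(\Psi_a)\cap E(\Psi_x)$, and then to assemble them into tight paths by applying the partite walk-counting Lemma~\ref{l:217}. The key structural choice will be to make the partite cycle have length $k-1$, so that walk order and partite order coincide at precisely the two edges where $\zeta$-connectability is required.

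First I would invoke Lemma~\ref{l:221} with the given $\zeta$ to produce a set $X\subseteq V(\Psi)$ with $|X|\le \zeta n/\mu$ such that for every $a\in V(\Psi)$ and every $x\in V(\Psi)\setminus (X\cup\{a\})$ the collection
\[
	C(a,x)=\bigl\{(b_1,\ldots,b_{k-1})\in V(\Psi)^{k-1}\colon\{b_1,\ldots,b_{k-1}\}\in E(\Psi_a)\cap E(\Psi_x)\text{ is $\zeta$-connectable in }\Psi\bigr\}
\]
has size at least $\mu n^{k-1}$. Fixing such a pair $a,x$, I would then form an auxiliary $(k-1)$-partite $(k-1)$-uniform hypergraph $\cB$ whose vertex classes $V_1,\ldots,V_{k-1}$ (indexed by $\ZZ/(k-1)\ZZ$) are copies of $V(\Psi)$, and whose edges are the $(k-1)$-sets $\{b_1,\ldots,b_{k-1}\}$ with $b_i\in V_i$ satisfying $(b_1,\ldots,b_{k-1})\in C(a,x)$. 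Thus $|E(\cB)|\ge \mu n^{k-1}$, so Lemma~\ref{l:217} applied with $r=2k-2$ furnishes at least $\mu^kn^{2k-2}$ walks $(b_1,\ldots,b_{2k-2})$ in $\cB$ with $b_i$ lying in the cyclically $i$-th part.

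For any such walk, the first edge $\{b_1,\ldots,b_{k-1}\}$ has walk order equal to partite order (trivially), and because the cycle has length $k-1$ the vertex $b_k$ lies back in $V_1$, so the last edge $\{b_k,\ldots,b_{2k-2}\}$ also has walk order matching partite order. Both endpoint tuples therefore lie in $C(a,x)$ and are $\zeta$-connectable in $\Psi$, while every intermediate edge, being an edge of $\cB$, is certainly in $E(\Psi_a)\cap E(\Psi_x)$; hence the walk is a tight walk in $H(\Psi_a)\cap H(\Psi_x)$ with the required endpoint connectability. The number of $(2k-2)$-tuples with a repeated vertex is at most $\binom{2k-2}{2}n^{2k-3}=O(n^{2k-3})$, which is less than $\tfrac12\mu^kn^{2k-2}$ once $n\ge n_0$, so at least $\tfrac12\mu^kn^{2k-2}$ of the walks are genuine paths.

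The main subtlety is that $\zeta$-connectability is not invariant under arbitrary permutations of a $(k-1)$-tuple, so one cannot simply pass to the unordered hypergraph $H(\Psi_a)\cap H(\Psi_x)$ (which would cost a factor of $(k-1)!$ and force a symmetrised definition). Making $\cB$ be $(k-1)$-partite and cycling through exactly $k-1$ parts is precisely what forces walk order to agree with partite order at both endpoints of the $k$-edge tight walk, so that the endpoint tuples inherit $\zeta$-connectability from their membership in $C(a,x)$, while the intermediate edges — whose partite orders are cyclic shifts of their walk orders — need only be edges of $E(\Psi_a)\cap E(\Psi_x)$, which is automatic.
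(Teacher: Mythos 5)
Your proof is correct and follows exactly the same route as the paper: invoking Lemma~\ref{l:221} for the set $X$, building a $(k-1)$-partite $(k-1)$-uniform auxiliary hypergraph on $k-1$ copies of $V(\Psi)$ whose edges encode the $\zeta$-connectable tuples in $E(\Psi_a)\cap E(\Psi_x)$, applying Lemma~\ref{l:217} with $r=2k-2$, and then discarding the $O(n^{2k-3})$ non-injective walks. The only difference is that you spell out the cyclic-indexing observation (that $b_k$ falls back into $V_1$, so both end-tuples sit in partite order) which the paper leaves implicit.
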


\begin{proof}
	Let $X$ be the set produced by Lemma \ref{l:221}. Consider two distinct vertices 
   $a, x\in V(\P)$ with $x\not\in X$. Form an auxiliary $(k-1)$-partite $(k-1)$-uniform 
   hypergraph 
   \[
   	\ccB = (V_1\dcup\dots\dcup V_{k-1}, E_\ccB)
	\]
	whose vertex classes 
   are $k-1$ disjoint copies of $V(\P)$ and whose edges $\{v_1,\dots, v_{k-1}\} \in E_\ccB$ 
   with $v_i\in V_i$ for $i\in [k-1]$ correspond to $\zeta$-connectable $(k-1)$-tuples 
   $(v_1,\dots, v_{k-1})$ such that $\{v_1, \dots, v_{k-1}\}\in E(\P_a)\cap E(\P_x)$.
	
	Lemma~\ref{l:221} tells us that 
	\[
		|E_{\ccB}|\ge \mu n^{k-1}\,.
	\]
	Thus Lemma~\ref{l:217} applied to $\ccB$ with $(k-1, \mu, 2k-2)$ here in place 
	of $(k,d,r)$ there yields at least $\mu^kn^{2k-2}$ walks $(b_1, \dots, b_{2k-2})$ in $\ccB$ 
	with $b_1\in V_1, \dots, b_{k-1}\in V_{k-1}$. By the definition of~$\ccB$ each of these 
	walks corresponds to a walk in $H(\P_a)\cap H(\P_x)$ whose first and last $k-1$ vertices
	form a $\zeta$-connectable $(k-1)$-tuple in $\Psi$. At most $O(n^{2k-3})$ of these walks 
	can have repeated vertices and, hence, there are at least
	\[
		\mu^kn^{2k-2} - O(n^{2k-3})\ge \frac{\mu^k}2 n^{2k-2} 
	\]
	paths of the desired from.
\end{proof}

\subsection{On \texorpdfstring{$(\al, \beta, \ell, \mu)$}{(alpha, beta, ell, mu)}-constellations}  

This subsection is dedicated to $(\alpha, \mu)$-constellations~$\Psi$ whose distinguished 
graphs $R^\Psi_x$ have the robustness property delivered by Proposition~\ref{prop:robust}.

\begin{dfn}\label{d:222}
	Let $k\ge 2$, $\al, \beta, \mu >0$ and let $\ell\ge 3$ be odd. 
	A $k$-uniform constellation~$\P$ is said to be an $(\al, \beta, \ell, \mu)$-constellation if 
	\begin{enumerate}[label=\alabel]
		\item\label{it:222a} it is an $(\al, \mu)$-constellation,
		\item\label{it:222b} and for all $x\in V(\P)^{(k-2)}$ and all distinct $y,z\in V(R^\P_x)$ 
			the number of $y$-$z$-paths in~$R^\Psi_x$ of length $\ell$ is 
			at least $\beta |V(\P)|^{\ell-1}$.
	\end{enumerate}
\end{dfn}

The main result of this subsection shows how to expand sufficiently large $k$-uniform
hypergraphs whose minimum $(k-2)$-degree is at least $\bigl(\frac59+\alpha\bigr)\frac{n^2}2$
for appropriate choices of the parameters to such $(\al, \beta, \ell, \mu)$-constellations.
Essentially, the proof of this observation proceeds by applying Proposition~\ref{prop:robust} 
to all link graphs.

\begin{fact}\label{l:223}
	For all $k\ge 2$ and $\al, \mu >0$ there exist $\beta=\beta(\al,\mu)>0$ and an odd 
	integer $\ell=\l(\al,\mu)\ge 3$ such that for sufficiently large $n$ every $k$-uniform 
	$n$-vertex hypergraph $H$ with $\delta_{k-2}(H)\ge \bigl(\frac 59 + \al\bigr)\frac{n^2}{2}$ 
	expands to an $(\al, \beta, \ell, \mu)$-constellation.
\end{fact}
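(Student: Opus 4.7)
The plan is to apply Proposition~\ref{prop:robust} individually to each of the $\binom{n}{k-2}$ link graphs $H_x$ with $x\in V(H)^{(k-2)}$, and then to assemble the resulting subgraphs into a constellation. Concretely, given $k\ge 2$ and $\alpha,\mu>0$, invoke Proposition~\ref{prop:robust} with these values of $\alpha$ and $\mu$ to obtain $\beta_0=\beta_0(\alpha,\mu)>0$ and an odd integer $\ell=\ell(\alpha,\mu)\ge 3$, and put
\[
	\beta = \beta_0 \bigl(\tfrac{2}{3}\bigr)^{\ell-1}\,.
\]
This normalising factor compensates for the fact that in Definition~\ref{d:222}\,\ref{it:222b} the number of $y$-$z$-paths is to be compared with $|V(\Psi)|^{\ell-1}$, whereas Definition~\ref{d:robust} compares it with $|V(R_x)|^{\ell-1}$.

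Next, let $H$ be any $k$-uniform hypergraph on $n$ vertices with $\delta_{k-2}(H)\ge \bigl(\tfrac59+\alpha\bigr)\frac{n^2}{2}$, where $n$ is large enough for the $n$-vertex graph conclusion of Proposition~\ref{prop:robust} to apply. For every $x\in V(H)^{(k-2)}$ the link graph $H_x$ has vertex set $V(H)$ of size $n$ (with the $k-2$ vertices of $x$ being isolated) and at least $d_H(x)\ge\bigl(\tfrac59+\alpha\bigr)\frac{n^2}{2}$ edges. Hence Proposition~\ref{prop:robust} yields a $(\beta_0,\ell)$-robust induced subgraph $R_x\subseteq H_x$ with $|V(R_x)|\ge \bigl(\tfrac23+\tfrac{\alpha}{2}\bigr)n$ and $e_{H_x}\bigl(V(R_x),V(H)\setminus V(R_x)\bigr)\le \mu n^2$. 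Define
\[
	\Psi=\bigl(H,\bigl\{R_x\colon x\in V(H)^{(k-2)}\bigr\}\bigr)\,.
\]

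It remains to verify that $\Psi$ is an $(\alpha,\beta,\ell,\mu)$-constellation. The minimum $(k-2)$-degree condition on $H(\Psi)=H$ is the assumption of the fact, and clauses~\ref{it:217a} and~\ref{it:217b} of Definition~\ref{d:217} are precisely items~\ref{it:rc1} and~\ref{it:rc2} of Proposition~\ref{prop:robust} applied to $H_x$; thus $\Psi$ is an $(\alpha,\mu)$-constellation, giving clause~\ref{it:222a} of Definition~\ref{d:222}. For clause~\ref{it:222b}, fix $x\in V(H)^{(k-2)}$ and distinct $y,z\in V(R_x)$; the $(\beta_0,\ell)$-robustness of $R_x$ guarantees at least
\[
	\beta_0 |V(R_x)|^{\ell-1}\ge \beta_0\bigl(\tfrac23\bigr)^{\ell-1} n^{\ell-1}=\beta\, |V(\Psi)|^{\ell-1}
\]
$y$-$z$-paths of length $\ell$ inside $R_x$, as required.

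There is no substantial obstacle here: the entire argument amounts to running Proposition~\ref{prop:robust} in parallel over all $(k-2)$-sets of vertices and performing the minor rescaling of $\beta$ indicated above. The only subtleties worth remarking on are (i) that the parameters $\beta_0$ and $\ell$ produced by Proposition~\ref{prop:robust} depend only on $\alpha$ and $\mu$, so a single choice works simultaneously for all link graphs, and (ii) the cosmetic issue that $H_x$ carries $k-2$ isolated vertices, which is irrelevant because Proposition~\ref{prop:robust} is stated for arbitrary $n$-vertex graphs and does not assume any minimum degree.
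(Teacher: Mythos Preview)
Your proof is correct and follows essentially the same approach as the paper: apply Proposition~\ref{prop:robust} to each link graph $H_x$, set $\beta=\beta_0(2/3)^{\ell-1}$ to rescale from $|V(R_x)|$ to $|V(\Psi)|$, and verify the constellation axioms directly. The paper's argument is virtually identical (it writes $\beta'$ where you write $\beta_0$).
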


Notice that this result is the reason why the study of $(\al, \beta, \ell, \mu)$-constellations
conducted in the subsequent sections sheds light on Theorem~\ref{t:main}.

\begin{proof}[Proof of Fact~\ref{l:223}]
	For $\alpha$ and $\mu$ Proposition~\ref{prop:robust} delivers some constant $\beta'>0$ and 
	an odd integer $\ell\ge 3$. We contend that $\beta=(2/3)^{\ell-1}\beta'$ and $\ell$ have the 
	desired property. 
	
	To see this, we consider a sufficiently large $k$-uniform hypergraph $H$  
	on $n$ vertices satisfying $\delta_{k-2}(H) \ge \bigl(\frac 59 + \al\bigr)\frac{n^2}{2}$.	
	For every $x\in V(H)^{(k-2)}$ Proposition~\ref{prop:robust} applies to the link graph $H_x$ 
	and yields a $(\beta', \ell)$-robust induced subgraph $R_x\subseteq H_x$ satisfying 
	\begin{enumerate}[label=\rmlabel]
		\item\label{it:71} $|V(R_x)|\ge \bigl(\frac 23 + \frac{\al}{2}\bigr)n$
		\item\label{it:72} and $e_{H_x}(V(R_x), V(H)\setminus V(R_x))\le \mu n^2$.
	\end{enumerate}
	We shall show that 
	\[
		\P = \bigl(H, \bigl\{R_x\colon x\in V(H)^{(k-2)}\bigr\}\bigr)
	\]
	is the desired $(\al, \beta, \ell, \mu)$-constellation. 
	By Definition~\ref{d:217} and~\ref{it:71},~\ref{it:72} above, $\Psi$ is 
	an $(\alpha, \mu)$-constellation, meaning that part~\ref{it:222a} of Definition~\ref{d:222}
	holds. 
	
	Moving on to the second part we fix an arbitrary $(k-2)$-set $x\subseteq V(H)$
	as well as two distinct vertices $y$, $z$ of $R_x$. Since $R_x$ is $(\beta', \ell)$-robust, 
	the number of $y$-$z$-paths in $R_x$ of length~$\l$ is indeed at least 
	\[
		\beta'|V(R_x)|^{\l-1} 
			\overset{\ref{it:71}}{\ge}  
		\Big(\frac 32\Big)^{\ell-1}\beta \cdot \Big(\frac 23 + \frac{\al}{2}\Big)^{\ell-1}n^{\ell-1}
			\ge
		 \beta n^{\l-1}\,. \qedhere
	\]
\end{proof}

The remainder of this subsection deals with the question to what extent being an
$(\al, \beta, \ell, \mu)$-constellation is preserved under taking link constellations
and removing a small proportion of the vertices. Let us observe that if $\Psi$ denotes 
a $k$-uniform $(\al, \beta, \ell, \mu)$-constellation, then for each $x\in V(\P)^{(k-2)}$
the vertices in $x$ are isolated in $H_x$, which by Definition~\ref{d:222}~\ref{it:222b}
implies that they cannot be vertices of $R^\Psi_x$. 
Thus we have $V(R^\Psi_x)\cap x = \varnothing$ for each $x\in V(\P)^{(k-2)}$.

Let us now consider for some $S\subseteq V(\Psi)$ of size $|S|\le k-2$ the $(k-|S|)$-uniform
link constellation $\Psi_S$. For every $x\in V(\Psi_S)^{(k-2-|S|)}$ 
we have $R^{\Psi_S}_x=R^\Psi_{S\cup x}\setminus S=R^\Psi_{S\cup x}$. Therefore,~$\Psi_S$
inherits the property in Definition~\ref{d:222}~\ref{it:222b} from $\Psi$ and together 
with Fact~\ref{f:linkconst} this leads to the following conclusion. 

\begin{fact}\label{f:linkfullc}
	Given $k\ge 2$, $\al,\beta> 0$, $\mu'>\mu>0$ and an odd integer $\ell\ge 3$, 
	there exists a natural number $n_0$ such that the following holds. 
	
	If $\P$ is a $k$-uniform $(\al, \beta, \l, \mu)$-constellation with at least $n_0$ vertices 
	and $S\subseteq V(\P)$ consists of at most $k-2$ vertices, then the $(k-|S|)$-uniform link 
	constellation $\P_S$ is an $(\al, \beta, \l, \mu')$-constellation. \qed
\end{fact}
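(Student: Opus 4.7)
The plan is to verify the two defining clauses of Definition~\ref{d:222} for $\Psi_S$ separately, and each one reduces to something already stated.

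For clause~\ref{it:222a} I would simply invoke Fact~\ref{f:linkconst}, which, for $n_0$ large enough in terms of $k$, $\alpha$, $\mu$ and $\mu'$, asserts exactly that $\Psi_S$ is a $(k-|S|)$-uniform $(\alpha,\mu')$-constellation. Nothing further has to be checked for this part, since this is the only place where the pair $(\alpha,\mu)$ gets perturbed to $(\alpha,\mu')$.

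For clause~\ref{it:222b} the key identity is that for every $x\in V(\Psi_S)^{(k-|S|-2)}$ one has
\[
R^{\Psi_S}_x \;=\; R^\Psi_{S\cup x},
\]
as already noted in the paragraph preceding the statement. Indeed, by Definition~\ref{d:28} we have $R^{\Psi_S}_x=R^\Psi_{S\cup x}-S$, but since the vertices of $S\cup x$ are isolated in $\xoverline H_{S\cup x}$ and hence cannot lie in $R^\Psi_{S\cup x}$, the deletion of $S$ does not affect the graph at all. Given two distinct vertices $y,z\in V(R^{\Psi_S}_x)$, applying clause~\ref{it:222b} for $\Psi$ itself to the $(k-2)$-set $S\cup x$ produces at least $\beta|V(\Psi)|^{\ell-1}\ge \beta|V(\Psi_S)|^{\ell-1}$ paths of length $\ell$ from $y$ to $z$ in $R^\Psi_{S\cup x}=R^{\Psi_S}_x$, which is exactly what clause~\ref{it:222b} demands for $\Psi_S$.

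The only mild obstacle, if one can call it that, is packed into Fact~\ref{f:linkconst}: the adjustment of $\mu$ to a slightly larger $\mu'$ is forced because $|V(\Psi_S)|=|V(\Psi)|-|S|$ is a bit smaller than $|V(\Psi)|$, so the quadratic normalisations in both parts of Definition~\ref{d:217} have to be reopened and the loss absorbed. This is precisely why the hypothesis $\mu'>\mu$ (rather than $\mu'\ge\mu$) and the threshold $n_0$ are needed. By contrast, the robustness clause carries over verbatim, so $\beta$ and $\ell$ need no adjustment; this is what makes the present fact a routine companion to Fact~\ref{f:linkconst}.
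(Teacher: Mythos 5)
Your proof is correct and follows essentially the same route the paper takes: Fact~\ref{f:linkconst} handles the $(\alpha,\mu')$-constellation part, and the identity $R^{\Psi_S}_x = R^\Psi_{S\cup x}$ lets the robustness clause carry over verbatim with the observation $\beta|V(\Psi)|^{\ell-1}\ge\beta|V(\Psi_S)|^{\ell-1}$. One small notational slip: the vertices of $S\cup x$ are isolated in $H_{S\cup x}$ (where they do exist), not in $\xoverline H_{S\cup x}$ (from which they are removed); since $R^\Psi_{S\cup x}$ is an induced subgraph of $H_{S\cup x}$, it is their isolation in $H_{S\cup x}$ combined with clause~\ref{it:222b} of Definition~\ref{d:222} that rules them out of $V(R^\Psi_{S\cup x})$, exactly as the paper's preceding paragraph states.
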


Next we deal with a similar result allowing vertex deletions as well.

\begin{lemma}\label{lem:1816}
	Given $k\ge 2$, $\al, \beta, \mu >0$ and an odd integer $\ell\ge 3$ set
	\[
		\theta =\min \Big\{ \frac \al 4\,,\, \frac{\beta}{2\ell}\Big\}\,,
	\]
	and let $\P$ be a $k$-uniform $(\al, \beta, \ell, \mu)$-constellation on $n\ge 6k$ vertices. 
	If $S, X\subseteq V(\P)$ are disjoint, $|S|\le k-2$, and $|X|\le \theta n$, then $\P_S-X$ is 
	an $\bigl(\frac \al 2, \frac \beta 2, \ell, 2\mu\bigr)$-constellation. 
\end{lemma}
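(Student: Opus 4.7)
The plan is to verify the four defining conditions of an $(\alpha/2, \beta/2, \ell, 2\mu)$-constellation directly for $\Psi' := \Psi_S - X$. Write $s = |S|$ and $n' = |V(\Psi')| = n - s - |X|$. Two identities simplify the bookkeeping throughout: for each $y \in V(\Psi')^{(k-s-2)}$ one has $\Psi'_y = \Psi_{S\cup y} - X$ and $V(R^{\Psi'}_y) = V(R^{\Psi}_{S\cup y}) \setminus X$, the latter because $S \cup y$ is disjoint from $V(R^{\Psi}_{S\cup y})$ (as recorded just before Fact~\ref{f:linkfullc}).

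First I would check that $n' \ge n/\sqrt{2}$. From $n \ge 6k$ we get $s \le k-2 \le n/6$; the hypothesis $\delta_{k-2}(H(\Psi))\ge(5/9+\alpha)n^2/2$ forces $\alpha \le 4/9$, and hence $\theta \le \alpha/4 \le 1/9$; thus $n' \ge n - n/6 - n/9 = 13n/18 > n/\sqrt{2}$. This margin is precisely what allows the absorption of the factor $2$ in the target $\mu$-parameter in condition \ref{it:217b}.

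Next I would verify the three $(\alpha/2, 2\mu)$-constellation requirements of Definition~\ref{d:217} for $\Psi'$. For the minimum $(k-s-2)$-degree, each $(k-s-2)$-set $T \subseteq V(\Psi')$ extends to the $(k-2)$-set $T\cup S$ of degree at least $(5/9+\alpha)n^2/2$ in $\Psi$, and at most $|X|n \le \theta n^2$ of the pairs realising this degree can touch $X$; the choice $\theta \le \alpha/4$ then gives degree at least $(5/9+\alpha/2)(n')^2/2$ in $\Psi'$. The order bound $|V(R^{\Psi'}_y)| \ge (2/3+\alpha/4) n'$ follows analogously from $|V(R^{\Psi}_{S\cup y})| \ge (2/3+\alpha/2) n$, again via $\theta \le \alpha/4$. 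Finally, the cross-edge count in $\Psi'_y$ is bounded above by the corresponding count in $\Psi_{S\cup y}$, hence by $\mu n^2$, and $n' \ge n/\sqrt{2}$ rephrases this as at most $2\mu(n')^2$.

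The only condition that invokes the second constraint $\theta \le \beta/(2\ell)$ is robustness, and this is the one step I would flag as the main (though still routine) point. For $y\in V(\Psi')^{(k-s-2)}$ and distinct $u, v \in V(R^{\Psi'}_y)$, the $u$-$v$-paths of length $\ell$ in $R^{\Psi'}_y$ are precisely the $u$-$v$-paths of length $\ell$ in $R^{\Psi}_{S\cup y}$ whose $\ell-1$ internal vertices all avoid $X$. There are at least $\beta n^{\ell-1}$ such paths in $R^{\Psi}_{S\cup y}$ by Definition~\ref{d:222}\ref{it:222b}, and at most $(\ell-1)|X|n^{\ell-2} \le \ell\theta n^{\ell-1}$ of them meet $X$. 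The bound $\theta \le \beta/(2\ell)$ leaves at least $(\beta/2) n^{\ell-1} \ge (\beta/2)(n')^{\ell-1}$ surviving, as required. No step presents a genuine obstacle: the two halves of $\theta$ are calibrated precisely so that halving $\alpha$ and halving $\beta$ absorb the damage done by removing $X$.
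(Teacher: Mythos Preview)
Your proposal is correct and follows essentially the same route as the paper's own proof: you verify the minimum-degree condition and parts~\ref{it:217a}, \ref{it:217b} of Definition~\ref{d:217} using $\theta\le\alpha/4$, then the robustness clause~\ref{it:222b} of Definition~\ref{d:222} via the path-deletion estimate controlled by $\theta\le\beta/(2\ell)$, with the inequality $n'\ge 13n/18>n/\sqrt{2}$ absorbing the factor~$2$ in the $\mu$-parameter. The only cosmetic difference is that you make the identities $R^{\Psi'}_y=R^{\Psi}_{S\cup y}-X$ explicit up front, whereas the paper leaves them implicit in the computations.
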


\begin{proof}
	Let $\P=\bigl(H, \bigl\{R_x\colon x\in V(H)^{(k-2)}\bigr\}\bigr)$ be 
	a $k$-uniform $(\al, \beta, \ell, \mu)$-constellation on $n\ge 6k$ vertices. Recall that this 
	means 		
	\begin{equation}\label{e:m1}
		\delta_{k-2}(H) \ge \Big(\frac 59 + \al \Big)\frac{n^2}{2}\,,
	\end{equation}
	and that for every $x\in V(\P)^{(k-2)}$ the graph $R_x\subseteq H_x$ has the following 
	properties:
	\begin{enumerate}[label = \rmlabel]
		\item\label{it:m1} $|V(R_x)|\ge \bigl(\frac 23 +\frac \al 2 \bigr)n$,
		\item\label{it:m2} $e_{H_x}\bigl(V(R_x), V(\P)\setminus V(R_x)\bigr)\le \mu n^2$,
		\item\label{it:m3} and for all distinct $y,z \in V(R_x)$ the number of $y$-$z$-paths 
			in $R_x$ of length~$\ell$ is at least $\beta n^{\ell-1}$.
		\end{enumerate} 	
	
	Further, let $S, X\subseteq V(\P)$ be any disjoint sets such that $|S|\le k-2$ 
	and $|X|\le \theta n$. We are to prove that 
	\[
		\P_\star 
		= 
		\P_S-X 
		= 
		\bigl(\xoverline H_S-X, \bigl\{R_{x\cup S}- X\colon 
				x\in \bigl(V(H)\setminus (S\cup X)\bigr)^{(k-2-|S|)}\bigr\}\bigr)
	\]
	is a $(k-|S|)$-uniform $\bigl(\frac \al 2, \frac\beta2, \ell 2\mu\bigr)$-constellation,
	i.e., that its underlying hypergraph satisfies an appropriate minimum degree conditions 
	and that the distinguished subgraphs of its link graphs have properties analogous 
	to~\ref{it:m1}\,--\,\ref{it:m3}. 
	
	Because of
	\begin{align*}
		\delta_{k-|S|-2}(\xoverline H_S-X)
		&\ge
		\delta_{k-2}(H-X) 
		\ge 
		\Big(\frac 59 + \al \Big)\frac{n^2}{2}-|X|n \\
		&\ge 
		\Big(\frac 59 + \al \Big)\frac{n^2}{2}-\theta n^2 
		\ge		
		\Big(\frac 59 + \frac \al 2 \Big)\frac{n^2}{2}
		\ge 
		\Big(\frac 59 + \frac \al 2 \Big)\frac{|V(\P_\star)|^2}{2}\,,
	\end{align*}
	where we utilised $\theta\le \frac\alpha 4$ in the penultimate step, the minimum 
	degree of the hypergraph $H(\Psi_\star)=\xoverline H_S-X$ is indeed as large as we 
	need it to be. 
			
	Now let $x\in \bigl(V(\P_\star)\bigr)^{(k-2-|S|)}$ be arbitrary. 
	Since $x\dcup S \in \bigl(V(\P)\setminus X\bigr)^{(k-2)}$, the above statement~\ref{it:m1} 
	entails 
	\begin{align*}
		|V(R^{\Psi_\star}_x)|
		&=
		|V(R_{x\cup S}-  X)|
		\ge 
		\Big(\frac 23 +\frac \al 2 \Big)n - |X|\\
		&\ge 
		\Big(\frac 23 +\frac \al 2 \Big)n - \theta n
		\ge
		\Big(\frac 23 +\frac \al 4 \Big)n 
		\ge 
		\Big(\frac 23 +\frac \al 4 \Big)|V(\P_\star)|\,,
	\end{align*}
	which shows that the required variant of~\ref{it:m1} holds for $\Psi_\star$.
	 
	Next, the graph $H(\Psi_\star)_x=(\xoverline H_S-X)_x$ is a subgraph of $H_{x\cup S}$, 
	so~\ref{it:m2} tells us that 
	\[
		e_{H(\Psi_\star)_x}\bigl(V(R_x^{\Psi_\star}), V(\P_\star)\setminus V(R_x^{\Psi_\star})\bigr)
			\le e_{H_{x\cup S}}\bigl(V(R_{x\cup S}),V(\P)\setminus V(R_{x\cup S})\bigr) 
			\le \mu n^2\,.
	\]		
	From $\theta \le \frac \al 4 \le \frac 19$ and $n \ge 6k$ we conclude
	\[
		|V(\Psi_\star)|
		= 
		n-|X|-|S|
		\ge \Big(1-\frac19-\frac16\Big)n
		=
		\frac{13}{18}n
		>
		\frac n{\sqrt{2}}
	\]
	and thus we arrive indeed at
	\[
		e_{H(\Psi_\star)_x}\bigl(V(R_x^{\Psi_\star}), V(\P_\star)\setminus V(R_x^{\Psi_\star})\bigr)
		\le
		2\mu |V(\P_\star)|^2\,,
	\]
	which concludes the proof that the appropriate modification of~\ref{it:m2} holds for $\Psi_\star$.
	Altogether, we have thereby shown that $\Psi_\star$ is 
	an $\bigl(\frac \al 2, 2\mu\bigr)$-constellation.
	
	Finally we consider distinct vertices $y,z \in V(R_{x\cup S}-X)$ and recall that 
	by~\ref{it:m3} above the number of \mbox{$y$-$z$-paths} in $R_{x\cup S}$ is at 
	least $\beta n^{\ell-1}$. At most $(\ell-1)|X| n^{\ell-2} \le \frac \beta 2 n^{\ell-1}$ 
	of these paths can have an inner vertex in $X$ and, consequently, $R_{x\cup S}-X$ contains 
	at least~$\frac \beta 2 n^{\ell-1}$ such paths. Therefore $\Psi_\star$ is indeed an 
	$\bigl(\frac \al 2, \frac \beta 2, \ell, 2\mu\bigr)$-constellation.
\end{proof}

\section{The Connecting Lemma}
\label{sec:conn}

In this section we establish the Connecting Lemma (Proposition \ref{prop:clk}). Given
an $(\alpha, \beta, \ell, \mu)$-constellation with appropriate parameters 
this result allows us to connect every leftconnectable $(k-1)$-tuple to every 
rightconnectable $(k-1)$-tuple by means of a short path. In the course of proving 
Theorem~\ref{t:main} the Connecting Lemma gets used $\Omega(n)$ times and, essentially, 
it allows us to convert an almost spanning path cover into an almost spanning cycle. 
For some reasons related to our way of employing the absorption method, it will turn out to 
be enormously helpful later if we can guarantee that the number of left-over vertices outside this 
almost spanning cycle is a multiple of $k$. There are several possibilities how one might 
try to accomplish this and our approach is to prove a version of the Connecting Lemma 
with absolute control over the length of the connecting path modulo $k$. 
When closing the almost spanning cycle 
by means of a final application of the Connecting Lemma, we will then be able to prescribe 
in which residue class modulo $k$ the number of left-over vertices is going to be. 
(For a different way to handle such a situation we refer to recent work of Schacht and 
his students~\cite{Mathias}).

The following result is implicit in~\cite{R}*{Proposition 2.6} and after stating it we
shall briefly explain how it can be derived from the argument presented there.

\vbox{
\begin{prop}\label{prop:cl3}
	Depending on $\al, \beta, \zetas >0$ and an odd integer $\ell \ge 3$ there exist 
	a constant $\vartheta_\star = \vartheta_\star(\al, \beta, \ell, \zetas )>0$ and a 
	natural number $n_0$ with the following property. 
	
	If $\Psi$ is a $3$-uniform $(\alpha, \beta, \ell, \frac \al 4)$-constellation 
	on $n\ge n_0$ vertices, $\sa, \sb\in V(\P)^2$ are two disjoint pairs of  
	vertices such that $\sa$ is $\zetas$-leftconnectable and $\sb$ 
	is $\zetas$-rightconnectable, then the number of $\sa$-$\sb$-paths in $H(\P)$ 
	with $3\ell+1$ inner vertices is at least $\vartheta_\star n^{3\ell +1}$. \qed
\end{prop}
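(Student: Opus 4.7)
The plan is to derive the statement by following the Connecting Lemma of \cite{R}*{Proposition~2.6}, keeping careful track of the length of the produced walk in order to land on precisely $3\ell + 1$ inner vertices, i.e., a tight walk of length $3\ell+3$.

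The walks will be built in three pieces, each living essentially inside a single $(\beta,\ell)$-robust link graph $R^\Psi_{c_i}$ for suitable pivot vertices $c_1, c_2, c_3 \in V(\Psi)$. Leftconnectability of $\sa = (a_1, a_2)$ yields $\zetas n$ candidates for $c_1$ with $a_1 a_2 c_1 \in E(\Psi)$ and $a_2 \in V(R^\Psi_{c_1})$; symmetrically, rightconnectability of $\sb = (b_1, b_2)$ yields $\zetas n$ candidates for $c_3$ with $b_2 b_1 c_3 \in E(\Psi)$ and $b_1 \in V(R^\Psi_{c_3})$. Within each $R^\Psi_{c_i}$ the robustness property of Definition~\ref{d:222}\,\ref{it:222b} supplies $\beta n^{\ell-1}$ graph $\ell$-paths between prescribed endpoints, which one turns into contributions to a tight walk in $H(\Psi)$. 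A middle pivot $c_2$ together with a few bridging hyperedges is chosen so that the three robust segments glue together into a single tight walk from $(a_1, a_2)$ to $(b_1, b_2)$ of total length $3\ell + 3$; Corollary~\ref{c:219} and Lemma~\ref{l:219} supply the necessary abundance of bridges with controllable rightconnectable/leftconnectable endpoints.

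Counting the walks produced this way, each robust segment contributes $\beta n^{\ell-1}$ choices, each pivot $\Omega(n)$ choices, and the leftconnectability and rightconnectability conditions at the endpoints contribute $\zetas n$ choices each. Multiplying yields $\Omega(n^{3\ell + 1})$ walks of the desired form. At most $O(n^{3\ell})$ of these walks have a repeated vertex and can be discarded, leaving at least $\vts n^{3\ell + 1}$ genuine $\sa$-$\sb$-paths with $3\ell + 1$ inner vertices, where $\vts = \vts(\alpha, \beta, \ell, \zetas) > 0$ is chosen to absorb all multiplicative constants.

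The main obstacle is converting graph $\ell$-paths that live inside $R^\Psi_{c_i}$ into a coherent tight hypergraph walk. A single $\ell$-path $q_0 q_1 \ldots q_\ell$ in $R^\Psi_c$ directly only yields triples of the form $q_i q_{i+1} c$, all sharing the pivot $c$, which by themselves do not constitute a tight chain in $H(\Psi)$. One must therefore interleave the robust segments with additional bridging triples provided by the $(\alpha, \mu)$-constellation structure and arrange everything so that the resulting walk is a genuine tight hypergraph path whose length is exactly $3\ell + 3$. This interleaving and the associated length bookkeeping (needed so that the count is sharp modulo $k=3$, cf.\ the discussion preceding the proposition) are the most delicate parts of the construction.
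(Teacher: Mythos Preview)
First a framing remark: the paper does not give a self-contained proof of this proposition. It invokes \cite{R}*{Proposition~2.6} directly, and the only content of the discussion following the statement is the observation that the proof in \cite{R} uses, for each pivot $u$, only the facts $a_2\in V(R^\Psi_u)$ and $a_1a_2u\in E(\Psi)$---which is precisely what $\zetas$-leftconnectability of $(a_1,a_2)$ supplies. So ``the paper's proof'' here is a citation together with a translation of hypotheses.

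Your sketch has the right starting ingredients (leftconnectability supplies pivots, robustness supplies graph paths), but there is a genuine gap at exactly the step you yourself flag as ``the main obstacle.'' The structure you propose---three fixed pivots $c_1,c_2,c_3$, each hosting one robust $\ell$-path in its link graph---cannot be assembled into a tight $3$-uniform path. A graph path $q_0q_1\ldots q_\ell$ in $R^\Psi_c$ only certifies the hyperedges $q_iq_{i+1}c$, all sharing $c$; there is no arrangement of these into a sequence of consecutive tight triples without repeating $c$. Your proposed fix, to ``interleave with additional bridging triples provided by the $(\alpha,\mu)$-constellation structure,'' is not an argument: any such interleaving introduces new vertices, destroys the length bookkeeping, and you give no mechanism for guaranteeing those extra triples exist in the required abundance. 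Correspondingly, your multiplicative count (three factors of $\beta n^{\ell-1}$ times three pivot choices) does not land on $n^{3\ell+1}$.

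The mechanism actually used in \cite{R}---and mirrored for general $k$ in the proof of Claim~\ref{c:con} in this paper, see the convexity step~\eqref{eq:1251}---is different in kind. One does not fix a handful of pivots. One first counts pairs $(u,W)$ where $W$ is a suitable graph walk in $R^\Psi_u$, and then applies a convexity (Jensen) argument to deduce that many single walks $W$ lie simultaneously in the link graphs of $\Omega(n)$ \emph{distinct} pivots $u_1,u_2,\ldots$. Only with that supply of many interchangeable pivots can one intersperse them periodically among the vertices of $W$ to obtain a genuine tight $3$-uniform path of the required length. That averaging step is the heart of the proof, and it is missing from your outline.
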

}

Observe that the Setup 2.4 we are assuming in~\cite{R}*{Proposition 2.6} is tantamount
to an $(\alpha, \beta, \ell, \frac \al 4)$-constellation. The connectabilty assumptions
in~\cite{R} are slightly different. Writing $\seq{a}=(x, y)$ we were using 
in the proof of~\cite{R}*{Proposition 2.6} that a set called~$U_{xy}$ there, and 
defined to consist of all vertices $u$ with $xy\in E(R^\Psi_u)$, has at least the 
size~$\zeta |V(\Psi)|$. When working with vertices $u\in U_{xy}$, however, we were only 
using $y\in V(R^\Psi_u)$ and $xyu\in E(\Psi)$. For this reason, the entire proof can
also be carried out with the set called~$U^\Psi_{(x, y)}$ here, or in other words it 
suffices to suppose that $\seq{a}$ is $\zeta$-leftconnectable. Similarly, we may 
assume that $\sb$ is $\zeta$-rightconnectable rather than being $\zeta$-connectable in the 
sense of~\cite{R}. Now we introduce the function giving the number of inner vertices in our connections.

\begin{dfn}\label{d:f}
	Given integers $k\ge 3$, $0\le i < k$, and $\ell \ge 3$ we set 
	\[
		f(k,i,\ell)=[4^{k-3}(2\ell+4)-2]k+i\,.
	\]
\end{dfn}

We are now ready to state the $k$-uniform Connecting Lemma. 

\begin{prop}[Connecting Lemma]\label{prop:clk}
	For all $k\ge 3$, $\al, \beta, \zeta >0$, and odd integers $\ell\ge 3$ there 
	exist $\vartheta>0$  
	and $n_0\in \NN$ with the following property.
	
	If $\P$ is a $k$-uniform $(\al, \beta, \ell, \frac{\al}{k+6})$-constellation 
	on $n\ge n_0$ vertices, $\sa, \sb\in V(\P)^{k-1}$ are two disjoint $(k-1)$-tuples 
	such that $\sa$ is $\zeta$-leftconnectable and $\sb$ is $\zeta$-rightconnectable, 
	and $0\le i < k$, then the number of $\seq a$-$\seq b$-paths in $H(\P)$ 
	with $f=f(k,i,\ell)$ inner vertices is at least~$\vartheta n^f$. 
\end{prop}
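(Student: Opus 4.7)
The plan is to prove the Connecting Lemma by induction on $k$. For the base case $k=3$ I would use Proposition~\ref{prop:cl3} directly, which yields $3\ell+1$ inner vertices and thus residue $1\pmod 3$; to hit other residues $i\in\{0,1,2\}$ I would first use Lemma~\ref{l:219} to extend $\sb$ to the left by a short walk of $r\in\{0,1,2\}$ inner vertices (with $r\equiv i-1\pmod 3$), ending at a new $\zeta$-rightconnectable tuple $\sb'$, and then apply Proposition~\ref{prop:cl3} to connect $\sa$ to $\sb'$. The supply bound $\vartheta n^{f(3,i,\ell)}$ follows by multiplying the counting estimates from Lemma~\ref{l:219} and Proposition~\ref{prop:cl3}.

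For the inductive step $k-1\to k$: given leftconnectable $\sa$ and rightconnectable $\sb$ in $\Psi$, set $U=U^{\Psi}_{\sa}$ and $W=U^{\Psi}_{\seqq b}$, both of size at least $\zeta n$. Applying Lemma~\ref{l:220} to $(U,W)$ with a suitably small parameter in place of $\zeta$ produces at least $\zeta^3 n^{2k-2}$ bridging tuples $(u,q_1,\dots,q_{2k-4},w)$. By Fact~\ref{f:linkfullc}, both link constellations $\Psi_u$ and $\Psi_w$ are $(k-1)$-uniform $(\alpha,\beta,\ell,\alpha/(k+5))$-constellations, which is precisely the shape demanded by the inductive hypothesis, since $\alpha/(k+5)=\alpha/((k-1)+6)$. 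By the definition of $U$ the tuple $(a_2,\dots,a_{k-1})$ is $\zeta$-leftconnectable in $\Psi_u$, and by Lemma~\ref{l:220}\,\ref{it:2203} the tuple $(q_1,\dots,q_{k-2})$ is $\zeta^3$-rightconnectable in $\Psi_u$; the inductive hypothesis therefore produces a $(k-1)$-uniform path in $\Psi_u$ of $f(k-1,i_1,\ell)$ inner vertices with $i_1\pmod{k-1}$ at our disposal, and analogously in $\Psi_w$.

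The main obstacle will be assembling the two link paths and the bridge into a single $k$-uniform path in $\Psi$. A $(k-1)$-uniform walk in $\Psi_u$ lifts to a sequence of $\Psi$-edges only while the anchor $u$ remains inside the active $k$-window, which restricts the $u$-anchored segment to a bounded range of positions; the same is true for $w$ on the right. The $(2k-4)$-vertex bridge is designed precisely to carry this transition: its $k-1$ consecutive $(k-2)$-edges in $\Psi_{uw}$ yield, by adjunction of $\{u,w\}$, exactly $k-1$ $\Psi$-edges containing both anchors simultaneously, permitting $u$ to exit the window on the left as $w$ enters on the right. After verifying that the splice produces a genuine path (avoiding vertex repetitions, which is ensured by the abundant supply of bridges and of inductive paths), the total inner-vertex count becomes $f(k-1,i_1,\ell)+f(k-1,i_2,\ell)+(2k-2)$ plus a controllable constant; tuning $i_1,i_2$ modulo $k-1$ then realises any residue $i$ modulo $k$, since $\gcd(k-1,k)=1$. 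The target supply $\vartheta n^{f(k,i,\ell)}$ follows from a standard multiplicative counting over the choices of bridge, link path in $\Psi_u$, and link path in $\Psi_w$, with at most $O(n^{f(k,i,\ell)-1})$ degenerate choices absorbed into the constant.
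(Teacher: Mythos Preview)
Your outline has a genuine gap in the induction step, and a smaller one in the base case.

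\medskip

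\textbf{Base case.} A single application of Proposition~\ref{prop:cl3} produces $3\ell+1$ inner vertices, and tacking on a short walk of $r\in\{0,1,2\}$ extra vertices gives a length of order $3\ell+O(1)$. But $f(3,i,\ell)=6\ell+6+i$, so your construction is too short by roughly a factor of two. Moreover, Lemma~\ref{l:219} produces walks with connectable \emph{ends of its own choosing}; it does not let you extend a \emph{given} tuple $\sb$. The paper instead places a short walk from Lemma~\ref{l:219} in the middle and applies Proposition~\ref{prop:cl3} \emph{twice}, once on each side; this both hits the correct length and controls the residue via the length of the middle walk.

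\medskip

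\textbf{Induction step.} You correctly identify the difficulty: a $(k-1)$-uniform path $a_2\dots a_{k-1}p_1p_2\dots q_1\dots q_{k-2}$ in $\Psi_u$ only yields $k$-uniform edges of $H(\Psi)$ while $u$ sits inside the sliding $k$-window, i.e., for at most $k$ consecutive positions. But the inductive path has $f(k-1,i_1,\ell)=\Theta(4^{k-4}\ell k)$ inner vertices, so a single anchor $u$ is nowhere near enough. Your proposal then jumps to the bridge as if it resolved this, but the bridge $q_1\dots q_{2k-4}$ only handles the handover from $u$ to $w$ at \emph{one} location; it does nothing for the long stretch of $p$-vertices that needs to be covered by $k$-uniform edges.

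The missing idea, which drives the whole proof in the paper, is to use \emph{many} anchors $u_1,\dots,u_s$ (and $w_1,\dots,w_s$), one inserted after every block of $k-1$ consecutive $p$-vertices, so that the $j$-th block is anchored by $u_j$. For this to make sense every $u_j$ must individually witness the \emph{same} $(k-1)$-uniform walk and the \emph{same} bridge. The paper obtains this by building an auxiliary $3$-partite $3$-uniform hypergraph on $U^\star\times M\times W^\star$ (with $M=V(\Psi)^{(k-1)(2s-2)}$ encoding the middle sequence $(\seq p,\seq q,\seq r)$), and then applying the convexity of $x\mapsto x^s$ to $\sum_{\seq m}|\ccA_{\seq m}|^s$ to count ordered $s$-tuples of $(u,w)$-pairs sharing a common middle. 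This averaging step is the heart of the argument, and your proposal contains no analogue of it.

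A secondary consequence is that your length count $f(k-1,i_1,\ell)+f(k-1,i_2,\ell)+(2k-2)+O(1)$ does not match $f(k,i,\ell)$; the interleaving of $s$ anchors on each side is precisely what produces the factor $4$ in the recursion $f(k,\cdot,\ell)\approx 4\cdot\frac{k}{k-1}f(k-1,\cdot,\ell)$ underlying Definition~\ref{d:f}.
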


The proof of this result occupies the remainder of this section and before we begin
we provide a short overview over the main ideas. The plan is to proceed by induction on~$k$.
When we reach a certain value of $k$, most of the work is devoted to showing the weaker 
assertion~$(\Phi_k)$ that there exists at least one number $f_\star=f_\star(k, \ell)$ 
such that the statement of the Connecting Lemma holds for connections with $f_\star$ 
inner vertices. Once we know~$(\Phi_k)$ the induction can be completed by putting 
short `connectable' walks as obtained by Lemma~\ref{l:219} in the middle and connecting 
them with two applications of~$(\Phi_k)$ to $\seq{a}$ and $\seq{b}$. 

The proof of~$(\Phi_k)$
itself is more complicated and starts by applying Lemma~\ref{l:220} to $U^\Psi_{\seq{a}}$
and~$U^\Psi_{\seqq{b}}$ here in place of $U$ and $W$ there. This yields many 
$(2k-2)$-tuples $(u, q_1, \dots, q_{2k-4}, w)$ in $V(\P)^{2k-2}$ which, after some reordering, 
have good chances to end up being middle segments of the desired connections. Applying the 
induction hypothesis to~$\Psi_u$ and $\Psi_w$ we can connect~$\seq{a}$ and~$\seq{b}$ 
by many $(k-1)$-uniform paths to these middle segments and it remains to `augment' these
connections to $k$-uniform paths, which can be done by averaging over many possibilities 
for $u$ and $w$, respectively (see Figure~\ref{fig:con}).      

\begin{proof}[Proof of Proposition~\ref{prop:clk}]
	We proceed by induction on $k$, keeping $\alpha$, $\beta$, and $\ell$ fixed.
	
	\smallskip
	
	\noindent {\bf Choice of constants.}
	Due to monotonicity (see Fact \ref{r:monoton}) we may suppose 
	that $\zeta \le \frac 1{k3^{2k}}$. By recursion on $k\ge 3$ we define for every 
	$\zeta\in\bigl(0, \frac1{k3^{2k}}\bigr]$ a positive real number $\theta(k, \zeta)$.
	Starting with $k=3$ we set 
	\[
		\theta(3, \zeta)=\zeta\bigl(\theta_\star(\alpha, \beta, \ell, \zeta)\bigr)^2
		\quad \text{ for } \quad
		\zeta\in \bigl(0, 3^{-7}\bigr]\,,
	\]
	where $\theta_\star(\alpha, \beta, \ell, \zeta)$ is given by Proposition~\ref{prop:cl3}.
	For $k\ge 4$ and $\zeta\in\bigl(0, \frac1{k3^{2k}}\bigr]$ we stipulate 
	\begin{equation}\label{eq:1253}
		\theta(k, \zeta)=\zeta^{6s+1}\bigl(\theta(k-1, \zeta^3)\bigr)^{4s}\,,
		\quad \text{where } 
		s=4^{k-4}(2\ell+4)\,.
	\end{equation}
	Our goal is to prove the Connecting Lemma with $2\theta(k, \zeta)$ playing the r\^{o}le
	of $\theta$.	

	\smallskip
	
	\noindent {\bf The base case $k=3$.}	
	Suppose that $\Psi$ is a sufficiently 
	large $3$-uniform $(\alpha, \beta, \ell, \frac\alpha9)$-constellation, 
	$i\in \{0,1,2\}$, the pair $\seq a = (a_1, a_2)\in V(\P)^2 $ is $\zeta$-leftconnectable, 
	$\seq b = (b_1, b_2)$ is $\zeta$-rightconnectable, the four vertices $a_1$, $a_2$, $b_1$,
	and $b_2$ are distinct, and $\zeta \le \frac 1{3^7}$. Lemma~\ref{l:219} applied to 
	$(3, i+2)$ here in place of $(k, r)$ there tells us that there are at 
	least $\frac{1}{3^{i+3}}n^{i+4}$ walks $x_1\dots x_{i+4}$ of length $i+2$ in $H(\P)$ 
	whose starting pair $(x_1, x_2)$ is $\zeta$-rightconnectable and whose ending 
	pair $(x_{i+3}, x_{i+4})$ is $\zeta$-leftconnectable. Among these walks at least
	\[
		\Big(\frac{1}{3^{i+3}}-\frac {4(i+4)}n \Big)n^{i+4} 
		> 
		\frac{n^{i+4}}{3^{i+4}}
		\ge 
		\frac{n^{i+4}}{3^6}
		\ge 3\zeta n^{i+4}
	\]
	avoid $\{a_1, a_2, b_1, b_2\}$. 
	
	Now for each of them two applications of Proposition~\ref{prop:cl3} 
	to the $(\al, \beta, \ell, \frac{\al}{9})$-constellation~$\P$ enable us to find
	in $H(\P)$ at least $\vartheta_\star n^{3\ell+1}$ paths $a_1a_2p_1\dots p_{3\ell+1}x_1x_2$
	and at least $\vartheta_\star n^{3\ell+1}$ paths $x_{i+3}x_{i+4}r_1\dots r_{3\ell+1}b_1b_2$
	where $\theta_\star=\theta_\star(\alpha, \beta, \ell, \zeta)$. 
	Altogether, this reasoning leads to at least $3\zeta\theta_\star^2n^f$ 
	walks 
	\[
		a_1a_2p_1\dots p_{3\ell+1}x_1x_2\dots x_{i+3}x_{i+4}r_1\dots r_{3\ell +1}b_1b_2
	\]
	with $f$ inner vertices, where
	\[
		f = 2(3\ell + 1)+(i+4) = 6\ell + 6 + i = f(3, i, \ell)\,.
	\]
	At most $f^2n^{f-1}=o(n^f)$ of these walks fail to be paths and thus the assertion 
	follows.
	
	\smallskip
	
	\noindent {\bf Induction Step.}
	Suppose $k\ge 4$ and that the Connecting Lemma is already known for~$k-1$ instead of $k$. 	
	Set 
	\begin{equation}\label{eq:mte}
		 t=2k(s-1)+2 
		\qand 
		\eta = \zeta^{3s}\bigl(\theta(k-1, \zeta^3)\bigr)^{2s}\,,
	\end{equation}
	where, let us recall, $s=4^{k-4}(2\ell+4)$ was introduced in~\eqref{eq:1253} while we chose 
	our constants.
	Following the plan outlined above, our first step is to prove a Connecting Lemma for 
	connections with~$t$ inner vertices. 
	\begin{claim}\label{c:con}
		For any two disjoint $(k-1)$-tuples $\seq a = (a_1, \dots, a_{k-1})$ 
		and  $\seq b = (b_1, \dots, b_{k-1})$ such that $\seq a$ is $\zeta$-leftconnectable 
		and $\seq b$ is $\zeta$-rightconnectable, the number of $\seq a$-$\seq b$-walks 
		with~$t$ inner vertices in $H(\P)$ is at least $2\eta n^t$. 
	\end{claim}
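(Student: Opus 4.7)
The plan is to prove Claim~\ref{c:con} by concatenating $s$ middle segments obtained from Lemma~\ref{l:220} with $2s$ link-path connections supplied by the inductive Connecting Lemma at level $k-1$, and then lifting the resulting $(k-1)$-uniform skeleton into a $k$-uniform walk in $H(\Psi)$ by averaging over the gatekeeper vertices $u_j$ and $w_j$ introduced with each middle.

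First, the connectability of $\seq a$ and $\seq b$ gives $|U^{\Psi}_{\seq a}|,|U^{\Psi}_{\seqq b}| \ge \zeta n$. Iterating Lemma~\ref{l:220} $s$ times---starting from $(U^{\Psi}_{\seq a},U^{\Psi}_{\seqq b})$ and feeding each subsequent call with size-$\zeta n$ sets derived from the boundary of the piece just attached---yields a chain of middle tuples $(u_j,q^{(j)}_1,\dots,q^{(j)}_{2k-4},w_j)$ for $j=1,\dots,s$, accounting for the factor $\zeta^{3s}$ in $\eta$. After the obvious reordering, each block
\[
  M_j \;=\; q^{(j)}_1\cdots q^{(j)}_{k-2}\;u_j\;w_j\;q^{(j)}_{k-1}\cdots q^{(j)}_{2k-4}
\]
is itself a valid $k$-uniform tight-walk segment of $H(\Psi)$: its $k-1$ interior $k$-windows each contain the pair $\{u_j,w_j\}$ and coincide, after removal of this pair, with the $(k-2)$-uniform edges of the walk $q^{(j)}_1\cdots q^{(j)}_{2k-4}$ in $H(\Psi_{u_jw_j})$ furnished by item~\ref{it:2202} of Lemma~\ref{l:220}. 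Items~\ref{it:2203}--\ref{it:2204} provide the connectability of the $q$-tuples bordering each $M_j$ inside the $(k-1)$-uniform links $\Psi_{u_j}$ and $\Psi_{w_j}$, and by Fact~\ref{f:linkfullc} these links satisfy the hypotheses of the Connecting Lemma at level $k-1$. Performing $2s$ applications of the induction hypothesis---one on each side of each $M_j$---contributes the factor $(\theta(k-1,\zeta^3))^{2s}$, and the residues $i\in\{0,\dots,k-2\}$ can be tuned so that the total number of inner vertices equals $t=2k(s-1)+2$.

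The principal difficulty is the \emph{augmentation} step, namely lifting the chain of $(k-1)$-uniform link-paths and middle blocks into a single $k$-uniform walk in $H(\Psi)$. The $k$-windows straddling a gatekeeper $u_j$ or $w_j$ correspond, after removing that gatekeeper, to edges of the adjacent $(k-1)$-uniform link-path and thus lie in $E(\Psi)$ for free; the $k$-windows strictly inside a link-path segment, however, are an additional constraint not supplied by the link. I would handle them by averaging: for each fixed configuration of inner vertices, the density condition of the $(\alpha,\frac{\alpha}{k+6})$-constellation $\Psi$ forces a positive fraction of admissible $u_j$ (respectively $w_j$) to be compatible with those remaining $k$-windows lying in $E(\Psi)$, and a double count over $(u_j,\text{link-path})$-pairs converts the product of induction counts into a count of $k$-uniform walks. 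Multiplying the contributions yields at least $\zeta^{3s}(\theta(k-1,\zeta^3))^{2s}n^t = \eta n^t$ walks, and a mild tightening of the averaging constants (or a routine pigeonholing that absorbs the $o(n^t)$ degenerate configurations) yields the required~$2\eta n^t$. The main obstacle is controlling the correlations between the successive gatekeepers along the chain so that the $s$ rounds of averaging compose without losing any essential factor.
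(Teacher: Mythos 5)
Your architecture is genuinely different from the paper's, and it breaks down in a way that the claim cannot be repaired by ``tuning the residues.'' The paper applies Lemma~\ref{l:220} \emph{once} and the level-$(k-1)$ Connecting Lemma only \emph{twice}, producing a single middle configuration $\seq m=(\seq p, \seq q, \seq r)$ with one level-$(k-1)$ connection on each side; the exponent $s$ on $\zeta^3\bigl(\theta(k-1,\zeta^3)\bigr)^2$ then comes from a convexity argument on the 3-partite auxiliary hypergraph $\ccA$: one bounds $\sum_{\seq m}|\ccA_{\seq m}|^s$ from below and thereby finds $s$ pairs $(u_1,w_1),\dots,(u_s,w_s)$ that are \emph{all} compatible with the \emph{same} $\seq m$. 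Your plan of $s$ separate middle blocks joined by $2s$ applications of the level-$(k-1)$ Connecting Lemma cannot match the required walk length. Each level-$(k-1)$ connection has $f(k-1,i,\ell)\ge (s-2)(k-1)$ inner vertices, so $2s$ of them already contribute at least $2s(s-2)(k-1)=\Theta(s^2)$ inner vertices, while $t=2k(s-1)+2=\Theta(s)$. No choice of residues $i$ can close a gap that is quadratic rather than linear in $s$.

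The augmentation step is also not merely a ``principal difficulty'' to be handled by averaging over gatekeepers; it is where the paper's single-middle structure is essential. Because all $s$ pairs $(u_j,w_j)$ lie in $\ccA_{\seq m}$ for one fixed $\seq m$, the $(k-1)$-uniform walk $\seq p$ is \emph{simultaneously} a walk in every $\Psi_{u_j}$ (and $\seq r$ in every $\Psi_{w_j}$), so inserting $u_1,\dots,u_s$ at every $(k-1)$st position of $\seq p$ makes every $k$-window an edge of $\Psi$ for free. In your scheme each link-path was produced in a fixed link $\Psi_{u_j}$ and is specific to that $u_j$, so you cannot trade $u_j$ for another gatekeeper and keep the path legal, and the density hypothesis on $\Psi$ gives no leverage: membership of an entire $(k-1)$-path in a particular link graph is a very strong global constraint, not something a positive-fraction argument over $u_j$ supplies. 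Finally, the further applications of Lemma~\ref{l:220} that you propose ``feeding with size-$\zeta n$ sets derived from the boundary of the piece just attached'' have no justification either: Lemma~\ref{l:220} gives $\zeta^3$-connectability of $(k-2)$-tuples in $\Psi_u$ and $\Psi_w$, not $\zeta$-left/rightconnectability of $(k-1)$-tuples in $\Psi$, which is what the lemma's hypothesis requires.
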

	\begin{proof}
		The connectability assumptions mean that the sets 
		\begin{align*}
			U&=\{u\in V(\Psi)\colon a_1\dots a_{k-1}u\in E(\Psi) \textrm{ and } 
				(a_2, \dots, a_{k-1}) \textrm{ is $\zeta$-leftconnectable in } \Psi_u\}
			\intertext{ and }
			W&=\{w\in V(\Psi)\colon wb_1\dots b_{k-1}\in E(\Psi) \textrm{ and } 
				(b_{1}, \dots, b_{k-2}) \textrm{ is $\zeta$-rightconnectable in } \Psi_w\}
		\end{align*}
		satisfy $|U|, |W|\ge \zeta n$. Now by $\frac{\al}{k+6}\le \frac{\al}{10}$
		and Fact~\ref{f:monmu} $\Psi$ is an $(\alpha, \frac\alpha{10})$-constellation. 
		Combined with $\zeta \le \frac{1}{3^{k+2}}$ and Lemma~\ref{l:220} this 
		shows that the number of $(2k-2)$-tuples 
		\[
			(u, \seq q, w)= (u, q_1, \dots, q_{2k-4}, w)\in U\times V(\Psi)^{2k-4}\times W			
		\]
		such that
		\begin{enumerate}[label = \alabel]
			\item\label{it:3j} $u\ne w$, 
			\item\label{it:3a} $q_1\dots q_{2k-4}$ is a walk in $H(\Psi_{uw})$,
			\item\label{it:3b} $(q_1, \dots, q_{k-2})$ is $\zeta^3$-rightconnectable in $\P_u$,
			\item\label{it:3c} and $(q_{k-1}, \dots, q_{2k-4})$ is $\zeta^3$-leftconnectable in $\P_w$.
		\end{enumerate}
		is at least $\zeta^3n^{2k-2}$.				
		For later reference we recall that $u\in U$ and $w\in W$ mean
		\begin{enumerate}[label = \alabel, resume]
			\item\label{it:3d} $(a_2, \dots, a_{k-1})$ is $\zeta$-leftconnectable in $\P_u$,
			\item\label{it:3e} $\{a_1, \dots, a_{k-1}, u\}\in E(\Psi)$,
			\item\label{it:3f} $(b_1, \dots, b_{k-2})$ is $\zeta$-rightconnectable in $\P_w$,
			\item\label{it:3g} and $\{w,b_1, \dots, b_{k-1}\}\in E(\Psi)$.
		\end{enumerate}
		Notice that by Fact~\ref{f:linkfullc} the link constellation of every vertex 
		is a $(k-1)$-uniform $(\al, \beta, \ell, \frac{\al}{k+5})$-constellation
		and that $f(k-1, 1, \ell)=(k-1)(s-2)+1$. Now for every $(2k-2)$-tuple $(u, \seq q, w)$
		satisfying~\ref{it:3j}\,--\,\ref{it:3g} 
		we apply the induction hypothesis twice with $(\zeta^3, 1)$ here in place 
		of $(\zeta, i)$ there. First, by~\ref{it:3b} and~\ref{it:3d} we can connect 
		$(a_2, \dots, a_{k-1})$ to $(q_1, \dots, q_{k-2})$ in $\Psi_u$, thus getting at least 
		$2\theta(k-1, \zeta^3)(n-1)^{(k-1)(s-2)+1}$
		\begin{enumerate}[label=\alabel, resume]
				\item\label{it:3h} walks $a_2\dots a_{k-1}p_1\dots p_{(k-1)(s-2)+1}q_1\dots q_{k-2}$ 
					in $\Psi_u$
		\end{enumerate}
		with $f(k-1,1,\l)$ inner vertices. 
		Second,~\ref{it:3c} and~\ref{it:3f} allow us to connect $(q_{k-1}, \dots, q_{2k-4})$ to 
		$(b_1, \dots, b_{k-2})$ in $\Psi_w$ by at least $2\theta(k-1, \zeta^3)(n-1)^{(k-1)(s-2)+1}$
		\begin{enumerate}[label=\alabel, resume]
				\item\label{it:3i} walks $q_{k-1}\dots q_{2k-4}r_{{(k-1)(s-2)+1}}\dots r_1b_1\dots b_{k-2}$ 
					in $\Psi_w$. 
		\end{enumerate}
		Altogether, the number of $\bigl((k-1)(2s-2)+2\bigr)$-tuples 
		\[
			(u,  \seq p, \seq q,\seq r,w) \in 
				U\times V(\P)^{(k-1)(s-2)+1}\times V(\P)^{2k-4}\times V(\P)^{(k-1)(s-2)+1}\times W
		\]
		with \ref{it:3j}\,--\,\ref{it:3i}, where
		\[
			\seq p=(p_1, \dots, p_{{(k-1)(s-2)+1}}) \qand 
			\seq r=(r_{{(k-1)(s-2)+1}}, \dots, r_1)\,,
		\]
		is at least $4\zeta^3\bigl(\theta(k-1, \zeta^3)\bigr)^2(n-1)^{(k-1)(2s-4)+2}n^{2k-2}\ge 
			2\zeta^3\bigl(\theta(k-1, \zeta^3)\bigr)^2n^{(k-1)(2s-2)+2}$.
		
		Roughly speaking, we plan to derive the $\seq{a}$-$\seq{b}$-paths we are supposed to 
		construct from these 
		$\bigl((k-1)(2s-2)+2\bigr)$-tuples by taking many copies of $u$ and $w$ and inserting 
		them in appropriate positions into $(\seq{a}, \seq{p}, \seq{q}, \seq{r}, \seq{b})$. 
		To analyse the number of ways of doing this, we consider the auxiliary $3$-partite 
		$3$-uniform hypergraph $\ccA$ with vertex 
		classes $U^\star$, $M$, and~$W^\star$, where $U^\star$ and $W^\star$ are two disjoint copies 
		of $V(\P)$, while $M=V(\Psi)^{(k-1)(2s-2)}$.
		
		We represent the vertices in $M$ as sequences
		\[
			\seq{m}
			=
			(\seq p, \seq q, \seq r)
			=
			(p_1, \ldots, p_{(k-1)(s-2)+1},q_1,\dots,q_{2k-4},r_{(k-1)(s-2)+1}, \ldots, r_1)
			\,.
		\]
		The edges of $\ccA$ are defined to be the triples $\{u, \seq{m}, w\}$ 
		with $u\in U\subseteq U^\star$, $\seq{m}\in M$, and $w\in W\subseteq W^\star$, for which  
		the $\bigl((k-1)(2s-2)+2\bigr)$-tuple $(u, \seq{m}, w)$ satisfies~\ref{it:3j}\,--\,\ref{it:3i}. 		
		We have just proved that  
		\begin{equation}\label{eq:1244}
			e(\ccA)\ge 2\zeta^3\bigl(\theta(k-1, \zeta^3)\bigr)^2n^{(k-1)(2s-2)+2}
			= 
			2\zeta^3\bigl(\theta(k-1, \zeta^3)\bigr)^2 |U^\star||M||W^\star|\,.
		\end{equation}
		By the (ordered) bipartite link graph of a vertex $\seq{m}\in M$ we mean the set of pairs
		\[
			\ccA_{\seq{m}}=\bigl\{(u, w)\in U\times W\colon u\seq{m}w\in E(\ccA)\bigr\}\,.
		\]
		The convexity of the function $x\longmapsto x^s$ on $\RR_{\ge 0}$ yields
		\begin{align}\label{eq:1251}
			\sum_{\seq{m}\in M} |\ccA_{\seq{m}}|^{s}
			&\ge
			|M|\biggl(\frac{e(\ccA)}{|M|}\biggr)^{s}
			\overset{\eqref{eq:1244}}{\ge}
			n^{(k-1)(2s-2)}\bigl(2\zeta^3\bigl(\theta(k-1, \zeta^3)\bigr)^2 n^2\bigr)^s \nonumber\\
			& \ge
			2\zeta^{3s} \bigl(\theta(k-1, \zeta^3)\bigr)^{2s} n^{k(2s-2)+2}
			\overset{\eqref{eq:mte}}{=} 2\eta n^t\,.
		\end{align}
	 	In other words, the number of $t$-tuples 
		\[
			(\seq u, \seq p, \seq q, \seq r, \seq w)\in 
			U^{s}\times V(\Psi)^{(k-1)(s-2)+1}\times V(\Psi)^{2k-4}\times 
				V(\Psi)^{(k-1)(s-2)+1}\times W^{s}
		\]
		with 
		\[
			(u_1, w_1), \ldots, (u_{s}, w_{s})\in \ccA_{\seq{m}}\,,
		\]
		where
		\[
			\seq u = (u_1, \dots, u_s)\,, 
			\quad \seq w=(w_1, \dots, w_s)\,,
			\qand \seq{m}=(\seq p, \seq q, \seq r)\in M\,,
		\]
		is at least $2\eta n^t$. So to conclude the proof of Claim~\ref{c:con} it suffices 
		to show that for every such $t$-tuple the sequence 
		\begin{multline*}
			a_1\dots a_{k-1}u_1p_1\dots p_{k-1} u_2 p_k\dots p_{2k-2}u_3\dots u_{s-2} 
				p_{(k-1)(s-3)+1}\dots p_{(k-1)(s-2)}
			u_{s-1}p_{(k-1)(s-2)+1}\\
			q_1\dots q_{k-2} u_{s} w_{s}q_{k-1}\dots q_{2k-4}\\
			r_{(k-1)(s-2)+1} w_{s-1} r_{(k-1)(s-2)} \dots r_{(k-1)(s-3)+1} w_{s-2}\dots w_3
				r_{2k-2} \dots r_{k} w_2 r_{k-1}\dots r_{1}w_{1}b_{1}\dots b_{k-1}
		\end{multline*}
		indicated in Figure~\ref{fig:con} is an $\seq a$-$\seq b$-walk in $H(\Psi)$.
		
			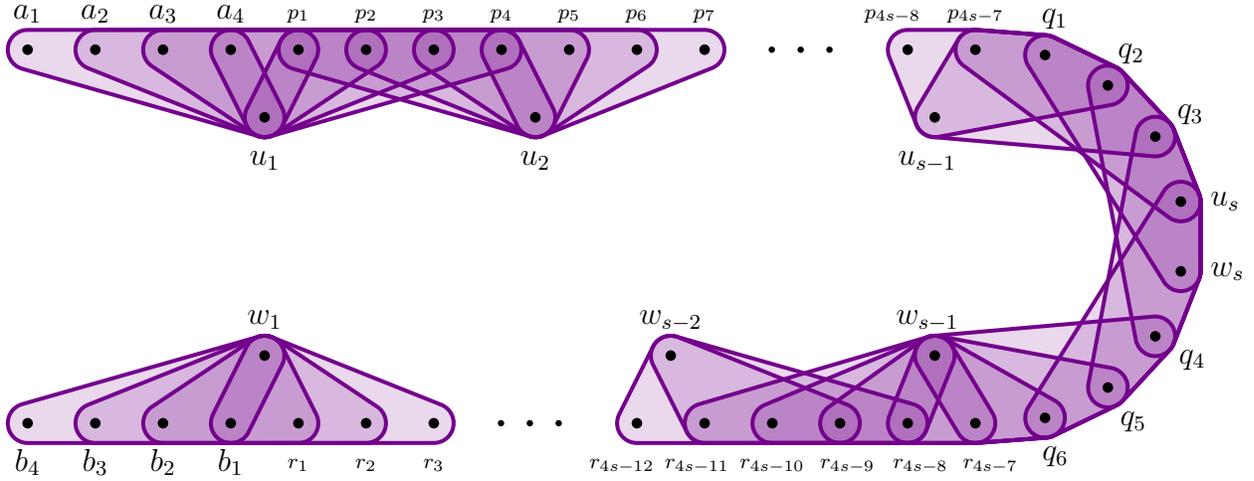
\begin{figure}[th]
	\begin{tikzpicture}[scale=1]
	
		\def\an{10.7};			
		\def\ra{2.5cm};        
		\def\s{.9};

		\coordinate (c6) at (-9*\an:\ra);	
		\coordinate (q6) at (-7*\an:\ra);
		\coordinate (q5) at (-5*\an:\ra);
		\coordinate (q4) at (-3*\an:\ra);
		\coordinate (q3) at (3*\an:\ra);
		\coordinate (q2) at (5*\an:\ra);
		\coordinate (w) at (-\an:\ra);
		\coordinate (u) at (\an:\ra);
		\coordinate (q1) at (7*\an:\ra);
		\coordinate (a7) at (9*\an:\ra);
						
		\coordinate (a1) at ($(a7)+( -14*\s, 0)$);
		\coordinate (a2) at ($(a7)+( -13*\s, 0)$);
		\coordinate (a3) at ($(a7)+( -12*\s, 0)$);
		\coordinate (a4) at ($(a7)+( -11*\s, 0)$);
		\coordinate (p1) at ($(a7)+( -10*\s, 0)$);
		\coordinate (p2) at ($(a7)+(-9*\s, 0)$);
		\coordinate (p3) at ($(a7)+(-8*\s, 0)$);
		\coordinate (p4) at ($(a7)+(-7*\s, 0)$);
		\coordinate (p5) at ($(a7)+(-6*\s, 0)$);
		\coordinate (p6) at ($(a7)+(-5*\s,0)$);
		\coordinate (p7) at ($(a7)+(-4*\s, 0)$);
		\coordinate (p8) at ($(a7)+(-\s, 0)$);
		\coordinate (p65) at ($(p7)+(-\s, 0)$);
		\coordinate (p78) at ($(p7)!.5!(p8)$);
		
		\coordinate (b4) at ($(c6)+(-14*\s, 0)$);
		\coordinate (b3) at ($(c6)+(-13*\s, 0)$);
		\coordinate (b2) at ($(c6)+(-12*\s, 0)$);
		\coordinate (b1) at ($(c6)+(-11*\s, 0)$);
		\coordinate (r1) at ($(c6)+(-10*\s, 0)$);
		\coordinate (r2) at ($(c6)+(-9*\s, 0)$);
		\coordinate (r3) at ($(c6)+(-8*\s, 0)$);
		\coordinate (r4) at ($(c6)+(-5*\s, 0)$);
		\coordinate (r5) at ($(c6)+(-4*\s, 0)$);
		\coordinate (r34) at ($(r3)!.5!(r4)$);
		\coordinate (r6) at ($(c6)+(-3*\s, 0)$);
		\coordinate (r7) at ($(c6)+(-2*\s, 0)$);
		\coordinate (r8) at ($(c6)+(-\s, 0)$);
				
		\def\p{.9};     
		
		\coordinate (w1) at ($(b1)!.5!(r1)+(0,\p)$);
		\coordinate (w2) at ($(r4)!.5!(r5)+(0,\p)$);
		\coordinate (w3) at ($(r8)!.4!(c6)+(0,\p)$);
		
		\coordinate (u1) at ($(a4)!.5!(p1)+(0,-\p)$);
		\coordinate (u2) at ($(p4)!.5!(p5)+(0,-\p)$);
		\coordinate (u3) at ($(p8)!.4!(a7)+(0,-\p)$);
		
		\def \co{violet!90!blue}

		\foreach \ii/\ij/\ik/\im/\in in {
				u1/a1/a2/a3/a4,
				u1/a2/a3/a4/p1,
				u1/a3/a4/p1/p2,
				u1/a4/p1/p2/p3,
				u1/p1/p2/p3/p4,
				u2/p1/p2/p3/p4,
				u2/p2/p3/p4/p5,
				u2/p3/p4/p5/p6,
				u2/p4/p5/p6/p7,
				u3/p8/a7/q1/q2,
				u3/a7/q1/q2/q3,
				a7/q1/q2/q3/u,
				q1/q2/q3/u/w,
				q2/q3/u/w/q4,
				q3/u/w/q4/q5,
				u/w/q4/q5/q6,
				w1/b1/b2/b3/b4,
				w1/r1/b1/b2/b3,
				w1/r2/r1/b1/b2,
				w1/r3/r2/r1/b1,				
				w2/r7/r6/r5/r4,
				w2/r8/r7/r6/r5,
				w3/r8/r7/r6/r5,
				w3/c6/r8/r7/r6,
				w3/q6/c6/r8/r7,
				w3/q5/q6/c6/r8,
				w3/q4/q5/q6/c6}
			{\pedge{(\ii)}{(\ij)}{(\ik)}{(\im)}{(\in)}{7.5pt}{1.5pt}{\co}{\co,opacity=0.15};}

		\def\x{.42};  
		\def\y{.5}; 
		\def \yy{.63};
	
		\foreach \i in {1,...,6}{
			\fill  (q\i) circle (2pt);}
		\foreach \i in {a7,c6,w,u}{
			\fill  (\i) circle (2pt);}
		\foreach \i in {1,...,8}{
			\fill  (p\i) circle (2pt);
			\fill  (r\i) circle (2pt);}
		\foreach \i in {1,...,4}{
			\fill  (a\i) circle (2pt);
			\fill  (b\i) circle (2pt);
			\node[anchor=base] at ($(a\i)+(0,\x)$) {$a_{\i}$}; 
			\node[anchor=base] at ($(b\i)+(0,-\yy)$) {$b_{\i}$};}
		\foreach \i in {1,...,3}{
			\fill  (w\i) circle (2pt);
			\fill  (u\i) circle (2pt);}
	
		\node at (r34) {\Huge $\dots$};
		\node at (p78) {\Huge $\dots$};
		
	\foreach \i in {1,...,7}{
		\node[anchor=base] at ($(p\i)+(0,\x)$) {\tiny $p_{\i}$};}
		\node[anchor=base] at ($(p8)+(-.2,\x)$) {\tiny $p_{4s-8}$};
		
		\node[anchor=mid] at ($(r1)+(0,-\y)$) {\tiny $r_1$};
		\node[anchor=mid] at ($(r2)+(0,-\y)$) {\tiny $r_{2}$};
		\node[anchor=mid] at ($(r3)+(0,-\y)$) {\tiny $r_{3}$};
		\node[anchor=mid] at ($(r4)+(-.2,-\y)$) {\tiny $r_{4s-12}$};
		\node[anchor=mid] at ($(r5)+(-.1,-\y)$) {\tiny $r_{4s-11}$};
		\node[anchor=mid] at ($(r6)+(0,-\y)$) {\tiny $r_{4s-10}$};
		\node[anchor=mid] at ($(r7)+(.1,-\y)$) {\tiny $r_{4s-9}$};
		\node[anchor=mid] at ($(r8)+(.17,-\y)$) {\tiny $r_{4s-8}$};
		\node[anchor=mid] at ($(c6)+(.2,-\y)$) {\tiny $r_{4s-7}$};
		\node[anchor=base] at ($(a7)+(0,\x)$) {\tiny $p_{4s-7}$};
		
		\node at ($(q5)+(-5*\an:.57)$) {$q_5$};
		\node at ($(q4)+(-3*\an:.58)$) {$q_4$};
		\node at ($(q6)+(-7*\an:.53)$) {$q_6$};
		\node[anchor=west] at ($(w)+(.25,0)$) {$w_{s}$};
		\node[anchor=west] at ($(u)+(.25,0)$) {$u_{s}$};
		\node at ($(q3)+(3*\an:.55)$) {$q_3$};
		\node at ($(q2)+(5*\an:.52)$) {$q_2$};
		\node at ($(q1)+(7*\an:.50)$) {$q_1$};
		
		\node[anchor=base] at ($(w1)+(0,\x)$) {$w_1$};
		\node[anchor=base] at ($(w2)+(0,\x)$) {$w_{s-2}$};
		\node[anchor=base] at ($(w3)+(-.1,\x)$) {$w_{s-1}$};
		
		\node[anchor=base] at ($(u1)+(0,-\yy)$) {$u_1$};
		\node[anchor=base] at ($(u2)+(0,-\yy)$) {$u_2$};
		\node[anchor=base] at ($(u3)+(-.1,-\yy)$) {$u_{s-1}$};
	\end{tikzpicture}
	\caption{Connecting $(a_1,a_2,a_3,a_4)$ and $(b_1,b_2,b_3,b_4)$ in a $5$-uniform constellation.}
	\label{fig:con}
\end{figure}

		We shall now argue that this follows from the 
		fact that for each~$j\in [s]$ the conditions~\hbox{\ref{it:3j}\,--\,\ref{it:3i}} hold for $u_j$ 
		and $w_j$ here in place of $u$ and $w$ there.
		
		The first of the required edges is provided by the case $u=u_1$ of~\ref{it:3e}. 
		Together with~\ref{it:3h} this shows that the initial segment
		\begin{multline*}
			a_1a_2\dots a_{k-1}u_1p_1\dots p_{k-1} u_2 p_k\dots p_{2k-2}u_3\dots u_{s-2} 
			p_{(k-1)(s-3)+1}\dots p_{(k-1)(s-2)} \\ u_{s-1}p_{(k-1)(s-2)+1}q_1\dots q_{k-2} u_{s}
		\end{multline*}
		is a walk in $H(\P)$. Similarly, by~\ref{it:3g} and~\ref{it:3i} the terminal segment 		
		\begin{multline*}
			w_{s}q_{k-1}\dots q_{2k-4}r_{(k-1)(s-2)+1} w_{s-1} r_{(k-1)(s-2)} \dots r_{(k-1)(s-3)+1} 
				w_{s-2}\dots w_3 \\
			r_{2k-2} \dots r_{k} w_2 r_{k-1}\dots r_{1}w_{1}b_{1}\dots b_{k-2}b_{k-1}
		\end{multline*}
		is a walk in $H(\Psi)$. Finally, the middle part
		\[
			q_1\dots q_{k-2} u_{s} w_{s}q_{k-1}\dots q_{2k-4}
		\]
		is a walk in $H(\P)$, because by~\ref{it:3a} we know that $q_1\dots q_{2k-4}$ is a walk 
		in $H(\Psi_{u_s w_s})$. 
	\end{proof}
	
	Returning to the induction step, we consider $i\in \{0,1,\dots, k-1\}$, a $\zeta$-leftconnectable
	$(k-1)$-tuple $\seq a\in V(\P)^{k-1}$, and a $\zeta$-rightconnectable $(k-1)$-tuple $\seq b$
	such that $\seq{a}$ and~$\seq{b}$ have no vertices in common. Plugging $r=i+k-3$ into 
	Lemma~\ref{l:219} we obtain at least $\frac 1 {3^{i+k-2}}n^{i+2k-4}$ walks $x_1 \dots x_{i+2k-4}$ 
	of length $i+k-3$ in $H(\P)$ that start with a \hbox{$\zeta$-rightconnectable} $(k-1)$-tuple and 
	end with a $\zeta$-leftconnectable $(k-1)$-tuple. Of these walks, at least
	\[
		\Big(\frac{1}{3^{i+k-2}}-\frac {2(k-1)(i+2k-4)}n \Big)n^{i+2k-4} 
		> 
		\frac{n^{i+2k-4}}{3^{i+k-1}}
		> 
		\frac{n^{i+2k-4}}{3^{2k}}
		> \zeta n^{i+2k-4}
	\]
	have no common vertices with $\seq{a}$ and $\seq{b}$. For each such walk, Claim~\ref{c:con} 
	tells us that we can connect $\seq{a}$ to $(x_1,\dots, x_{k-1})$ in at least $2\eta n^t$
	ways by a walk with $t$ inner vertices, and the same applies to connections from
	$(x_{i+k-2},\dots, x_{i+2k-4})$ to $\seq b$. 
	
	Altogether this reasoning leads to $4\zeta \eta^2 n^f=4\theta(k, \zeta) n^f$ walks in $H(\P)$ 
	from $\seq a$ to $\seq b$ with~$f$ inner vertices, where
	\begin{align*}
		f &= 2t + (i+2k-4)=2(2ks-2k+2)+(i+2k-4)\\
		 &=(4s-2)k+i=[4^{k-3}(2\ell+4)-2]k+i=f(k,i,\ell)\,.
	\end{align*}

	As usual, at most $O(n^{f-1})$ of these walks can fail to be paths. So, in particular, 
	there exist at least $2\theta(k, \zeta) n^f$ paths from $\seq{a}$ to $\seq{b}$ possessing $f$
	inner vertices. This completes the induction step and, hence, the proof of the Connecting Lemma.
\end{proof}

\section{Reservoir Lemma}\label{sec:reservoir}

In this section we discuss a standard device occurring in many applications of the absorption 
method: the reservoir. The problem addressed by the Reservoir Lemma is that while the Connecting 
Lemma delivers many connections for any two disjoint connectable $(k-1)$-tuples, it gives us 
no control where the inner vertices of these connections are. Thus it might happen that each of these
connections has an inner vertex which is `unavailable' to us, because we already assigned 
a different r\^{o}le to it in the Hamiltonian cycle we are about to construct. To avoid this 
problem, one fixes a small random subset of the vertex set, called the reservoir, and decides that 
the vertices in the reservoir will only be used for the purpose of connecting pairs of $(k-1)$-tuples
by means of short paths. 

\begin{prop}[Reservoir Lemma]\label{prop:reservoir}
	Suppose that $k\ge 3$, $\al, \beta, \xi, \zetass >0$, and that $\ell\ge 3$ is an odd integer. 
	If $\thetass=\theta(k,\alpha,\beta,\l,\zetass)$ is provided by Proposition~\ref{prop:clk}, then
	there exists some $n_0\in \NN$ such that for every 
	$k$-uniform $(\al, \beta, \ell, \frac\alpha{k+6})$-constellation $\P$ on $n\ge n_0$ vertices 
	there exists a subset $\cR\subseteq V(\P)$ with the following properties.
	\begin{enumerate}[label=\rmlabel]
		\item\label{it:R1} We have $\frac12 \xi n \le |\cR| \le \xi n$.
		\item\label{it:R2} For all pairs of disjoint $(k-1)$-tuples $\sa,\sb\in V(\P)^{k-1}$ 
			such that $\sa$ is $\zetass$-leftconnect\-able and $\sb$ is $\zetass$-rightconnectable 
			in $\P$, and for every $i\in [0, k)$, the number of $\sa$-$\sb$-paths in~$H(\P)$
			with $f=f(k,i,\ell)$ inner vertices all of which belong to $\cR$ 
			is at least~$\frac12\thetass|\cR|^{f}$.
	\end{enumerate}
\end{prop}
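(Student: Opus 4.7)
The plan is to build $\cR$ by a standard random selection and to verify both properties via concentration, with the Connecting Lemma providing the input count of paths.

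\textbf{Random construction and size.} I would include each vertex of $V(\P)$ in $\cR$ independently with probability $p := \xi(1 - n^{-1/3})$. Then $|\cR|$ is binomially distributed with mean $pn = \xi n - \xi n^{2/3}$, and Chernoff's inequality gives
\[
	\PP\bigl[|\cR|\notin [\tfrac12\xi n,\,\xi n]\bigr]\le \exp\bigl(-\Omega(n^{1/3})\bigr)\,,
\]
which yields~\ref{it:R1}.

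\textbf{Expected path count for a fixed triple.} Fix a disjoint pair $(\sa,\sb)\in V(\P)^{2(k-1)}$ with $\sa$ being $\zetass$-leftconnectable and $\sb$ being $\zetass$-rightconnectable, and fix $i\in[0,k)$. Proposition~\ref{prop:clk} supplies a collection $P$ of at least $\thetass n^f$ such $\sa$-$\sb$-paths, each with $f=f(k,i,\ell)$ inner vertices. Let $X=X(\sa,\sb,i)$ count those paths in $P$ whose $f$ inner vertices all lie in $\cR$. Since each such path contributes with probability $p^f$, we get
\[
	\EE[X]\;\ge\;\thetass p^f n^f\;\ge\;(1-o(1))\thetass \xi^f n^f\;\ge\;\tfrac34\thetass|\cR|^f\,,
\]
the last inequality holding whenever $|\cR|\le\xi n$ and $n$ is sufficiently large.

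\textbf{Concentration and union bound.} I view $X$ as a function of the $n$ independent Bernoullis $\Ind[v\in\cR]$. Flipping the status of any single vertex changes~$X$ by at most $fn^{f-1}$, since that is an upper bound on the number of paths in~$P$ containing a prescribed vertex as an inner vertex. McDiarmid's bounded-differences inequality therefore produces
\[
	\PP\bigl[X<\tfrac23\EE[X]\bigr]\;\le\;2\exp\bigl(-\Omega(n)\bigr)\,.
\]
Since at most $k\cdot n^{2(k-1)}$ triples $(\sa,\sb,i)$ are admissible, a single union bound---together with the size event---leaves positive probability for a realisation of~$\cR$ satisfying \ref{it:R1} and, simultaneously, $X\ge\tfrac23\EE[X]\ge\tfrac12\thetass|\cR|^f$ for every admissible triple. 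Fixing any such $\cR$ gives~\ref{it:R2}.

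\textbf{Main obstacle.} The only delicate point is the concentration. Chebyshev's inequality yields merely $\mathrm{Var}[X]/\EE[X]^2=O(1/n)$, which is far too weak to survive a polynomial union bound over triples. What rescues the argument is the asymmetry that, although $\EE[X]=\Theta(n^f)$, each individual vertex participates in only $O(n^{f-1})$ paths of~$P$; this is exactly the input that McDiarmid needs in order to upgrade the weak Chebyshev estimate to the exponential tail $\exp(-\Omega(n))$.
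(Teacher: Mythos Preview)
Your proposal is correct and follows essentially the same approach as the paper's sketch: random selection of $\cR$, Chernoff for the size, the Connecting Lemma for the expected path count, Azuma--Hoeffding/McDiarmid for concentration with Lipschitz constant $fn^{f-1}$, and a union bound over the polynomially many triples $(\sa,\sb,i)$. The only cosmetic differences are your choice of inclusion probability $p=\xi(1-n^{-1/3})$ versus the paper's $\tfrac34\xi$, and your naming the concentration tool McDiarmid rather than Azuma--Hoeffding; neither affects the argument.
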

	
Since the proof of this result is quite standard, we will only provide a brief sketch here.
It suffices to prove that the binomial random subset $\cR\subseteq V(\Psi)$ including every 
vertex independently with probability $\frac34\xi$ a.a.s.\ has the 
properties~\ref{it:R1} and~\ref{it:R2}. 
Now~\ref{it:R1} is a straightforward consequence of Chernoff's inequality. As there are only 
polynomially many possibilities for $(\sa, \sb, i)$ in~\ref{it:R2}, it suffices to show that for 
each of them the probability that there are fewer than $\frac12\thetass|\cR|^{f}$ paths of the 
desired form is at most $\exp\bigl(-\Omega(n)\bigr)$. This can in turn be established by applying 
the Azuma-Hoeffding inequality to the at least~$\thetass n^f$ such paths in $V(\Psi)^f$
delivered by Proposition~\ref{prop:clk}.
For further details we refer to~\cite{Y}*{Proposition~4.1}, where we gave a full account of the 
argument for $k=4$.  

Let us emphasise again that the set $\cR$ provided by Proposition~\ref{prop:reservoir}
is called the {\it reservoir}. The connections in~\ref{it:R2} whose inner vertices belong 
to $\cR$ are called paths {\it through} $\cR$.
  
In the proof of Theorem~\ref{t:main} we shall repeatedly connect suitable tuples through 
the reservoir. Whenever such a connection 
is made, some of the vertices of the reservoir are used and the part of $\cR$ still 
available for further connections shrinks. Although the reservoir gets used~$\Omega(|V(\P)|)$ 
times, we shall be able to keep an appropriate version of property~\ref{it:R2} of the reservoir 
intact throughout this process. 
	
\begin{cor}\label{lem:use-reservoir}
	Let a sufficiently large $k$-uniform 
	$(\alpha, \beta, \ell, \frac\alpha{k+6})$-constellation~$\Psi$
	as well as a reservoir $\cR\subseteq V(\Psi)$ as in Proposition~\ref{prop:reservoir} be given. 
	Moreover, let $\cR'\subseteq \cR$ be an arbitrary set 
	with $|\cR'|\le \frac{\xi\thetass}{4^kk\ell}n$. If $\sa, \sb\in V(\P)^{k-1}$ are two 
	disjoint $(k-1)$-tuples such that~$\sa$ is \hbox{$\zetass$-leftconnectable} and $\sb$ 
	is $\zetass$-rightconnectable, then for every $i\in [0, k)$ there is an $\sa$-$\sb$-path 
	through $\cR\setminus \cR'$ with $f(k,i,\ell)$ inner vertices. 
\end{cor}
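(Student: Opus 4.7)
The plan is to apply Proposition~\ref{prop:reservoir}\ref{it:R2} to obtain a large lower bound on the number of $\sa$-$\sb$-paths of the prescribed length whose inner vertices all lie in $\cR$, and then observe that only a negligible fraction of them can use a vertex of $\cR'$. So set $f=f(k,i,\ell)$. Proposition~\ref{prop:reservoir}\ref{it:R2} supplies at least $\frac12 \thetass |\cR|^f$ such $\sa$-$\sb$-paths.

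Next, I would bound the number of these paths that meet $\cR'$. Since each path has exactly $f$ inner vertices forming an ordered tuple in $\cR^f$, the number of $f$-tuples in $\cR^f$ with at least one coordinate in $\cR'$ is at most $f \cdot |\cR'| \cdot |\cR|^{f-1}$. Every path through $\cR$ meeting $\cR'$ corresponds to such a tuple, so the number of `bad' paths is at most $f|\cR'||\cR|^{f-1}$.

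It now suffices to check that
\[
    f|\cR'||\cR|^{f-1} < \tfrac12 \thetass |\cR|^{f}\,,
\]
i.e.\ $|\cR'| < \frac{\thetass |\cR|}{2f}$. Using~\ref{it:R1} we have $|\cR|\ge \tfrac12\xi n$, and by Definition~\ref{d:f} we have $f \le 4^{k-3}(2\ell+4)k \le 4^{k-1}k\ell$ (using e.g.\ $2\ell+4\le 4\ell$ for $\ell\ge 2$). Hence
\[
    \frac{\thetass |\cR|}{2f} \ge \frac{\thetass \xi n}{4\cdot 4^{k-1} k\ell} = \frac{\xi\thetass}{4^{k}k\ell}\,n\,,
\]
so the hypothesis $|\cR'|\le \frac{\xi\thetass}{4^kk\ell}n$ yields the required strict inequality (after possibly absorbing a factor of~$2$ into the constants, which is harmless). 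Therefore at least one $\sa$-$\sb$-path with $f$ inner vertices lies entirely in $\cR\setminus \cR'$.

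The argument is essentially a union bound; no subtle obstacle arises. The only point that needs a little care is confirming the polynomial bound on $f=f(k,i,\ell)$ that is compatible with the denominator $4^k k\ell$ appearing in the hypothesis on~$|\cR'|$, and that is where the slack in Definition~\ref{d:f} is designed to be used.
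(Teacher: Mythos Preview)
Your argument is essentially the paper's own proof: count all $\sa$-$\sb$-paths through $\cR$ via Proposition~\ref{prop:reservoir}\ref{it:R2}, subtract the at most $f|\cR'||\cR|^{f-1}$ paths meeting~$\cR'$, and check the difference is positive using~\ref{it:R1} and the explicit form of~$f$. The only blemish is a small arithmetic slip: from $2\ell+4\le 4\ell$ you get $4^{k-3}(2\ell+4)\le 4^{k-2}\ell$, so in fact $f< 4^{k-2}k\ell$ (not merely $\le 4^{k-1}k\ell$); with this sharper bound your chain yields the \emph{strict} inequality $|\cR'|<\frac{\thetass|\cR|}{2f}$ directly, and no hand-waving about ``absorbing a factor of~$2$'' is needed.
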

	
	\begin{proof}
		Set $f=f(k, i, \ell)$ and recall that $f(k,i,\l)=(4^{k-3}(2\l+4)-2)k+i< 4^{k-2}k\ell$. 
		So the lower bound in Proposition~\ref{prop:reservoir}\,\ref{it:R1} together with 
		the bound on $|\cR'|$ yields
		\[
			|\cR'|
			\le 
			\frac{\thetass |\cR|}{4^{k-1}k\ell}
			\le
			\frac{\thetass |\cR|}{4f} \,.
		\]

		Consider all $\seq{a}$-$\seq{b}$-paths through $\cR$ with $f$ inner vertices. 
		On the one hand, there are at least $\frac\thetass2 |\cR|^f$ of them due to  
		Proposition~\ref{prop:reservoir}\,\ref{it:R2}. On the other hand, there are
		at most 
		\[
			f|\cR'||\cR|^{f-1} \le \frac\thetass4 |\cR|^f 
		\]
		such paths having an inner vertex in $\cR'$. Consequently, 
		at least $\frac\thetass2 |\cR|^f-\frac\thetass4 |\cR|^f>0$ of our paths
		have all their inner vertices in $\cR\setminus\cR'$.
	\end{proof}

\section{The absorbing path}
\label{sec:Abpa}

\subsection{Overview}
\label{sec:abs_intro}
In this section we establish that for $\mu\ll\alpha$ every sufficiently 
large $(\alpha, \beta, \ell, \mu)$-constellation contains an~\emph{absorbing path~$P_A$}, 
whose main property is that it can `absorb' an arbitrary but not too large set
of vertices whose cardinality is a multiple of~$k$. Thus the problem of proving 
Theorem~\ref{t:main} gets reduced to the simpler task of finding an almost spanning cycle 
containing the absorbing path and missing a number of vertices that is divisible by $k$.
In order to have a realistic chance to include the absorbing path into such a cycle we 
make sure that its first and last $(k-1)$-tuple is connectable. Moreover, we will need to be 
able to work outside a forbidden `reservoir set' that later will have been selected in advance.  

\begin{prop}[Absorbing Path Lemma]\label{prop:apl}
	Given $k\ge 3$, $\al > 0$, $\beta >0$, and an odd integer $\ell \ge 3$ there exist
	constants $\zeta = \zeta(\al, k)$,  $\thetas=\thetas(k, \al, \beta, \l, \zeta)>0$ 
	and an integer $n_0$ with the following property. 
	
	Suppose that $\P$ is a $k$-uniform $(\al, \beta, \ell, \mu)$-constellation 
	with $\mu = \frac{1}{10k}\big(\frac{\al}{2}\big)^{2^{k-3}+1}$ on $n\ge n_0$ vertices. 
	If $\cR\subseteq V(\P)$ with $|\cR|\le \thetas^2n$ is arbitrary, then there exists a 
	path $P_A\subseteq H(\Psi)-\cR$ such that 
	\begin{enumerate}[label=\rmlabel]
			\item\label{it:apl1} $|V(P_A)|\le \thetas n$,
			\item\label{it:apl2} the starting and ending $(k-1)$-tuple of $P_A$ 
				are $\zeta$-connectable, 
			\item\label{it:apl3} and for every subset $Z\subseteq V(\P)\setminus V(P_A)$ 
				with $|Z|\le 2 \thetas^2 n$ and $|Z|\equiv 0\pmod{k}$, there exists a 
				path $Q\subseteq H(\P)$ with $V(Q)=V(P_A)\cup Z$ having the same end-$(k-1)$-tuples 
				as $P_A$. 			
		\end{enumerate}
\end{prop}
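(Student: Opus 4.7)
My plan is to follow the absorption method, building $P_A$ as a long concatenation of short \emph{absorber gadgets} joined by connecting paths supplied by Proposition~\ref{prop:clk}. For each ordered $k$-tuple $\seq z = (z_1,\dots,z_k)\in V(\P)^k$ of distinct vertices I define a $\seq z$-\emph{absorber} to be a tight $k$-path $A \subseteq H(\P)$ of bounded length, with $\zeta$-connectable end $(k-1)$-tuples, together with a designated position such that inserting $\seq z$ there produces another tight $k$-path $A^+ \subseteq H(\P)$ having the same end $(k-1)$-tuples as $A$. The central counting fact needed is that for every $\seq z$ there exist at least $c\, n^L$ such absorbers, for constants $c = c(\alpha, k) > 0$ and $L = L(k)$. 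This is derived by combining Corollary~\ref{c:smallabs} --- which, for every $v \in V(\P)$ and almost every auxiliary vertex $x$, produces $\Omega(\mu^k n^{2k-2})$ $(k-1)$-uniform paths in $H(\P_v) \cap H(\P_x)$ with connectable endpoints --- with the tight-walk counting of Corollary~\ref{c:218} to extend the local gadgets into full absorbers, and with Lemma~\ref{l:214} to control connectability along the way.

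Next I randomly sample absorbers, including each potential absorber independently with probability $p$ tuned so that $p \cdot n^L \approx \thetas n / k$. A Chernoff estimate combined with a union bound over the $O(n^k)$ target tuples $\seq z$ shows that a.a.s.\ the resulting family $\cF$ satisfies (i) $|\cF| \le \thetas n/(4kL)$, and (ii) after deleting the $o(|\cF|)$ absorbers that overlap with one another or with the reservoir $\cR$, for every $\seq z \in V(\P)^k$ the family $\cF$ still contains at least $4\thetas^2 n$ $\seq z$-absorbers. I then enumerate the surviving absorbers as $A_1,\dots,A_M$ and join them iteratively via Proposition~\ref{prop:clk}: at each step the set $V_{\text{used}}$ of already used vertices has size $O(\thetas n)$, so Lemma~\ref{lem:1816} guarantees that $\P-(\cR\cup V_{\text{used}})$ is still an $(\al/2,\beta/2,\l,2\mu)$-constellation, allowing the Connecting Lemma to yield the next short connecting path disjoint from everything chosen so far. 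The resulting $P_A$ manifestly satisfies~\ref{it:apl1} and~\ref{it:apl2}.

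Property~\ref{it:apl3} is verified by a greedy matching. Given $Z \subseteq V(\P)\setminus V(P_A)$ with $|Z|\le 2\thetas^2 n$ and $|Z|\equiv 0 \pmod k$, partition $Z$ arbitrarily into $|Z|/k$ ordered $k$-tuples $T_1,\dots,T_{|Z|/k}$. Assign to each $T_i$ a distinct absorber $A_{j_i}\in\cF$ that is a $T_i$-absorber: this succeeds because at each stage fewer than $|Z|\le 2\thetas^2 n$ absorbers have been used, while every $T_i$ has at least $4\thetas^2 n$ candidates in $\cF$ by~(ii). Replacing each selected $A_{j_i}$ in $P_A$ by its extended version $A^+_{j_i}$ produces the required path $Q$ with $V(Q) = V(P_A)\cup Z$ and the same end $(k-1)$-tuples as $P_A$.

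The main obstacle is the first step: the explicit design of a $\seq z$-absorber with both the combinatorial rewriting property and a uniform lower-bound count for every $\seq z$. A natural realisation is to take a $k$-fold concatenation of single-vertex absorbers --- each a tight $k$-path $a_1\dots a_{2k-1}$ in $H(\P)$ for which $a_1\dots a_{2k-2}$ is also a tight $(k-1)$-path in $\P_{z_i}$ --- glued together by short connectable walks; this lines up neatly with the requirement that $|Z|$ be a multiple of $k$, since absorption now happens in blocks of $k$ vertices. Corollary~\ref{c:smallabs} supplies the local single-vertex gadgets in sufficient abundance, and Lemma~\ref{l:219} together with Lemma~\ref{l:214} handles the connectability of each junction. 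The delicate point is to keep all the counting constants strictly positive after the union bound over $n^k$ target tuples, which is precisely what forces $\mu$ to be taken as small as in the statement.
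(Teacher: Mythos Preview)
Your high-level scheme --- randomly sample $\Theta(n)$ absorbers, discard the few that overlap one another or meet $\cR$, and connect the survivors via Proposition~\ref{prop:clk} --- is exactly what the paper does, and your treatment of the random selection (Chernoff plus union bound over $n^k$ targets) and of the greedy assignment in~\ref{it:apl3} is fine. The genuine gap is in the absorber design, which you correctly flag as the ``main obstacle'' but do not resolve.

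You define a $\seq z$-absorber as a pure \emph{insertion} gadget: a path $A$ such that $A^+$, obtained by splicing $\seq z$ in, is again a path with the same end-tuples. You then propose to realise it by concatenating $k$ single-vertex insertion gadgets and cite Corollary~\ref{c:smallabs} as the source. But that corollary does not produce insertion gadgets: for a given vertex $a$ and for $x$ outside a small exceptional set $X$, it yields many $(k-1)$-uniform paths $b_1\dots b_{2k-2}$ in $H(\Psi_a)\cap H(\Psi_x)$, and the associated $k$-uniform path $b_1\dots b_{k-1}xb_k\dots b_{2k-2}$ is an \emph{exchanger} --- it lets one replace $x$ by $a$, ejecting $x$ from the path. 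Your proposal provides no mechanism for re-absorbing the ejected vertices, so a concatenation of $k$ exchangers is not a $\seq z$-absorber in your sense. Nor does your alternative description (a $k$-path $a_1\dots a_{2k-1}$ in $H(\Psi)$ with $a_1\dots a_{2k-2}$ a $(k-1)$-path in $\Psi_{z_i}$) follow from the cited results: that requires $2k-1$ edges of $H$ to hold simultaneously, correlated across $H$ and $\Psi_{z_i}$, and none of Corollaries~\ref{c:smallabs}, \ref{c:218} or Lemma~\ref{l:214} gives this count uniformly for \emph{every} $z_i$. The paper closes this gap by a two-stage absorber (Definition~\ref{d:abs}, Lemma~\ref{l:menyabs}): first a genuine insertion gadget for a \emph{specific} $k$-tuple $\seq x$, obtained via supersaturation of $K_k^{(k)}(3)$ in the hypergraph of bridges (Lemma~\ref{l:elf}), giving paths $\seq u\seq w$ and $\seq u\seq x\seq w$; second, for each coordinate, an $(a_i,x_i)$-exchanger from Corollary~\ref{c:smallabs}. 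Absorption of an arbitrary $\seq a$ then swaps each $a_i$ for $x_i$ and re-inserts the displaced $\seq x$ via $\seq u\seq w\rightsquigarrow\seq u\seq x\seq w$. The indirection is essential precisely because Corollary~\ref{c:smallabs} works only for $x\notin X$; the $K_k^{(k)}(3)$ step lets one pick $\seq x$ avoiding the bad set $X$ while still handling every target $\seq a$.
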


Our absorbers will be analogous to those in~\cite{Y} and we refer to~\cite{Y}*{Section 5.1} 
for further motivation. Here we will only recall that the absorbers
have two kinds of main components reflecting the following observations.

\begin{itemize}
	\item A complete $k$-partite subhypergraph $S$ of $H(\Psi)$ whose vertex classes
		$\{x_i, x_{i+k}, x_{i+2k}\}$ are of size $3$ (where $i\in [k]$) contains a spanning 
		path $P=x_1\ldots x_{3k}$. Moreover,~$S$ also contains the 
		path $P'=x_1\ldots x_kx_{2k+1}\ldots x_{3k}$, which has the same first and 
		last $(k-1)$-tuple as $P$. Thus if the absorbing path contains $P'$ as a subpath but 
		avoids the vertices $x_{k+1}, \ldots, x_{2k}$, then it can absorb these vertices 
		simultaneously (see Figure~\ref{fig:xy1}).
		However, not every $k$-element subset of $V(\Psi)$ is {\it absorbable}
		in this manner. 
	\item If the links of two vertices $a$ and $x$ intersect in a $(k-1)$-uniform path 
		$b_1\ldots b_{2k-2}$, then we can form two $k$-uniform paths in~$H(\Psi)$, namely
		$P_a=b_1\ldots b_{k-1}ab_k\ldots b_{2k-1}$ and $P_x=b_1\ldots b_{k-1}xb_k\ldots b_{2k-1}$
		(see Figure~\ref{fig:xy3}).
		Now if the absorbing path contains~$P_x$, then we can remove $x$ and insert $a$ instead.
		We call such a structure an {\it $(a, x)$-exchanger}.
\end{itemize} 

Now the plan for absorbing an arbitrary set $\{a_1, \ldots, a_k\}$ of $k$ vertices is that 
we will find an `absorbable' set $\{x_1, \ldots, x_k\}$ such that for every $i\in [k]$ there 
is an $(a_i, x_i)$-exchanger. The main difficulty in executing this strategy is that we need 
to pay a lot of attention to connectability issues, because ultimately we need to connect all 
parts of the absorbers we are about to construct to the rest of the Hamiltonian cycle we 
intend to exhibit. For this reason, the definition of absorbers reads as follows. 

\begin{dfn}\label{d:abs}
	Suppose that $k\ge 3$, $\al, \mu, \zeta >0$, that $\P$ is 
	a $k$-uniform $(\al, \mu)$-constellation, and that  $\seq a = (a_1, \dots, a_k)\in V(\P)^k$
	is a $k$-tuple consisting of distinct vertices. We say that 
	\[
		\seq{A}=(\seq u, \seq x, \seq w, \seq b_1, \dots, \seq b_k)\in V(\P)^{2k^2+k}
	\]
	with $\seq u = (u_1, \dots, u_k)$, $\seq x = (x_1, \dots, x_k)$, $\seq w = (w_1, \dots, w_k)$, 
	and $\seq b_i = (b_{i1}, \dots, b_{i(2k-2)})$ for $i\in [k]$ is an {\it $(\sa, \zeta)$-absorber} 
	in $\P$, if
	\begin{enumerate}[label=\alabel]
			\item\label{it:aa} all $2k^2+k$ vertices of $\seq{A}$ are distinct and different 
				from those in $\seq a$, 
			\item\label{it:ab} $\seq u\seq x\seq w$ and $\seq u\seq w$ are paths in $H(\Psi)$,
			\item\label{it:ac} $(u_1, \dots, u_{k-1})$ is $\zeta$-rightconnectable 
				and $(w_2, \dots, w_k)$ is $\zeta$-leftconnectable in $\P$,
			\item\label{it:ad} and for every $i\in [k]$ the $(2k-2)$-tuple $\seq b_i$ 
				is a path in $H(\Psi_{a_i})\cap H(\Psi_{x_i})$ whose first and last
				$(k-1)$-tuple is $\zeta$-connectable in $\Psi$.
	\end{enumerate}
\end{dfn}

\begin{figure}[h]
	\centering
	\begin{subfigure}[b]{0.45\textwidth}
		\centering
		\begin{tikzpicture}[scale = .7]

	\def\ra{3cm};
	
	\foreach \i in {1,...,5}{
		\coordinate (x\i) at (18+\i*72:\ra);
		\coordinate (u\i) at (18+\i*72:\ra+1cm);
		\coordinate (w\i) at (18+\i*72:\ra-1cm);
		\coordinate (n\i) at (5+\i*72:\ra+1.7cm);
		\fill (x\i) circle (3pt);
		\fill (u\i) circle (3pt);
		\fill (w\i) circle (3pt);
		\foreach \i in {1,...,36}{
			\coordinate (v\i) at (90+\i*10:\ra-1cm);
			\coordinate (s\i) at (90+\i*10:\ra+1cm);
			\coordinate (t\i) at (90+\i*10:\ra);}
		
		\coordinate (y1) at (42:\ra+.1);
		\coordinate (y2) at (66:\ra-.6cm);
		\coordinate (y3) at (42:\ra +.6cm);
		\coordinate (y4) at (66:\ra+.2cm);
		\coordinate (y5) at (42:\ra-.4cm);
		\coordinate (y6) at (66:\ra-.8cm);

		\draw[ line width=2pt,fill opacity=1, rotate = 18+\i*72] (x\i) ellipse (2cm and 25pt);}

	\coordinate (z1) at (150:\ra-.15cm);
	\coordinate (z2) at (135:\ra-.44cm);
	\coordinate (z3) at (120:\ra-.7cm);
	\coordinate (z4) at (105:\ra-.9cm);

	\coordinate (g1) at (150:\ra+.85cm);
	\coordinate (g2) at (135:\ra+.56cm);
	\coordinate (g3) at (120:\ra+.3cm);
	\coordinate (g4) at (105:\ra+.1cm);

	\draw [yellow, line width = 10pt] plot [smooth]  coordinates {(s36) (s35) (s34) (s33) (s32) (s31) (s30) (s29) (s28) (s27)(s26) (s25) (s24) (s23) (s22) (s21) (s20) (s19) (s18) (s17) (s16) (s15) (s14) (s13) (s12) (s11) (s10) (s9) (s8) (s7) (g1) (g2)(g3)(g4)	(t36) (t35) (t34) (t33) (t32) (t31) (t30) (t29) (t28) (t27)(t26) (t25) (t24) (t23) (t22) (t21) (t20) (t19) (t18) (t17) (t16) (t15) (t14) (t13) (t12) (t11) (t10) (t9) (t8) (t7) (z1)(z2)(z3)(z4)
		(v36) (v35) (v34) (v33) (v32) (v31) (v30) (v29) (v28) (v27)(v26) (v25) (v24) (v23) (v22) (v21) (v20) (v19) (v18) (v17) (v16) (v15) (v14) (v13) (v12) (v11) (v10) (v9) (v8) (v7) 	};

	\coordinate (h1) at (150:\ra+.7cm);
	\coordinate (h2) at (135:\ra-.12cm);
	\coordinate (h3) at (120:\ra-.62cm);
	\coordinate (h4) at (105:\ra-.9cm);

	\draw [blue, line width = 4pt] plot [smooth]  coordinates {(s36) (s35) (s34) (s33) (s32) (s31) (s30) (s29) (s28) (s27)(s26) (s25) (s24) (s23) (s22) (s21) (s20) (s19) (s18) (s17) (s16) (s15) (s14) (s13) (s12) (s11) (s10) (s9) (s8) (s7) (h1) (h2)(h3)(h4)
		(v36) (v35) (v34) (v33) (v32) (v31) (v30) (v29) (v28) (v27)(v26) (v25) (v24) (v23) (v22) (v21) (v20) (v19) (v18) (v17) (v16) (v15) (v14) (v13) (v12) (v11) (v10) (v9) (v8) (v7) 	};
	
	\foreach \i in {1,...,5}{
		\fill (x\i) circle (5pt);
		\fill (u\i) circle (5pt);
		\fill (w\i) circle (5pt);}
	
	\node at (n1){\large $V_1$};
	\node at (n5){\large $V_2$};
	\node at ($(n3)+(-.3,0)$){\large $V_{k-1}$};
	\node at (n2){\large $V_k$};

		\node at (18+72:\ra+.35cm){\tiny $x_1$};
		\node at (18+5*72:\ra+.45cm){\tiny $x_2$};
		\node at (20+3*72:\ra+.4cm){\tiny $x_{k-1}$};
		\node at (18+2*72:\ra+.4cm){\tiny $x_k$};
		
		\node at (18+72:\ra+1.33cm){\tiny $u_1$};
		\node at (18+5*72:\ra+1.43cm){\tiny $u_2$};
		\node at (20+3*72:\ra+1.5cm){\tiny $u_{k-1}$};
		\node at (18+2*72:\ra+1.43cm){\tiny $u_k$};
		
		\node at (18+72:\ra-.7cm){\tiny $w_1$};
		\node at (18+5*72:\ra-.6cm){\tiny $w_2$};
		\node at (22+3*72:\ra-.7cm){\tiny $w_{k-1}$};
		\node at ($(18+2*72:\ra-.65cm)+(.35,.25)$){\tiny $w_k$};

		\end{tikzpicture}
		\caption{The $K^{(k)}_k(3)$ with two paths}\label{fig:xy1}
	\end{subfigure}
	\hfill    
	\begin{subfigure}[b]{0.45\textwidth}
		\centering
		\begin{tikzpicture}[scale = .9]

		\phantom{\fill (0,-9) circle (2pt);}

			\def\s{.8};
			
			\def\an{5};
			\def\s{270-9*\an};
			\def\ra{6cm};
			
			\foreach \i in {1,...,8}{
				\coordinate (b\i) at (\s+2*\i*\an:\ra);
			}
			
			\coordinate (a) at ($(b4)!.5!(b5)+(0,2.7)$);
			\coordinate (x) at ($(b4)!.5!(b5)-(0,1.8)$);
			
			\def\w{9pt};

		\pedge{(a)}{(b4)}{(b3)}{(b2)}{(b1)}{\w}{1.5pt}{violet}{violet,opacity=0.15};
		\pedge{(a)}{(b5)}{(b4)}{(b3)}{(b2)}{\w}{1.5pt}{violet}{violet,opacity=0.15};
		\pedge{(a)}{(b6)}{(b5)}{(b4)}{(b3)}{\w}{1.5pt}{violet}{violet,opacity=0.15};
		\pedge{(a)}{(b7)}{(b6)}{(b5)}{(b4)}{\w}{1.5pt}{violet}{violet,opacity=0.15};
		\pedge{(a)}{(b8)}{(b7)}{(b6)}{(b5)}{\w}{1.5pt}{violet}{violet,opacity=0.15};
		
		\pedge{(x)}{(b1)}{(b2)}{(b3)}{(b4)}{\w}{1.5pt}{violet!100!black}{violet,opacity=0.15};
		\pedge{(x)}{(b2)}{(b3)}{(b4)}{(b5)}{\w}{1.5pt}{violet!100!black}{violet,opacity=0.15};
		\pedge{(x)}{(b3)}{(b4)}{(b5)}{(b6)}{\w}{1.5pt}{violet!100!black}{violet,opacity=0.15};
		\pedge{(x)}{(b4)}{(b5)}{(b6)}{(b7)}{\w}{1.5pt}{violet!100!black}{violet,opacity=0.15};
		\pedge{(x)}{(b5)}{(b6)}{(b7)}{(b8)}{\w}{1.5pt}{violet!100!black}{violet,opacity=0.15};
		
		\def\ww{5pt};
		
		\redge{(b4)}{(b3)}{(b2)}{(b1)}{\ww}{1.5pt}{yellow!100!black}{yellow,opacity=0.25};
		\redge{(b5)}{(b4)}{(b3)}{(b2)}{\ww}{1.5pt}{yellow!100!black}{yellow,opacity=0.25};
		\redge{(b6)}{(b5)}{(b4)}{(b3)}{\ww}{1.5pt}{yellow!100!black}{yellow,opacity=0.25};
		\redge{(b7)}{(b6)}{(b5)}{(b4)}{\ww}{1.5pt}{yellow!100!black}{yellow,opacity=0.25};
		\redge{(b8)}{(b7)}{(b6)}{(b5)}{\ww}{1.5pt}{yellow!100!black}{yellow,opacity=0.25};
		
		\foreach \i in {1,...,8}{
			\fill (b\i) circle (2pt);
		}
		\fill (a) circle (2pt);
		\fill (x) circle (2pt);
		
		\node at ($(a)	+(0,.55)$) {$a$};
		\node at ($(x)	-(0,.55)$) {$x$};

	\end{tikzpicture}
	\caption{A $5$-uniform $(a,x)$-exchanger}\label{fig:xy3}
\end{subfigure}    
\caption{The building blocks of an absorber.}	\label{fig:abspart2}
\vspace{-1em}
\end{figure}
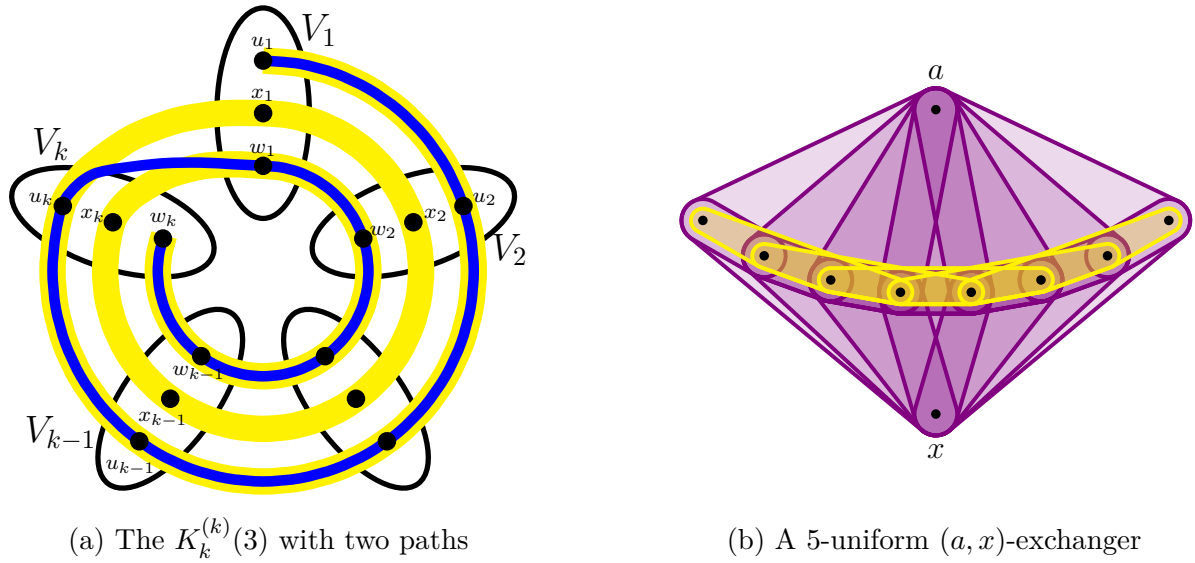

\begin{figure}[t]
	\begin{tikzpicture}[scale=1]
	
	\def \dd{2.1};
	\def \md{.45};
	\def \co{yellow!90!black};

	\coordinate (a1) at (-3*\dd, 0);
	\coordinate (a2) at (-\dd,0);
	\coordinate (a3) at (\dd,0);
	\coordinate (a4) at (3*\dd,0);
	
	\foreach \i in {1,2,3,4}{
		\coordinate (x\i) at ($(a\i)-(0,1.5)$);
		\foreach \j in {1,...,7}{
			\coordinate (b\i\j) at ($(x\i)+(\j*\md,0)-(4*\md,0)$);
		}}
	
	\foreach \i in {1,2}{
		\node at (b\i1) at ($(b\i1)+(0,.5)$){\tiny $ b_{\i1}$};
		\node at (b\i2) at ($(b\i2)+(.2,.5)$){\tiny $\dots$};
		\node at (b\i6) at ($(b\i6)+(-.4,.5)$){\tiny $\dots$};
		\node at (b\i7) at ($(b\i7)+(0,.5)$){\tiny $b_{\i (2k-2)}$};
	}
	
	\node at (b41) at ($(b41)+(0,.5)$){\tiny $ b_{k1}$};
	\node at (b42) at ($(b42)+(.2,.5)$){\tiny $\dots$};
	\node at (b46) at ($(b46)+(-.4,.5)$){\tiny $\dots$};
	\node at (b47) at ($(b47)+(0,.5)$){\tiny $b_{k (2k-2)}$};
	
	\coordinate (y1) at (-5,-4);
	\coordinate (y2) at (-4,-4);
	\coordinate (y3) at (-3,-4);
	\coordinate (y4) at (-2,-4);
	\coordinate (z1) at (2,-4);
	\coordinate (z2) at (3,-4);
	\coordinate (z3) at (4,-4);
	\coordinate (z4) at (5,-4);
	
	\coordinate (v1) at ($(y4)!.2!(z1)$);
	\coordinate (v2) at ($(y4)!.4!(z1)$);
	\coordinate (v3) at ($(y4)!.6!(z1)$);
	\coordinate (v4) at ($(y4)!.8!(z1)$);
	
	\foreach \i in {1,2,4}{
		\redge{(b\i1)}{(b\i2)}{(b\i3)}{(x\i)}{5.5pt}{1.5pt}{\co}{yellow,opacity=0.25}
		\redge{(b\i2)}{(b\i3)}{(x\i)}{(b\i5)}{5.5pt}{1.5pt}{\co}{yellow,opacity=0.25}
		\redge{(b\i3)}{(x\i)}{(b\i5)}{(b\i6)}{5.5pt}{1.5pt}{\co}{yellow,opacity=0.25}
		\redge{(x\i)}{(b\i5)}{(b\i6)}{(b\i7)}{5.5pt}{1.5pt}{\co}{yellow,opacity=0.25}
		}
	
	\redge{(y1)}{(y2)}{(y3)}{(y4)}{5.5pt}{1.5pt}{\co}{yellow,opacity=0.25}
	\redge{(y2)}{(y3)}{(y4)}{(z1)}{5.5pt}{1.5pt}{\co}{yellow,opacity=0.25}
	\redge{(y3)}{(y4)}{(z1)}{(z2)}{5.5pt}{1.5pt}{\co}{yellow,opacity=0.25}
	\redge{(y4)}{(z1)}{(z2)}{(z3)}{5.5pt}{1.5pt}{\co}{yellow,opacity=0.25}
	\redge{(z1)}{(z2)}{(z3)}{(z4)}{5.5pt}{1.5pt}{\co}{yellow,opacity=0.25}

	\begin{pgfonlayer}{front}

		\foreach \i in {1,2,4}{
			\fill (y\i) circle (2pt);
			\fill (a\i) circle (3pt);
			\fill (x\i) circle (3pt);
			\fill (z\i) circle (2pt);
			\foreach \j in {1,...,7}{
				\fill (b\i\j) circle (2pt);}
			\draw[black!50!white, line width=1.5pt, line cap=round, shorten <=3.8pt,shorten >=6.5pt,->] (a\i) -- (x\i);
		
	}
	
		\fill (y3) circle (2pt);
		\fill (z3) circle (2pt);
	
		\node at ($(x1) -(.3,.5)$) {$x_1$};
		\node at ($(x2) -(.3,.5)$) {$x_2$};
		\node at ($(x4) -(-.3,.5)$) {$x_k$};
		
		\node at ($(z1) -(0,.5)$) {$w_{1}$};
		\node at ($(z2) -(0,.5)$) {$w_{2}$};
		\node at ($(z4) -(0,.5)$) {$w_{k}$};
		\node at ($(z3) -(0,.5)$) {\Huge $\dots$};
		
			\node at ($(a1)+(0,.3)$) {$a_{1}$};
			\node at ($(y1) -(0,.5)$) {$u_{1}$};
				\node at ($(a2)+(0,.3)$) {$a_{2}$};
				\node at ($(y2) -(0,.5)$) {$u_{2}$};
					\node at ($(a4)+(0,.3)$) {$a_{k}$};
					\node at ($(y4) -(0,.5)$) {$u_{k}$};
					\node at ($(x3) $) {\Huge $\dots$};
					\node at ($(v3) +(0,.5)$) {\Huge $\dots$};
					\node at ($(y3) -(0,.5)$) {\Huge $\dots$};

	\end{pgfonlayer}

	\draw[black!50!white, line width=1.5pt, line cap=round, shorten <=6.8pt,shorten >=6.5pt,->] (x1) -- +(0,-1.5)-- ($(v1)+(0,1)$) --(v1);
	\draw[black!50!white, line width=1.5pt, line cap=round, shorten <=6.8pt,shorten >=6.5pt,->] (x2) -- +(0,-.95)-- ($(v2)+(0,1.55)$) --(v2);
	\draw[black!50!white, line width=1.5pt, line cap=round, shorten <=6.8pt,shorten >=6.5pt,->] (x4) -- +(0,-1.5)-- ($(v4)+(0,1)$) --(v4);

	\begin{pgfonlayer}{background}
	\draw[ultra thick, decorate, decoration={snake,segment length=10,amplitude=3}, yellow!95!red]
			(b17)--(b21);
	\draw[ultra thick, decorate, decoration={snake,segment length=10,amplitude=3}, yellow!95!red]
			(b27)--(b33);
	\draw[ultra thick, decorate, decoration={snake,segment length=10,amplitude=3}, yellow!95!red]
			(b35)--(b41);
	
	\draw[ultra thick, decorate, decoration={snake,segment length=10,amplitude=3}, yellow!95!red]
			(b47) to[out=270,in=0] (z4);
	\end{pgfonlayer}

	\end{tikzpicture}

\bigskip 

	\begin{tikzpicture}[scale=1]
	
	\def \dd{2.1};
	\def \md{.45};
	\def \co{yellow!90!black};

	\coordinate (a1) at (-3*\dd, 0);
	\coordinate (a2) at (-\dd,0);
	\coordinate (a3) at (\dd,0);
	\coordinate (a4) at (3*\dd,0);
	
	\foreach \i in {1,2,3,4}{
		\coordinate (x\i) at ($(a\i)-(0,1.5)$);
		\foreach \j in {1,...,7}{
			\coordinate (b\i\j) at ($(x\i)+(\j*\md,0)-(4*\md,0)$);
		}}

		\foreach \i in {1,2}{
			\node at (b\i1) at ($(b\i1)+(0,.5)$){\tiny $ b_{\i1}$};
			\node at (b\i2) at ($(b\i2)+(-.1,.3)$){\tiny $\dots$};
			\node at (b\i3) at ($(b\i3)+(0,.5)$){\tiny $ b_{\i (k-1)}$};
			\node at (b\i5) at ($(b\i5)+(0,.5)$){\tiny $ b_{\i k}$};
			\node at (b\i6) at ($(b\i6)+(-.1,.3)$){\tiny $\dots$};
			\node at (b\i7) at ($(b\i7)+(0,.5)$){\tiny $b_{\i (2k-2)}$};
		}
		
		\node at (b41) at ($(b41)+(0,.5)$){\tiny $ b_{k1}$};
		\node at (b42) at ($(b42)+(-.1,.3)$){\tiny $\dots$};
		\node at (b43) at ($(b43)+(0,.5)$){\tiny $ b_{k (k-1)}$};
		\node at (b45) at ($(b45)+(0,.5)$){\tiny $ b_{k k}$};
		\node at (b46) at ($(b46)+(-.1,.3)$){\tiny $\dots$};
		\node at (b47) at ($(b47)+(0,.5)$){\tiny $b_{k (2k-2)}$};

	\coordinate (y1) at (-5,-4);
	\coordinate (y2) at (-4,-4);
	\coordinate (y3) at (-3,-4);
	\coordinate (y4) at (-2,-4);
	\coordinate (z1) at (2,-4);
	\coordinate (z2) at (3,-4);
	\coordinate (z3) at (4,-4);
	
	\coordinate (v1) at ($(y4)!.2!(z1)+(0,.2)$);
	\coordinate (v2) at ($(y4)!.4!(z1)+(0,.4)$);
	\coordinate (v3) at ($(y4)!.6!(z1)+(0,.4)$);
	\coordinate (v4) at ($(y4)!.8!(z1)+(0,.2)$);
	
	\foreach \i in {1,2,4}{
		\redge{(b\i1)}{(b\i2)}{(b\i3)}{(x\i)}{5.5pt}{1.5pt}{\co}{yellow,opacity=0.25}
		\redge{(b\i2)}{(b\i3)}{(x\i)}{(b\i5)}{5.5pt}{1.5pt}{\co}{yellow,opacity=0.25}
		\redge{(b\i3)}{(x\i)}{(b\i5)}{(b\i6)}{5.5pt}{1.5pt}{\co}{yellow,opacity=0.25}
		\redge{(x\i)}{(b\i5)}{(b\i6)}{(b\i7)}{5.5pt}{1.5pt}{\co}{yellow,opacity=0.25}
		}
	
	\redge{(y1)}{(y2)}{(y3)}{(y4)}{5.5pt}{1.5pt}{\co}{yellow,opacity=0.25}
	\redge{(y4)}{(y3)}{(y2)}{(v1)}{5.5pt}{1.5pt}{\co}{yellow,opacity=0.25}
	\redge{(y4)}{(y3)}{(v2)}{(v1)}{5.5pt}{1.5pt}{\co}{yellow,opacity=0.25}
	\redge{(y4)}{(v1)}{(v2)}{(v3)}{5.5pt}{1.5pt}{\co}{yellow,opacity=0.25}
	\redge{(v1)}{(v2)}{(v3)}{(v4)}{5.5pt}{1.5pt}{\co}{yellow,opacity=0.25}
	\redge{(v2)}{(v3)}{(v4)}{(z1)}{5.5pt}{1.5pt}{\co}{yellow,opacity=0.25}
	\redge{(z2)}{(z1)}{(v4)}{(v3)}{5.5pt}{1.5pt}{\co}{yellow,opacity=0.25}
	\redge{(z3)}{(z2)}{(z1)}{(v4)}{5.5pt}{1.5pt}{\co}{yellow,opacity=0.25}
	\redge{(z4)}{(z3)}{(z2)}{(z1)}{5.5pt}{1.5pt}{\co}{yellow,opacity=0.25}

	\begin{pgfonlayer}{front}
	
		\foreach \i in {1,2,3,4} {
		\fill (z\i) circle (2pt);
		}
	
		\foreach \i in {1,2,4}{
			\fill (y\i) circle (2pt);
			\fill (v\i) circle (3pt);
			\fill (x\i) circle (3pt);
			\foreach \j in {1,...,7}{
				\fill (b\i\j) circle (2pt);}
	}
		\fill (v3) circle (3pt);
			\fill (y3) circle (2pt);
	
	\node at ($(y1) -(0,.5)$) {$u_{1}$};
	\node at ($(x1) -(0,.5)$) {$a_{1}$};
	\node at ($(y2) -(0,.5)$) {$u_{2}$};
	\node at ($(x2) -(0,.5)$) {$a_{2}$};
	\node at ($(y4) -(0,.5)$) {$u_{k}$};
	\node at ($(x4) -(0,.5)$) {$a_{k}$};
	\node at ($(y3) -(0,.5)$) {\Huge $\dots$};
	\node at ($(x3) $) {\Huge $\dots$};

	\node at ($(v1) +(0,.5)$) {$x_{1}$};
	\node at ($(v4) +(0,.5)$) {$x_{k}$};
	\node at ($(v2) +(0,.4)$) {$x_{2}$};
	\node at ($(v3) +(0,.4)$) {\Huge $\dots$};

\node at ($(z1) -(0,.5)$) {$w_{1}$};
\node at ($(z2) -(0,.5)$) {$w_{2}$};
\node at ($(z3) -(0,.5)$) {\Huge $\dots$};
\node at ($(z4) -(0,.5)$) {$w_{k}$};

	\end{pgfonlayer}

	\begin{pgfonlayer}{background}
	
	\draw[ultra thick, decorate, decoration={snake,segment length=10,amplitude=3}, yellow!95!red]
			(b17)--(b21);
	\draw[ultra thick, decorate, decoration={snake,segment length=10,amplitude=3}, yellow!95!red]
			(b27)--(b33);
	\draw[ultra thick, decorate, decoration={snake,segment length=10,amplitude=3}, yellow!95!red]
			(b35)--(b41);
	
	\draw[ultra thick, decorate, decoration={snake,segment length=10,amplitude=3}, yellow!95!red]
			(b47) to[out=270,in=0] (z4);

	\end{pgfonlayer}

				\end{tikzpicture}
				\caption{Absorber for $(a_1,\dots, a_k)$ before and after absorption.}
				\label{fig:abs}
			\end{figure}
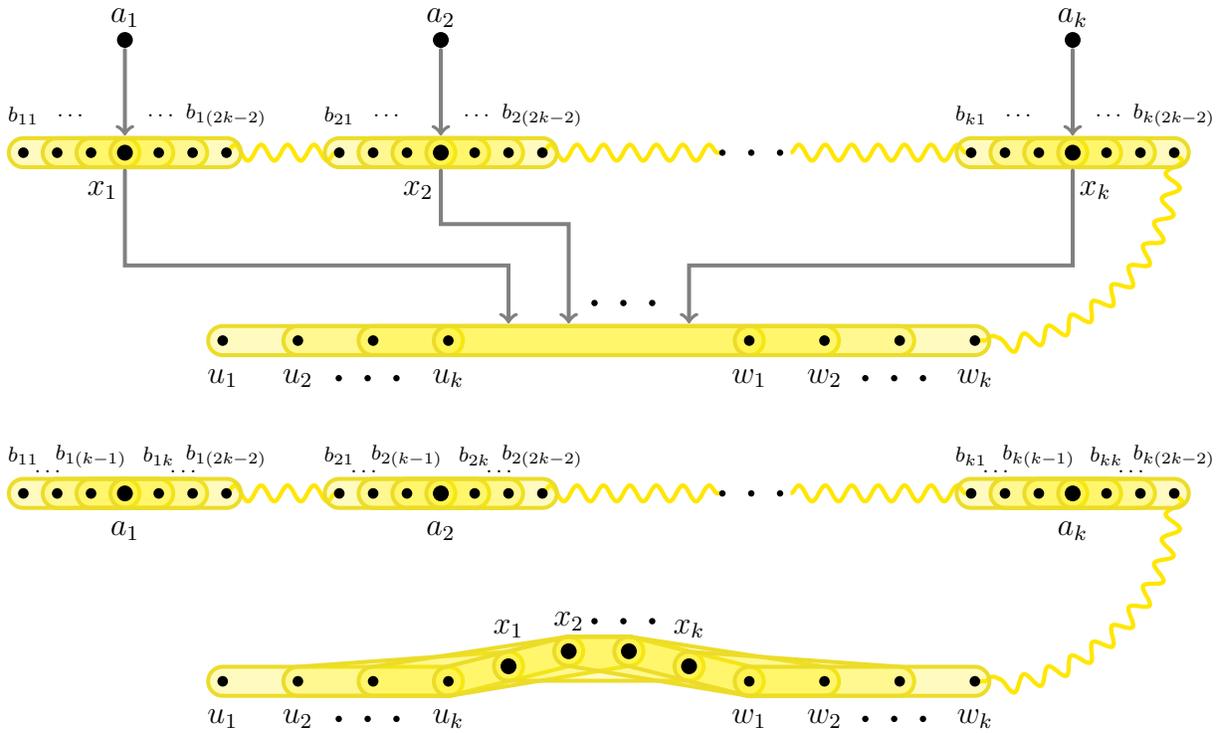

We conclude this subsection with an explicit description how these absorbers are going to 
be utilised (see Figure~\ref{fig:abs}). Suppose to this end that for some $k$-tuple 
$\sa = (a_1, \dots, a_k)$ consisting of $k$ distinct vertices and some $(\sa, \zeta)$-absorber 
$(\seq u, \seq x, \seq w, \seq b_1, \dots, \seq b_k)$ it turns out that the paths
\begin{equation}\label{eq:1818}
	\su\sw    
	\qand
	 b_{i1}\dots b_{i(k-1)} x_ib_{ik}\dots b_{i(2k-2)}
	\quad\textrm{for }i\in [k]
\end{equation}
end up being subpaths of the absorbing path $P_A$ we are about to construct, while $a_1, \ldots, a_k$
are not in $V(P_A)$. We may then replace for each $i\in [k]$ the path
\[
	b_{i1}\dots b_{i(k-1)} x_ib_{ik}\dots b_{i(2k-2)}
		\quad\textrm{ by the path }\quad
	b_{i1}\dots b_{i(k-1)} a_ib_{ik}\dots b_{i(2k-2)}\,,
\]
and then	
\[
	\su\sw			
	\quad\textrm{ by  }\quad
	\su\sx\sw\,.
\]
In this manner we transform $P_A$ into a new path $Q$ with $V(Q)=V(P_A)\cup \{a_1, \ldots, a_k\}$
having the same first and last $(k-1)$-tuple as $P_A$. We say in this 
situation that~$Q$ arises from $P_A$ by {\it absorbing} $\{a_1, \ldots, a_k\}$. 
The $k+1$ paths enumerated in~\eqref{eq:1818} are called the {\it pre-absorption paths} of the 
absorber $(\seq u, \seq x, \seq w, \seq b_1, \dots, \seq b_k)$. So there is one pre-absorption 
path with $2k$ vertices, namely $\su\sw$, and there are $k$ pre-absorption paths with $2k-1$
vertices having a vertex $x_i$ in the middle. 

\subsection{Construction of the building blocks}
	We commence with the first part $(\su, \sx, \sw)$ of our absorbers consisting of $3k$ vertices. 
	As we have already indicated, we shall find $(3k)$-tuples 
	satisfying clause~\ref{it:ab} of Definition~\ref{d:abs} by looking for complete $k$-partite 
	subhypergraphs of~$H(\Psi)$ whose vertex classes are of size three.  
	
	Let us recall for this purpose that by a classic result of Erd\H{o}s~\cite{E} the Tur\'an 
	density of every $k$-partite $k$-uniform hypergraph vanishes. This means that, given a 
	$k$-partite $k$-uniform hypergraph $F$ and a constant $\eps>0$, every sufficiently large 
	$k$-uniform hypergraph $H$ satisfying $|E(H)|\ge \eps |V(H)|^k$ contains a copy of $F$. Due to the
	so-called `supersaturation' phenomenon later discovered by Erd\H{o}s and Simonovits~\cite{ES}, 
	the same assumption actually implies that $H$ contains $\Omega\bigl(|V(H)|^{|V(F)|}\bigr)$
	copies of $F$. For later reference, we record this fact as follows.
	\begin{lemma}\label{thm:super}
		Given a $k$-partite $k$-uniform hypergraph $F$ and $\eps>0$, there are a constant $\xi>0$
		and a natural number $n_0$ such that every $k$-uniform hypergraph $H$ on $n\ge n_0$ vertices 
		with at least $\eps n^k$ edges contains at least $\xi n^{|V(F)|}$ copies of $F$. \qed
	\end{lemma}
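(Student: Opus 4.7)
The plan is to deduce this from Erd\H{o}s's theorem on $k$-partite hypergraphs (recalled just above the statement) by the standard Erd\H{o}s--Simonovits sampling trick. Write $m = |V(F)|$ and apply Erd\H{o}s's theorem with density $\eps/2$ in place of $\eps$: this yields an integer $m_0 \ge m$ such that every $k$-uniform hypergraph on $m_0$ vertices with at least $(\eps/2)m_0^k$ edges contains a copy of $F$. Fix this $m_0$ and take $n_0$ large compared to $m_0$ and $1/\eps$.

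First I would estimate, by double counting, the average number of edges contained in an $m_0$-subset $S \subseteq V(H)$. Namely,
\[
\sum_{S \in V(H)^{(m_0)}} e_H(S) \;=\; |E(H)|\binom{n-k}{m_0-k} \;\ge\; \eps\, n^k \binom{n-k}{m_0-k}\,,
\]
so for $n$ large enough the average of $e_H(S)$ over all $m_0$-subsets is at least $(3\eps/4)\, m_0^k$. Since the trivial upper bound $e_H(S) \le m_0^k$ holds, a one-line averaging argument (writing $p$ for the probability that $e_H(S) \ge (\eps/2)m_0^k$ and using $\mathbb{E}[e_H(S)] \le p\cdot m_0^k + (1-p)(\eps/2)m_0^k$) shows that at least $\xi_0\binom{n}{m_0}$ of the $m_0$-subsets $S$ satisfy $e_H(S) \ge (\eps/2)m_0^k$, where one can take $\xi_0 = \eps/4$.

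By the defining property of $m_0$, each such dense $S$ contains at least one copy of $F$. Conversely, each copy of $F$ in $H$ is supported on exactly $m$ vertices and is therefore contained in precisely $\binom{n-m}{m_0-m}$ of the $m_0$-subsets of $V(H)$. Dividing, the number of copies of $F$ in $H$ is at least
\[
\frac{\xi_0 \binom{n}{m_0}}{\binom{n-m}{m_0-m}} \;=\; \xi_0 \cdot \frac{n^{\underline{m}}}{m_0^{\underline{m}}} \;\ge\; \xi\, n^{m}
\]
for a suitable $\xi = \xi(\eps, F) > 0$, as required. The substantive input is Erd\H{o}s's theorem; the rest is routine averaging, and no real obstacle arises.
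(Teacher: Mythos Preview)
The paper does not give a proof of this lemma; it is stated with a \qed\ and attributed to Erd\H{o}s and Erd\H{o}s--Simonovits. Your argument is precisely the standard Erd\H{o}s--Simonovits supersaturation proof, and it is correct.

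One small arithmetical point: the average of $e_H(S)$ over $m_0$-subsets $S$ equals $|E(H)|\cdot m_0^{\underline{k}}/n^{\underline{k}}$, which for large $n$ tends to $\eps\, m_0^{\underline{k}}$ rather than $\eps\, m_0^k$. So your claimed lower bound $(3\eps/4)m_0^k$ is not literally correct for arbitrary $m_0$. This is harmless, since Erd\H{o}s's theorem lets you take $m_0$ as large as you like (in particular large enough that $m_0^{\underline{k}} \ge \tfrac34 m_0^k$), or alternatively you can run the whole averaging step with $\binom{m_0}{k}$ in place of $m_0^k$; either way the argument goes through unchanged.
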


	We shall now apply this result to $F=K^{(k)}_k(3)$, the complete $k$-partite hypergraph with 
	vertex classes of size $3$, and to an auxiliary hypergraph whose edges are derived from bridges.
	This will establish the following statement, whose conditions~\ref{it:e1} and~\ref{it:e2}
	coincide with~\ref{it:ab} and~\ref{it:ac} from Definition~\ref{d:abs}.

	\begin{lemma}\label{l:elf}
		For every $k\ge 2$ there exists $\xi=\xi(k)>0$ such that for every $\al>0$ there is 
		an integer $n_0$ with the following property. 
		
		For every $k$-uniform $(\al, \frac \al 9)$-constellation $\P$ on $n\ge n_0$ vertices 
		the number of $(3k)$-tuples $(\su, \sx, \sw)\in V(\P)^k\times V(\P)^k\times V(\P)^k$ 
		such that writing $\su=(u_1, \dots, u_k)$, $\sx=( x_1, \dots, x_k)$, 
		and $\sw=(w_1, \dots, w_k)$
		\begin{enumerate}[label=\rmlabel]
			\item\label{it:e1} both $\su\sx\sw$ and $\su\sw$ are $k$-uniform paths in $\P$,
			\item\label{it:e2} $(u_1, \dots, u_{k-1})$ is $\frac 1{9k}$-rightconnectable 
				and $(w_2, \dots, w_k)$ is $\frac 1{9k}$-leftconnectable in $\P$
		\end{enumerate}
		is at least $\xi n^{3k}$.
	\end{lemma}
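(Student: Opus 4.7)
The plan is to combine the abundance of $\frac{1}{9k}$-bridges supplied by Corollary~\ref{c:219} with the Erd\H{o}s--Simonovits supersaturation principle (Lemma~\ref{thm:super}) applied to a suitable auxiliary $k$-partite $k$-uniform hypergraph, thereby producing many copies of $K^{(k)}_k(3)$ whose two ``outer rows'' both form bridges and whose third ``row'' fills the middle positions in Figure~\ref{fig:xy1}.

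First, assuming $n_0 \ge k^2/\alpha$, Corollary~\ref{c:219} delivers at least $n^k/9$ ordered $\frac{1}{9k}$-bridges in $\Psi$. I would encode these into an auxiliary $k$-partite $k$-uniform hypergraph $\cB$ on vertex set $V_1 \dcup \cdots \dcup V_k$, where each $V_i$ is a disjoint copy of $V(\Psi)$, by declaring $\{v_1^{(1)}, \ldots, v_k^{(k)}\}$ an edge of $\cB$ precisely when the underlying tuple $(v_1, \ldots, v_k) \in V(\Psi)^k$ is a $\frac{1}{9k}$-bridge in $\Psi$. Then $e(\cB) \ge n^k/9 = \frac{1}{9k^k}|V(\cB)|^k$, so Lemma~\ref{thm:super} applied to $\cB$ with $F = K^{(k)}_k(3)$ produces a constant $\xi_0 = \xi_0(k) > 0$ together with at least $\xi_0 (kn)^{3k}$ copies of $K^{(k)}_k(3)$ in $\cB$, provided $n$ is large enough.

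For any such copy, whose vertex classes I write as $\{a_i, b_i, c_i\} \subseteq V_i$ for $i \in [k]$, every transversal $k$-tuple is, by the very definition of $\cB$, a $\frac{1}{9k}$-bridge in $\Psi$. Setting
\[
	\seq u = (a_1, \ldots, a_k), \quad \seq x = (b_1, \ldots, b_k), \quad \seq w = (c_1, \ldots, c_k),
\]
Definition~\ref{d:bridge} applied to the bridges $\seq u$ and $\seq w$ immediately delivers~\ref{it:e2}, and every consecutive $k$-tuple in $\seq u\seq x\seq w$ or in $\seq u\seq w$ cycles once through the classes $V_1, \ldots, V_k$, so each such tuple is a transversal of the $K^{(k)}_k(3)$---hence an edge of $\Psi$---which yields~\ref{it:e1} once the $3k$ vertices are confirmed to be pairwise distinct.

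The main conceptual move is the formation of $\cB$; once this is in place the supersaturation lemma does all the work, and the only remaining technicality is to ensure the $3k$ vertices $a_i, b_i, c_i$ remain pairwise distinct after the $V_i$'s are identified with $V(\Psi)$. Distinctness within a single class is automatic, while the number of $K^{(k)}_k(3)$ copies exhibiting an across-class collision is only $O(n^{3k-1})$ and can safely be discarded. This yields at least $\xi n^{3k}$ valid $(3k)$-tuples for some $\xi = \xi(k) > 0$, as required.
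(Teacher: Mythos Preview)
Your proposal is correct and follows essentially the same approach as the paper: build the auxiliary $k$-partite hypergraph on $k$ disjoint copies of $V(\Psi)$ whose edges record $\frac{1}{9k}$-bridges, invoke Corollary~\ref{c:219} for the density lower bound, and then apply supersaturation (Lemma~\ref{thm:super}) with $F=K^{(k)}_k(3)$ to obtain the required $(3k)$-tuples. You are in fact slightly more careful than the paper in explicitly discarding the $O(n^{3k-1})$ copies with across-class vertex collisions, which is needed for $\seq{u}\seq{x}\seq{w}$ to be a genuine path.
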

	
	\begin{proof}
		Throughout the argument we assume that $\xi\ll k^{-1}$ is sufficiently small and 
		that $n_0\gg \alpha^{-1}, \xi^{-1}$ is sufficiently large. Let $\P$ be a $k$-uniform 
		$(\al, \frac\alpha9)$-constellation on $n\ge n_0$ vertices. Construct an 
		auxiliary $k$-partite $k$-uniform hypergraph $\ccB=(V_1\dcup\dots\dcup V_k, E_{\ccB})$ 
		whose vertex classes are $k$ disjoint copies of $V(\P)$ and whose 
		edges $\{v_1,\dots, v_k\}\in E_{\ccB}$ with $v_i\in V_i$ for $i\in [k]$ correspond to 
		the $\frac 1{9k}$-bridges $(v_1, \dots, v_k)$ of $\P$. Corollary~\ref{c:219} tells us that 
		\[
			|E_{\ccB}| \ge \frac 19 n^k = \frac {1}{9k^k}|V(\ccB)|^k\,.
		\]
		So Lemma~\ref{thm:super} applied to $\ccB$ and $F=K^{(k)}_k(3)$  
		leads to $\Omega(n^{3k})$ copies of $K^{(k)}_k(3)$ in $\ccB$,
		where the implied constant only depends on $k$. In other words, for some 
		constant $\xi=\xi(k)$ depending only on $k$ there are at least $\xi n^{3k}$ 
		tuples  $(\su, \sx, \sw)\in V(\P)^k\times V(\P)^k\times V(\P)^k$ 
		such that, writing $\su=(u_1, \dots, u_k)$, $\sx=( x_1, \dots, x_k)$, 
		and $\sw=(w_1, \dots, w_k)$, we have a copy of $K^{(k)}_k(3)$ in $\ccB$ with 
		$u_i, x_i, w_i\in V_i$
		for all $i\in [k]$. Clearly, these $(3k)$-tuples satisfy the demand~\ref{it:e1}
		of the lemma and, since $\su$ and $\sw$ are $\frac 1{9k}$-bridges, they have 
		property~\ref{it:e2} as well (cf. Definition~\ref{d:bridge}). 
\end{proof}

Armed with this result and with Corollary \ref{c:smallabs} we can now prove that if
$\zeta, \mu\ll \alpha, k^{-1}$, then for every $k$-tuple $\sa$ of distinct vertices
from a sufficiently large $(\alpha, \mu)$-constellation the number of $(\sa,\zeta)$-absorbers  
is at least $\Omega(n^{2k^2+k})$.

\begin{lemma}\label{l:menyabs}
	 For every $k\ge 3$ and $\al > 0$ there exist constants $\zeta = \zeta(\al,k)$ and 
	 $\xi = \xi(\al,k)$ as well as an integer $n_0$ with the following property. 
	 
	 If $\Psi$ is a $k$-uniform $(\al, \mu)$-constellation on $n\ge n_0$ vertices, 
	 where $\mu = \frac{1}{10k}\big(\frac{\al}{2}\big)^{2^{k-3}+1}$, and $\sa\in V(\P)^k$ 
	 is an arbitrary $k$-tuple of distinct vertices, then the number of $(\sa,\zeta)$-absorbers 
	 in~$\P$ is at least $\xi n^{2k^2+k}$.
\end{lemma}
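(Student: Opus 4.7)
The plan is to construct each absorber in two stages, mirroring the two kinds of building blocks highlighted in Subsection~\ref{sec:abs_intro}: a \emph{skeleton} triple $(\seq u,\seq x,\seq w)\in V(\Psi)^{3k}$ accounting for clauses~\ref{it:ab} and~\ref{it:ac} of Definition~\ref{d:abs}, together with $k$ \emph{exchangers} $\seq b_1,\dots,\seq b_k\in V(\Psi)^{2k-2}$ accounting for clause~\ref{it:ad}. Skeletons will be supplied by Lemma~\ref{l:elf}, and exchangers by Corollary~\ref{c:smallabs}. The constant $\zeta$ is to be chosen (in terms of $\alpha$ and $k$) small enough that $\zeta\le 1/(9k)$ and $\zeta\le \mu\xi_1/(10k)$, where $\xi_1=\xi_1(k)>0$ denotes the constant provided by Lemma~\ref{l:elf}. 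Since $\mu=\frac{1}{10k}(\alpha/2)^{2^{k-3}+1}\le \alpha/9$ in our regime, Fact~\ref{f:monmu} ensures that $\Psi$ is also an $(\alpha,\alpha/9)$-constellation, so Lemma~\ref{l:elf} is applicable.

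In the first stage, Lemma~\ref{l:elf} provides at least $\xi_1 n^{3k}$ triples $(\seq u,\seq x,\seq w)$ for which $\seq u\seq x\seq w$ and $\seq u\seq w$ are paths (clause~\ref{it:ab}) and for which $(u_1,\dots,u_{k-1})$ is $\frac{1}{9k}$-rightconnectable and $(w_2,\dots,w_k)$ is $\frac{1}{9k}$-leftconnectable; by Fact~\ref{r:monoton} this upgrades to $\zeta$-connectability, yielding clause~\ref{it:ac}. Applying Corollary~\ref{c:smallabs} with this $\zeta$, one obtains an exceptional set $X\subseteq V(\Psi)$ of size at most $(\zeta/\mu) n$. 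I call a triple \emph{admissible} if it is disjoint from $\seq a$ and satisfies $x_i\notin X$ for every $i\in[k]$. A direct double-counting estimate bounds the number of inadmissible triples by $3k(|X|+k)n^{3k-1}\le \tfrac{1}{2}\xi_1 n^{3k}$ for $n$ sufficiently large, so at least $\tfrac{1}{2}\xi_1 n^{3k}$ admissible triples remain.

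In the second stage, fix any admissible triple. For each $i\in[k]$, because $x_i\notin X\cup\{a_i\}$, Corollary~\ref{c:smallabs} produces at least $\tfrac{1}{2}\mu^k n^{2k-2}$ candidate paths $\seq b_i$ in $H(\Psi_{a_i})\cap H(\Psi_{x_i})$ whose first and last $(k-1)$-tuples are $\zeta$-connectable in $\Psi$—precisely what clause~\ref{it:ad} requires. Select the $\seq b_i$ greedily for $i=1,\dots,k$, at each step forbidding $\seq b_i$ to share a vertex with the at most $2k^2+2k$ vertices already chosen (those of $\seq a$, of the skeleton, and of the earlier $\seq b_j$'s); the number of candidates excluded at each step is $O(n^{2k-3})$, so at least $\tfrac{1}{4}\mu^k n^{2k-2}$ valid choices of $\seq b_i$ remain. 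Multiplying the contributions yields at least
\[
\frac{1}{2}\xi_1 n^{3k}\cdot\Big(\frac{1}{4}\mu^k n^{2k-2}\Big)^k
=\frac{\xi_1\mu^{k^2}}{2\cdot 4^k}\,n^{2k^2+k}
\]
$(\seq a,\zeta)$-absorbers, establishing the lemma with $\xi=\xi_1\mu^{k^2}/(2\cdot 4^k)$.

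The main obstacle is not any single deep step but rather the bookkeeping required to synchronise the various connectability thresholds between Lemma~\ref{l:elf}, Corollary~\ref{c:smallabs}, and the clauses of Definition~\ref{d:abs}, while simultaneously keeping the exceptional set $X$ small and ensuring that all $2k^2+k$ vertices of each absorber are mutually distinct and disjoint from~$\seq a$.
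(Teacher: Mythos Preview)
Your proposal is correct and follows essentially the same approach as the paper: obtain skeletons from Lemma~\ref{l:elf}, obtain exchangers from Corollary~\ref{c:smallabs}, and use the exceptional set $X$ together with simple counting to handle disjointness. The only differences are cosmetic bookkeeping---the paper discards all skeletons meeting $X$ (not just those with some $x_i\in X$), and it counts all tuples $(\seq b_1,\dots,\seq b_k)$ at once and subtracts coincidences rather than choosing them greedily---but both routes lead to the same conclusion with comparable constants.
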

\begin{proof}
	Starting with the constant $\xi''=\xi''(k)>0$ provided by Lemma~\ref{l:elf} we set 
	\begin{equation}\label{eq:zx}
		\xi'=\frac{\mu^k}{2},
			\quad
		\zeta =\frac{\xi''\mu}{7k},
			\qand 
		\xi = \frac 14 (\xi')^k\xi''
	\end{equation}
	and we suppose that $n_0$ is sufficiently large. 
	
	In order to show that $\zeta$ and $\xi$ have the 
	desired property, we consider a $k$-uniform $(\al,\mu)$-constellation $\P$ on $n\ge n_0$ 
	vertices as well as a $k$-tuple $\sa = (a_1, \dots, a_k)\in V(\P)^k$ consisting of distinct 
	vertices. The set $X\subseteq V(\P)$ delivered by Corollary~\ref{c:smallabs} (with the same 
	meaning of~$\Psi$,~$\alpha$,~$\mu$, and~$\zeta$ as here) satisfies		
	\begin{equation}\label{eq:x}
		|X|\le \frac \zeta\mu n 
			\overset{\eqref{eq:zx}}{=}
			\frac{\xi''}{7k}n\,.
	\end{equation}

	By $\mu \le \frac \al 9$,  $\zeta \le \frac 1{9k}$, and monotonicity, Lemma~\ref{l:elf} 
	yields at least $\xi'' n^{3k}$ paths $(\su, \sx, \sw)$ in $V(\P)^{3k}$ with the 
	properties~\ref{it:e1} and~\ref{it:e2} of that lemma. Since the number of these paths 
	sharing a vertex with $X\cup\{a_1, \dots, a_k\}$ can be bounded from above by 
	\[
		3k(|X|+k)n^{3k-1}
		\overset{\eqref{eq:x}}{\le}
		3k \frac{\xi''}{7k} n^{3k}+ 3k^2n^{3k-1}
		<
		\frac {\xi''}2 n^{3k}\,,
	\]
	there are at least $\frac {\xi''}2 n^{3k}$ such paths avoiding both $X$ and $\sa$.
	Now it suffices to establish that each of them participates in at 
	least $\frac 12(\xi')^kn^{2k^2-2k}$ absorbers. 
	
	For the rest of the proof we fix some such path $(\su, \sx, \sw)\in V(\P)^{3k}$
	and, as usual, we write $\sx = (x_1, \dots, x_k)$. Now we apply Corollary~\ref{c:smallabs} for 
	every $i\in [k]$ to the vertices $a_i$ and $x_i$, thus obtaining $\xi' n^{2k-2}$ 
	paths $\sb_i=(b_{i1}, \dots, b_{i(2k-2)})\in V(\P)^{2k-2}$ 
	in $H(\Psi_{a_i})\cap H(\Psi_{x_i})$ whose first and last $(k-1)$-tuples are $\zeta$-connectable 
	in $\Psi$. Altogether, this yields $(\xi')^k n^{2k^2-2k}$ 
	possibilities for $(\seq{b}_1, \ldots, \seq{b}_k)$ and for most of 
	them $(\su, \sx, \sw, \seq{b}_1, \ldots, \seq{b}_k)$ is an $(\sa, \zeta)$-absorber.
	The only exceptions occur when some of these $2k^2+k$ vertices coincide, but this can 
	happen in at most $(2k^2+k)(2k^2-2k) n ^ {(2k-2)k-1}<\frac12 (\xi')^k n^{2k^2-2k}$ ways.
	Thus $(\su, \sx, \sw)$ is indeed extendable in at least $\frac12 (\xi')^k n^{2k^2-2k}$
	distinct ways to an $(\sa, \zeta)$-absorber.
\end{proof}

\subsection{Construction of the absorbing path}\label{subsec:1838}
After these preparations the Absorbing Path Lemma can be shown in a rather standard fashion.
The argument starts by observing that a random selection of $(2k^2+k)$-tuples 
contains, with high probability, for every $k$-tuple~$\seq{a}$ a positive proportion of
$(\seq{a}, \zeta)$-absorbers. Moreover, if we generate $\Theta(n)$ such random tuples with a small 
implied constant, then most of 
them will be disjoint to all others and it remains to connect the paths they consist of by means 
of the Connecting Lemma. 
\def\tt{t}

\begin{proof}[Proof of Proposition~\ref{prop:apl}]
	
	Given to us are $k\ge 3$, $\al, \beta >0$, an odd integer $\l \ge 3$, and 
	$\mu = \frac{1}{10k}\big(\frac{\al}{2}\big)^{2^{k-3}+1}$. Let $\zeta = \zeta(\al,k)>0$ 
	and ${\xi=\xi(\al,k)>0}$ be the constants supplied by Lemma~\ref{l:menyabs}, let 
	$\theta=\theta(k, \al, \beta, \l, \zeta)$ be provided by Proposition \ref{prop:clk},
	define an auxiliary constant by 
	\begin{equation}\label{eq:gamma}
		\gamma =\min\Big\{\frac{\xi}{48k^2M^2}, \frac{\theta}{8kM^2}\Big\},
		\quad\textrm{where}\quad
		M= 4^{k-2}k\ell \ge 12k\,,
	\end{equation}
	and finally set 
	\[
		\thetas=4kM\gamma\,.
	\]
	We contend that $\zeta$ and $\thetas$ have the desired properties.
	
	To verify this we consider a $k$-uniform $(\al, \beta, \l, \mu)$-constellation $\P$ 
	on $n$ vertices, where $n$ is sufficiently large, as well as an arbitrary 
	subset $\cR \subseteq V(\P)$ whose size is at most $\thetas^2 n$. Let 
	\[
			\tt = 2k^2+k< 3k^2
	\]
	be the length of our absorbers. Since the desired absorbing path needs to be disjoint 
	to~$\cR$, only the absorbers avoiding~$\cR$ are relevant in the sequel. For every 
	$k$-tuple $\sa\in V(\P)^k$ consisting of distinct vertices we denote the collection of 
	appropriate absorbers by 
	\[
		\ccA(\sa)=\bigl\{\seq{A}\in (V(\P)\setminus \cR)^t\colon 
			\seq{A} \text { is an $(\sa, \zeta)$-absorber}\bigr\}\,.
	\]
	Lemma~\ref{l:menyabs} tells us that the total number of $(\sa, \zeta)$-absorbers is at 
	least $\xi n^{\tt}$ and by subtracting those which meet $\cR$ we obtain 
	\begin{equation}\label{eq:asa}
			|\ccA(\sa)|
			\ge 
			\xi n^{\tt} - \tt|\cR|n^{\tt-1} 
			\ge 
			(\xi-t\thetas^2)n^t
			\ge
			\frac \xi 2n^{\tt}\,.
	\end{equation}
	Let 
	\[
		\ccA
		=
		\bigcup\bigl\{\ccA(\sa)\colon \sa\in V(\P)^k \text{ consists of $k$ distinct vertices}\bigr\}
		\subseteq 
		\bigl(V(\P)\setminus \cR\bigr)^{\tt}
	\]
	be the set of all relevant absorbers. The probabilistic argument we have been alluding to
	earlier leads to the following result. 
	
	\begin{claim}\label{clm:1905}
		There is a set $\ccB\subseteq \ccA$ of mutually disjoint absorbers of size $|\ccB|\le 2\gamma n$
		satisfying $|\ccA(\sa)\cap \ccB|\ge \thetas^2n$ for every $k$-tuple $\sa\in V(\P)^k$ 
		consisting of distinct vertices. 
	\end{claim}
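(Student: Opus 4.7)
The plan is a standard probabilistic deletion argument. We first sample a random subfamily $\ccB^\star\subseteq\ccA$ by including each absorber $\seq A\in\ccA$ independently with probability $p=\gamma/n^{t-1}$. Since $|\ccA|\le n^t$, this gives $\EE|\ccB^\star|\le\gamma n$, so Chernoff's inequality will yield $|\ccB^\star|\le 2\gamma n$ with probability $1-o(1)$.

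Two further events should hold simultaneously. For each $k$-tuple $\sa\in V(\P)^k$ of distinct vertices, $|\ccA(\sa)\cap\ccB^\star|$ is a sum of independent Bernoulli variables with mean at least $p|\ccA(\sa)|\ge\gamma\xi n/2$ by~\eqref{eq:asa}; a Chernoff lower-tail estimate keeps this count close to its mean with failure probability $\exp(-\Omega(n))$, and a union bound over the at most $n^k$ choices of $\sa$ forces the lower bound to hold for every admissible $\sa$ simultaneously. Separately, since at most $t^2n^{2t-1}$ ordered pairs of distinct $t$-tuples in $V(\P)^t$ share a vertex, the number $D$ of vertex-sharing pairs of absorbers landing in $\ccB^\star$ has expectation $O(\gamma^2 t^2 n)$, and Markov's inequality caps $D$ by a constant multiple of this expectation with positive probability.

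Conditioning on an outcome $\ccB^\star$ for which all three events hold, we would then delete one absorber from each vertex-sharing pair, obtaining a family $\ccB$ of mutually vertex-disjoint absorbers with $|\ccB|\le|\ccB^\star|\le 2\gamma n$. Each deletion decreases $|\ccA(\sa)\cap\ccB^\star|$ by at most one, so the lower bound $|\ccA(\sa)\cap\ccB|\ge\Omega(\gamma\xi n)-O(\gamma^2 t^2 n)$ holds for every admissible $\sa$. The hard part, if any, is only the bookkeeping: checking that this expression exceeds $\thetas^2 n=16k^2M^2\gamma^2 n$ reduces to an inequality of the shape $\xi\gtrsim\gamma k^2 M^2$, which is exactly what the choice $\gamma\le\xi/(48k^2M^2)$ in~\eqref{eq:gamma} was arranged to guarantee (using also $M\ge 12k$ and $t<3k^2$ to absorb the $\gamma^2 t^2 n$ loss term). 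No new combinatorial idea is required.
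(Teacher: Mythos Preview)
Your proposal is correct and follows essentially the same approach as the paper: sample absorbers independently with probability $p=\gamma n^{1-t}$, use Chernoff for the per-$\sa$ lower bounds and a union bound over all~$\sa$, use Markov for the number of overlapping pairs, and then delete to obtain a disjoint family. The only cosmetic differences are that the paper uses Markov (rather than Chernoff) for the size bound $|\ccB^\star|\le 2\gamma n$, and phrases the cleanup step as ``take a maximal disjoint subfamily'' rather than ``delete one absorber from each overlapping pair''; the arithmetic verifying that the loss from deletions is at most $\tfrac14\thetas^2 n$ (via $\gamma t\le 3k^2\gamma\le \tfrac14\thetas$, using $M\ge 12k$) is the same as what you sketch.
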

		
	\begin{proof}
		Let $\ccA_p\subseteq \ccA$ be a random subset including every absorber in~$\ccA$ independently 
		with probability $p=\gamma n^{1-t}$. As $|\ccA_p|$ is binomially distributed with 
		expectation $p|\ccA| \le pn^{\tt}=\gamma n$, Markov's inequality yields
		\begin{equation}\label{eq:m1}
			\PP\bigl(|\ccA_p|\ge 2\gamma n\bigr)
			\le 
			\PP\bigl(|\ccA_p|\ge 2p |\ccA|\bigr)
			\le
			\frac 12\,.
		\end{equation}

		Next we observe that the set 
		\[
			\bigl\{\{\seq{A}, \seq{A'}\}\in \ccA^{(2)}\colon \seq{A} \tand \seq{A'} 
				\textrm{ share a vertex}\bigr\}
		\]
		of {\it overlapping pairs of absorbers} has at most the cardinality $t^2n^{2t-1}$.
		So the expected size of its intersection with $\ccA_p^{(2)}$ is at most 
		$p^2t^2n^{2t-1}=\gamma^2t^2n$.  
		Since 
		\[	
			\gamma t\le 3k^2\gamma \le \tfrac14\theta_\star\,,
		\]
		a further application of Markov's inequality reveals
		\begin{equation}\label{eq:m2}
			\PP\Bigl(\big|\bigl\{\{\seq{A},\seq{A'}\}\in \ccA^{(2)}_p\colon\,\seq{A}\tand \seq{A'} \textrm{ share a vertex}\bigr\}\big| 
				\ge \tfrac14 \theta_\star^2 n\Bigr)\le
			 \frac 14\,.
		\end{equation}

		Finally, for every $k$-tuple $\sa\in V(\P)^k$ of distinct vertices the random 
		variable $|\ccA_p\cap \ccA(\sa)|$ is binomially distributed with expectation $p|\ccA(\sa)|$.
		By~\eqref{eq:asa} we know that 
		\[
			p|\ccA(\sa)|
			\ge 
			\frac12\gamma\xi n
			\ge
			24 k^2M^2\gamma^2n
			= 
			\tfrac32\thetas^2n
		\]
		and, therefore, Chernoff's inequality yields
		\[
			\PP\big(|\ccA_p\cap \ccA(\sa)|\le \tfrac54 \thetas^2n \big)
			\le
			e^{-\Omega(n)}
			<
			\frac 1{4n^k}\,.
		\]		
		As there are at most $n^k$ possibilities for $\sa$, the union bound leads to 
		\begin{equation}\label{eq:c1}
			\PP\big(|\ccA_p\cap \ccA(\sa)|\le \tfrac54 \thetas^2n \text{ holds for some $\sa$}\big)
			<
			\frac 14\,.
		\end{equation}

		Taken together, the probabilities estimated in \eqref{eq:m1}\,--\,\eqref{eq:c1} amount 
		to less than $1$. Thus there exists a deterministic set $\ccB_\star\subseteq \ccA$
		of size $|\ccB_\star|\le 2\gamma n$ containing at most $\frac14\theta_\star^2 n$ pairs of 
		overlapping absorbers and satisfying $|\ccB_\star\cap \ccA(\sa)|\ge \tfrac54\thetas^2 n$
		for all $k$-tuples $\sa\in V(\P)^k$ of distinct vertices. 
		
		Now it suffices to check that a maximal subcollection $\ccB\subseteq \ccB_\star$ 
	   of mutually disjoint absorbers has the desired properties. The upper bound 
	   $|\ccB|\le |\ccB_\star|\le 2\gamma n$ is clear and due 
	   to $|\ccB_\star\setminus \ccB|\le \frac14\theta_\star^2 n$ we have 
	   \[
	   	|\ccB\cap \ccA(\sa)|
			\ge 
			\tfrac54\thetas^2n - \tfrac14\thetas^2n
			=
			\thetas^2n
		\]
	   for every $\sa$.		 
	\end{proof}	 
		
	It remains to connect the absorbers we have just selected into a path. 
	Recall that every member of $\ccB$ possesses $k+1$ pre-absorptions paths
	introduced in the last paragraph of Subsection~\ref{sec:abs_intro}. 
	Each of these paths has at most~$2k$ vertices, starts with a $\zeta$-rightconnectable $(k-1)$-tuple, 
	and ends with a $\zeta$-leftconnectable $(k-1)$-tuple. In fact, most of the 
	pre-absorptions paths even have $\zeta$-connectable end-tuples (see 
	Definition~\ref{d:abs}~\ref{it:ad}).
	
	Setting $r=(k+1)|\ccB|\le 4k\gamma n$, let $P_1, \ldots, P_r$ be the pre-absorption paths 
	of the absorbers in $\ccB$ enumerated in such a way that the 	end-tuples of $P_1$ and $P_r$ 
	are $\zeta$-connectable.
	We shall construct our absorbing path $P_A$ to be of the form 
	\[
		P_A=P_1C_1P_2C_2\ldots P_{r-1}C_{r-1}P_r\,,
	\]
	where $C_1, \ldots, C_{r-1}$ are connections that will be provided by 
	Proposition~\ref{prop:clk}. Since we intend to use the Connecting Lemma with $i=0$, 
	each of these connections is going to have   
	\[
		f=f(k, 0, \ell)=[4^{k-3}(2\ell+4)-2]k\le M-2k
	\]
	vertices, which will yield
	\begin{equation}\label{eq:1859}
		|V(P_A)|\le r\bigl(2k+(M-2k)\bigr)=rM\le 4kM\gamma n\,.
	\end{equation}

	We will determine the connections $C_1, \ldots, C_{r-1}$ one by one. When choosing $C_j$
	for some $j\in [r-1]$, the Connecting Lemma (Proposition~\ref{prop:clk}) offers us at 
	least $\theta n^f$ possible ways
	to connect $P_j$ with~$P_{j+1}$ by means of a path with $f$ inner vertices. As we need 
	to avoid both the already constructed parts of $P_A$ and the set $\cR$, there are at most 
	\[
		f(|\cR|+4kM\gamma n)n^{f-1}
		<
		(M\thetas^2+4kM^2\gamma)n^f
		\overset{\eqref{eq:gamma}}{<}
		8kM^2\gamma n^f
		\overset{\eqref{eq:gamma}}{\le}
		\theta n^f 
	\]
	potential connections we cannot use, and thus the choice of $C_j$ is indeed possible. 
	This concludes the description of the construction of $P_A$ and it remains to check that 
	the path we just defined has all required properties. 
	
	Condition~\ref{it:apl1} follows from~\eqref{eq:1859} and~\ref{it:apl2} is guaranteed by our 
	choice of the enumeration $P_1, \ldots, P_r$.	 
	For the proof of~\ref{it:apl3} we consider any set $Z\subseteq V(\P)\setminus V(P_A)$ 
	satisfying $|Z|\le 2 \thetas^2n$ and $|Z|\equiv 0\pmod{k}$.
	Let $\seq{a}_1, \ldots, \seq{a}_z\in V(\Psi)^k$ with $z=\frac{|Z|}k\le \thetas^2 n$ 
	be disjoint $k$-tuples with the property that every vertex from $Z$ occurs in exactly 
	one of them. By Claim~\ref{clm:1905} we can find 
	distinct absorbers $\seq{A}_1, \ldots, \seq{A}_z\in \ccB$ such that $\seq{A}_j$ 
	is a $(\seq{a}_j, \zeta)$-absorber for every $j\in [z]$. It remains to utilise these 
	absorbers one by one. 
\end{proof}

\section{Covering}
\label{sec:cov}

The aim of this section is to prove that under natural assumptions on the parameters almost all
vertices of every large~$k$-uniform $(\alpha, \beta, \ell, \mu)$-constellation can be covered  
by long paths whose first and last~$(k-1)$-tuples are connectable. Before formulating the precise statements let us give an overview of the argument, which will proceed by induction on $k$.  

In the induction step from $k-1$ to $k$ we study a largest possible collection~$\ccC$ 
of mutually vertex-disjoint \hbox{$M$-vertex} paths with connectable end-tuples and we denote 
the set of currently uncovered vertices by~$U$. If~$U$ is not small enough already, i.e., 
if $|U|=\Omega(|V(\Psi)|)$, then we partition~$V(\Psi)$ into sets of size $M$, the so-called blocks, 
such that the vertex set of each path in~$\ccC$ is one such block. Next, we show by probabilistic 
arguments that there is a special selection of~$M$ blocks, called a useful society below, such that 
their union $S$ has the property that for `many' vertices $u\in U$ the induction hypothesis applies 
to $\Psi_u[S]$. For such vertices $u$ we can then find $M+1$ (actually even more) long 
disjoint $(k-1)$-uniform paths in $\Psi_u[S]$ starting and ending with connectable~$(k-2)$-tuples.

In fact, for some still not too small set~$U''\subseteq U'$ these paths 
will coincide for all $u\in U''$, meaning that inserting vertices from~$U''$ at 
every~$k^{\mathrm{th}}$ position will yield~$M+1$ paths in~$\Psi$ with connectable end-tuples
(see Figure~\ref{fig:aug}).
This allows us to take the original paths contained in $S$ out of~$\ccC$ and to add the 
newly constructed paths instead, thus increasing the size of~$\ccC$. 
The following covering principle lies at the heart of this inductive argument. 

\begin{dfn}\label{d:herz}
	For $k\ge 3$ the statement $\heartsuit_k$ asserts that given~$\alpha,\beta, \thetas >0$ 
	and an odd integer~$\ell\ge 3$ there exists a constant~$\zeta_{\star\star}>0$
	such that for every $M_0\in\NN$ there exist a natural number~$M\ge M_0$ 
	with~$M\equiv -1 \pmod k$ and the following property: 
	
	For every sufficiently 
	large~$k$-uniform~$(\alpha,\beta,\ell,\frac{4\alpha}{17^k})$-constellation~$\Psi$ we can cover 
	all but at most~$\vartheta _\star^2\vert V(\Psi)\vert$ vertices by mutually 
	vertex-disjoint~$M$-vertex paths whose first and last~$(k-1)$-tuples 
	are~$\zeta_{\star\star}$-connectable.
\end{dfn}

For the base case $k=3$ we quote~\cite{Y}*{Lemma~2.14}. One needs to be a little bit careful here, 
because~\cite{Y} uses a slightly different notion of $\zeta_{\star\star}$-connectable pairs 
in \hbox{$3$-uniform hypergraphs}. However, every pair that 
is \hbox{$\zeta_{\star\star}$-connectable} in
the sense of~\cite{Y} is \hbox{$\zeta_{\star\star}$-connectable} in the sense of 
Definition~\ref{d:212} as well and, therefore,~\cite{Y}*{Lemma~2.14} is strictly stronger 
than~$\herz_3$.

\begin{fact}\label{f:herz3}
	The assertion $\heartsuit_3$ holds. \qed
\end{fact}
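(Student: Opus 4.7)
The plan is to invoke \cite{Y}*{Lemma~2.14} and verify that the notion of $\zetass$-connectability employed there implies the one used in the present paper. That lemma establishes precisely the covering statement required by $\herz_3$: given $\alpha, \beta, \thetas>0$ and an odd integer $\ell\ge 3$, it produces a constant $\zetass>0$ such that for every $M_0\in\NN$ one can pick $M\ge M_0$ with $M\equiv -1\pmod{3}$ and the property that every sufficiently large $3$-uniform $(\alpha,\beta,\ell,\frac{4\alpha}{17^3})$-constellation $\Psi$ can be covered, up to at most $\thetas^2|V(\Psi)|$ vertices, by mutually vertex-disjoint $M$-vertex paths whose end-pairs are $\zetass$-connectable in the sense of \cite{Y}.

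The one point I would need to address is the difference between the two conventions. In \cite{Y}, a pair $(x_1,x_2)$ is called $\zetass$-connectable when the set of vertices $v\in V(\Psi)$ satisfying $x_1x_2\in E(R^\Psi_v)$ has cardinality at least $\zetass|V(\Psi)|$. Since this condition forces both $x_1x_2v\in E(\Psi)$ and $\{x_1,x_2\}\subseteq V(R^\Psi_v)$, every such vertex $v$ lies simultaneously in the set
\[
U^\Psi_{(x_1,x_2)}=\bigl\{v\in V(\Psi)\colon x_1x_2v\in E(\Psi)\text{ and }x_2\in V(R^\Psi_v)\bigr\}
\]
and in its symmetric counterpart $U^\Psi_{(x_2,x_1)}$. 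Referring to Definitions~\ref{d:210} and~\ref{d:212}, the pair $(x_1,x_2)$ is thus both $\zetass$-leftconnectable and $\zetass$-rightconnectable in the sense of the present paper, and therefore $\zetass$-connectable. Consequently the paths produced by \cite{Y}*{Lemma~2.14} meet the end-tuple requirement of $\herz_3$ \emph{a fortiori}, and no further argument is needed; since all the work has already been carried out in \cite{Y}, there is no real obstacle to overcome here.
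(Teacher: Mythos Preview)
Your proposal is correct and follows essentially the same approach as the paper: both invoke \cite{Y}*{Lemma~2.14} and observe that the $\zetass$-connectability notion of~\cite{Y} (requiring many vertices $v$ with $x_1x_2\in E(R^\Psi_v)$) implies the present paper's notion, so that the earlier lemma is strictly stronger than~$\herz_3$. In fact you spell out the verification of this implication a bit more explicitly than the paper does, but the substance is identical.
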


There is one issue with the inductive proof of $\herz_k$ sketched above: when 
applying the induction hypothesis to a $(k-1)$-uniform constellation of the form $\Psi_u[S]$, 
where~$S$ is the vertex set of a useful society, we would prefer to get a covering of almost 
all vertices in~$S$ by paths of length $\Omega(\sqrt{|S|})$ rather than $\Omega(1)$, but prima 
facie~$\heartsuit_{k-1}$ does not 
seem to deliver this. For this reason we also have to deal with the following statement capable
of providing coverings by very long paths. 

\begin{dfn}\label{d:pik}
	For $k\ge 3$ the covering principle $\spadesuit_k$ asserts that given~$\alpha,\beta,\xi>0$ 
	and an odd integer~$\ell\geq 3$, there exists an infinite arithmetic 
	progression~$P\subseteq k\NN$ with the following property. 
	
	If~$\Psi$ is a~$k$-uniform~$(\alpha,\beta,\ell,\frac{\alpha}{17^k})$-constellation,~$M\in P$, 
	and~$\mathfrak{B}\subseteq V(\Psi)^k$ is a collection of~$\xi$-bridges in~$\Psi$ 
	with~$\vert \mathfrak{B}\vert\geq\xi\vert V(\Psi)\vert^k$, then all but at 
	most~$\bigl\lfloor \xi\vert V(\Psi)\vert\bigr\rfloor +M$ vertices of~$\Psi$ can be covered 
	with mutually disjoint~$M$-vertex paths starting and ending with bridges from~$\mathfrak{B}$.
\end{dfn}

Observe that for a fixed $k$-uniform $(\alpha,\beta,\ell,\frac{\alpha}{17^k})$-constellation~$\Psi$ 
we can apply $\pik_k$ with every $M\in P$. For a larger value of $M$ we have to cover fewer vertices,
but, on the other hand, we need to cover them with longer paths. Thus there is no obvious monotonicity 
in~$M$.

Now we plan to establish the implication~$\herz_{k-1}\Rightarrow\pik_{k-1}\Rightarrow\herz_k$,
thus decomposing the induction step of the proof of $\heartsuit_k$ into two simpler tasks. 
They will be treated in Lemma~\ref{lem:herzzupik} and Lemma~\ref{lem:indstep}, respectively.

\begin{lemma}\label{lem:herzzupik}
	If~$k\geq 3$ and $\heartsuit_k$ holds, then so does $\spadesuit_k$.
\end{lemma}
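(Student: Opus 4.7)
The only substantive difference between $\herz_k$ and $\pik_k$ is that the latter requires each $M$-vertex path to begin and end with a bridge from a prescribed collection~$\mathfrak{B}$. The plan is to apply $\herz_k$ to~$\Psi$ (or rather, to a slight modification of it) to obtain a cover of almost all vertices by shorter $M_1$-vertex paths with $\zetass$-connectable end-tuples, and then to assemble each $M$-vertex path by gluing together a block of $q$ such medium paths using $q+1$ applications of the Connecting Lemma, while attaching one bridge from $\mathfrak{B}$ at each end.

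For the constants I will apply $\herz_k$ with parameters $(\alpha/2,\beta/2,\ell,\vts^2)$, where $\vts^2$ is taken as a small fraction of $\xi$, to obtain $\zetass>0$; set $\zeta=\min\{\xi,\zetass/2\}$; and employ the Connecting Lemma with index $i=0$, so that each connection contributes exactly $f_0=f(k,0,\ell)$ inner vertices. Since $f_0\equiv 0\pmod k$ while the $M_1$ supplied by $\herz_k$ satisfies $M_1\equiv -1\pmod k$, an $M$-vertex path consisting of two terminal bridges, $q+1$ connections, and $q$ medium paths has length $M=2k+(q+1)f_0+qM_1\equiv -q\pmod k$; choosing $q=kj$ therefore forces $M\equiv 0\pmod k$, and the arithmetic progression to output is $P=\{2k+f_0+kj(f_0+M_1):j\ge j_0\}$. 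Given $\Psi$, $M\in P$, and $\mathfrak{B}$, I will first invoke the Reservoir Lemma (Proposition~\ref{prop:reservoir}) to extract a small reservoir $\cR$; then, after discarding from $\mathfrak{B}$ the at most $k|\cR|n^{k-1}$ bridges that meet $\cR$, a standard maximality argument delivers a sub-collection $\mathcal{X}'$ of roughly $2\lceil n/M\rceil$ mutually vertex-disjoint bridges from $\mathfrak{B}$ avoiding $\cR$. Setting $X=V(\mathcal{X}')$ and invoking Lemma~\ref{lem:1816}, the constellation $\Psi-X-\cR$ is an $(\alpha/2,\beta/2,\ell,2\alpha/17^k)$-constellation, and the identity $2\alpha/17^k=4(\alpha/2)/17^k$ makes this match the hypothesis of $\herz_k$ with $\alpha/2$ in place of $\alpha$. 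Applying $\herz_k$ to $\Psi-X-\cR$ yields $T\approx n/M_1$ medium paths whose end-tuples, $\zetass$-connectable in $\Psi-X-\cR$, descend to $\zeta$-connectability in $\Psi$ via Fact~\ref{f:41}. Finally I partition the medium paths into groups of $kj$, and for each group stitch everything together with two bridges from $\mathcal{X}'$, routing every connection through still-unused vertices of $\cR$ via Corollary~\ref{lem:use-reservoir}.

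The principal technical obstacle will be juggling several quantitative constraints simultaneously. On the one hand, $|\cR|$ and $|X|$ must each remain at most roughly $\xi n/5$, so that the uncovered vertices --- those missed by $\herz_k$, the at most $kj-1$ medium paths of the possibly incomplete group (contributing fewer than $M$ vertices in total), the reservoir vertices unused by any connection, and the bridge vertices in $\mathcal{X}'$ not absorbed into any final $M$-path --- together stay within the allowance $\xi n+M$. On the other hand, $M_1$ must be chosen large enough in terms of $f_0$ and the reservoir parameter $\thetass$ that the cumulative consumption of $\cR$ across the $O(nf_0/M_1)$ connections never exceeds the tolerance of Corollary~\ref{lem:use-reservoir}. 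Once the constants are fixed in the right order --- $\zetass$ first, then $\zeta$, then the connecting and reservoir constants $\vartheta$ and $\thetass$, and finally $M_1$ and $j_0$ --- the construction proceeds without further surprise.
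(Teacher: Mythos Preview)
Your approach is essentially identical to the paper's: reserve a small set $\cR$, put aside a stock of pairwise disjoint bridges from $\mathfrak{B}$ avoiding $\cR$, apply $\herz_k$ (with halved parameters, via Lemma~\ref{lem:1816}) to the remaining constellation to get a family of $M_1$-vertex paths, partition these into groups of size $q$, and stitch each group together with $q+1$ connections through $\cR$ and two terminal bridges. The arithmetic progression you write down, $P=\{2k+f_0+kj(f_0+M_1):j\ge j_0\}$, is exactly the paper's $P$ (the paper phrases it as an intersection of two progressions, but the common difference is $k(M_1+f_0)$ either way).

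There is one quantitative slip worth flagging. You budget ``roughly $2\lceil n/M\rceil$'' bridges, but the number of long paths you actually want to build is $\lfloor T/q\rfloor$, where $T\le (n-|X|-|\cR|)/M_1$ is the number of medium paths. Since each long path draws only $qM_1<M$ vertices from the medium-path pool (the remaining $2k+(q+1)f_0$ come from $X$ and $\cR$), one has $\lfloor T/q\rfloor$ bounded by roughly $n/(qM_1)$ rather than $n/M$, and for large $n$ this exceeds $\lceil n/M\rceil$ by a factor $1+O(f_0/M_1)$. If you run out of bridges, the leftover medium paths contribute $\Omega(n f_0/M_1)$ uncovered vertices, which for small $\xi$ breaks the budget $\xi n+M$. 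The fix is immediate --- set aside, say, $\lceil 2n/(qM_1)\rceil$ bridges, or (as the paper does) a fixed proportion $\thetas n$ with $\thetas\ll\xi$ --- and everything else in your outline then goes through as written.
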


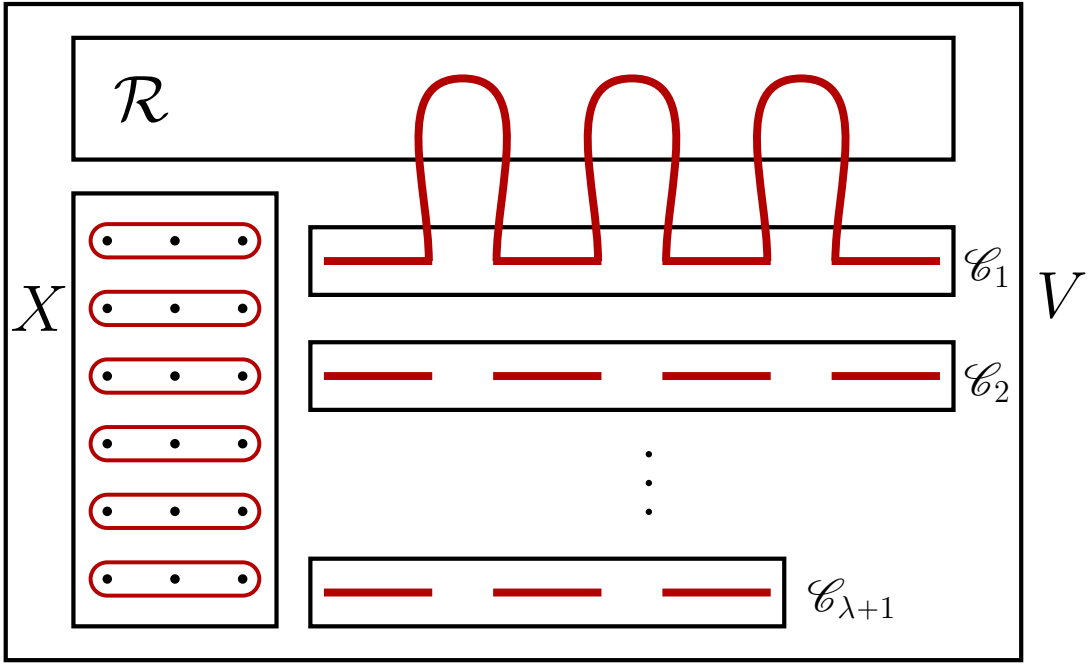
\begin{figure}[t]
	\begin{tikzpicture}[scale=0.9]
	
	\coordinate (a11) at (0,0);
	\coordinate (a12) at (0,0);
	\coordinate (a13) at (0,0);
	
	\foreach \i in {0, ..., 5}{
		\foreach \j in {0,1,2}{
			\coordinate (a\i\j) at (\j,\i);
			\fill (a\i\j) circle (2pt);
		}}
	
	\foreach \i in {0, ..., 5}{
		\qedge{(a\i0)}{(a\i1)}{(a\i2)}{7pt}{1.5pt}{red!70!black}{red!50!white,opacity=0};
		}
	
	\draw [ultra thick] (-1.5,-1.2) rectangle (13.5,8.5);
	\draw [ultra thick] (-.5,-.7) rectangle (2.5,5.7);
	\draw [ultra thick] (-.5,6.2) rectangle (12.5,8);
	\draw [ultra thick] (3,4.2) rectangle (12.5,5.2);
	\draw [ultra thick] (3,2.5) rectangle (12.5,3.5);
	\draw [ultra thick] (3,-.7) rectangle (10,.3);
	
	\node [rotate=90] at (8,1.5){{\Huge $\dots$}};
	\node at (-1,4){{\Huge $X$}};
	\node at (.5,7.1){{\Huge $\mathcal{R}$}};
	\node at (11,-.3){{\Large $\mathscr{C}_{\lambda+1}$}};
	\node at (13,2.9){{\Large $\mathscr{C}_{2}$}};
	\node at (13,4.6){{\Large $\mathscr{C}_{1}$}};
	\node at (14.1,4.2){{\Huge $V$}};
		
	\coordinate (a) at (3.2,-.2);
	
	\foreach \i in {0,1,2,3}{
		\coordinate (a0\i) at ($(a)+(\i*2.5,0)$);
		\coordinate (a1\i) at ($(a)+(\i*2.5,3.2)$);
		\coordinate (a2\i) at ($(a)+(\i*2.5,4.9)$);
		\draw [line width=3pt, color=red!70!black] (a1\i) -- ($(a1\i)+(1.6,0)$);
		\draw [line width=3pt, color=red!70!black] (a2\i) -- ($(a2\i)+(1.6,0)$);
		}
	
	\foreach \i in {0,1,2}{
		\draw [line width=3pt, color=red!70!black] (a0\i) -- ($(a0\i)+(1.6,0)$);
	}
	
	\draw [line width= 3pt, color = red!70!black] ($(a20)+(1.55,0)$) to [out = 90, in = 180]
	 +(.5,2.7)to [out = 0, in = 90] ($(a21)+(.05,0)$);
	
	\draw [line width= 3pt, color = red!70!black] ($(a21)+(1.55,0)$) to [out = 90, in = 180]
	+(.5,2.7)to [out = 0, in = 90] ($(a22)+(.055,0)$);
	
	\draw [line width= 3pt, color = red!70!black] ($(a22)+(1.55,0)$) to [out = 90, in = 180]
	+(.5,2.7)to [out = 0, in = 90] ($(a23)+(.055,0)$);

	\end{tikzpicture}
	\caption {The case $k=3$ of Lemma~\ref{lem:herzzupik}. The set $X$ of vertices is reserved for 
	bridges.}
	\label{fig:res}
\end{figure}

The idea behind the proof of this implication is the following (see Figure~\ref{fig:res}). 
Given an appropriate constellation~$\Psi$, our first step is to take out a reservoir set $\cR$. 
Next we decide which bridges from~$\gB$ are going to appear at the ends of the paths we are supposed 
to construct. After these choices are made,
we apply~$\herz_k$ to the constellation obtained from~$\Psi$ by removing~$\cR$ and the vertices 
reserved for the bridges, thus getting a covering of almost all remaining vertices with `short' paths.  
Now we partition the set of these paths into groups of size~$p$, where~$p$ denotes an 
arbitrary natural number. For each group we connect all its paths through the 
reservoir. Moreover, we connect the ends of the resulting paths to some of the bridges 
that have been put aside. In this manner we obtain a covering of almost all vertices of $\Psi$
with longer paths, whose precise length depends linearly on~$p$. Thus by varying~$p$ we can 
reach an arithmetic progression of possible lengths for the paths in the new covering. 

\begin{proof}[Proof of Lemma~\ref{lem:herzzupik}]
	Let~$\alpha$,~$\beta$,~$\xi>0$ and an odd integer~$\ell\ge 3$ be given. 
	Choose some auxiliary constants obeying the hierarchy 
	\[
		\alpha, \beta, \xi, k^{-1}, \ell^{-1} \gg \thetas \gg \zetass \gg \thetass \gg M^{-1} 
		\gg n_0^{-1}\,,
	\]
	where $M$ is an integer with $M\equiv -1\pmod{k}$. 
	
	We contend that 
	\[
		P=\bigl\{M'\in k\NN\colon M'>n_0\text{ and }M'\equiv f(k,0,\ell)+2k \pmod{M+f(k,0,\ell)}\bigr\}
	\]
	has the property demanded by $\spadesuit_k$.  
	
	By Definition~\ref{d:f} the number $f(k,0,\ell)$ is divisible by $k$ and, consequently, $P$ 
	is indeed an infinite arithmetic progression. Now let~$\Psi$ be 
	a~$k$-uniform~$(\alpha,\beta,\ell,\frac{\alpha}{17^k})$-constellation with $n$ vertices, 
	let~$M'\in P$ be 
	arbitrary, and let~$\mathfrak{B}\subseteq V(\Psi)^k$ be a set of~$\xi$-bridges in~$\Psi$ 
	with~$\vert \mathfrak{B}\vert \geq \xi \vert V(\Psi)\vert ^k$. 
	We are to cover all but at most $\xi\vert V(\Psi)\vert +M'$ vertices of~$\Psi$ by mutually 
	disjoint~$M'$-vertex paths starting and ending with bridges from~$\mathfrak{B}$.
	If~$|V(\Psi)|\le M'$, then the empty set is such a collection of paths. 
	Thus, we may assume that~$\vert V(\Psi)\vert >M'>n_0$. 
	
	Let~$\mathcal{R}\subseteq V(\Psi)$ with $|\cR|\le \thetas n$ be a the reservoir set
	provided by Proposition~\ref{prop:reservoir} with~$\thetas$,~$\frac\zetass2$ here in 
	place of $\xi$, $\zetass$ there. 
	For later use we record that due to $\thetass\ll \thetas, k^{-1}, \ell^{-1}$ 
	the case~$i=0$ of Corollary~\ref{lem:use-reservoir} yields:
	\begin{enumerate}
		\item[$(\star)$]\label{it:301} If $\cR'\subseteq \cR$ is an arbitrary set 
				with $|\cR'|\le \thetass^2 |V(\Psi)|$, the $(k-1)$-tuple $\sa\in V(\P)^{k-1}$
				is \hbox{$\frac\zetass2$-leftconnectable}, and $\sb\in V(\P)^{k-1}$ is 
				$\frac\zetass2$-rightconnectable and disjoint to $\seq{a}$, then there is 
				an $\sa$-$\sb$-path through $\cR\setminus \cR'$ with $f(k, 0, \ell)$ inner vertices. 
	\end{enumerate}

	Let~$b_1,\dots ,b_r$ be a maximal sequence of bridges from~$\mathfrak{B}$ that are mutually 
	disjoint and disjoint to~$\mathcal{R}$. Since the selected bridges and~$\mathcal{R}$ together 
	involve~$kr+\vert\mathcal{R}\vert$ vertices, the maximality implies
	\[
		k(kr+\vert\mathcal{R}\vert)\vert V(\Psi)\vert^{k-1}
		\ge 
		\vert\mathfrak{B}\vert
		\ge
		\xi \vert V(\Psi)\vert ^k\,,
	\]
	whence 
	\begin{equation}\label{eq:1926}
		r
		\ge 
		\frac{(\xi -k\thetas)\vert V(\Psi)\vert}{k^2}
		\ge 
		\vartheta_\star\vert V(\Psi)\vert\,.
	\end{equation}

	Set $x=\lfloor \thetas |V(\Psi)|\rfloor$ and let~$X$ be the set of vertices 
	constituting $b_1, \ldots, b_x$. Lemma~\ref{lem:1816} reveals that~$\Psi'=\Psi-(X\cup\mathcal{R})$ 
	is an~$\bigl(\frac{\alpha}{2},\frac{\beta}{2},\ell,\frac{2\alpha}{17^k}\bigr)$-constellation.
	Therefore, the principle~$\heartsuit_k$ yields a family~$\ccC$ of disjoint~$M$-vertex paths 
	in~$\Psi'$ which together cover all but 
	at most~$\vartheta_\star^2\vert V(\Psi')\vert$ vertices of~$\Psi'$ and whose  
	end-tuples are~$\zeta_{\star\star}$-connectable in~$\Psi '$. For later use we remark that 
	owing to Fact~\ref{f:41} the end-tuples of the paths in $\ccC$ are $\frac\zetass2$-connectable 
	in~$\Psi$.
	
	By the definition of $P$ there is a natural number $p$ such that 
	\[
		M'= \bigl(M+f(k,0,\ell)\bigr)p+f(k,0,\ell)+2k\,.
	\]
	Fix an arbitrary partition~$\ccC=\ccC_1\dcup\dots\dcup\ccC_{\lambda+1}$ 
	with~$\vert \ccC_1\vert =\dots =\vert \ccC_{\lambda}\vert 
	=p>\vert\ccC_{\lambda +1}\vert$. 
	
	Now we declare our strategy for constructing vertex-disjoint 
	paths $P_1, \ldots, P_\lambda\subseteq H(\Psi)$ 
	witnessing the conclusion of~$\spadesuit_k$. For every $j\in [\lambda]$ we first intend to form a 
	path $P'_j$ by connecting the~$p$ paths in~$\ccC_j$ through the reservoir~$\mathcal{R}$. 
	Subsequently, we plan to derive~$P_j$ from~$P'_j$ by connecting its ends with two bridges from the 
	list $b_1,\dots ,b_x$, say with~$b_{2j-1}$ and~$b_{2j}$. For all $p+1$ connections required 
	for this construction of $P_j$, we want to appeal to~$(\star)$.
	Clearly, if the paths $P_1, \ldots, P_\lambda$ can be constructed, then each of them will consist
	of~$M'$ vertices.  
	  
	Altogether, we are aiming for $(p+1)\lambda$ connections that 
	require a total number of
	\[
		(p+1)f(k, 0, \ell)\lambda
	\]
	vertices from the reservoir. 
	If this number is less than $\thetass^2n$, then repeated applications of~$(\star)$
	allow us to choose our connections disjointly. 
	Since $M\gg\thetass^{-1}\gg k, \ell$, we have indeed
	\[
		 (p+1)f(k,0,\ell)\lambda 
		 \le
		 2p \cdot 4^k k\ell\cdot \frac{|V(\Psi)|}{Mp} 
		 =
		 \frac{2\cdot 4^k k\ell |V(\Psi)|}M
		 <
		\thetass^2|V(\Psi)|\,.
	\]
	Similarly, 
	\[
		2\lambda
		\le 
		\frac{2|V(\Psi)|}{Mp}
		\le 
		\frac{2|V(\Psi)|}M
		\le 
		\thetas |V(\Psi)|
	\]
	proves that we have sufficiently many bridges at our disposal. 
	 
	Altogether, the vertex-disjoint paths $P_1, \ldots, P_\lambda\subseteq H(\Psi)$ 
	can indeed be constructed. The number of vertices of $\Psi$ they fail to cover can be bounded 
	from above by  
	\begin{align*}
		|X|+|\cR|+\Big|V(\Psi')\setminus \bigcup_{P\in \ccC}V(P)\Big|
			+
			\Big|\bigcup_{P\in \ccC_{\lambda+1}}V(P)\Big|
		&\le
		kx+\thetas |V(\Psi)|+\thetas^2 |V(\Psi)| +Mp \\
		&\le
		Mp+\bigl((k+1)\thetas+\thetas^2\bigr)|V(\Psi)| \\
		&\le
		M'+\xi \vert V(\Psi)\vert\,, 
	\end{align*}
	which concludes the proof of $\pik_k$.
\end{proof}

The proof of our next result involves some probabilistic arguments based on the following 
consequence of Janson's inequality (see~\cite{Y}*{Corollary A.3}). 

\begin{lemma}\label{lem:probappcor}
	Let $m\ge k$ and $M$ be positive integers, and let $\eta\in (0, \frac1{2k})$. Suppose that~$V$ 
	is a finite set and that 
	\[
	V=B_1\dcup\ldots\dcup B_\nu\dcup Z
	\]
	is a partition with $|B_1|=\ldots =|B_\nu|=M<\eta |V|$, $|Z|<\eta |V|$, and $\nu\ge m$. 
	Let $\ccS\subseteq \{B_1, \ldots, B_\nu\}$ be an $m$-element subset chosen uniformly
	at random and set $S=\bigcup \ccS$. Further, let~$\xi$ be a real number 
	with $\max(8k^2\eta, 16k^2/m) < \xi < 1$.
	\begin{enumerate}[label=\alabel]
		\item\label{it:1041a} If $Q\subseteq V^k$ has size $|Q|=d |V|^k$, then 
		\[
			\PP\bigl(\big||Q\cap S^k|-d(Mm)^k\big|\ge \xi (Mm)^k\bigr) 
			\le 
			12\sqrt{m}\exp\left(-\frac{\xi^2m}{48k^{2k+2}}\right)\,. 
		\]
		\item\label{it:1041b} Similarly, if $G$ denotes a $k$-uniform hypergraph with vertex 
		set $V$ and $d|V|^k/k!$ edges, then 
		\[
			\pushQED{\qed} 
			\PP\bigl(\big|e_G(S)-d(Mm)^k/k!\big|\ge \xi (Mm)^k/k!\bigr) 
			\le 
			12\sqrt{m}\exp\left(-\frac{\xi^2m}{48k^{2k+2}}\right)\,. \qedhere
			\popQED
		\]
	\end{enumerate} 
\end{lemma}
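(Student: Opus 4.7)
The plan is to deduce both parts from a Janson-type concentration estimate for the uniformly random block selection $\ccS\subseteq\{B_1,\ldots,B_\nu\}$, following the argument carried out in detail for $k=4$ in~\cite{Y}*{Corollary~A.3}; the only change needed is to allow $k$ to be arbitrary, which forces the implicit constant in the exponent to take the $k$-dependent form $48k^{2k+2}$.

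For part~\ref{it:1041a}, I would write $|Q\cap S^k|=\sum_{q\in Q}X_q$ with $X_q=\Ind[q\in S^k]$. Assigning to each $v\in V\setminus Z$ its block $B(v)$ and letting $\ell(q)$ denote the number of distinct blocks among $B(v_1),\dots,B(v_k)$ for $q=(v_1,\dots,v_k)$, one has $\EE[X_q]=\binom{\nu-\ell(q)}{m-\ell(q)}/\binom{\nu}{m}$ whenever $\ell(q)$ is well defined (i.e., no $v_i$ lies in $Z$), and $\EE[X_q]=0$ otherwise. The first routine step is to verify $\bigl|\EE[|Q\cap S^k|]-d(Mm)^k\bigr|\le\tfrac{\xi}{2}(Mm)^k$. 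The tuples with $\ell(q)=k$ each contribute $(m/\nu)^k\bigl(1+O(k^2/m)\bigr)$, and combined with $|V|^k(m/\nu)^k=(Mm)^k\bigl(1+O(k\eta)\bigr)$ this accounts for $d(Mm)^k\pm\tfrac{\xi}{4}(Mm)^k$ under the hypotheses $\xi>\max(8k^2\eta,16k^2/m)$; the remaining tuples (those with $\ell(q)<k$ or some $v_i\in Z$) are classified by their value of $\ell(q)$ and shown to contribute at most $\tfrac{\xi}{4}(Mm)^k$ in total, using that the number of tuples with $\ell(q)=\ell$ is $O(k^{2(k-\ell)}\nu^\ell M^k)$ while their individual probability is bounded by $(m/\nu)^\ell$.

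The core and, I expect, most delicate step is the concentration of $|Q\cap S^k|$ about its mean, because tracking the constant in the exponent requires a careful bookkeeping of the dependency structure. Since each event $\{X_q=1\}$ is monotone increasing in $\ccS$ and depends on at most $k$ blocks, two such events are independent unless the block supports of the corresponding tuples overlap. Estimating the Janson correction term $\Delta$ by summing over pairs of tuples sharing a block yields $\Delta\le k^{O(k)}(Mm)^{2k-1}/m$, and Janson's inequality for uniformly chosen $m$-subsets of~$\{B_1,\ldots,B_\nu\}$ then produces an exponential tail bound of the announced form $\exp\bigl(-\xi^2m/(48k^{2k+2})\bigr)$. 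The prefactor~$12\sqrt{m}$ arises when passing between binomial and fixed-size sampling via Hoeffding's standard conditioning inequality.

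Part~\ref{it:1041b} follows from part~\ref{it:1041a} applied to the set of ordered edges $Q=\{(v_1,\dots,v_k)\in V^k\colon\{v_1,\dots,v_k\}\in E(G)\}$, which has cardinality $k!\cdot e(G)=d|V|^k$ and satisfies $|Q\cap S^k|=k!\cdot e_G(S)$; dividing the conclusion of~\ref{it:1041a} by $k!$ translates it directly into the claimed deviation bound for~$e_G(S)$.
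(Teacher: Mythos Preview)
Your proposal is correct and matches the paper's approach: the paper does not give a proof at all but simply cites~\cite{Y}*{Corollary~A.3}, noting that the result is a consequence of Janson's inequality, and your sketch follows precisely that route, adapting the $k=4$ argument to general~$k$. Your reduction of part~\ref{it:1041b} to part~\ref{it:1041a} via ordered edges is clean and accurate.
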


This has the following consequence on random subconstellations. 

\begin{lemma}\label{l:cr}
	Given $k\ge 2$, $\alpha, \beta, \mu, \xi>0$, and an odd integer $\ell\ge 3$ there exists 
	a natural number $M_0$ such that the following holds for every $M\ge M_0$.  
	If $\Psi$ is a sufficiently large
	$k$-uniform $(\alpha, \beta, \ell, \mu)$-constellation,
	\[
		V(\Psi)=B_1\dcup \ldots \dcup B_\nu\dcup B'
	\]
	is a partition with $|B_1|=\ldots = |B_\nu|=M$ and $|B'|<2M$,  
	and $\gB\subseteq V(\Psi)^k$ is a set of $\xi$-bridges in $\Psi$ of size $|\gB|\ge \xi|V(\Psi)|^k$, 
	then there are at least $\frac34\binom\nu M$ sets $\ccS\subseteq \{B_1, \ldots, B_\nu\}$
	of size $M$ such that their union $S=\bigcup \ccS$ has the properties that $\Psi[S]$ 
	is a $(\frac\alpha2, \frac\beta2, \ell, 2\mu)$-constellation and   
	\[
		\gB_\star=\bigl\{\seq{x}\in \gB\cap S^k \colon 
		\seq{x} \text{ is a $\tfrac\xi2$-bridge in }\Psi[S]\bigr\}
	\]
	has at least the size $|\gB_\star|\ge \frac\xi2 |S|^k$.	
\end{lemma}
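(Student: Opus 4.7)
The plan is to choose $\ccS$ uniformly at random from $\binom{\{B_1,\dots,B_\nu\}}{M}$ and bound the probability that $S=\bigcup\ccS$ fails either of the two conclusions by at most $1/8$ each. The argument consists of repeated applications of Lemma~\ref{lem:probappcor} combined with first-moment estimates, and works provided $M_0$ is chosen large enough in terms of $\alpha,\beta,\mu,\xi,k,\ell$.

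For the constellation property, Definitions~\ref{d:217} and~\ref{d:222} reduce the task to four quantities which, for every $(k-2)$-tuple $y\subseteq S$ (and, for robustness, every pair of distinct $y',z'\in V(R_y^\Psi)\cap S$), must stay within a constant factor of their expectations: the link edge count $e_{H(\Psi)_y}(S)$, the size $|V(R_y^\Psi)\cap S|$, the cross-edge count $e_{H(\Psi)_y}\bigl(V(R_y^\Psi)\cap S,\,S\setminus V(R_y^\Psi)\bigr)$, and the number of $y'$-$z'$-paths of length $\ell$ inside $R_y^\Psi\cap S$. For each \emph{fixed} tuple $y$ (or $(y,y',z')$), the appropriate case of Lemma~\ref{lem:probappcor} bounds the failure probability by $12\sqrt{M}\exp(-cM)$, where $c=c(\alpha,\beta,\mu,\xi,k)$. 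Multiplying by $\Pr[y\subseteq S]=O\bigl((M/\nu)^{k-2}\bigr)$ and summing over the at most $n^k$ relevant tuples yields an expected number of failures of order $M^{O(k)}\exp(-cM)$, which is well below $1/16$ for $M$ sufficiently large; Markov's inequality then delivers the first conclusion with probability $\ge 7/8$.

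For the bridge property, Lemma~\ref{lem:probappcor}~\ref{it:1041a} applied with $Q=\gB$ yields $|\gB\cap S^k|\ge(3\xi/4)|S|^k$ with very high probability. The bulk of the work is to show that most $\seq x\in\gB\cap S^k$ remain $\xi/2$-bridges in $\Psi[S]$. I would verify the following by induction on $k$: if $\seq x\in V(\Psi)^{k-1}$ is $\xi$-leftconnectable in $\Psi$ and $\seq x\subseteq S$, then $\seq x$ is $\xi/2$-leftconnectable in $\Psi[S]$ with conditional failure probability at most $q_k$, where $q_k$ satisfies a recursion $q_k\lesssim p_0+(4/\xi)q_{k-1}$ whose solution is $O\bigl((4/\xi)^{k-1}\bigr)p_0$ with $p_0=12\sqrt{M}\exp(-cM)$, hence exponentially small in $M$. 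The base $k=2$ is immediate since leftconnectability of a singleton is simply membership in $V(R_\emptyset)$, preserved under intersection with $S$. The inductive step combines the $k=1$ case of Lemma~\ref{lem:probappcor}~\ref{it:1041a} --- which gives $|U^\Psi_{\seq x}\cap S|\ge(3\xi/4)|S|$ w.h.p.\ --- with the inductive hypothesis applied inside each link constellation $\Psi_z$; a Markov bound on the random variable counting ``bad'' $z\in U^\Psi_{\seq x}\cap S$ (those failing the inner recursive check) shows that at least $(\xi/2)|S|$ of these $z$'s survive, establishing $\xi/2$-leftconnectability in $\Psi[S]$. Rightconnectability is handled symmetrically, and summing the combined failure probability over all $\seq x\in\gB$ (weighted by $\Pr[\seq x\subseteq S]$) keeps the expected number of non-surviving bridges below $(\xi/4)|S|^k$; a final Markov step yields $|\gB_\star|\ge(\xi/2)|S|^k$ with probability $\ge 7/8$.

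The main technical obstacle is the nested recursive definition of connectability, in which each of the $k-1$ levels contributes a concentration loss that must all fit inside the factor-$2$ slack between $\xi$ and $\xi/2$ in the conclusion. What makes this tractable is that Lemma~\ref{lem:probappcor} provides failure probabilities that are exponentially small in $M$, while the number of structures to union-bound over at each level grows only polynomially in $n$ and $M$. Choosing $M_0$ suitably large in the hierarchy ensures that the aggregate failure probability stays at most $1/4$. The overall scheme closely parallels the $k=4$ argument in~\cite{Y}.
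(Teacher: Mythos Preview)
Your approach matches the paper's: random $\ccS$, Lemma~\ref{lem:probappcor} for each constellation axiom summed over the relevant tuples, and an inductive transfer of leftconnectability through the recursive Definition~\ref{d:210}. The only differences are that the paper uses union bounds where you use Markov (showing that with probability $1-\exp(-\Omega(M))$ \emph{every} bridge in $S^k$ survives and \emph{every} inner vertex $u\in U\cap S$ is good), and it phrases the induction hypothesis (Claim~\ref{clm:2234}) for arbitrary link constellations $\Psi_z$ together with an auxiliary conditioning set $z'$ --- a slight generalisation of your stated hypothesis that you will need once you write the inductive step out in detail, since applying the hypothesis inside $\Psi_z$ requires conditioning on $x_1\in S$ even though $x_1$ no longer appears in the shorter tuple.
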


\begin{proof}
	Let $M_0\gg \alpha^{-1}, \beta^{-1}, \mu^{-1}, \xi^{-1}, k, \ell$ be sufficiently large.
	We call the sets $B_1, \ldots, B_\nu$ {\it blocks}. 
	Choose a set $\ccS\subseteq \{B_1, \ldots, B_\nu\}$ of $M$ blocks uniformly at random 
	among all~$\binom\nu M$ possibilities. We shall prove that the probability 
	that $S=\bigcup \ccS$ fails to have the desired properties is at most~$\exp\bigl(-\Omega(M)\bigr)$,
	where the implied constant only depends on~$\alpha$,~$\beta$,~$\mu$,~$\xi$,~$k$, and~$\ell$. 
	Hence, by choosing~$M_0$ sufficiently large, this probability can be pushed below~$\frac14$, 
	as desired. It will be convenient to set $V'=V\setminus B'$. For $y\in V'$ we denote the unique 
	block containing~$y$ by $B_y$. 
	
	\begin{claim}\label{clm:1855}
		The event that $\Psi[S]$ fails to be 
		a $(\frac\alpha2, \frac\beta2, \ell, 2\mu)$-constellation has at most 
		the probability $\exp\bigl(-\Omega(M)\bigr)$.  
	\end{claim}
	
	\begin{proof}
		We begin by estimating the probability of the unfortunate event~$\gU$ 
		that~$\Psi[S]$ fails to be a~$(\frac{\alpha}{2}, 2\mu)$-constellation.
		For an arbitrary set $x\in (V')^{(k-2)}$ 
		we define
		\[
			\ccZ_x=\{B_y\colon y\in x\}\,, \quad t_x=\vert \ccZ_x\vert\in [k-2]\,, 
			\text{ and } \quad Z_x=\bigcup\ccZ_x\,.
		\]
 		Further, we consider the conditional probabilities 
		\begin{align*}
			P_1(x)&=\PP\left(e_{\Psi_{x}}(S\setminus Z_x)
				<\left(\frac{5}{9}+\frac{2\alpha}{3}\right)\frac{(M-t_x)^2M^2}{2}
				\copr x\in S^{(k-2)}\right),\\
			P_2(x)&=\PP\left(\big| V(R_{x}^{\Psi}[S])\big| 
				< \left(\frac{2}{3}+\frac{\alpha}{3}\right) (M-t_x)M
				\copr x\in S^{(k-2)}\right)\,,
			\intertext{ and }		
				P_3(x)&=\PP\left(e_{\Psi_{x}[S]}\left(V(R_{x}^{\Psi}[S]),
					S\setminus V(R_{x}^{\Psi}[S])\right)
					> 2\mu(M-t_x)^2M^2\copr x\in S^{(k-2)}\right)
		\end{align*}
		and observe that
		\begin{align}\label{eq:P(U_1)first}
			\PP(\mathfrak{U})
			\le
			\sum_{x\in (V')^{(k-2)}}\PP(x\in S^{(k-2)})\big(P_1(x)+P_2(x)+P_3(x)\big)\,.
		\end{align}
		So if we manage to prove 
		\begin{equation}\label{eq:1932}
			P_1(x), P_2(x), P_3(x)\le \exp\bigl(-\Omega(M)\bigr)\,,
		\end{equation}
		then 
		\begin{equation}\label{eq:P(notU_1)}
			\PP(\gU)
			\le
			(M^2)^{k-2}\exp\bigl(-\Omega(M)\bigr) 
			\le
			\exp\bigl(-\Omega(M)\bigr)
		\end{equation}
		will follow. Thus our next goal is to establish~\eqref{eq:1932}. 
	
		To this end, we will repeatedly apply Lemma~\ref{lem:probappcor} 
		with 
		\[
		 	M-t_x\,, \frac{kM}{n}\,, B'\cup Z_x\,, \nu -t_x\,, \text{ and } 
			\min\bigl\{\tfrac\alpha6, \mu\bigr\}
		\]
		here in place of 
		\[	
				m\,, \eta\,, Z\,, \nu\,,\text{ and }\xi
		\]
		there and relocating the elements of~$\ccZ_x$ to the exceptional set of the partition. 
				
		First, the minimum degree condition imposed on~$H(\Psi)$ implies 
		that the graph~$H(\Psi_{x})$ has at 
		least~$\left(\frac{5}{9}+\alpha\right) \frac{|V(\Psi)|^{2}}{2}$ edges. 
		So Lemma~\ref{lem:probappcor}~\ref{it:1041b} applied with~$2$ and~$H(\Psi_{x})$
		here in place of~$k$ and~$G$ there 
		yields $P_1(x)\le\exp\bigl(-\Omega(M)\bigr)$.
		
		Second, we know that
		$\vert V(R_{x}^{\Psi})\vert\geq \left(\frac23+\frac{\alpha}{2}\right)|V(\Psi)|$,
		since~$\Psi$ is an~$(\alpha, \mu)$-constellation. 
		Hence, applying Lemma~\ref{lem:probappcor}~\ref{it:1041a} with~$1$ and~$V(R_{x}^{\Psi})$ 
		here instead of~$k$ and~$Q$ there entails $P_2(x)\le\exp\bigl(-\Omega(M)\bigr)$.
		
		Lastly, from~$\Psi$ being a~$(\alpha,\mu)$-constellation it also follows that 
		\[	
			e_{\Psi_{x}}\left(V(R_{x}^{\Psi}), V\setminus V(R_{x}^{\Psi})\right)
			\le \mu |V(\Psi)|^2\,.
		\]
		Hence, Lemma~\ref{lem:probappcor}~\ref{it:1041b} applied to the bipartite subgraph
		of $H(\Psi_x)$ between $V(R^\Psi_x)$ and its complement  
		tells us that $P_3(x)\le\exp\bigl(-\Omega(M)\bigr)$.
		This concludes the proof of~\eqref{eq:1932} and, hence, of~\eqref{eq:P(notU_1)}. 
		An analogous proof allows us to transfer part~\ref{it:222b} of Definition~\ref{d:222}
		from~$\Psi$ to~$\Psi[S]$ and we omit the details.
	\end{proof}
	
	It remains to prove that the event $|\gB_\star|\ge \frac\xi2 |S|^k$ has high probability 
	as well. Here we start with the estimate
	\[	
		\PP\bigl(|\gB_\star|\le \tfrac\xi2 |S|^k\bigr)
		\le
		\PP\bigl(|\gB\cap S^k|\le \tfrac\xi2 |S|^k\bigr)
		+
		\PP(\neg \gE)\,,
	\]
	where $\gE$ denotes the event that every $\xi$-bridge $\seq{x}\in \gB\cap S^k$ is 
	a $\frac\xi2$-bridge in~$\Psi[S]$. 
	Another application of Lemma~\ref{lem:probappcor}~\ref{it:1041a} tells us that the 
	first summand is at most $\exp\bigl(-\Omega(M)\bigr)$ and thus it remains to prove that
	\begin{equation}\label{eq:endziel}
		\PP(\neg \gE)\le \exp\bigl(-\Omega(M)\bigr)\,.
	\end{equation}
	Towards this goal we analyse how connectability transfers to $\Psi[S]$.
	
	\begin{claim}\label{clm:2234}
		If $k'\in [k-1]$, $z, z'\in (V')^{(k-1-k')}$, 
		and~$\seq{x}\in \bigr(V'\setminus (z\cup z')\bigr)^{k'}$ 
		is a~$\xi$-leftconnectable tuple in~$\Psi_z$, then 
		\[
			\PP\bigl(\seq{x} \text{ fails to be $\tfrac\xi2$-leftconnectable in } 
				\Psi_z[S]\coprn \seq{x}\in S^{k'}
			 \text{ and } z'\subseteq S\bigr)
			\le
			\exp\bigl(-\Omega(M)\bigr)\,.
		\]
	\end{claim}
	
	\begin{proof}
		We argue by induction on $k'$. In the base case $k'=1$ the probability under consideration 
		vanishes. This is because a $1$-tuple $\seq{x}=(x)$ is $\xi$-leftconnectable in $\Psi_z$
		if and only if $x\in V(R^\Psi_z)$. Moreover, if $x\in S\setminus z$, 
		then $(x)$ is $\frac\xi2$-leftconnectable in~$\Psi_z[S]$ if and only if~$x\in R^{\Psi[S]}_z$.
		Due to $R^{\Psi[S]}_z=R^\Psi_z[S]$ these two statements are equivalent to each other. 
		
		For the induction step from $k'-1$ to $k'$ we write $\seq{x}=(x_1, \ldots, x_{k'})$
		and recall that the $\xi$-leftconnectability of $\seq{x}$ in $\Psi_z$ means that 
		$|U|\ge \xi|V(\Psi_z)|$, where 
		\begin{multline*}
			 U=\bigl\{u\in V(\Psi_z)\colon x_1\dots x_{k'}u \in E(\Psi_z)
				 \textrm{ and }
			(x_2, \dots, x_{k'}) \textrm{ is $\xi$-leftconnectable in } \Psi_{zu} \bigr\}\,.
		\end{multline*}
		Assuming $\seq{x}\in S^{k'}$ the analogous set whose size decides 
		whether $\seq{x}$ is $\frac\xi2$-leftconnectable
		in $\Psi_z[S]$ either contains $U\cap S$ as a subset, 
		or it does not. Accordingly, if $\seq{x}$ fails to be $\frac\xi2$-leftconnectable 
		in $\Psi_z[S]$, then either $|U\cap S|\le \frac\xi2 |V(\Psi_z[S])|$ or the event $\gA$
		that for some $u\in S\cap U$ the $(k'-1)$-tuple $(x_2, \dots, x_{k'})$ 
		fails to be $\frac\xi2$-leftconnectable in $\Psi_{zu}[S]$ occurs. For this reason,
		it suffices to prove
		\begin{align}
			\PP\bigl(|U\cap S|\le \tfrac\xi2 |S| \coprn \seq{x}\in S^{k'} 
			\text{ and } z'\subseteq S\bigr) &\le \exp\bigl(-\Omega(M)\bigr)
			\label{eq:1428} \\
			\text{ and } \quad
			\PP\bigl(\gA \coprn \seq{x}\in S^{k'}
			\text{ and } z'\subseteq S\bigr) &\le \exp\bigl(-\Omega(M)\bigr)\,.
			\label{eq:1429}
		\end{align}
		Now~\eqref{eq:1428} follows in the usual way from Lemma~\ref{lem:probappcor}~\ref{it:1041a}.
		To prove~\eqref{eq:1429} we observe that the induction hypothesis yields 
		\begin{multline*}
			\PP\bigl((x_2, \dots, x_{k'}) \text{ fails to be $\tfrac\xi2$-leftconnectable in } 
				\Psi_{zu}[S]\coprn (x_2, \dots, x_{k'}) \in S^{k'-1}\,, \\ 
			\text{ and } (z'\cup\{x_1\})\subseteq S \bigr)
			\le
			\exp\bigl(-\Omega(M)\bigr)
		\end{multline*}
		for every $u\in U$, whence
		\begin{align*}
			\PP(\gA \coprn \seq{x}\in S^{k'} \text{ and } z'\subseteq S) 
			&\le
			\sum_{u\in U}\PP(u\in S)\exp\bigl(-\Omega(M)\bigr)\\
			&\le 
			M^2\exp\bigl(-\Omega(M)\bigr)  
			\le
			\exp\bigl(-\Omega(M)\bigr)\,. \qedhere
		\end{align*}
	\end{proof}
	
	By applying the case $k'=k-1$ of Claim~\ref{clm:2234} to all $\xi$-leftconnectable 
	$(k-1)$-tuples in $\Psi$ we obtain
	\begin{multline*}
		\PP\bigl(\text{Some $\seq{x}\in S^{k-1}$ that is $\xi$-leftconnectable in $\Psi$}\\
			\text{fails to be $\tfrac\xi2$-leftconnectable in $\Psi[S]$}\bigr)
		\le
			\exp\bigl(-\Omega(M)\bigr)\,.
	\end{multline*}
	By symmetry the same holds for rightconnectability as well and, therefore,
	\[
		\PP\bigl(\text{Some  $\xi$-bridge $\seq{x}\in S^k$ fails to be a
			$\tfrac\xi2$-bridge in $\Psi[S]$}\bigr)
		\le
			\exp\bigl(-\Omega(M)\bigr)\,.
	\]
	In other words, we have thereby proved~\eqref{eq:endziel} and, hence, Lemma~\ref{l:cr}.
\end{proof}

The next lemma shows how to ascend from $(k-1)$-uniform coverings to $k$-uniform coverings.

\begin{lemma}\label{lem:indstep}
	For every~$k\ge 4$ the covering principle $\pik_{k-1}$ implies $\herz_k$.
\end{lemma}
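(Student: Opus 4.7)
The plan is to proceed by contradiction, taking a maximal family $\ccC$ of disjoint $M$-vertex paths in $\Psi$ with $\zetass$-connectable end-tuples, and producing one more such path. Given $\alpha,\beta,\thetas>0$ and odd $\ell\ge 3$, invoke $\pik_{k-1}$ with parameters $\tfrac\alpha4$, $\tfrac\beta4$, $\ell$, and a small $\xi>0$ to obtain an arithmetic progression $P\subseteq(k-1)\NN$. Choose $M_0$ large and $M\ge M_0$ with $M\equiv -1\pmod k$; write $M=qk-1$ and arrange that the matched length $M'=q(k-1)$ lies in $P$. Fix $\zetass>0$ sufficiently small relative to~$\xi$. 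Let $\Psi$ be a sufficiently large $k$-uniform $(\alpha,\beta,\ell,\tfrac{4\alpha}{17^k})$-constellation on $n$ vertices and assume $\ccC$ is maximal yet leaves $|U|\ge\thetas^2 n$ vertices uncovered. Partition $V(\Psi)$, modulo a remainder of fewer than $M$ vertices, into blocks $B_1,\dots,B_\nu$ of size $M$ such that every $P\in\ccC$ occupies one block exactly.

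The crux is to select, by a probabilistic argument combining Lemma~\ref{l:cr}, Lemma~\ref{lem:probappcor}, and Corollary~\ref{c:219}, a subset $\ccS\subseteq\{B_1,\dots,B_\nu\}$ of size $M$ with union $S=\bigcup\ccS$ such that
\begin{enumerate}[label=\rmlabel]
\item $\Psi[S]$ is a $k$-uniform $(\tfrac{\alpha}{2},\tfrac{\beta}{2},\ell,\tfrac{8\alpha}{17^k})$-constellation, and
\item there is a set $U^\star\subseteq U$ of linear size such that for every $u\in U^\star$ the $(k-1)$-uniform link $\Psi_u[S]$ is an $(\tfrac\alpha4,\tfrac\beta4,\ell,\tfrac{\alpha}{17^{k-1}})$-constellation, and the set $\gB_u$ of $(k-1)$-tuples in $S^{k-1}$ that are $\xi$-bridges simultaneously in $\Psi_u$ and in $\Psi_u[S]$ has size $|\gB_u|\ge\xi|S|^{k-1}$.
\end{enumerate}
Corollary~\ref{c:219} supplies many $\xi$-bridges in $\Psi_u$ to begin with; the probabilistic selection generalises Lemma~\ref{l:cr} and guarantees that a positive fraction of these bridges survive the restriction to $S$ without losing the bridge property inside $\Psi_u[S]$.

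For every $u\in U^\star$ apply $\pik_{k-1}$ to $\Psi_u[S]$ with bridge family $\gB_u$, obtaining a collection $\mathscr{D}_u$ of at least $M+1$ disjoint $M'$-vertex $(k-1)$-uniform paths in $\Psi_u[S]$ whose end $(k-2)$-tuples come from $\gB_u$. Since the number of possible such collections is bounded by a function of $M$ and $k$ alone, pigeonhole furnishes $U''\subseteq U^\star$ with $|U''|=\Omega(n)$ on which $\mathscr{D}_u$ coincides with a common collection $\mathscr{D}$. Pick $M+1$ paths from $\mathscr{D}$; for each such path $x_1x_2\dots x_{M'}$, pick fresh distinct vertices $u_1,\dots,u_{q-1}\in U''$ and insert them at positions $k,2k,\dots,(q-1)k$. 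The resulting sequence has $M'+(q-1)=M$ vertices, and each of its $k$-element consecutive windows contains exactly one $u_j\in U''$ together with $k-1$ consecutive $x$'s that form an edge of $\Psi_{u_j}$; hence the window is an edge of $\Psi$, and the sequence is a $k$-uniform path in $\Psi$. Its start $(k-1)$-tuple $(x_1,\dots,x_{k-1})$ is a bridge in $\Psi_u$ for every $u\in U''$ by the choice of $\gB_u$, so $\{x_1,\dots,x_{k-1},u\}\in E(\Psi)$, while $(x_1,\dots,x_{k-2})$ is $\xi$-rightconnectable and $(x_2,\dots,x_{k-1})$ is $\xi$-leftconnectable in $\Psi_u$. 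With $|U''|\ge\zetass n$ this witnesses that $(x_1,\dots,x_{k-1})$ is $\zetass$-connectable in $\Psi$; the end tuple is treated symmetrically. Removing from $\ccC$ the $M$ paths whose vertex sets lie in $\ccS$ and inserting the new $M+1$ paths yields a collection of $|\ccC|+1$ disjoint $M$-vertex paths with $\zetass$-connectable end-tuples, contradicting the maximality of $\ccC$.

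The principal obstacle is arranging in the useful society that each bridge in $\gB_u$ is a bridge not only in the small sub-link $\Psi_u[S]$ but also in the full link $\Psi_u$. This is what enables connectability of the end-$(k-1)$-tuples of the new $k$-uniform paths to be certified in $\Psi$ itself, because Fact~\ref{f:41} is inapplicable (the ratio $|S|/|V(\Psi)|$ tends to zero as $n\to\infty$). Hence the probabilistic construction must jointly control the constellation parameters of $\Psi_u[S]$ uniformly over $u\in U$ and preserve the bridge structure already present in $\Psi_u$ after the restriction to $S$, an argument paralleling and extending Lemma~\ref{l:cr}.
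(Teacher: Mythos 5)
Your overall architecture matches the paper's: use $\pik_{k-1}$ to generate an arithmetic progression, partition $V(\Psi)$ into $M$-blocks tracking the paths of a maximal collection $\ccC$, probabilistically select a ``good'' union $S$ of $M$ blocks, apply $\pik_{k-1}$ inside each link $\Psi_u[S]$, pigeonhole over the finitely many possible path collections on $S$, insert uncovered vertices at every $k$-th position, and swap $\le M$ old paths for $M+1$ new ones. The insertion mechanism and the count $\frac{k-1}{k}(M+1)+\frac{M-(k-1)}{k}=M$ are also correct.

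However, there is a genuine gap in how you certify that the end-$(k-1)$-tuples of the augmented paths are $\zetass$-connectable \emph{in $\Psi$}. You take $\gB_u$ to consist of tuples that are $\xi$-bridges simultaneously in $\Psi_u$ and $\Psi_u[S]$, and then argue that after pigeonholing down to $U''$ the common start tuple $\seq{x}$ is a bridge in $\Psi_u$ for every $u\in U''$, which you turn into $\zetass$-connectability in $\Psi$ provided $|U''|\ge\zetass n$. The problem is the pigeonhole: you can only guarantee $|U''|\ge |U^\star|/f(M,k)$ where $f(M,k)$ grows at least like $(M^2)!$, and $|U^\star|=\Omega(\thetas^2 n)$ with a constant independent of $M$. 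So $|U''|/n \to 0$ as $M\to\infty$, and you cannot obtain $|U''|\ge\zetass n$ unless $\zetass$ is allowed to decrease with $M$. But $\heartsuit_k$ quantifies $\zetass$ \emph{before} $M_0$ (and hence before $M$), and this order is essential -- Lemma~\ref{lem:herzzupik} chooses constants with $\zetass\gg M^{-1}$ precisely because $\heartsuit_k$ gives a $\zetass$ not depending on $M$. Your proposal fixes $\zetass$ after $M$ and still implicitly needs $\zetass\lesssim 1/(M^2)!$, so the produced $\zetass$ is $M$-dependent and the lemma is not proved.

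The paper avoids this altogether by baking the connectability into the definition of a useful society: clause~\ref{it:usoc3} asks for many $(k-1)$-tuples that are $\tfrac\zetass2$-bridges in $\Psi_u[S]$ \emph{and already $\zetass$-connectable in $\Psi$}. Then the paths delivered by $\pik_{k-1}$ end in tuples from this set, so connectability in $\Psi$ is immediate, and the pigeonholed set $U'$ only needs to be of size $\frac{(M-(k-1))(M+1)}{k}$ (a function of $M$ alone) to supply insertion vertices -- no linear lower bound on $|U'|$ is required. To ensure enough such bridges exist in $\Psi_u$, the paper isolates the bad vertices $\Ubad$ and shows $|\Ubad|\le 40\zetass n$ (Claim~\ref{cl:Ubadsmall}); that argument does use your key observation -- a tuple that fails to be $\zetass$-leftconnectable in $\Psi$ can be a $\zetass$-bridge in $\Psi_u$ for fewer than $\zetass n$ vertices $u$ -- but deploys it \emph{before} the pigeonhole step, where it does not collide with the quantifier order. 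You should restructure accordingly: define $\Ubad$, prove it is small, restrict the useful-society condition to bridges that are $\zetass$-connectable in $\Psi$, and then the pigeonhole need only produce $|U'|\ge\frac{(M-(k-1))(M+1)}{k}$.
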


\begin{proof}
	Let~$\alpha, \beta, \thetas>0$, and an odd integer~$\ell\geq 3$ be given. 
	Without loss of generality we may assume that $\thetas\ll \alpha, \beta, k^{-1}, \ell^{-1}$. 
	Pick a sufficiently small constant 
	\begin{equation}\label{eq:2155}
		\zetass\ll \thetas\,.
	\end{equation}
	The statement~$\pik_{k-1}$ applied 
	to~$\frac{\alpha}{2}$,~$\frac{\beta}{2}$,~$\ell$,~$\frac\zetass2$ 
	here in place of~$\alpha$,~$\beta$,~$\ell$,~$\xi$ there delivers an infinite arithmetic 
	progression~$P\subseteq (k-1)\NN$. Choose~$M\gg\zeta_{\star\star}^{-1}$ 
	such that~$\frac{k-1}{k}(M+1)\in P$ and notice that $M\equiv -1\pmod{k}$ is clear. 
	
	Now let~$\Psi$ be a~$(\alpha,\beta,\ell,\frac{4\alpha}{17^k})$-constellation
	on $n$ vertices, where $n$ is sufficiently large. 	
	We are to prove that all but at most $\thetas^2|V(\Psi)|$ vertices of $\Psi$ can be covered
	by vertex-disjoint $M$-vertex paths starting end ending with $\zetass$-connectable $(k-1)$-tuples.
	Let 
	\begin{align*}
		\ccP
		=
		\bigl\{P\subseteq H(\Psi)\colon P \text{ is a }k&\text{-uniform }M\text{-vertex path}\\& 
		\text{whose first and last }(k-1)\text{-tuple is }\zeta_{\star\star}\text{-connectable}\bigr\} 
	\end{align*}
	be the collection of all paths that might occur in such a covering, and 
	let~$\ccC\subseteq \ccP$ be a maximal subcollection of vertex-disjoint paths from~$\ccP$. 
	Further, let
	\[
		U=V(\Psi)\setminus\bigcup_{P\in \ccC}V(P)
	\]
	be the set of uncovered vertices. We may assume that 
	\begin{equation}\label{eq:1431}
		\vert U\vert > \vartheta_\star^2\vert V(\Psi)\vert\,,
	\end{equation}
	since otherwise nothing is left to show. Now roughly speaking the strategy is to find a
	set $S\subseteq V(\Psi)$ of size $M^2$ meeting at most $M$ paths from $\ccC$ such that 
	for `many' vertices~$u\in U$ we can apply $\pik_{k-1}$ to the $(k-1)$-uniform constellation
	$\Psi_u[S]$, thus getting at least $M+1$ vertex-disjoint paths with $\frac{k-1}{k}(M+1)$
	vertices. These paths will agree for many vertices~$u\in U$ and can then be 
	augmented to $k$-uniform paths engendering a contradiction to the maximality of~$\ccC$. 
	In the intended application of $\pik_{k-1}$ we are allowed to specify a set of 
	bridges $\mathfrak B$ that we potentially would like to see at the ends of the paths we obtain. 
	Since we ultimately aim at generating paths in $\ccP$ and, hence, paths starting and ending 
	with $\zetass$-connectable $(k-1)$-tuples, it seems advisable to let $\mathfrak B$ be
	the set of $\frac\zetass2$-bridges in~$\Psi_u[S]$ that are $\zetass$-connectable in $\Psi$. 
	This choice of $\mathfrak B$ is only permissible if~$|\mathfrak B|$ is sufficiently large 
	(i.e., at least $\frac\zetass2 |S|^{k-1}$). Our way of ensuring this in sufficiently many cases 
	exploits that for fixed $u\in U$ and a random choice of $S\subseteq V(\Psi)$ 
	Lemma~\ref{l:cr} tells us that the $\zetass$-bridges in~$\Psi_u$ are likely to 
	be $\frac\zetass2$-bridges in~$\Psi_u[S]$.
	Thus it suffices to focus on vertices $u\in U$ which are not in the set	
	\[
		\Ubad
		=
		\bigl\{u\in U\colon \text{at most }\tfrac{1}{20}n^{k-1}\text{ of the }
			\zeta_{\star\star}\text{-bridges in }\Psi_u\text{ are }
			\zeta_{\star\star}\text{-connectable in }\Psi\bigr\}\,.
	\]
	The next claim states that this set is indeed small.
	
	\begin{claim}\label{cl:Ubadsmall}
		We have $\vert \Ubad\vert \le 40\zeta_{\star\star}n$.
	\end{claim}
	
	\begin{proof}
		Set 
		\begin{multline*}
			\Pi=\bigl\{(x_1,\dots ,x_{k-1},u)\in V(\Psi)^{k-1}\times \Ubad\colon 
			(x_1,\dots ,x_{k-1})\text{ is a }\zeta_{\star\star}\text{-bridge in }\Psi_u\\
			\text{ but not } \zeta_{\star\star}\text{-connectable in }\Psi\bigr\}\,.
		\end{multline*}
		For every~$u\in \Ubad$ Corollary~\ref{c:219} tells us that the number 
		of~$\zeta_{\star\star}$-bridges $(x_1,\dots ,x_{k-1})$ in~$\Psi_u$
		is at least~$\frac19(n-1)^{k-1}>\frac{1}{10}n^{k-1}$ and by the definition of~$U_{\text{bad}}$ 
		at least~$\frac{1}{20}n^{k-1}$ among them fail to be~$\zeta_{\star\star}$-connectable in~$\Psi$. 
		This proves that 
		\[
			\vert \Pi\vert
			\ge 
			\frac{1}{20}n^{k-1}\vert \Ubad\vert\,.
		\]

		On the other hand, an upper bound on~$|\Pi|$ can be obtained as follows. 
		Let~$\Pi_{\text{left}}$ be the set of $k$-tuples in~$\Pi$ for which~$(x_1,\dots ,x_{k-1})$ 
		fails to be~$\zeta_{\star\star}$-leftconnectable and define $\Pi_{\text{right}}$ 
		similarly with respect to rightconnectability. As a~$(k-1)$-tuple that is 
		not~$\zeta_{\star\star}$-leftconnectable in~$\Psi$ can only be a~$\zeta_{\star\star}$-bridge 
		in~$\Psi_u$ for less than~$\zeta_{\star\star}n$ vertices~$u$, we have 
		$\vert \Pi_{\text{left}}\vert\le \zeta_{\star\star}n^k$. The same upper 
		bound can be proved for $|\Pi_{\text{right}}|$ 
		and because of~$\Pi=\Pi_{\text{left}}\cup\Pi_{\text{right}}$
		this yields~$\vert \Pi\vert\le 2\zeta_{\star\star}n^k$. 
		Combining the two bounds on~$\vert \Pi\vert$ we obtain 
		indeed~$\vert U_{\text{bad}}\vert\leq 40\zeta_{\star\star} n$.
	\end{proof}
	
	Because of our choice of $\zetass$ in~\eqref{eq:2155} this 
	yields $|\Ubad|\le \frac12 \thetas^2n$, which combined with~\eqref{eq:1431} implies 
	\begin{equation}\label{eq:1427}
		|U\setminus\Ubad|\ge \frac12\thetas^2n\,.
	\end{equation} 

	Next we will partition the vertex set into blocks some of which will later be selected 
	randomly for hosting the augmentation of $\ccC$. 
	Form a partition 
	\begin{equation}\label{eq:1901}
		V(\Psi)
		=
		B_1\dcup\dots\dcup B_{\nu}\dcup B'\,,
	\end{equation}
	with~$\vert B_1\vert =\dots =\vert B_{\nu}\vert =M >\vert B'\vert$, 
	where the first $|\ccC|$ classes $B_1,\dots , B_{\vert\ccC\vert}$ are the vertex sets 
	of the paths in the collection~$\ccC$, and $B_{|\ccC|+1},\dots , B_{\nu}$ are arbitrary 
	disjoint $M$-sets making~\eqref{eq:1901} true.
	The sets~$B_1, \ldots, B_\nu$ are called \textit{blocks}. 
	A \textit{society} is a set of~$M$ blocks. We point out that 
	\begin{equation}\label{eq:1924}
		\text{if $\ccS$ is a society and $S=\bigcup \ccS$, then $|S|=M^2$.}
	\end{equation}

	\begin{dfn}\label{dfn:usefulsociety}
		A society~$\ccS$ with~$S=\bigcup \ccS$ is called \textit{useful} 
		for a vertex~$u\in U$ if
		\begin{enumerate}[label=\nlabel]
			\item\label{it:usoc1} $u\not\in S$,
			\item\label{it:usoc2} $\Psi_u[S]$ is 
				a~$(k-1)$-uniform
				$(\frac{\alpha}{2},\frac{\beta}{2},\ell,\frac{\alpha/2}{17^{k-1}})$-constellation.
			\item\label{it:usoc3} The number of~$(k-1)$-tuples in~$S^{k-1}$ that 
				are~$\frac\zetass2$-bridges in~$\Psi_u[S]$ 
				and~\hbox{$\zeta_{\star\star}$-connectable} in~$\Psi$
				is at least~$\frac\zetass2|S|^{k-1}$.
		\end{enumerate}
	\end{dfn}

	The next claim explains the naming of useful societies:~$\Psi_u[S]$ 
	contains~$M+1$ ``suitable'' paths.
	
	\begin{claim}\label{cl:uSocietyuseful}
		If a society~$\ccS$ is useful for~$u\in U$ and~$S=\bigcup\ccS$, 
		then there is a collection~$\ccW$ of mutually disjoint $(k-1)$-uniform 
		paths in~$\Psi_u[S]$ with the following properties.
		\begin{enumerate}[label=\rmlabel]
			\item Every path in $\ccW$ has $\frac{k-1}{k}(M+1)$ vertices. 
			\item Every path in $\ccW$ starts and ends with a $(k-1)$-tuple that 
				is~$\zeta_{\star\star}$-connectable in~$\Psi$.
			\item $|\ccW|\ge M+1$. 
		\end{enumerate}
	\end{claim}
	
	\begin{proof}
		By Definition~\ref{dfn:usefulsociety}~\ref{it:usoc3} and~\eqref{eq:1924} the set
		\[
			\Xi=\bigl\{\seq{e}\in S^{k-1}\colon 
			\seq{e} \text{ is }\zeta_{\star\star}\text{-connectable in }\Psi
			\text{ and a }\tfrac\zetass2\text{-bridge in }\Psi_u[S]\bigr\}
		\]
		satisfies~$\vert \Xi\vert\geq \frac\zetass2(M^2)^{k-1}$. 
		Now we apply $\pik_{k-1}$ to~$\Psi_u[S]$,~$\Xi$,~$\frac\zetass2$, 
		and~${\frac{k-1}{k}(M+1)}$ here in place of~$\Psi$,~$\mathfrak{B}$,~$\xi$, and~$M$ 
		there -- which is permissible due to the selection of parameters in the beginning of 
		the proof of Lemma~\ref{lem:indstep}. 
		
		This application of $\pik_{k-1}$ yields a collection~$\ccW$ of mutually 
		disjoint~$(k-1)$-uniform paths in~$\Psi_u[S]$ 
		that covers all but at most~$\frac\zetass2\vert S\vert+\frac{k-1}{k}(M+1)$ vertices 
		of~$S$ such that each path starts and ends with a bridge from~$\Xi$. 
		Since each bridge in~$\Xi$ is a~$\zeta_{\star\star}$-connectable tuple in~$\Psi$, 
		it remains to check that~$\vert \ccW\vert\geq M+1$.
		Because of~$M\gg\zeta_{\star\star}^{-1}\gg k$ we have indeed 
		\[
			\vert \ccW\vert
			\ge
			\frac{(1-\zetass/2)M^2-\frac{k-1}{k}(M+1)}{\frac{k-1}{k}(M+1)}
			\ge
			\frac{(1-\zeta_{\star\star})M(M+1)}{(1-\zeta_{\star\star})M}
			= 
			M+1\,. \qedhere
		\]
	\end{proof}
		
	Lemma~\ref{l:cr} implies that some society is useful for many vertices. 
	
	\begin{claim}\label{cl:onesocusefulformany}
		There exists a society~$\ccS$ that is useful 
		for~$\frac{2}{3}\vert U\setminus U_{\text{bad}}\vert$ vertices in~$U\setminus U_{\text{bad}}$.
	\end{claim}

	\begin{proof}
		By double counting it suffices to establish that for every vertex $u\in U\setminus\Ubad$
		at least~$\frac23$ of all societies are useful. Fix an arbitrary such vertex $u$ and 
		suppose first that $u\not\in B'$. Without loss of generality we may assume 
		that $u\in B_\nu$. We plan to apply Lemma~\ref{l:cr} with
		$(k-1, \frac{\alpha}{4\cdot 17^{k-1}}, \zetass)$ here in place of 
		$(k, \mu, \xi)$ there to the $(k-1)$-uniform 
		constellation $\Psi_u$, the partition 
		\[
			V(\Psi_u)=B_1\dcup\ldots\dcup B_{\nu-1}\dcup (B_\nu\cup B'\setminus\{u\})\,,
		\]
		and the set 
		\[
			\gB_u=\bigl\{\seq{x}\in V(\Psi_u)^{k-1}\colon \seq{x} 
			\text{ is $\zetass$-connectable in $\Psi$ and a $\zetass$-bridge in $\Psi_u$}\bigr\}\,.
		\]

		Notice that Fact~\ref{f:linkfullc} tell us that $\Psi_u$ 
		is indeed an $(\alpha, \beta, \ell, \frac{\alpha}{4\cdot 17^{k-1}})$-constellation. Moreover, 
		$u\not\in\Ubad$ implies $|\gB_u|\ge\frac1{20}n^{k-1}>\zetass|V(\Psi_u)|^{k-1}$.
		So all assumptions of 
		Lemma~\ref{l:cr} hold and we conclude that at 
		least $\frac34\binom{\nu-1}M>\frac23\binom{\nu}M$ societies are useful for $u$.
		The case $u\in B'$ is similar.    
	\end{proof}
	
	For the remainder of this proof we fix a society~$\ccS$ that is useful 
	for at least~$\frac{2}{3}\vert U\setminus U_{\text{bad}}\vert$ vertices 
	in~$U\setminus U_{\text{bad}}$ and set~$S=\bigcup\ccS$. 
	Claim~\ref{cl:uSocietyuseful} informs us that for every $u\in U$, for which~$\ccS$ is useful, 
	there is a collection~$\ccW_u$ of~$M+1$ mutually vertex disjoint~$(k-1)$-uniform paths 
	in~$\Psi_u[S]$ consisting of~$\frac{k-1}{k}(M+1)$ vertices each, which start and end 
	with~$\zetass$-connectable $(k-1)$-tuples.
 
	Since there are at most~$(M^2)!$ possibilities to order the vertices in~$S$, 
	there has to exist a subset~$U'\subseteq U\setminus U_{\text{bad}}$ 
	such that~$\ccW_u=\ccW$ is the same for every~$u\in U'$ and 
	\[
		\vert U'\vert
		\ge
		\frac{\frac23\vert U\setminus U_{\text{bad}}\vert}{(M^2)!}
		\overset{\eqref{eq:1427}}{\ge}
		\frac{\vartheta_\star^2 n}{3(M^2)!}
		\ge 
		\frac{(M-(k-1))(M+1)}{k}\,.
	\]
	Now, for every path in~$\ccW$ put~$\frac{M-(k-1)}{k}$ distinct vertices from~$U'$ aside
	and insert them at every~$k$-th position into the path from~$\ccW$ (see Figure~\ref{fig:aug}).
	
		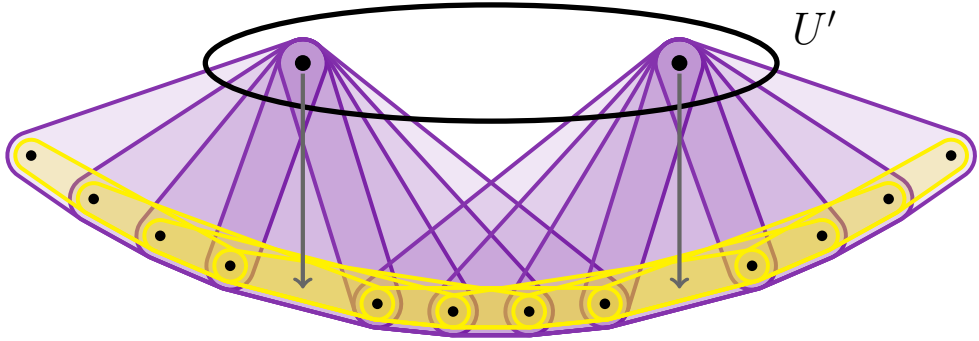
\begin{figure}[h!]
	\begin{tikzpicture}[scale=1]
		
	\def\an{2.9};
	\def\s{270-13*\an};
	\def\ra{10cm};
	
	\foreach \i in {0,...,13}{
		\coordinate (a\i) at (\s+2*\i*\an:\ra);
	}
	
	\coordinate (u1) at ($(a4) + (0,3)$);
	\coordinate (u2) at ($(a9) + (0,3)$);
	
	\def \co{violet!80!blue}

		\foreach \ii/\ij/\ik/\im/\in in {
			a3/a2/a1/a0/u1,
			a5/a3/a2/a1/u1,
			a6/a5/a3/a2/u1,
			a7/a6/a5/a3/u1,
			a8/a7/a6/a5/u1,
			a8/a7/a6/a5/u2,
			a10/a8/a7/a6/u2,
			a11/a10/a8/a7/u2,
			a12/a11/a10/a8/u2,
			a13/a12/a11/a10/u2}	
		{\pedge{(\ii)}{(\ij)}{(\ik)}{(\im)}{(\in)}{9pt}{1.5pt}{\co!80!white}{\co,opacity=0.1};}

		\foreach \i/\j/\k/\m in {
			a3/a2/a1/a0,
			a5/a3/a2/a1,
			a6/a5/a3/a2,
			a7/a6/a5/a3,
			a8/a7/a6/a5,
			a10/a8/a7/a6,
			a11/a10/a8/a7,
			a12/a11/a10/a8,
			a13/a12/a11/a10}	
				\redge{(\i)}{(\j)}{(\k)}{(\m)}{6pt}{1.5pt}{yellow!100!black}{yellow,opacity=0.2};

	\node at (4.3,-6.2) {\Large $U'$};
	
	\draw[black, line width=2pt] ($(u1)!.5!(u2)$) ellipse (3.8cm and 22pt);	
	
	\draw [ultra thick, black!60!white, shorten <= 4pt, ->] (u1) -- (a4); 
	\draw [ultra thick, black!60!white, shorten <= 4pt, ->] (u2) -- (a9); 				
				
	\foreach \i in {1,2}{
		\fill (u\i) circle (3pt);
	}			
	
	\foreach \i in {0,1,2,3,5,6,7,8,10,11, 12, 13}{
		\fill (a\i) circle (2pt);
	}

	\end{tikzpicture}
	\caption {Augmenting a yellow $\frac 45 (M+1)$-vertex path to a lila $M$-vertex path.}
	\label{fig:aug}
\end{figure}
 
	Since the starting and ending~$(k-1)$-tuples of every path in~$\ccW$ 
	are~$\zeta_{\star\star}$-connectable in~$\Psi$ and the insertion of the 
	additional vertices increases their length to~$\frac{k-1}{k}(M+1)+ \frac{M-(k-1)}{k}=M$, 
	the resulting~$M+1$ paths are elements 
	of~$\ccP$. Hence, the collection~$\ccC$ can be augmented by removing the at most~$M$ paths 
	whose blocks lie in~$\ccS$ and adding the~$M+1$ newly constructed paths instead. 
	As this contradicts the maximality of~$\ccC$, the assumption~\eqref{eq:1431} must have been
	false. This concludes the proof of Lemma~\ref{lem:indstep}.	
\end{proof}

Finally, we arrive at the main result of this section.

\begin{prop}\label{prop:1851}
	For every~$k\geq 3$ the statement~$\heartsuit_k$ holds.
\end{prop}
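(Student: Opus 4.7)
The plan is to prove Proposition~\ref{prop:1851} by straightforward induction on $k$, using the three results already established in this section as links in a chain.

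For the base case $k=3$, the statement $\heartsuit_3$ is exactly Fact~\ref{f:herz3}, so there is nothing to do.

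For the induction step, suppose $k \geq 4$ and that $\heartsuit_{k-1}$ has already been established. I would first apply Lemma~\ref{lem:herzzupik} with $k-1$ in place of $k$, which requires precisely the hypothesis $\heartsuit_{k-1}$ and delivers $\spadesuit_{k-1}$. Then I would invoke Lemma~\ref{lem:indstep}, which takes $\spadesuit_{k-1}$ as input and produces $\heartsuit_k$ as output. Combining these two implications completes the induction step.

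Since both implications have already been proved as separate lemmas in this section, there is no genuine obstacle to overcome in the proof of Proposition~\ref{prop:1851} itself; the entire technical work has been absorbed into Lemmas~\ref{lem:herzzupik} and~\ref{lem:indstep}, with the decomposition of the induction step into the intermediate principle $\spadesuit$ being the key idea that makes the argument manageable.
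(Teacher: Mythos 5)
Your proposal is correct and is exactly the proof the paper gives: the base case is Fact~\ref{f:herz3}, and the induction step chains Lemma~\ref{lem:herzzupik} (applied with $k-1$ in place of $k$, giving $\heartsuit_{k-1}\Rightarrow\spadesuit_{k-1}$) with Lemma~\ref{lem:indstep} (giving $\spadesuit_{k-1}\Rightarrow\heartsuit_k$). Nothing to add.
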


\begin{proof}
	We argue by induction on $k$, the base case being provided by Fact~\ref{f:herz3}. 
	The Lemmata~\ref{lem:herzzupik} and~\ref{lem:indstep} show 
	that~$\heartsuit_{k-1}\Rightarrow\spadesuit_{k-1}\Rightarrow\heartsuit_k$, 
	which is the induction step.
\end{proof}

\section{The proof of Theorem~\ref{t:main}}
\label{sec:main-pf}

The results in the foregoing sections routinely imply Theorem~\ref{t:main}, but for the 
sake of completeness we provide the details.

\begin{proof}[Proof of Theorem~\ref{t:main}]
	Given $k\ge 3$ and $\alpha>0$ we choose some auxiliary constants fitting into the hierarchy
	\begin{align}\label{eq:hierarchy}
		\alpha, k^{-1}\gg \mu\gg \beta ,\ell^{-1}\gg \zeta_{\star}\gg\vartheta_{\star}
		\gg\zeta_{\star\star}\gg\vartheta_{\star\star}\gg M^{-1}\gg n_0^{-1}\,,
	\end{align}
	where $\ell\ge 3$ is an odd integer and $M\equiv -1\pmod{k}$.
	
	Now let~$H=(V,E)$ be a~$k$-uniform hypergraph on~$n\ge n_0$ vertices satisfying the 
	minimum~$(k-2)$-degree condition~$\delta_{k-2}(H)\geq(\frac{5}{9}+\alpha)\frac{n^2}{2}$. 
	By Fact~$\ref{l:223}$ and~$\alpha\gg\mu\gg\beta, \ell^{-1}$ there exists 
	an~$(\alpha,\beta,\ell,\mu)$-constellation~$\Psi$ with underlying hypergraph $H$.
	
	\noindent {\bf Stage A.}
	We set aside a reservoir set~$\cR$ of size~$\vert\mathcal{R}\vert\leq\vartheta_{\star}^2n$ 
	provided by Proposition~\ref{prop:reservoir}. Let us recall that by 
	Corollary~\ref{lem:use-reservoir} and $\thetass\ll \thetas, k^{-1}, \ell^{-1}$ 
	\begin{enumerate}[label=\nlabel]
		\item\label{it:10a} 
				for every set~$\mathcal{R}'\subseteq\mathcal{R}$ of 
				at most $\thetass^2 n$ ``forbidden'' vertices, 
				every~$\zetass$-leftconnectable $(k-1)$-tuple~$\seq{a}$, 
				every~$\zetass$-rightconnectable $(k-1)$-tuple~$\seq{b}$
				that is disjoint to $\seq{a}$, and every $i\in [0, k)$, 
				there is an $\seq{a}$-$\seq{b}$-path through $\cR\setminus \cR'$ 
				with~$f(k,i,\ell)$ inner vertices.
	\end{enumerate}
	
	\noindent {\bf Stage B.}
	Next, we choose an absorbing path avoiding $\cR$. 
	More precisely, Proposition~\ref{prop:apl} yields a path~$P_A\subseteq H-\cR$ 
	with the properties that
	\begin{enumerate}[label=\nlabel, resume]
		\item $\vert V(P_A)\vert\leq\vartheta_{\star}n$,
		\item the starting and ending~$(k-1)$-tuple of~$P_A$ are~$\zetass$-connectable,
		\item\label{it:10d} and for every subset~$Z\subseteq V\setminus V(P_A)$ 
		with~$\vert Z\vert\leq 2\vartheta_{\star}^2n$ and~$\vert Z\vert\equiv 0\pmod{k}$, 
		there is a path~$Q\subseteq H$ with~$V(Q)=V(P_A)\cup Z$ having the same 
		end-$(k-1)$-tuples as~$P_A$.
	\end{enumerate}

	\noindent {\bf Stage C.}
	We proceed by covering almost all vertices belonging neither to $\cR$ nor to $P_A$
	by long paths. To this end we set~$X=\mathcal{R}\cup V(P_A)$ and consider the 
	constellation~$\Psi'=\Psi-X$. 
	Since~$\vert X\vert\le \vartheta_{\star}^2n+\vartheta_{\star}n\leq 2\vartheta_{\star}n$,
	Lemma~\ref{lem:1816} tells us that $\Psi'$ is an $\left(\frac{\alpha}{2}, \frac{\beta}{2}, \ell,
	2\mu \right)$-constellation. So the covering principle $\herz_k$ defined in Definition~\ref{d:herz}
	and proved in Proposition~\ref{prop:1851} applies to $\Psi'$, $2\zetass$ here in place of 
	$\Psi$, $\zetass$ there. In other words, in $\Psi'$ there exists a collection $\ccC$ of 
	mutually disjoint $M$-vertex paths whose end-tuples are $(2\zetass)$-connectable in $\Psi'$
	such that 
	\[
		\Big|V(\Psi')\setminus\bigcup_{P\in\ccC} V(P)\Big|
		\le 
		\thetas^2 n\,.
	\]
	Due to Fact~\ref{f:41}, the end-tuples of the paths in $\ccC$ are $\zetass$-connectable in $\Psi$.
	
	\noindent {\bf Stage D.}
	Now we want to connect the paths in $\ccC$ and $P_A$, thus obtaining one long path $T$
	with $\zetass$-connectable end-tuples. This is to be done by means of $|\ccC|$ connections 
	through the reservoir, iteratively using~\ref{it:10a} with $i=0$. Altogether these connections 
	require 
	\[
		|\ccC|f(k, 0, \ell) 
		\le 
		\frac{4^k\ell k n}{M}
		\le
		\thetass^2 n
	\]
	vertices from the reservoir. So $|\ccC|$ successive applications of~\ref{it:10a} 
	indeed allow us to construct this long path $T$ 
	(see Figure~\ref{fig:71}).
	
	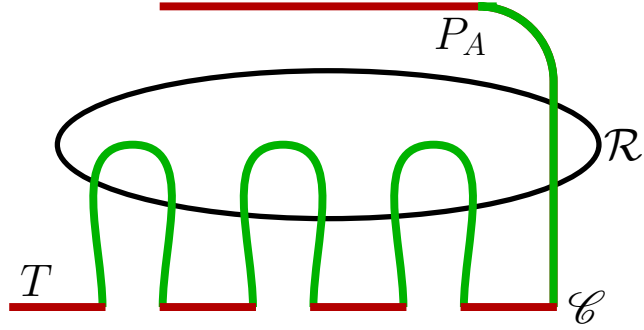
\begin{figure}[h!]
	\begin{tikzpicture}[scale=.8]
	
	\coordinate (x) at (0,0);
	
		\draw[black, line width=2pt] (5.3,2.7) ellipse (4.5cm and 35pt);	
		
	\foreach \i in {0,1,2,3}{
		\coordinate (x\i) at ($(x)+(\i*2.5,0)$);
		\coordinate (y\i) at ($(x\i)+(1.6,0)$);
		}
	
	\draw [line width= 3pt, color = green!70!black] ($(y0)-(.05,0)$) to [out = 90, in = 180]
	 +(.5,2.7)to [out = 0, in = 90] ($(x1)+(.05,0)$);
	
	\draw [line width= 3pt, color = green!70!black] ($(y1)-(.05,0)$) to [out = 90, in = 180]
	+(.5,2.7)to [out = 0, in = 90] ($(x2)+(.055,0)$);
	
	\draw [line width= 3pt, color = green!70!black] ($(y2)-(0.05,0)$) to [out = 90, in = 180]
	+(.5,2.7)to [out = 0, in = 90] ($(x3)+(.055,0)$);
	
		\draw [line width= 3pt, color = red!70!black, rounded corners =10mm] 
		($(y3)-(0.055,0)$) -- +(0,5) -- ($(x1)+(0,5)$);
		
		\draw [line width= 3pt, color = green!70!black, rounded corners =10mm] 
		($(y3)-(0.055,0)$) -- +(0,5) -- ($(y3)+(-1,5)$);
		
		\foreach \i in {0,1,2,3}{
			\draw [line width=3pt, color=red!70!black] (x\i) -- (y\i);
		}

	\node [above right ] at (x0) {\Large $T$};
	\node [right ] at (y3) {\Large $\ccC$};
	\node at (10.2,2.7){\Large $\cR$};
	\node at (7.5,4.5) {\Large $P_A$};

	\end{tikzpicture}
	\caption {The situation after Stage D.}
	\label{fig:71}
\end{figure}

	\noindent {\bf Stage E.}
	Moreover, we can still use~\ref{it:10a} 
	one more time in order to connect the end-tuples of $T$, thus creating one long cycle $C$. 
	For this last connection we use $f(k, i, \ell)$ inner vertices, where $i\in [0, k)$ is 
	determined by the congruence~$i \equiv n-|V(T)|\pmod{k}$. The current situation is depicted 
	in Figure~\ref{fig:72}.
	
	\begin{figure}[h]
	\begin{tikzpicture}[scale=1]
	
	\coordinate (x) at (0,0);
		
	\foreach \i in {0,1,2,3}{

		\coordinate (x\i) at ($(x)+(\i*2.5,0)$);
		\coordinate (y\i) at ($(x\i)+(1.6,0)$);
		}
		
			\draw[black, line width=2pt] (5.3,2.7) ellipse (4.5cm and 35pt);

	\coordinate (z1) at (6.5,3.5);
	\coordinate (z2) at (5.5,5);
	\coordinate (z3) at (4,5);
	
	\node at (6.2,5.2) {\Large $Z$};

\qedge{(z2)}{(z1)}{(z3)}{6pt}{1.5pt}{blue!70!black}{blue!50!white,opacity=0.1};
\foreach \i in {1,...,3} \fill (z\i) circle (2pt);

			\draw [line width= 3pt, color = green!70!black, rounded corners =10mm] 
			($(x1)+(0,4.5)$)--($(y0)+(0,4.5)$) --  +(2,-1.5)  -- ($(x0)+(0,3)$) -- ($(x0)+(0.055,0)$);
			
			\node [green!50!black] at (4,3.2){$f(k,i,\ell)$};
	
	\foreach \i in {0,1,2,3}{
		\draw [line width=3pt, color=red!70!black] (x\i) -- (y\i);
	}

	\draw [line width= 3pt, color = red!70!black] ($(y0)-(.05,0)$) to [out = 90, in = 180]
	 +(.5,2.7)to [out = 0, in = 90] ($(x1)+(.05,0)$);
	
	\draw [line width= 3pt, color = red!70!black] ($(y1)-(.05,0)$) to [out = 90, in = 180]
	+(.5,2.7)to [out = 0, in = 90] ($(x2)+(.055,0)$);
	
	\draw [line width= 3pt, color = red!70!black] ($(y2)-(0.05,0)$) to [out = 90, in = 180]
	+(.5,2.7)to [out = 0, in = 90] ($(x3)+(.055,0)$);

	\draw [line width= 3pt, color = red!70!black, rounded corners =10mm] 
		($(y3)-(0.055,0)$)-- +(0,4.5) -- ($(x1)+(0,4.5)$);

	\node at ($(x0)+(.5,.5)$) {\Large $C$};
	\node [right] at (y3) {\Large $\ccC$};
	\node at (10.2,2.7){\Large $\cR$};
	\node at (7.8,4.9) {\Large $P_A$};

	\end{tikzpicture}
	\caption {The situation after Stage E. The dots in $Z$ represent sets of $k$ vertices each.}
	\label{fig:72}
\end{figure}

	Our choice of $i$ guarantees that the set $Z=V(\Psi)\setminus V(C)$ satisfies 
	\[
		|Z|\equiv n-|V(T)|-f(k, i, \ell)\equiv 0\pmod{k}\,.
	\]
	Furthermore, $Z$ has at most the size 	
	\[
		|Z| 
		\le 
		|\cR| + \Big|V(\Psi')\setminus\bigcup_{P\in\ccC} V(P)\Big|
		\le
		2\thetas^2 n\,.
	\]

	\noindent {\bf Stage F.}
	Taken together, the last two displayed formulae and~\ref{it:10d} show that~$Z$ can be 
	absorbed by $P_A$, i.e., that there exists 
	a path $Q$ with $V(Q)=V(P_A)\cup Z$ having the same end-tuples as~$P_A$. Upon replacing 
	the subpath~$P_A$ of~$C$ by~$Q$ we obtain the desired Hamiltonian cycle in~$H$ 
	(see Figure~\ref{fig:73}).  
\end{proof}

\begin{figure}[h]
	\begin{tikzpicture}[scale=1]
	
	\coordinate (x) at (0,0);
	\foreach \i in {0,1,2,3}{

		\coordinate (x\i) at ($(x)+(\i*2.5,0)$);
		\coordinate (y\i) at ($(x\i)+(1.6,0)$);
		}
		
			\draw[black, line width=2pt] (5.3,2.7) ellipse (4.5cm and 35pt);

	\coordinate (z1) at (6.5,3.5);
	\coordinate (z2) at (5.5,5.5);
	\coordinate (z3) at (4,5.5);

	\node at (3.3,5.4) {\Large $Q$};

			\draw [line width= 3pt, color = red!70!black, rounded corners =10mm] 
			($(x1)+(0,5)$)--($(y0)+(0,5)$) --  +(2,-2)  -- ($(x0)+(0,3)$) -- ($(x0)+(0.055,0)$);
			
	
	\foreach \i in {0,1,2,3}{
		\draw [line width=3pt, color=red!70!black] (x\i) -- (y\i);
	}

	\draw [line width= 3pt, color = red!70!black] ($(y0)-(.05,0)$) to [out = 90, in = 180]
	 +(.5,2.7)to [out = 0, in = 90] ($(x1)+(.05,0)$);
	
	\draw [line width= 3pt, color = red!70!black] ($(y1)-(.05,0)$) to [out = 90, in = 180]
	+(.5,2.7)to [out = 0, in = 90] ($(x2)+(.055,0)$);
	
	\draw [line width= 3pt, color = red!70!black] ($(y2)-(0.05,0)$) to [out = 90, in = 180]
	+(.5,2.7)to [out = 0, in = 90] ($(x3)+(.055,0)$);

	\draw [line width= 3pt, color = red!70!black, rounded corners =10mm] 
		($(y3)-(0.055,0)$)-- +(0,5) -- ($(x3)+(0,5)$);

		\draw [line width= 3pt, color = red!70!black] 
		($(x1)+(0,5)$) -- (3.6,5) to [out = 0, in  = 180] (z3) to [out =0, in= 180] (4.4,5) 
		--(5.1,5) to [out = 0, in  = 180] (z2) to [out =0, in= 180] (5.9,5)
		--(6.1,5) to [out = 0, in  = 180] (z1) to [out =0, in= 180] (6.9,5) -- ($(x3)+(0,5)$);
		
		\draw [line width= 3pt, color = green!70!black] 
		(3.6,5) to [out = 0, in  = 180] (z3) to [out =0, in= 180] (4.4,5) ;
		
			\draw [line width= 3pt, color = green!70!black] 
			(5.1,5) to [out = 0, in  = 180] (z2) to [out =0, in= 180] (5.9,5);
			
				\draw [line width= 3pt, color = green!70!black] 
				(6.1,5) to [out = 0, in  = 180] (z1) to [out =0, in= 180] (6.9,5) ;

	\node at (10.2,2.7){\Large $\cR$};

	\foreach \i in {1,...,3} \fill (z\i) circle (3pt);

	\end{tikzpicture}
	\caption {The situation after Stage F.}
	\label{fig:73}
\end{figure}

\begin{bibdiv}
\begin{biblist}
\bib{AR}{article}{
   author={Alon, Noga},
   author={Ruzsa, Imre Z.},
   title={Non-averaging subsets and non-vanishing transversals},
   journal={J. Combin. Theory Ser. A},
   volume={86},
   date={1999},
   number={1},
   pages={1--13},
   issn={0097-3165},
   review={\MR{1682960}},
   doi={10.1006/jcta.1998.2926},
}

\bib{Mathias}{article}{
	author={Ara\'{u}jo, Pedro},
	author={Piga, Sim\'{o}n}, 
	author={Schacht, Mathias}, 
	title={Localised codegree conditions for tight Hamilton cycles in $3$-uniform hypergraphs}, 
	eprint={2005.11942},
	note={Submitted},
}

\bib{BR}{article}{
	author={Blakley, G. R.},
	author={Roy, Prabir},
	title={A H\"older type inequality for symmetric matrices with nonnegative
					entries},
	journal={Proc. Amer. Math. Soc.},
	volume={16},
	date={1965},
	pages={1244--1245},
	issn={0002-9939},
	review={\MR{0184950}},
}

\bib{Chv}{article}{
   author={Chv\'{a}tal, V.},
   title={On Hamilton's ideals},
   journal={J. Combinatorial Theory Ser. B},
   volume={12},
   date={1972},
   pages={163--168},
   issn={0095-8956},
   review={\MR{294155}},
   doi={10.1016/0095-8956(72)90020-2},
}

\bib{Dirac}{article}{
   author={Dirac, G. A.},
   title={Some theorems on abstract graphs},
   journal={Proc. London Math. Soc. (3)},
   volume={2},
   date={1952},
   pages={69--81},
   issn={0024-6115},
   review={\MR{47308}},
   doi={10.1112/plms/s3-2.1.69},
}

\bib{E}{article}{
   author={Erd\H{o}s, P.},
   title={On extremal problems of graphs and generalized graphs},
   journal={Israel J. Math.},
   volume={2},
   date={1964},
   pages={183--190},
   issn={0021-2172},
   review={\MR{183654}},
   doi={10.1007/BF02759942},
}
	
\bib{ES}{article}{
   author={Erd\H{o}s, Paul},
   author={Simonovits, Mikl\'{o}s},
   title={Supersaturated graphs and hypergraphs},
   journal={Combinatorica},
   volume={3},
   date={1983},
   number={2},
   pages={181--192},
   issn={0209-9683},
   review={\MR{726456}},
   doi={10.1007/BF02579292},
}
	
\bib{Fiedler}{book}{
   author={Fiedler, Miroslav},
   title={Theory of graphs and its applications},
   series={Proceedings of the Symposium held in Smolenice in June 1963},
   publisher={Publishing House of the Czechoslovak Academy of Sciences,
   Prague},
   date={1964},
   pages={234},
   review={\MR{0172259}},
}

\bib{Fitch}{article}{
   author={Fitch, Matthew},
   title={Rational exponents for hypergraph Tur\'{a}n problems},
   journal={J. Comb.},
   volume={10},
   date={2019},
   number={1},
   pages={61--86},
   issn={2156-3527},
   review={\MR{3890916}},
   doi={10.4310/joc.2019.v10.n1.a3},
}

\bib{HS}{article}{
   author={Hajnal, A.},
   author={Szemer\'{e}di, E.},
   title={Proof of a conjecture of P. Erd\H{o}s},
   conference={
      title={Combinatorial theory and its applications, II},
      address={Proc. Colloq., Balatonf\"{u}red},
      date={1969},
   },
   book={
      publisher={North-Holland, Amsterdam},
   },
   date={1970},
   pages={601--623},
   review={\MR{0297607}},
}
	
\bib{HZ16}{article}{
   author={Han, Jie},
   author={Zhao, Yi},
   title={Forbidding Hamilton cycles in uniform hypergraphs},
   journal={J. Combin. Theory Ser. A},
   volume={143},
   date={2016},
   pages={107--115},
   issn={0097-3165},
   review={\MR{3519818}},
   doi={10.1016/j.jcta.2016.05.005},
}

\bib{KaKi}{article}{
   author={Katona, Gyula Y.},
   author={Kierstead, H. A.},
   title={Hamiltonian chains in hypergraphs},
   journal={J. Graph Theory},
   volume={30},
   date={1999},
   number={3},
   pages={205--212},
   issn={0364-9024},
   review={\MR{1671170}},
   doi={10.1002/(SICI)1097-0118(199903)30:3$<$205::AID-JGT5$>$3.3.CO;2-F},

}

\bib{Lang}{article}{
	author={Lang, Richard},
	author={Sanhueza-Matamala, Nicolás}, 
	title={Minimum degree conditions for tight Hamilton cycles}, 
	eprint={2005.05291},
	note={Submitted},
}

\bib{Lee}{article}{
	author={Lee, Joonkyung}, 
	title={On some graph densities in locally dense graphs}, 
	eprint={1707.02916},
	note={Random Structures \& Algorithms. To Appear},
}

\bib{Y}{article}{
	author={Polcyn, Joanna},
   author={Reiher, Chr.},
   author={R\"{o}dl, Vojt\v{e}ch},
   author={Ruci\'{n}ski, Andrzej},
   author={Schacht, Mathias},
   author={Sch\"ulke, Bjarne},
   title={Minimum pair-degree condition for tight Hamiltonian cycles in $4$-uniform hypergraphs},
   eprint={2005.03391},
	note={Acta Mathematica Hungarica. To Appear},
}

\bib{Posa}{article}{
   author={P\'{o}sa, L.},
   title={A theorem concerning Hamilton lines},
   language={English, with Russian summary},
   journal={Magyar Tud. Akad. Mat. Kutat\'{o} Int. K\"{o}zl.},
   volume={7},
   date={1962},
   pages={225--226},
   issn={0541-9514},
   review={\MR{184876}},
}

\bib{R}{article}{
   author={Reiher, Chr.},
   author={R\"{o}dl, Vojt\v{e}ch},
   author={Ruci\'{n}ski, Andrzej},
   author={Schacht, Mathias},
   author={Szemer\'{e}di, Endre},
   title={Minimum vertex degree condition for tight Hamiltonian cycles in
   3-uniform hypergraphs},
   journal={Proc. Lond. Math. Soc. (3)},
   volume={119},
   date={2019},
   number={2},
   pages={409--439},
   issn={0024-6115},
   review={\MR{3959049}},
   doi={10.1112/plms.12235},
}

\bib{RRS}{article}{
   author={R\"{o}dl, Vojt\v{e}ch},
   author={Ruci\'{n}ski, Andrzej},
   author={Szemer\'{e}di, Endre},
   title={An approximate Dirac-type theorem for $k$-uniform hypergraphs},
   journal={Combinatorica},
   volume={28},
   date={2008},
   number={2},
   pages={229--260},
   issn={0209-9683},
   review={\MR{2399020}},
   doi={10.1007/s00493-008-2295-z},
}

\bib{Schuelke}{article}{
	title={A pair-degree condition for Hamiltonian cycles in $3 $-uniform hypergraphs},
	author={Sch{\"u}lke, Bjarne},
	eprint={1910.02691},
	note={Submitted},
}

\bib{Sid}{article}{
   author={Sidorenko, Alexander},
   title={A correlation inequality for bipartite graphs},
   journal={Graphs Combin.},
   volume={9},
   date={1993},
   number={2},
   pages={201--204},
   issn={0911-0119},
   review={\MR{1225933}},
   doi={10.1007/BF02988307},
}
	
\bib{Sim}{article}{
   author={Simonovits, Mikl{\'o}s},
   title={Extremal graph problems, degenerate extremal problems, and
   supersaturated graphs},
   conference={
      title={Progress in graph theory},
      address={Waterloo, Ont.},
      date={1982},
   },
   book={
      publisher={Academic Press, Toronto, ON},
   },
   date={1984},
   pages={419--437},
   review={\MR{776819}},
}

\bib{ST}{article}{
   author={Staden, Katherine},
   author={Treglown, Andrew},
   title={On degree sequences forcing the square of a Hamilton cycle},
   journal={SIAM J. Discrete Math.},
   volume={31},
   date={2017},
   number={1},
   pages={383--437},
   issn={0895-4801},
   review={\MR{3615461}},
   doi={10.1137/15M1033101},
}
	
\bib{Sz}{article}{
   author={Szemer\'{e}di, Endre},
   title={Is laziness paying off? (``Absorbing'' method)},
   conference={
      title={Colloquium De Giorgi 2010--2012},
   },
   book={
      series={Colloquia},
      volume={4},
      publisher={Ed. Norm., Pisa},
   },
   date={2013},
   pages={17--34},
   review={\MR{3089025}},
}

\bib{Tr}{article}{
   author={Treglown, Andrew},
   title={A degree sequence Hajnal-Szemer\'{e}di theorem},
   journal={J. Combin. Theory Ser. B},
   volume={118},
   date={2016},
   pages={13--43},
   issn={0095-8956},
   review={\MR{3471843}},
   doi={10.1016/j.jctb.2016.01.007},
}

\end{biblist}
\end{bibdiv}	
\end{document}